\setlist[enumerate]{itemsep=0pt,label=$(\mathrm{\arabic*})$,leftmargin=*}
\setlist[itemize]{itemsep=0pt, topsep=5pt, labelindent=\parindent,leftmargin=27pt}
\DeclareSymbolFont{cyrletters}{OT2}{wncyr}{m}{n}
\DeclareMathSymbol{\Sha}{\mathalpha}{cyrletters}{"58}
\newtheorem{thm}{Theorem}[section]
\newaliascnt{lem}{thm}
\newtheorem{lem}[lem]{Lemma}
\newaliascnt{cor}{thm}
\newtheorem{cor}[cor]{Corollary}
\newaliascnt{prop}{thm}
\newtheorem{prop}[prop]{Proposition}
\newtheorem{claim}{Claim}
\newtheorem*{oneclaim}{Claim}
\theoremstyle{definition}
\newaliascnt{rem}{thm}
\newtheorem{rem}[rem]{Remark}
\theoremstyle{definition}
\newaliascnt{defn}{thm}
\newtheorem{defn}[defn]{Definition}
\newaliascnt{ex}{thm}
\newtheorem{ex}[ex]{Example}
\numberwithin{equation}{section}
\def\F{{\mathbb F}}
\newcommand{\Fp}{\mathbb{F}_p}
\def\Q{{\mathbb Q}}
\def\Qp{\Q_p}
\def\Z{{\mathbb Z}}
\newcommand{\Zp}{\mathbb{Z}_p}
\def\C{{\mathbb C}}
\def\cF{{\mathcal F}}
\def\cH{{\mathcal H}}
\def\aa{{\mathfrak a}}
\def\oo{{\mathfrak o}}
\def\cO{{\mathcal O}}
\def\cA{{\mathcal A}}
\def\End{\mathop{\mathrm{End}}\nolimits}
\def\Cl{\mathop{\mathrm{Cl}}\nolimits}
\def\Sel{\mathop{\mathrm{Sel}}\nolimits}
\def\Aut{\mathop{\mathrm{Aut}}\nolimits}
\def\Ann{\mathop{\mathrm{Ann}}\nolimits}
\newcommand{\Coker}{\operatorname{Coker}}
\def\Frob{\mathop{\mathrm{Frob}}\nolimits}
\def\Gal{\mathop{\mathrm{Gal}}\nolimits}
\def\Hom{\mathop{\mathrm{Hom}}\nolimits}
\def\Ind{\mathop{\mathrm{Ind}}\nolimits}
\def\Fitt{\mathop{\mathrm{Fitt}}\nolimits}
\def\corank{\mathop{\mathrm{corank}}\nolimits}
\def\Im{\mathop{\mathrm{Im}}\nolimits}
\newcommand{\Ker}{\operatorname{Ker}}
\def\Tr{\mathop{\text{\rm Tr}}\nolimits}
\def\rank{\mathop{\text{\rm rank}}\nolimits}
\def\det{\mathop{\mathrm{det}}\nolimits}
\def\ord{\mathop{\mathrm{ord}}\nolimits}
\newcommand{\ab}{\mathrm{ab}}
\newcommand{\ur}{\mathrm{ur}}
\renewcommand{\div}{\mathrm{div}}
\newcommand{\res}{\mathrm{res}}
\newcommand{\loc}{\mathrm{loc}}
\newcommand{\ds}{\displaystyle}
\newcommand{\Leg}[2]{\left(\dfrac{#1}{#2}\right)}
\begin{document}

\title[Class groups and 
Iwasawa theory of elliptic curves]{
Asymptotic behavior of class groups and 
cyclotomic Iwasawa theory of elliptic curves}

\author{Toshiro Hiranouchi}
\thanks{The work of the first author 
 is supported by JSPS KAKENHI 20K03536.}
\address{Department of Basic Sciences, Graduate School of Engineering, 
Kyushu Institute of Technology, 
1-1 Sensui-cho, Tobata-ku, Kitakyushu-shi, 
Fukuoka 804-8550 JAPAN}
\email{hira@mns.kyutech.ac.jp}

\author{Tatsuya Ohshita}
\thanks{
The work of the second author 
is supported by JSPS KAKENHI 18H05233, 20K14295 and 21K18577.}

\address{Department of Mathematics, 
Cooperative Faculty of Education, Gunma University,
Maebashi, Gunma 371-8510, Japan}
\email{ohshita@gunma-u.ac.jp}

\date{\today}
\subjclass[2010]{Primary 11R29; Secondary 11G05, 11R23. }
\keywords{class number; elliptic curve; 
Iwasawa theory}

\begin{abstract}
In this article, we study a relation between 
certain quotients of 
ideal class groups 
and the cyclotomic Iwasawa module $X_\infty$  
of the Pontrjagin dual of the fine Selmer group of 
an elliptic curve $E$ defined over $\mathbb{Q}$.
We consider the Galois extension field $K^E_n$ 
of $\mathbb{Q}$ generated by coordinates of 
all $p^n$-torsion points of $E$, and introduce a quotient 
$A^E_n$ of the $p$-sylow subgroup of the ideal class group of $K^E_n$
cut out by the modulo $p^n$ Galois representation $E[p^n]$. 
We describe the asymptotic behavior of $A^E_n$ 
by using the Iwasawa module $X_\infty$. 
In particular, under certain conditions, 
we obtain an asymptotic formula as Iwasawa's class number formula 
on the order of $A^E_n$ by using Iwasawa's invariants of $X_\infty$. 
\end{abstract}

\maketitle

\section{Introduction}\label{secintro}

Let $E$ be an elliptic curve over $\Q$.
For each $N \in \Z_{>0}$, we denote 
by $E[N]$ the subgroup of $E(\overline{\Q})$
consisting of elements annihilated by $N$.
Fix an odd prime number $p$ at which $E$ has good reduction. 
For each $n \in \Z_{> 0}$, 
we put $K^E_n:=\Q (E[p^n])$, and 
$h_n:= \ord_p \# (\Cl(\cO_{K^E_n}) \otimes_\Z \Z_p)$, 
where $\ord_p$ denotes the additive $p$-adic valuation 
normalized by $\ord_p(p)=1$ and $\Cl(\cO_{K^E_n})$ is the ideal class group of 
the ring of integers $\cO_{K^E_n}$. 
In recent papers \cite{SY1}, \cite{SY2}, and \cite{Hi}, 
there has been renewal of interest in 
an asymptotic behavior of the class numbers 
$\set{h_n}_{n\ge 0}$ along the tower of number fields $K_n^E$. 
It has been shown that 
an asymptotic inequality which gives a \emph{lower bound} of $\set{h_n}_{n\ge 0}$ 
in terms of the Mordell-Weil rank $\rank_{\Z} E(\Q)$ of $E$ (cf.\ \autoref{rem_comparison}). 
For some generalizations of these results including abelian varieties over a number field, 
see \cite{Ga} and \cite{Oh}. 
In these works, the divisible part of 
\emph{the fine Selmer group} $\Sel_p (\Q, E[p^\infty])$ (cf.~\autoref{defnSel}) 
plays important roles.

We define a quotient 
$A^E_n$ of  
$\Cl(\cO_{K^E_n}) \otimes_\Z \Z_p$,  
which is cut out by the Galois representation $E[p^n]$ (see \eqref{def:AnE} below). 
In this paper, 
we shall 
describe the asymptotic behavior of $A^E_n$ 
by using the fine Selmer group  $\Sel_{p}(K_n,E[p^n])$, 
where we put $K_n:=\Q(\mu_{p^n})$. 
As an application of our result, 
we shall show an asymptotic formula on the order of $A^E_n$ 
using Iwasawa's $\mu$ and $\lambda$-invariants of 
the cyclotomic Iwasawa module 
associated with the fine Selmer group of the elliptic curve $E$, 
as ``Iwasawa's class number formula'' (\cite{Iw}).

\subsection{The statements of the main results}

In order to state our main results, 
let us introduce some notation. 
For each $N\in \Z_{> 0}$, we denote by 
$\mu_N:=\boldsymbol{\mu}_N(\overline{\Q})$ 
the group of $N$-th roots of unity.
For each $m \in \Z_{\ge 0}$,
we define $K_m := \Q(\mu_{p^m})$ (in particular, we put $K_0:=\Q$), 
and set $K_{\infty}:=\bigcup_{m \ge 0} K_m$.
For each $m_1,m_2 \in \Z_{\ge 0} \cup \{ \infty \}$ with 
$m_2>m_1$, we set 
$\mathcal{G}_{m_2,m_1}:=\Gal(K_{m_2}/K_{m_1})$, 
and put  $\Delta:=\mathcal{G}_{1,0} \simeq (\Z/p\Z)^\times$.
For any $m \ge 1$, 
we have $\mathcal{G}_{m,0}=
\Delta \times \mathcal{G}_{m,1}$.
We can regard $\Zp[\Delta]$ as 
a subring of $\Z_p[\mathcal{G}_{m,0}]$. 
We put $\widehat{\Delta}:=
\Hom (\Delta, \Zp^\times)$. 
For each $\chi \in \widehat{\Delta}$, 
we define $\Zp(\chi):=\Zp$ to be 
the $\Zp[\Delta]$-algebra 
where $\Delta$ acts via $\chi$,  
and for a $\Zp[\Delta]$-module $M$,  
we set $M_\chi:=
M\otimes_{\Zp[\Delta]}\Zp(\chi)$. 
We have 
$M=\bigoplus_{\chi \in \widehat{\Delta}} M_\chi$ 
because $p$ is odd. 
For each $m,n \in \Z_{\ge 0}$, we define 
\[
R_{m,n}:=\Z/p^n\Z [\mathcal{G}_{m,0}]
=\Z_p/p^n\Z_p[\Gal(K_m/\Q)], 
\]
and put $R_n:=R_{n,n}$.
For each number field $L$, that is, a finite extension of $\Q$, 
and each $n\in \Z_{\ge 0} \cup \{ \infty \}$, 
let $\Sel (L,E[p^n])$ be the Selmer group in the classical sense, 
and $\Sel_p (L,E[p^n])$ the kernel of the localization map
\[
\Sel (L,E[p^n]) \longrightarrow \prod_{v \mid p}
H^1(L_v, E[p^n])
\]
which is called \emph{the fine Selmer group} 
(for details, see \autoref{defnSel} and
\autoref{remSelcl}, later). 
For each $m,n \in \Z_{\ge 0}$, 
the fine Selmer group 
$\Sel_p (K_m,
E[p^n])$ becomes an $R_{m,n}$-module.
For any $n \in \Z_{\ge 0}$, 
the field $K^E_n=\Q(E[p^n])$ 
contains $\mu_{p^n}$ and hence $K^E_n \supseteq K_n = \Q(\mu_{p^n})$
because of the Weil pairing 
$E[p^n] \times E[p^n] \longrightarrow 
\mu_{p^n}$ (\cite[Chapter~III, Corollary~8.1.1]{Si1}).
Let 
\begin{equation}
    \label{eq:rhoev}
(\rho_n^E)^\vee \colon \Gal(K^E_n/\Q)^{\rm op} 
\longrightarrow  \Aut_{\Z_p} (E[p^n]^{\vee})
=GL_2(\Z/p^n\Z)
\end{equation}
be the right action of 
$G_{\Q}$ on 
the Pontrjagin dual 
$E[p^n]^{\vee} := \Hom_{\Z_p}(E[p^n], \Z/p^n\Z)$ 
of $E[p^n]$.
We define a $\Z_p$-module  
$A_n^E$ by 
\begin{equation}
\label{def:AnE}
A_n^E:= (M_2(\Z/p^n\Z), (\rho_n^E)^\vee)
\otimes_{\Z[\Gal(K^E_n/K_n)]} 
\Cl(\cO_{K^E_n}[1/p]),
\end{equation}
where $M_2(\Z/p^n\Z)$ denotes the matrix algebra of degree two 
over $\Z/p^n\Z$. 
We can define a $\Z_p$-linear left action 
of $G_\Q := \Gal(\overline{\Q}/\Q)$ on $A_n^E$ by 
\[
\sigma (A \otimes [\aa]):=A  (\rho_n^E)^\vee(\sigma^{-1}) 
\otimes [\sigma \aa]
\]
for each $\sigma \in G_\Q$, 
$A\in M_2(\Z/p^n\Z)$ and 
$[\aa] \in \Cl(\cO_{K^E_n}[1/p])$.
Since every $\sigma \in G_{K_n}$ 
acts trivially on $A_n^E$, 
we may regard $A_n^E$ as an $R_n$-module. 
We denote by $(A_n^E)^{\vee} = \Hom_{\Zp}(A_n^E,\Z/p^n\Z)$ 
the Pontrjagin dual of $A_n^E$.
The following theorem is the main result of our paper.

\begin{thm}[\autoref{thmmainbody}]
\label{thmmain}
Let $E$ be an elliptic curve over $\Q$, and $p$ an odd prime number 
where $E$ has good reduction. 
Suppose that $E$ satisfies the following conditions 
{\rm (C1)}, {\rm (C2)} and {\rm (C3)}.
\begin{itemize}
\item[{\rm (C1)}] The Galois representation
\[
\rho^E_1  
\colon G_{K_{\infty}} := \Gal(\overline{\Q}/K_\infty) 
\longrightarrow \Aut_{\F_p}(E[p])
\simeq GL_2(\F_p)
\]
is absolutely irreducible over $\F_p$.
\item[{\rm (C2)}] 
For any $n \in \Z_{\ge 1}$ and 
any place $v$ of $K_n$ where the base change $E_{K_{n,v}}$ of $E$
has potentially multiplicative reduction, 
we have $E(K_{n,v})[p]=0$.
\item[{\rm (C3)}] 
If $E$ has complex multiplication, 
the ring $\End (E)$ of endomorphisms of $E$ 
defined over $\overline{\mathbb{Q}}$ is 
the maximal order of an imaginary quadratic field.
\end{itemize}
Then, there exists a family of $R_n$-homomorphisms 
\[
r_n:\Sel_p (K_n,
E[p^n])^{\oplus 2}\longrightarrow (A_n^E)^{\vee}
\]
such that 
the kernel $\Ker(r_n)$ and the cokernel $\Coker(r_n)$ are finite 
with order bounded independently of $n$. 
\end{thm}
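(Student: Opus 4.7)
\emph{Reduction to a single factor.} Setting $\mathcal{H}_n := \Gal(K_n^E/K_n)$, the row decomposition of $M_2(\Z/p^n\Z)$ under the right action $(\rho_n^E)^\vee$ gives $M_2(\Z/p^n\Z) \simeq (E[p^n]^\vee)^{\oplus 2}$ as right $\Z_p[\mathcal{H}_n]$-modules. This splitting is compatible with the full $R_n$-action on $A_n^E$, since both the right multiplication by $(\rho_n^E)^\vee(\sigma^{-1})$ and the Galois action on $\Cl(\cO_{K_n^E}[1/p])$ preserve rows. Hence $A_n^E \simeq \widetilde{A}_n^{\oplus 2}$ as $R_n$-modules, where $\widetilde{A}_n := E[p^n]^\vee \otimes_{\Z_p[\mathcal{H}_n]} \Cl(\cO_{K_n^E}[1/p])$. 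By tensor--Hom adjunction and the self-duality $(E[p^n]^\vee)^\vee \simeq E[p^n]$, one obtains
\[
(A_n^E)^\vee \simeq \Hom_{\Z_p[\mathcal{H}_n]}\bigl(\Cl(\cO_{K_n^E}[1/p]),\, E[p^n]\bigr)^{\oplus 2}.
\]
It therefore suffices to construct an $R_n$-homomorphism
\[
s_n : \Sel_p(K_n, E[p^n]) \longrightarrow \Hom_{\Z_p[\mathcal{H}_n]}\bigl(\Cl(\cO_{K_n^E}[1/p]),\, E[p^n]\bigr)
\]
with finite kernel and cokernel bounded independently of $n$, and set $r_n := s_n^{\oplus 2}$.

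\emph{Restriction and class field theory.} Let $S$ denote the finite set of places of $\Q$ containing $p$, $\infty$, and the primes of bad reduction of $E$. From the inflation--restriction exact sequence
\[
H^1(\mathcal{H}_n, E[p^n]) \to H^1(G_{K_n, S}, E[p^n]) \xrightarrow{\mathrm{res}} H^1(G_{K_n^E, S}, E[p^n])^{\mathcal{H}_n} \to H^2(\mathcal{H}_n, E[p^n]),
\]
assumption (C1), combined with Sah's lemma applied through a non-trivial central scalar in $\mathcal{H}_n$ (whose existence in the CM case is guaranteed by (C3)), shows that $H^i(\mathcal{H}_n, E[p^n])$ is finite and bounded independently of $n$ for $i=1,2$. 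Since $E[p^n]$ is trivial as a $G_{K_n^E}$-module, $H^1(G_{K_n^E, S}, E[p^n]) \simeq \Hom(G_{K_n^E, S}^{\mathrm{ab}}, E[p^n])$, and the fine Selmer condition at primes above $p$ forces $\mathrm{res}(c)$ to vanish on the corresponding decomposition subgroups. Combined with the classical Selmer local conditions at the remaining primes of $S$, the restricted homomorphism factors, up to a bounded discrepancy, through the Galois group of the maximal pro-$p$ abelian extension of $K_n^E$ that is everywhere unramified and split completely at primes above $p$. By global class field theory, this Galois group is canonically isomorphic to the $p$-part of $\Cl(\cO_{K_n^E}[1/p])$, producing $s_n$.

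\emph{Principal obstacle.} The main technical difficulty is to bound the above local discrepancies uniformly in $n$. At a bad prime $v \in S \setminus \{p, \infty\}$, the restriction of the classical Kummer image at $v$ to the decomposition subgroups at places of $K_n^E$ above $v$ differs from the simple unramified condition by a local cohomology group of $E[p^n]$ over the completions of $K_n^E$. Hypothesis (C2) is essential to handle the potentially multiplicative case (where it guarantees triviality of the relevant Tamagawa-type factor); at potentially good reduction primes, the required bound follows from the fact that the inertia action on $E[p^n]$ factors through a fixed finite quotient, uniformly in $n$. A parallel analysis, controlled again by local Galois cohomology of $E[p^n]$, bounds the unit and local-decomposition contribution entering the kernel of the CFT surjection onto $\Cl(\cO_{K_n^E}[1/p])$. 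Assembling these bounded discrepancies with the inflation--restriction input yields the $R_n$-homomorphism $r_n$ with the claimed kernel--cokernel bound.
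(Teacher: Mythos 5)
Your outline coincides with the paper's: the adjunction isomorphism $(A_n^E)^\vee \simeq \Hom_{\Z_p[\Gal(K^E_n/K_n)]}(\Cl(\cO_{K^E_n}[1/p])\otimes_\Z\Z_p,\,E[p^n])^{\oplus 2}$ is \autoref{lemAS}, the class-field-theoretic identification of the target with $H^0(K_n,\Sel_p(K^E_n,E[p^n]))$ is \autoref{lemSnSel}, and your $s_n$ is the restriction map of \autoref{proprescyctoKn}. The problem is that the two steps carrying all the difficulty are asserted rather than proved. For the global step, Sah's lemma needs a \emph{central} element of $\Gal(K^E_n/K_n)$ acting as a nontrivial scalar on $E[p^n]$; you never establish that such an element exists, and the parenthetical ``guaranteed by (C3)'' is not an argument. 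The idea is in fact salvageable from (C1) alone: an absolutely irreducible subgroup of $SL_2(\F_p)$ is non-abelian, hence (Dickson) of even order, and the unique involution of $SL_2(\F_p)$ is $-\id$, so $-\id$ lies in the mod $p$ image of $G_{K_\infty}$; any $g\in\Gal(K^E_n/K_n)$ with $g\equiv-\id \bmod p$ satisfies $g^{p^{n-1}}=-\id$ in $GL_2(\Z/p^n\Z)$, and Sah's lemma with this central involution kills $H^i(\Gal(K^E_n/K_n),E[p^n])$ for $i\ge 1$ because $-2$ is a unit. If you supply that argument you even improve on \autoref{propkerresbound} (vanishing instead of boundedness, bypassing the Lazard/Serre/CM analysis); as written, the step is a gap.

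The decisive gap is your ``principal obstacle'' paragraph, which compresses the technical core of the theorem (\autoref{proprescyctoKn}, \autoref{claimlocalres}, \autoref{lemfur}, \autoref{corfur}) into assertions, one of which is wrong. You say nothing about the places above $p$: there $K^E_{n,w}/K_{n,v}$ is wildly ramified of unbounded degree, so your claim that at good-reduction primes ``the inertia action on $E[p^n]$ factors through a fixed finite quotient, uniformly in $n$'' fails exactly where it is most needed; this is where the paper invokes the ordinary/supersingular dichotomy, the non-root-of-unity Frobenius eigenvalue argument and Imai-type boundedness in the ordinary case, and the level-$2$ fundamental characters in the supersingular case. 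Even at $\ell\neq p$ of potentially good reduction, finiteness of the inertia image only bounds the totally ramified contribution $[K^E_{n,w}:M_n]$; the unramified contribution requires the boundedness of $E(K_{n,v})[p^\infty]$ along the tower (\autoref{thmboundpotgoodred}), which you do not address. Finally, the comparison between the Bloch--Kato and unramified local conditions at bad primes --- needed both for the cokernel bound on $s_n$ and to identify $\Hom_{\Z_p[\Gal(K^E_n/K_n)]}(\Cl(\cO_{K^E_n}[1/p])\otimes_\Z\Z_p,E[p^n])$ with $H^0(K_n,\Sel_p(K^E_n,E[p^n]))$ --- requires the vanishing of $H^0$, $H^1$ (and $H^2$) of the quotients compared in \autoref{lemfur}, proved there from (C2) via \autoref{lemC2}, local duality and the Euler--Poincar\'e formula; ``triviality of the relevant Tamagawa-type factor'' does not substitute for this. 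So the proposal reproduces the paper's reduction but leaves its substance unproved.
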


\begin{rem}\label{rem:C1andC1str}
As we see \autoref{lemC1} below, 
the condition (C1) is satisfied if 
the following condition 
$\mathrm{(C1)}_{\mathrm{str}}$ holds: 
\begin{itemize}\setlength{\leftskip}{5mm}
    \item[$\mathrm{(C1)}_{\mathrm{str}}$] The Galois representation 
    \[
        \rho^E=\rho^{E,p} \colon G_\Q 
        \longrightarrow \Aut_{\Z_p}(T_p (E)) \simeq 
        GL_2(\Z_p)
    \] 
is surjective.
\end{itemize}
Note that if $E$ does not have complex multiplication, 
then 
the map $\rho^E$ is surjective for all but finitely many 
prime number $p$ 
by Serre's open image theorem \cite{Se3}. 
\end{rem}

\begin{rem}\label{rem:C2twist}
In \autoref{secC}, we show that 
for any elliptic curve $E$ over $\Q$, 
there exists a quadratic twist $E'/\Q$  of $E$ 
which satisfies the condition (C2) (\autoref{lemTwist}). 
\end{rem}

%

\begin{rem}\label{remendsurj}
If the condition (C1) for $E$ is satisfied, then 
the ring homomorphism
\(
    \Z_p[G_{K_\infty}] \longrightarrow 
    M_2(\F_p)
\)
induced by $\rho^E_1=(\rho^E \bmod p)$ 
is surjective, 
where $M_2(\Fp)$ is the matrix algebra of degree two 
over $\Fp$. 
Hence, with the aid of Nakayama's lemma for 
finitely generated $\Z_p$-modules, 
the condition (C1) for $E$ implies that 
the map 
\[ 
    (\rho_n^E)^\vee \colon 
    \Z_p[G_{K_n}] \longrightarrow 
    M_2(\Z/p^n\Z)
\]
induced by \eqref{eq:rhoev} 
is surjective. 
Under the assumption of (C1), we can regard 
$A^E_{n}$ as a quotient of 
$\Cl (\cO_{K^E_n})$.
\end{rem}

\begin{rem}\label{remAS}
For each $n\in \Z_{\ge 1}$, 
we define  an $R_n$-module
\[
S_{n}:=
\Hom_{\Z_p[\Gal(K^E_n/K_n)]}
(\Cl(\cO_{K^E_n}[1/p]) \otimes_{\Z} \Z_p, E[p^n]  ).
\]
In \autoref{secpfmain}, we shall prove 
\autoref{thmmain} by constructing $\Gal(K_n/\Q)$-homomorphisms 
\[
\Sel_p (K_n,
E[p^n])^{\oplus 2} \longrightarrow  S_{n}^{\oplus 2} \xleftarrow{\ \simeq\ } (A_n^E)^{\vee}, 
\]
where the orders of the kernel and the cokernel of the former map are bounded and the latter is an isomorphism. 
\end{rem}

\begin{rem}
In \cite{PS}, 
under certain assumptions on $(E,p)$, 
Prasad and Shekhar studied a relation between  
$\Sel_{p}(\Q ,E[p])$ and 
\(
\widetilde{S}:=
\Hom_{\Z_p}
(\Cl(\cO_{K^E_1}) \otimes_{\Z} \F_p, E[p]  ).
\)
Here, we give a remark on 
a relation between $\widetilde{S}$ and our $A^E_1$. 
Let $\boldsymbol{1} \in \widehat{\Delta}$ be the trivial character. 
Note that $S_{1,\boldsymbol{1}}$
in the sense of \autoref{remAS} is 
an $\F_p$-subspace of  
$\widetilde{S}$. Moreover, if $E(\Qp)[p]=\{ 0 \}$, 
then the natural injection 
$S_{1,\boldsymbol{1}} \hookrightarrow \widetilde{S}$
becomes an isomorphism. 
Indeed, in such case, 
for any $f \in \widetilde{S}$ and 
any prime ideal $\mathfrak{p}$ of $K^E_1$, 
it follows from the comparison of the action of the 
decomposition group at $\mathfrak{p}$ in $\Gal(K^E_1/\Q)$ 
that we have $f([\mathfrak{p}] \otimes 1)=0$.
Hence by \autoref{remAS}, we deduce that 
if $E(\Qp)[p]=\{ 0 \}$, then we have 
$A^E_{1,\boldsymbol{1}} \simeq \widetilde{S}^{\oplus 2}$.
\end{rem}

Here, we shall note that 
\autoref{thmmain} gives a description of 
the asymptotic behavior of 
the higher Fitting ideals of the $\Zp$-modules $A_n^E$.
Let $M$ be a finitely generated $\Z_p$-module.
For each $i \in \Z_{\ge 0}$, we denote by 
$\Fitt_{\Z_p,i}(M)$ the $i$-th Fitting ideal of $M$ (cf.~\autoref{defnFitt}), 
and put
\[
\Phi_i(M):=\ord_p (\Fitt_{\Z_p,i}(M)) 
\in \Z_{\ge 0} \cup \set{ \infty }.
\]
The sequence  $\set{ \Phi_i(M) }_{i \ge 0}$
determines the isomorphism class of 
the $\Z_p$-module $M$ (see \autoref{remPhiisom}). 
There is an equality 
$\Phi_i(A_{n,\chi}^E)= \Phi_i((A^E_{n,\chi^{-1}})^{\vee})$ for 
any $\chi \in \widehat{\Delta}$ because 
$A_{n,\chi}^E$ is non-canonically isomorphic to 
$(A^E_{n,\chi^{-1}})^{\vee} = 
\Hom_{\Z_p}(A_{n, \chi^{-1}}^E, \Z/p^n\Z)$ 
as a $\mathbb{Z}_p$-module.  
Similarly, we have $\Phi_i(A_n^E)= \Phi_i((A_n^E)^{\vee})$.   
%
Let $\set{ a_n}_n$ and $\set{ b_n }_n$ be sequences of real numbers.
we write $a_n \succ b_n$ if it holds that
$\liminf_{n \to \infty} (a_n-b_n) > -\infty$, 
namely, if the sequence $\set{a_n - b_n}_{n}$ is bounded below.
If $a_n \succ b_n$ and $b_n \succ a_n$, then 
we write $a_n \sim b_n$. 
For a family of homomorphisms 
$f_n\colon M_n\longrightarrow M_n'$ 
of finitely generated torsion $\Zp$-modules 
if the order of $\Ker(f_n)$ and that of $\Coker(f_n)$ 
are bounded independently of $n$, 
then we have $\Phi_i(M_n)\sim \Phi_i(M_n')$ for any $i\in \Z_{\ge 0}$ (\autoref{lemasymPhi}). 
\autoref{thmmain} implies the following corollary:

\begin{cor}
\label{cormain}
Let $E$ be an elliptic curve over $\Q$, and $p$ an odd prime number 
where $E$ has good reduction. 
Suppose that $E$ satisfies the conditions 
{\rm (C1)}, {\rm (C2)} and {\rm (C3)}.
Then, for any $i \in \Z_{\ge 0}$ and $\chi \in \widehat{\Delta}$, 
it holds 
\[
\Phi_i(A_{n,\chi}^E) = \Phi_i((A_{n,\chi^{-1}}^{E})^{\vee}) 
\sim \Phi_i \left(\Sel_p (K_n,
E[p^n])^{\oplus 2}_{\chi^{-1}} \right), 
\]
and moreover, we have
$\Phi_i(A_{n}^E) = \Phi_i((A_{n}^{E})^{\vee}) 
\sim \Phi_i \left(\Sel_p (K_n,
E[p^n])^{\oplus 2} \right)$.
\end{cor}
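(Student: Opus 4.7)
The plan is to deduce the corollary directly from \autoref{thmmain} together with the formal lemma \autoref{lemasymPhi} relating Fitting-ideal asymptotics to bounded-defect morphisms, plus the Pontrjagin-duality equality already recorded in the paragraph preceding the corollary. I will treat the non-character version first and then extract the $\chi$-isotypic version.

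First I would invoke \autoref{thmmain} to obtain the family of $R_n$-linear maps $r_n \colon \Sel_p(K_n,E[p^n])^{\oplus 2} \longrightarrow (A_n^E)^\vee$ whose kernels and cokernels have orders bounded uniformly in $n$. Applying \autoref{lemasymPhi} to this family yields $\Phi_i\bigl((A_n^E)^\vee\bigr) \sim \Phi_i\bigl(\Sel_p(K_n,E[p^n])^{\oplus 2}\bigr)$ for every $i \ge 0$. Combined with the equality $\Phi_i(A_n^E) = \Phi_i((A_n^E)^\vee)$ noted in the text (which comes from the non-canonical $\Zp$-module isomorphism $A_n^E \simeq (A_n^E)^\vee$), this gives the second assertion of the corollary.

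For the $\chi$-isotypic assertion, I would use that $p$ is odd, so $\Z_p[\Delta]$ is a product of copies of $\Z_p$ indexed by $\widehat{\Delta}$, and every $\Z_p[\Delta]$-module canonically decomposes as a direct sum of its $\chi$-components. Since $r_n$ is $R_n$-linear, in particular $\Z_p[\Delta]$-linear, it splits as a direct sum of maps $r_{n,\chi^{-1}} \colon \Sel_p(K_n,E[p^n])^{\oplus 2}_{\chi^{-1}} \longrightarrow (A_n^E)^\vee_{\chi^{-1}}$ over $\chi \in \widehat{\Delta}$. The kernel and cokernel of each $r_{n,\chi^{-1}}$ are direct summands of those of $r_n$, and hence still have orders bounded independently of $n$. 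Noting further that $(A_n^E)^\vee_{\chi^{-1}} \simeq (A_{n,\chi}^E)^\vee$, a second application of \autoref{lemasymPhi} delivers $\Phi_i\bigl((A_{n,\chi}^E)^\vee\bigr) \sim \Phi_i\bigl(\Sel_p(K_n,E[p^n])^{\oplus 2}_{\chi^{-1}}\bigr)$. The remaining equality $\Phi_i(A_{n,\chi}^E) = \Phi_i((A_{n,\chi^{-1}}^E)^\vee)$ is exactly the duality statement recorded before the corollary, which completes the argument.

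There is essentially no obstacle here beyond careful bookkeeping: the substantive content is already in \autoref{thmmain} and \autoref{lemasymPhi}. The only mildly delicate point is checking that taking $\chi$-parts is compatible with both the hypothesis (boundedness of kernel/cokernel) and the Pontrjagin duality identification $(M_\chi)^\vee \simeq (M^\vee)_{\chi^{-1}}$, but both follow immediately from the fact that $p \nmid |\Delta|$ makes the $\chi$-decomposition a splitting in the category of $\Zp$-modules.
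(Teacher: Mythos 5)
Your argument is correct and is essentially the paper's own: the corollary is deduced directly from \autoref{thmmain} via the bounded-kernel/cokernel consequence of \autoref{lemasymPhi}, together with the duality equalities $\Phi_i(A_{n,\chi}^E)=\Phi_i((A_{n,\chi^{-1}}^E)^\vee)$ recorded just before the statement. Your only addition is to spell out the $\chi$-isotypic splitting of $r_n$ (valid since $p\nmid\#\Delta$), which the paper leaves implicit.
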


\subsection{Asymptotic formulas as Iwasawa's class number formula}

For each $\chi\in \widehat{\Delta}$, 
we put $h_{n,\chi}^E:=\ord_p(\# A^E_{n,\chi})$, 
and $h_{n}^E:=\ord_p(\# A^E_{n})=\sum_{\chi \in \widehat{\Delta}} 
h^E_{n,\chi}$. 
Since  $A^E_{n}$ is a quotient of $\Cl(\cO_{K^E_n})$ 
as noted in \autoref{remendsurj}, 
we have 
\[
h_n:= \ord_p(\# \Cl(\cO_{K^E_n}) \otimes \Zp) 
\ge h^E_n. 
\] 
As we shall see below, 
\autoref{cormain} for $i=0$ gives 
a description of asymptotic behavior of $h^E_n$
like ``Iwasawa's class number formula''. 
Let us introduce Iwasawa theoretic notation. 
We put $\Gamma:=\mathcal{G}_{\infty,1}= \Gal(K_\infty/K_1)$.
There is a non-canonical isomorphism 
$\Gamma \simeq \Z_p$ 
and fix a toplogical generator $\gamma \in \Gamma$.
We set $\Lambda:=\Z_p[\![\Gamma]\!]$.
There exists an isomorphism 
$\Lambda \xrightarrow{\ \simeq \ } \Z_p[\![T]\!]$
of $\Z_p$-algebras sending $\gamma$ to $1+T$. 
For each $m,n \in \mathbb{Z}_{> 0}$, we define 
\[
    \Lambda_{m,n}:=\Z/p^n\Z [\mathcal{G}_{m,1}] \simeq 
    \Lambda/(p^n, \gamma^{p^{m-1}}-1), 
\]
and put $\Lambda_n:=\Lambda_{n,n}$. 
Since we have $\mathcal{G}_{m,0}=\Delta \times \mathcal{G}_{m,1}$, 
the equality $R_{m,n}=\Lambda_{m,n}[\Delta]$ holds.
In the following, we introduce the Iwasawa module of 
the Pontrjagin dual of the fine Selmer groups.
Write  
\[
    \Sel_p (K_{\infty},E[p^\infty])
    :=\varinjlim_{m} 
    \Sel_p (K_m,E[p^{\infty}]). 
\] 
For any $m,n \in \Z_{\ge 0} \cup \{ \infty \}$, 
define  
\[
    X_{m,n} := \Sel_p (K_{m},E[p^n])^\vee:=\Hom_{\Zp}\left( 
    \Sel_p (K_{m},E[p^n]), 
    \Qp/\Zp 
    \right),  
\]
and put $X_{n}:=X_{n,n}$. 
It is known that 
the $\Lambda$-module $X_{\infty}$ 
is finitely generated and torsion (\cite{Ka}). 
Take any $\chi \in \widehat{\Delta}$. 
The control theorem of 
the fine Selmer groups (\autoref{corpropcontrthm}) 
implies that  
\begin{equation}
    \label{eq:Phi0}
    \Phi_0 (X_{\infty,\chi} \otimes_{\Lambda} \Lambda_{n}) 
    \sim \Phi_0 (X_{n,\chi}) 
    \sim \Phi_0 \left(\Sel_p (K_n,E[p^n])^{\oplus 2}_{\chi^{-1}} \right). 
\end{equation}
Since $X_{\infty,\chi}$ 
is a finitely generated torsion $\Lambda$-module, 
we can define 
Iwasawa's $\mu$ and $\lambda$-invariants 
$\mu(X_{\infty,\chi})$ and 
$\lambda(X_{\infty, \chi})$
of the $\Lambda$-module $X_{\infty, \chi}$:  
for any finitely generated torsion $\Lambda$-module $M$, 
the characteristic ideal 
$\mathrm{char}_{\Lambda}(M)$ 
of the $\Lambda$-module $M$ is 
generated by an element 
$p^{\mu(M)} f (\gamma -1) \in \Lambda$ 
for some distinguished polynomial $f(T) \in \Z_p[T]$
of degree $\lambda(M)$.
By \eqref{eq:Phi0}, we have 
\[
\Phi_0 (X_{\infty,\chi} \otimes_{\Lambda} \Lambda_{n})
\sim 
\mu(X_{\infty,\chi}) p^n + \lambda(X_{\infty,\chi}) n.
\]
\autoref{cormain} for $i=0$ 
combined with the additivity of $\Phi_0$,  
$\mu$ and $\lambda$,  
implies the following.

\begin{cor}\label{cor:maintheoremcontrol}
Let $E$ be an elliptic curve over $\Q$, and $p$ an odd prime number 
where $E$ has good reduction. 
Suppose that $E$ satisfies 
the conditions {\rm (C1)}, 
{\rm (C2)}  and {\rm (C3)}.
Then, for any $\chi \in \widehat{\Delta}$, we have 
\[
h_{n,\chi}^E \sim 2\left(
\mu(X_{\infty,\chi}) p^n 
+ \lambda(X_{\infty, \chi }) n
\right), 
\]
and moreover, 
\(
h_{n}^E \sim 2\left(
\mu(X_{\infty}) p^n + \lambda(X_{\infty}) n
\right).
\)
\end{cor}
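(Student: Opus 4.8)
The plan is to obtain \autoref{cor:maintheoremcontrol} as a formal consequence of \autoref{cormain} in the case $i=0$, after translating everything into the language of the invariant $\Phi_0$. The starting point is the elementary fact that, for a finite $\Z_p$-module $M\simeq\bigoplus_j\Z/p^{a_j}\Z$, one has $\Fitt_{\Z_p,0}(M)=(p^{\sum_j a_j})=(\#M)$, so $\Phi_0(M)=\ord_p(\#M)$; moreover $\Phi_0$ is additive along the decomposition $M=\bigoplus_{\chi\in\widehat{\Delta}}M_\chi$, and it is unchanged under Pontrjagin duality of finite $\Z_p$-modules. Since $A^E_{n,\chi}$ is finite --- being a subquotient of the finite group $\Cl(\cO_{K^E_n})$ by \autoref{remendsurj} --- we get $h^E_{n,\chi}=\ord_p(\#A^E_{n,\chi})=\Phi_0(A^E_{n,\chi})$ and $h^E_n=\Phi_0(A^E_n)=\sum_{\chi}\Phi_0(A^E_{n,\chi})$.

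Next I would run, for a fixed $\chi$, the chain of $\sim$-relations. By \autoref{cormain} with $i=0$ we have $\Phi_0(A^E_{n,\chi})\sim\Phi_0\!\left(\Sel_p(K_n,E[p^n])^{\oplus 2}_{\chi^{-1}}\right)$; since taking the $\chi^{-1}$-isotypic component commutes with finite direct sums and $\Phi_0$ is additive on them, the right-hand side equals $2\,\Phi_0\!\left(\Sel_p(K_n,E[p^n])_{\chi^{-1}}\right)$. As $X_{n,\chi}$ is the Pontrjagin dual of $\Sel_p(K_n,E[p^n])_{\chi^{-1}}$ as a $\Z_p$-module, duality-invariance of $\Phi_0$ gives $\Phi_0\!\left(\Sel_p(K_n,E[p^n])_{\chi^{-1}}\right)=\Phi_0(X_{n,\chi})$; the control theorem for the fine Selmer groups, recorded in \eqref{eq:Phi0}, gives $\Phi_0(X_{n,\chi})\sim\Phi_0\!\left(X_{\infty,\chi}\otimes_\Lambda\Lambda_n\right)$; and the structure theorem for finitely generated torsion $\Lambda$-modules --- as spelled out in the display just before the corollary --- gives $\Phi_0\!\left(X_{\infty,\chi}\otimes_\Lambda\Lambda_n\right)\sim\mu(X_{\infty,\chi})\,p^n+\lambda(X_{\infty,\chi})\,n$. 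Chaining these, and using that $a_n\sim b_n$ implies $2a_n\sim 2b_n$, yields $h^E_{n,\chi}\sim 2\!\left(\mu(X_{\infty,\chi})\,p^n+\lambda(X_{\infty,\chi})\,n\right)$.

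For the statement without $\chi$ I would sum over $\widehat{\Delta}$: we have $h^E_n=\sum_\chi h^E_{n,\chi}$, while the decomposition $X_\infty=\bigoplus_\chi X_{\infty,\chi}$ forces $\mathrm{char}_\Lambda(X_\infty)=\prod_\chi\mathrm{char}_\Lambda(X_{\infty,\chi})$, hence $\mu(X_\infty)=\sum_\chi\mu(X_{\infty,\chi})$ and $\lambda(X_\infty)=\sum_\chi\lambda(X_{\infty,\chi})$; adding the $\#\widehat{\Delta}$ many $\sim$-relations above (a finite sum, so boundedness is preserved) gives $h^E_n\sim 2\!\left(\mu(X_\infty)\,p^n+\lambda(X_\infty)\,n\right)$. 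I do not expect a genuine obstacle here: all the substance sits in \autoref{thmmain}/\autoref{cormain} and in the control theorem behind \eqref{eq:Phi0}, and what remains is bookkeeping. The one point that needs care is the factor $2$ --- keeping the ``$\oplus 2$'' of \autoref{cormain}, which reflects the rank-two Galois module $E[p^n]$, cleanly separate from the duality and control-theorem identifications among $\Sel_p(K_n,E[p^n])_{\chi^{-1}}$, $X_{n,\chi}$ and $X_{\infty,\chi}\otimes_\Lambda\Lambda_n$ --- together with the elementary verification that $\Phi_0$ of a finite module computes $\ord_p$ of its order.
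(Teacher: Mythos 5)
Your proposal is correct and follows essentially the same route as the paper: the corollary is obtained by combining \autoref{cormain} for $i=0$ with the control-theorem relation \eqref{eq:Phi0}, the asymptotic $\Phi_0(X_{\infty,\chi}\otimes_\Lambda\Lambda_n)\sim\mu(X_{\infty,\chi})p^n+\lambda(X_{\infty,\chi})n$ from the structure theorem, and additivity of $\Phi_0$, $\mu$, $\lambda$ over the $\widehat{\Delta}$-decomposition. Your explicit bookkeeping of the factor $2$ (via $\Phi_0(M^{\oplus 2})=2\Phi_0(M)$ and duality-invariance of $\Phi_0$, keeping it separate from the identification $\Phi_0(\Sel_p(K_n,E[p^n])_{\chi^{-1}})=\Phi_0(X_{n,\chi})$) is exactly the intended reading of the paper's displayed chain.
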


As we note below, by assuming the Iwasawa main 
conjecture for elliptic curves, the constants 
$\mu(X_{\infty})$ and $\lambda(X_{\infty})$ 
are described in terms of Kato's Euler systems.
Let us recall  
the Iwasawa main conjecture 
(in the formulation 
using Kato's Euler systems).
By using Euler systems of Beilinson--Kato elements, 
Kato constructed a $\Lambda$-submodule $Z$ of 
$\mathbf{H}^1$, where we set 
$$
\mathbf{H}^q
=\mathbf{H}^q(T_p(E)):=
\varprojlim_{m}H^1 \left(
K_m, T_p(E) 
\right) 
$$
for each $q \in \mathbb{Z}_{\ge 0}$
(or the construction of $Z$, 
see \cite[Theorem~12.6]{Ka} for the  Galois representation 
$T=T_p(E) \subseteq V_{\Q_p}(f_E)$,  
where $f_E$ is the cuspform attached to $E$). 
The Iwasawa main conjecture for $(f_E,p, \chi)$ 
with $\chi \in \widehat{\Delta}$ 
in the sense of \cite[Conjecture~12.10]{Ka} 
(combined with \cite[Theorem~12.6]{Ka})
predicts the equality
\begin{equation}
    \label{eq:IMC}
    \mathrm{char}_{\Lambda}(
    \mathbf{H}^2_\chi)   
    =\mathrm{char}_{\Lambda}(
    \mathbf{H}^1_\chi /Z_\chi).   
\end{equation}
Since $E$ has good reduction at $p$,
for the left hand side of \eqref{eq:IMC}, 
we have 
$\mathrm{char}_{\Lambda}(X_{\infty,\chi})
=\mathrm{char}_{\Lambda}(
\mathbf{H}^2_\chi) $  
because of the following: 
\begin{itemize}
    \item By the limit of the 
    Poitou-Tate exact sequence, our $X_\infty$ coincides with 
    \[\mathbf{H}^2(T_p(E))_0:=
    \Ker\left( \mathbf{H}^2 \longrightarrow 
    \mathbf{H}^2_{\mathrm{loc}}:=\varprojlim_m H^1(\Qp (\mu_{p^m}), T_p(E))\right)
    \]  
    (see, for instance, the proof of \cite[Proposition 3.17]{Oh2}).
    \item When $E$ has good reduction at $p$, 
    the local duality of the Galois cohomology 
    and Imai's result \cite{Im} imply that the order of 
    $\mathbf{H}^2_{\mathrm{loc}}$ is finite, and hence 
    the index of $\mathbf{H}^2(T_p(E))_0 $ in $\mathbf{H}^2(T_p(E))$ 
    is finite.
\end{itemize}
By using the Euler systems, 
Kato proved that  
the half side of \eqref{eq:IMC}, that is, the inclusion 
\[
    \mathrm{char}_{\Lambda}(
    \mathbf{H}^2_{0,\chi} )
    \supseteq 
    \mathrm{char}_{\Lambda}(
    \mathbf{H}^1_\chi /Z_\chi)   
\]
holds for any $\chi \in \widehat{\Delta}$ 
under the following condition 
which is satisfied when $\mathrm{(C1)_{str}}$ holds:
\begin{itemize}
    \item[] The image of 
    the Galois representation 
    \[
        \rho^E\vert_{G_{K_\infty}} \colon G_{K_\infty} 
        \longrightarrow \Aut_{\Z_p}(T_p (E)) \simeq GL_2(\Z_p)
    \] 
    contains $SL_2(\Zp)$
\end{itemize}
(See \cite[Theorem~13.4]{Ka}. Note that 
$\mathrm{(C1)_{str}}$ implies the assumption (3) in 
\cite[Theorem~13.4]{Ka}). 
By summarizing all $\chi$-parts, 
the following corollary follows from 
\autoref{cor:maintheoremcontrol}.

\begin{cor}
Let $E$ be an elliptic curve over $\Q$, and $p$ an odd prime number 
where $E$ has good reduction. 
\begin{enumerate}
    \item Suppose that $E$ satisfies 
    the conditions $\mathrm{(C1)_{str}}$ and {\rm (C2)}.
    Then, we have 
    \[
        h_{n}^E \prec 2\left(
        \mu(\mathbf{H}^1/Z) p^n + 
        \lambda(\mathbf{H}^1/Z) n
        \right).
    \]
    \item Suppose that $E$ satisfies 
    the conditions $\mathrm{(C1)}$, 
    {\rm (C2)}  and {\rm (C3)}. 
    Let $\chi_0 \in \widehat{\Delta}$. 
    Then, if the Iwasawa main conjecture for $(f_E,p,\chi_0)$ holds, 
    we have 
    \[
    h^E_{n,\chi_0} \sim 2\left(
    \mu(\mathbf{H}^1_{\chi_0}/Z_{\chi_0}) p^n + 
    \lambda(\mathbf{H}^1_{\chi_0}/Z_{\chi_0}) n
    \right).
    \]
    In particular, if the Iwasawa main conjecture for $(f_E,p,\chi)$ holds for every $\chi \in \widehat{\Delta}$, then we have    
    \[
    h_{n}^E \sim 2\left(
    \mu(\mathbf{H}^1/Z) p^n + 
    \lambda(\mathbf{H}^1/Z) n
    \right).
    \]
\end{enumerate}
\end{cor}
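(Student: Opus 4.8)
The plan is to deduce the corollary formally from \autoref{cor:maintheoremcontrol} together with the two facts recalled just before the statement: that $E$ having good reduction at $p$ forces the equality of characteristic ideals $\mathrm{char}_{\Lambda}(X_{\infty,\chi})=\mathrm{char}_{\Lambda}(\mathbf{H}^2_{0,\chi})=\mathrm{char}_{\Lambda}(\mathbf{H}^2_{\chi})$ for every $\chi\in\widehat{\Delta}$ (via the limit of the Poitou--Tate sequence, local duality, and Imai's finiteness result, which make the index of $\mathbf{H}^2_0$ in $\mathbf{H}^2$ finite, hence the quotient pseudo-null); and Kato's divisibility $\mathrm{char}_{\Lambda}(\mathbf{H}^2_{0,\chi})\supseteq \mathrm{char}_{\Lambda}(\mathbf{H}^1_\chi/Z_\chi)$, valid for all $\chi$ when $\mathrm{(C1)_{str}}$ holds, respectively the Iwasawa main conjecture in case (2). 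Throughout I will use that $\mu$ and $\lambda$ are additive in short exact sequences of finitely generated torsion $\Lambda$-modules and respect the $\Delta$-decomposition, so that $\mu(\mathbf{H}^1/Z)=\sum_{\chi}\mu(\mathbf{H}^1_\chi/Z_\chi)$, $\lambda(\mathbf{H}^1/Z)=\sum_{\chi}\lambda(\mathbf{H}^1_\chi/Z_\chi)$, and likewise for $X_\infty$.

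\textbf{Part (1).} First I would check that the hypotheses of \autoref{cor:maintheoremcontrol} are met. By \autoref{rem:C1andC1str}, $\mathrm{(C1)_{str}}$ implies (C1); and surjectivity of $\rho^E\colon G_\Q\to GL_2(\Z_p)$ precludes complex multiplication, since for a CM curve the image of $\rho^E$ lies in the normalizer of a Cartan subgroup, a proper subgroup of $GL_2(\Z_p)$, so (C3) holds vacuously and, together with the assumed (C2), all of (C1), (C2), (C3) hold. Hence \autoref{cor:maintheoremcontrol} gives $h^E_n\sim 2\bigl(\mu(X_\infty)p^n+\lambda(X_\infty)n\bigr)$. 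Next, $\mathrm{(C1)_{str}}$ makes $\rho^E|_{G_{K_\infty}}$ contain $SL_2(\Z_p)$, so by Kato's theorem $\mathrm{char}_{\Lambda}(\mathbf{H}^2_{0,\chi})\supseteq \mathrm{char}_{\Lambda}(\mathbf{H}^1_\chi/Z_\chi)$ for each $\chi$, with $\mathbf{H}^1_\chi/Z_\chi$ finitely generated torsion over $\Lambda$; combined with the characteristic ideal identity this gives $\mathrm{char}_{\Lambda}(X_{\infty,\chi})\supseteq \mathrm{char}_{\Lambda}(\mathbf{H}^1_\chi/Z_\chi)$. Writing generators of the two sides as $p^{\mu}f$ and $p^{\mu'}g$ with $f,g$ distinguished under $\Lambda\simeq\Z_p[\![T]\!]$, this means $p^{\mu}f\mid p^{\mu'}g$, and since $\Z_p[\![T]\!]$ is a UFD whose primes are $p$ and the distinguished irreducibles, we get $\mu(X_{\infty,\chi})\le\mu(\mathbf{H}^1_\chi/Z_\chi)$ and $\lambda(X_{\infty,\chi})\le\lambda(\mathbf{H}^1_\chi/Z_\chi)$. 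Therefore $\mu(X_{\infty,\chi})p^n+\lambda(X_{\infty,\chi})n\le \mu(\mathbf{H}^1_\chi/Z_\chi)p^n+\lambda(\mathbf{H}^1_\chi/Z_\chi)n$ for all $n$; summing over $\chi$ and using additivity, $\mu(X_\infty)p^n+\lambda(X_\infty)n\le \mu(\mathbf{H}^1/Z)p^n+\lambda(\mathbf{H}^1/Z)n$, which combined with the displayed $\sim$ yields $h^E_n\prec 2\bigl(\mu(\mathbf{H}^1/Z)p^n+\lambda(\mathbf{H}^1/Z)n\bigr)$.

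\textbf{Part (2).} Now (C1), (C2), (C3) are assumed directly, so \autoref{cor:maintheoremcontrol} gives $h^E_{n,\chi_0}\sim 2\bigl(\mu(X_{\infty,\chi_0})p^n+\lambda(X_{\infty,\chi_0})n\bigr)$. The main conjecture for $(f_E,p,\chi_0)$, the equality $\mathrm{char}_{\Lambda}(\mathbf{H}^2_{\chi_0})=\mathrm{char}_{\Lambda}(\mathbf{H}^1_{\chi_0}/Z_{\chi_0})$ of \eqref{eq:IMC}, together with $\mathrm{char}_{\Lambda}(X_{\infty,\chi_0})=\mathrm{char}_{\Lambda}(\mathbf{H}^2_{\chi_0})$, gives $\mathrm{char}_{\Lambda}(X_{\infty,\chi_0})=\mathrm{char}_{\Lambda}(\mathbf{H}^1_{\chi_0}/Z_{\chi_0})$, hence $\mu(X_{\infty,\chi_0})=\mu(\mathbf{H}^1_{\chi_0}/Z_{\chi_0})$ and $\lambda(X_{\infty,\chi_0})=\lambda(\mathbf{H}^1_{\chi_0}/Z_{\chi_0})$, which is the first displayed formula. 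If the main conjecture holds for every $\chi\in\widehat{\Delta}$, then, $h^E_n=\sum_\chi h^E_{n,\chi}$ being a finite sum and $\sim$ being stable under finite sums, I would sum the $\chi$-parts and use additivity of $\mu$ and $\lambda$ to obtain $h^E_n\sim 2\bigl(\mu(\mathbf{H}^1/Z)p^n+\lambda(\mathbf{H}^1/Z)n\bigr)$.

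The corollary is essentially a formal consequence of \autoref{cor:maintheoremcontrol} and the cited inputs, so I do not expect a genuine obstacle; the points deserving care are that $\mathrm{(C1)_{str}}$ rules out complex multiplication (so (C3) is automatic in part (1)), the passage from an inclusion of characteristic ideals to the simultaneous inequalities on $\mu$ and $\lambda$ (using unique factorization in $\Z_p[\![T]\!]$ and the distinguishedness of the polynomial parts), and the bookkeeping of the $\Delta$-decomposition. All the substantive difficulty has already been absorbed into \autoref{thmmain}, the control theorem for fine Selmer groups, and Kato's Euler system arguments.
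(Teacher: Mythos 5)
Your proposal is correct and follows the same route the paper intends: the paper simply asserts the corollary as a formal consequence of \autoref{cor:maintheoremcontrol} together with the identification $\mathrm{char}_{\Lambda}(X_{\infty,\chi})=\mathrm{char}_{\Lambda}(\mathbf{H}^2_\chi)$ and Kato's divisibility (resp.\ the main conjecture), summed over the $\Delta$-decomposition. Your added details — that $\mathrm{(C1)_{str}}$ rules out CM so (C3) is vacuous in part (1), and that an inclusion of characteristic ideals yields the simultaneous inequalities on $\mu$ and $\lambda$ — are exactly the bookkeeping the paper leaves implicit.
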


Let $\boldsymbol{1} \in \widehat{\Delta}$ be 
the trivial character. 
In \cite{SU}, 
Skinner and Urban proved the Iwasawa main conjecture for $(f_E,p,\boldsymbol{1})$ 
with the following conditions: 
\begin{itemize}
    \item The pair $(E,p)$ satisfies $\mathrm{(C1)_{str}}$. 
    \item The elliptic curve $E$ has good ordinary reduction at $p$. 
    \item There exists a prime number $\ell_0$ where $E$ has multiplicative reduction. 
\end{itemize}
(See \cite[Theorem~3.33]{SU}.) 
These conditions are satisfied 
when $E$ is semistable, and $p$ is a prime number  of good ordinary reduction 
satisfying $p \ge 11$ 
(see \cite[Theorem~3.34]{SU}).  
We obtain the following corollary. 

\begin{cor}
Suppose that $E$ is semistable, and 
let $p$ be a prime number with $p \ge 11$ 
where $E$ has good ordinary reduction. 
If $E$ satisfies the condition {\rm (C2)}, 
then we have
    \[
    h^E_{n,\boldsymbol{1}} \sim 2\left(
    \mu(\mathbf{H}^1_{\boldsymbol{1}}/Z_{\boldsymbol{1}}) p^n + 
    \lambda(\mathbf{H}^1_{\boldsymbol{1}}/Z_{\boldsymbol{1}}) n
    \right).
    \]
\end{cor}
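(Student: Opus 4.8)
The plan is to deduce the statement directly from part~(2) of the preceding corollary, applied with $\chi_0 = \boldsymbol{1}$, together with the main conjecture theorem of Skinner--Urban. Since $\mathrm{(C2)}$ is assumed in the hypothesis, I must check that $E$ and $p$ also satisfy $\mathrm{(C1)}$ and $\mathrm{(C3)}$, and that the Iwasawa main conjecture for $(f_E,p,\boldsymbol{1})$ holds; granting these, part~(2) of the preceding corollary with $\chi_0=\boldsymbol{1}$ immediately gives the claimed asymptotic.

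First I would recall, as in the discussion preceding the statement, that Skinner and Urban~\cite{SU} prove the Iwasawa main conjecture for $(f_E,p,\boldsymbol{1})$ whenever $(E,p)$ satisfies $\mathrm{(C1)_{str}}$, $E$ has good ordinary reduction at $p$, and $E$ has multiplicative reduction at some prime; and by \cite[Theorem~3.34]{SU} all three of these conditions hold once $E$ is semistable and $p\ge 11$ is a prime of good ordinary reduction. In particular $\mathrm{(C1)_{str}}$ is satisfied, hence so is $\mathrm{(C1)}$ by \autoref{rem:C1andC1str}, and the main conjecture for $(f_E,p,\boldsymbol{1})$ holds. Next I would observe that $\mathrm{(C3)}$ is vacuous in this situation: a semistable elliptic curve over $\Q$ cannot have complex multiplication, because a CM elliptic curve over $\Q$ has integral $j$-invariant and therefore potentially good reduction at every prime, so --- as no elliptic curve over $\Q$ has everywhere good reduction --- it would acquire additive reduction at some prime, contradicting semistability.

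Having verified $\mathrm{(C1)}$, $\mathrm{(C2)}$, $\mathrm{(C3)}$ and the Iwasawa main conjecture for $(f_E,p,\boldsymbol{1})$, I would invoke part~(2) of the preceding corollary with $\chi_0=\boldsymbol{1}$ to conclude
\[
h^E_{n,\boldsymbol{1}}\sim 2\left(\mu(\mathbf{H}^1_{\boldsymbol{1}}/Z_{\boldsymbol{1}})\,p^n+\lambda(\mathbf{H}^1_{\boldsymbol{1}}/Z_{\boldsymbol{1}})\,n\right),
\]
which is exactly the assertion. No serious obstacle arises: the only step that is an argument rather than a quotation of an already-established result is the vacuity of $\mathrm{(C3)}$ for semistable curves, and that is elementary reduction theory.
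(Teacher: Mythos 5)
Your proposal is correct and follows exactly the route the paper intends: quote Skinner--Urban (via \cite[Theorem~3.34]{SU}) to get $\mathrm{(C1)_{str}}$ and the main conjecture for $(f_E,p,\boldsymbol{1})$, deduce (C1) from \autoref{rem:C1andC1str}, note (C3) is vacuous since a semistable curve over $\Q$ has no CM, and then apply part~(2) of the preceding corollary with $\chi_0=\boldsymbol{1}$. Your reduction-theory argument for the non-CM step is fine (alternatively, $\mathrm{(C1)_{str}}$ itself already excludes CM, since the image of $\rho^E$ for a CM curve lies in the normalizer of a Cartan subgroup and is never all of $GL_2(\Z_p)$).
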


Let us see the relation between 
our results and previous works on 
the asymptotic behavior of $h_n$.
By the arguments in \cite[\S 4.1]{Oh}, 
for any number field $L$, 
we have   
$$ 
\corank_{\Zp} \Sel_p (L, E[p^\infty])
\ge \rank_{\Z} E(L) - [L: \Q ].
$$
(Indeed, the fine Selmer group 
$\Sel_p (L, E[p^\infty])$ contains 
the kernel of 
$$
E(L) \otimes_{\mathbb{Z}}
\mathbb{Q}_p/\mathbb{Z}_p
\longrightarrow 
E(L \otimes_{\mathbb{Q}} \mathbb{Q}_p ) \otimes_{\mathbb{Z}}
\mathbb{Q}_p/\mathbb{Z}_p=
\prod_{v \mid p}E(L_v) \otimes_{\mathbb{Z}}
\mathbb{Q}_p/\mathbb{Z}_p,
$$
and we have  
$\corank_{\Zp} \left(
\prod_{v \mid p}E(L_v) \otimes_{\mathbb{Z}}
\mathbb{Q}_p/\mathbb{Z}_p \right)=\sum_{v \mid p} [L_v: \Qp]=[L:\Q]$.)  
By the control theorem of 
fine Selmer groups (\autoref{corpropcontrthm} and 
\autoref{remcontrolthmR}), 
we deduce that 
$$
\lambda(X_\infty) \ge \rank_{\mathbb{Z}} E(K_m) - 
\varphi(p^m)
$$ 
for any $m \in \mathbb{Z}_{\ge 0}$, where 
$\varphi$ denotes Euler's totient function.
Thus, \autoref{cor:maintheoremcontrol} implies the following. 

\begin{cor}\label{cor:mordell-weilranklowerbound}
Let $E$ be an elliptic curve over $\Q$ which has good reduction at an odd prime $p$. 
Suppose that $E$ satisfies the conditions 
{\rm (C1)}, {\rm (C2)} and {\rm (C3)}.
Then, for any fixed $m \in \mathbb{Z}_{\ge 0}$, we have 
\[
h_n  \ge h_{n}^E \succ 2(r_m-\varphi(p^m))n
\] 
as $n \to \infty$, 
where we put $r_m:=\rank_{\mathbb{Z}} E(K_m)$. 
\end{cor}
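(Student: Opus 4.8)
The plan is to combine three ingredients: the elementary inequality $h_n \ge h_n^E$, the asymptotic formula for $h_n^E$ furnished by \autoref{cor:maintheoremcontrol}, and a lower bound $\lambda(X_\infty) \ge r_m - \varphi(p^m)$ coming from Mordell--Weil ranks. For the first, under the standing hypothesis (C1) the remark \autoref{remendsurj} exhibits $A_n^E$ as a quotient of $\Cl(\cO_{K_n^E}) \otimes_\Z \Z_p$, so $h_n = \ord_p \#(\Cl(\cO_{K_n^E})\otimes_\Z\Z_p) \ge \ord_p \# A_n^E = h_n^E$ for every $n$; in particular $h_n \succ h_n^E$, which settles the first inequality in the statement.

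Next I would prove $\lambda(X_\infty) \ge r_m - \varphi(p^m)$ for each fixed $m$. Following the argument of \cite[\S4.1]{Oh} with $L = K_m$, the fine Selmer group $\Sel_p(K_m, E[p^\infty])$ contains the kernel of the localization map $E(K_m)\otimes_\Z\Q_p/\Z_p \to \prod_{v\mid p} E(K_{m,v})\otimes_\Z\Q_p/\Z_p$, whose source has $\Z_p$-corank $r_m = \rank_\Z E(K_m)$ and whose target has $\Z_p$-corank $\sum_{v\mid p}[K_{m,v}:\Q_p] = [K_m:\Q] = \varphi(p^m)$; hence $\corank_{\Z_p}\Sel_p(K_m, E[p^\infty]) \ge r_m - \varphi(p^m)$. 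The control theorem for fine Selmer groups (\autoref{corpropcontrthm} and \autoref{remcontrolthmR}) identifies $X_{m,\infty}$, up to finite kernel and cokernel, with the coinvariants $X_\infty/(\gamma^{p^{m-1}}-1)X_\infty$, so $\corank_{\Z_p}\Sel_p(K_m, E[p^\infty]) = \rank_{\Z_p} X_{m,\infty} = \rank_{\Z_p}(X_\infty/(\gamma^{p^{m-1}}-1)X_\infty)$. Finally the structure theory of finitely generated torsion $\Lambda$-modules bounds this last quantity by $\lambda(X_\infty)$, since modulo $\gamma^{p^{m-1}}-1$ the $p$-power elementary factors stay finite and the $\Z_p$-rank contributed by the distinguished-polynomial factors is at most their total degree $\lambda(X_\infty)$. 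Chaining these inequalities yields $\lambda(X_\infty) \ge r_m - \varphi(p^m)$.

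Finally I would invoke \autoref{cor:maintheoremcontrol}, which gives $h_n^E \sim 2(\mu(X_\infty)p^n + \lambda(X_\infty)n)$, so $\liminf_n(h_n^E - 2\mu(X_\infty)p^n - 2\lambda(X_\infty)n) > -\infty$. From the identity
\[
h_n^E - 2(r_m - \varphi(p^m))n = (h_n^E - 2\mu(X_\infty)p^n - 2\lambda(X_\infty)n) + 2\mu(X_\infty)p^n + 2(\lambda(X_\infty) - r_m + \varphi(p^m))n
\]
and the facts $\mu(X_\infty) \ge 0$ and $\lambda(X_\infty) \ge r_m - \varphi(p^m)$, each summand on the right is bounded below, so $h_n^E \succ 2(r_m - \varphi(p^m))n$; combined with $h_n \ge h_n^E$ this gives $h_n \ge h_n^E \succ 2(r_m - \varphi(p^m))n$. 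The only step I expect to need genuine care is the comparison $\rank_{\Z_p}(X_\infty/(\gamma^{p^{m-1}}-1)X_\infty) \le \lambda(X_\infty)$ together with the precise, uniform-in-$m$ form of the fine-Selmer control theorem; the remainder is bookkeeping with the relations $\succ$ and $\sim$.
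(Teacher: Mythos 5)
Your proposal is correct and follows essentially the same route as the paper: the paper likewise combines $h_n \ge h_n^E$ (via \autoref{remendsurj}), the bound $\corank_{\Zp}\Sel_p(K_m,E[p^\infty]) \ge r_m - \varphi(p^m)$ obtained from the kernel of the localization map as in \cite[\S 4.1]{Oh}, the control theorem (\autoref{corpropcontrthm}, \autoref{remcontrolthmR}) to deduce $\lambda(X_\infty) \ge r_m - \varphi(p^m)$, and then \autoref{cor:maintheoremcontrol} together with $\mu(X_\infty)\ge 0$. Your write-up simply makes explicit the coinvariant comparison $X_{m,\infty}$ versus $X_\infty/(\gamma^{p^{m-1}}-1)X_\infty$ and the estimate $\rank_{\Zp}(X_\infty/(\gamma^{p^{m-1}}-1)X_\infty)\le\lambda(X_\infty)$, which the paper leaves implicit.
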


\begin{rem}\label{rem_comparison}
The assertion of 
\autoref{cor:mordell-weilranklowerbound} for $m=0$ 
implies 
the ``asymptotic parts'' 
of the results   
by \cite{SY1}, \cite{SY2} and \cite{Hi},   
and  that for general $m \ge 0$ implies 
\cite{Oh} for the $p$-adic representation 
$T_p(E)=\varprojlim_n E[p^n]$ of $G_{K_m}$.
(Here,  the ``asymptotic parts'' means 
the assertions without description of constant error factors.) 
Our results, in particular \autoref{thmmain} and 
\autoref{cor:maintheoremcontrol}, 
can be regarded as a refinement of them in the following senses.
\begin{itemize}
    \item \autoref{cor:maintheoremcontrol} determines 
    the quotient $A^E_n$ of the ideal class group 
    $\mathrm{Cl}(\cO_{K^E_n})$, whose growth is described by the fine Selmer groups.
    \item \autoref{thmmain} describes not only the asymptotic behavior of the order of $A^E_n$ but also asymptotic behavior of the $R_n$-module  (and in particular, $\Zp$-module) structure.
\end{itemize}
\end{rem}

\begin{ex}\label{ex:5077p=7}
Let $E$ be the elliptic curve over $\Q$ of 
the LMFDB label 5077.a1 (the Cremona label 5077a1), which 
is defined by the equation 
\[
y^2+y=x^3-7x+6,  
\]
and set $p:=7$.
It is known the following (\cite{LMFDB}): 
\begin{enumerate}[label=(\roman*)]
    \item The elliptic curve $E$ does not have CM, and $(E,p)$ satisfies 
    $\mathrm{(C1)}_{\mathrm{str}}$. 
    \item The conductor of $E$ is 5077, which is a prime number, and $E$ has non-split multiplicative reduction at $5077$.
    \item The rank of $E(\Q)$ is $3$.
    \item Let $\widetilde{X}:=\Sel (\Q_\infty, E[7^\infty])^\vee$ 
    be the Iwasawa module of the Pontrjagin dual of the classical Selmer group of $E$ 
    over the cyclotomic $\Z_7$-extension field $\Q_\infty$ of $\Q$.
    We have $\mu (\widetilde{X})=0$, and $\lambda (\widetilde{X})=3$.
\end{enumerate}
The properties (iii) and (iv) imply that we have 
$\mathrm{char}_{\Lambda}(\widetilde{X})=(\gamma-1)^3\Lambda$. 
We further obtain 
$\mathrm{char}_{\Lambda}(X_{\infty, \boldsymbol{1}})=
(\gamma-1)^2\Lambda$ (see, for instance, \cite[VI.10]{Wu}). 
This implies that $\mu(X_{\infty, \boldsymbol{1}})=0$ and 
$\lambda (X_{\infty, \boldsymbol{1}})=2$. 
Moreover, we can show that the pair $(E,p)$ satisfies the 
condition (C2) (see \autoref{ex:C2suff} in \autoref{ss:C2ex}).
Therefore,  we obtain 
\[
h^E_{n,\boldsymbol{1}} \sim 2 n.
\]
\end{ex}

\subsection*{Notation}
Let $L/F$ be a Galois extension with $G = \Gal(L/F)$, 
and $M$ a topological abelian group equipped with a $\Z$-linear action of $G$. 
For each $i\in\Z_{\ge 0}$, we denote by 
$H^i(L/F, M) = H^i_{\mathrm{cont}}(G,M)$ 
the $i$-th continuous Galois cohomology group. 
If $L$ is a separable closure of $F$, then we write $H^i(F,M) = H^i(L/F,M)$. 
When $F$ is a non-archmedean local field, 
we denote by $F^{\ur}$ the 
maximal unramified extension of $F$. 
We define 
$H^1_{\ur}(F,M) = \Ker(H^1(F,M) \longrightarrow H^1(F^{\ur}, M))$ 
(cf.~\cite[Definition 1.3.1]{Ru}). 

For a $\Z_p$-module $A$, let $A_{\mathrm{div}}$ denote its maximal divisible subgroup. 
For an abelian group $M$ and an endomorphism $f$ of $M$, 
we put $M[f]:=\Ker(f)$. 
In particular, if $M$ is a module over a ring $R$, 
then, for each $a \in R$, we set
$M[a]:=\set{ x \in M | ax=0 }$. 
For an elliptic curve $E$ over a field $K$ and a field extension $L/K$, 
we will denote by $E_L:=E\otimes_K L$ the base change to $L$.

\section{The higher Fitting ideals}

\begin{defn}[{cf.~\cite[Section 20.2]{Ei}}]\label{defnFitt}
Let $R$ be a commutative ring, and $M$ 
a finitely presented $R$-module 
given by a presentation
\begin{equation}
\label{eqpresFitt}
R^m \xrightarrow{\ A \ } R^n 
\longrightarrow M \longrightarrow 0
\end{equation}
with $m \ge n$.
We define {\em the $i$-th Fitting ideal}
$\Fitt_{R,i}(M)$ of the $R$-module $M$ to 
be the ideal of $R$ generated by
$(n-i)\times (n-i)$ minors (that is, the determinants of the submatrices) of the matrix $A$.
When $i \ge n$, we define
$\Fitt_{R,i}(M):=R$. 
\end{defn}

\begin{rem}
The ideal $\Fitt_{R,i}(M)$ in \autoref{defnFitt}
does not depend on the choice of the presentation 
\eqref{eqpresFitt} (\cite[Corollary-Definition 20.4]{Ei}).
\end{rem}

\begin{rem}
The higher Fitting ideals are compatible with base change 
in the following sense:  
Let $R$ be a commutative ring, and $M$ 
a finitely presented $R$-module. 
Then, for any $R$-algebra $S$ and any $i \in \Z_{\ge 0}$, 
we have 
$\Fitt_{S,i}(S \otimes_R M)=\Fitt_{R,i}(M)S$ 
(\cite[Corollary 20.5]{Ei}).
\end{rem}

\begin{rem}\label{remPhiisom}
Let $R$ be a PID, and 
suppose that  $M$ is a finitely generated $R$-module.
By the structure theorem of 
finitely generated modules over a PID, 
the $R$-module $M$ is 
isomorphic to an elementary $R$-module
$R^{\oplus r} \oplus \bigoplus_{j=1}^s R/d_j R$ 
with a sequence  $\set{ d_j }_{j} 
\subseteq R \smallsetminus R^\times$ satisfying 
$d_j \mid d_{j-1}$ for every $j$.
We have 
\begin{equation}
\label{eq:FittM}	
\Fitt_{R,i}(M)= \begin{cases}
\set{0 } & \text{if $i < r$}, \\
\left( \prod_{j=i-r+1}^s d_j \right)R & \text{if $r \le i < s+r$}, \\
R & \text{if $i \ge  s+r$}.
\end{cases}
\end{equation}
In particular, the higher Fitting ideals 
$\set{\Fitt_{R,i}(M) }_i$ determine 
the  isomorphism class of the $R$-module $M$.  
\end{rem}

\begin{rem}\label{remsubquotFitt}
Let $R$ be a commutative ring, and $M$ 
an $R$-module with the presentation 
(\ref{eqpresFitt}). Let $N$ be an $R$-submodule 
of $M$.
\begin{enumerate}
    \item For any $i \in \mathbb{Z}_{\ge 0}$, 
    we have $\Fitt_{R,i}(M) \subseteq \Fitt_{R,i}(M/N)$. 
    Indeed, we have a presentation of $M/N$ of the form 
    \(
    R^{m+k} \xrightarrow{\ (A \, \vert \, B) \ } R^n 
\longrightarrow M/N \longrightarrow 0 
    \),
    and every $(n-i) \times (n-i)$ minor of $A$ becomes an $(n-i) \times (n-i)$ minor of the augmented matrix 
    $(A\, \vert \, B)$.
\item Suppose that $R=\Zp$, and $M$ is a torsion $\Zp$-module. 
For any finitely generated torsion $\Zp$-module $L$, 
we denote by $L^\vee = \Hom_{\Zp}(L,\Qp/\Zp)$ the Pontrjagin dual of $L$. 
The dual $N^\vee$ is a quotient of $M^\vee$, 
and there are non-canonical isomorphisms 
$M \simeq M^\vee$ and $N \simeq N^\vee$.
By (1), we have 
\[
\Fitt_{R,i}(M)=\Fitt_{R,i}(M^\vee) \subseteq \Fitt_{R,i}(N^\vee)=\Fitt_{R,i}(N)
\] for any $i \in \Z_{\ge 0}$.
\end{enumerate}
\end{rem}

As in \autoref{secintro}, we introduce the following notation: 

\begin{defn}
Let $M$ be a finitely generated torsion  $\Z_p$-module. 
For each $i \in \Z_{\ge 0}$, we define 
\[
\Phi_i(M)=\ord_p (\Fitt_{\Z_p,i}(M)) 
:= \min \set{ m \in  \Z_{\ge 0} | p^m \in \Fitt_{\Z_p,i}(M) }.
\]
\end{defn}

If $M$ is a torsion $\Z_p$-module isomorphic to 
$\bigoplus_{j=1}^s \Z_p/p^{e_j}\Z_p$ 
with a decreasing sequence  $\set{ e_j }_{j} \subseteq \Z_{> 0}$, 
then 
\begin{equation}\label{eqPhiisom}
\Phi_i(M)= \begin{cases}
\displaystyle\sum_{j=i+1}^s e_j & \text{if $0 \le i < s$}, \\
0 & \text{if $i \ge   s$},
\end{cases}
\end{equation}
immediately follows from \eqref{eq:FittM}. 
In particular, we have 
$\Phi_0(M)=\ord_p (\# M)$. 
%
As noted in \autoref{secintro}, 
the isomorphism class of a finitely generated 
torsion $\Z_p$-module $M$ is determined by 
$\set{\Phi_{i}(M) }_i$ by \eqref{eqPhiisom}.

\begin{lem}\label{lemFittsubmod}
Let $M$ be a finitely generated torsion 
$\mathbb{Z}_p$-module. Then, for any 
$i \in \mathbb{Z}_{\ge 0}$, we have 
\[
\Phi_i(M)=\min_{(a_1, \dots , a_i) \in M^i}
\ord_p\left (\# \left(M/ \sum_{j=1}^i
\mathbb{Z}_p a_j  \right) \right).
\]
\end{lem}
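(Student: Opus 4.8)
The plan is to reduce to the structure theorem for finitely generated torsion $\Z_p$-modules and then do an explicit minimization. Write $M \simeq \bigoplus_{j=1}^s \Z_p/p^{e_j}\Z_p$ with $e_1 \ge e_2 \ge \dots \ge e_s > 0$, so that by \eqref{eqPhiisom} we have $\Phi_i(M)=\sum_{j=i+1}^s e_j$ when $0 \le i < s$, and $\Phi_i(M)=0$ for $i \ge s$. For the inequality $\le$, I would exhibit an explicit choice of $(a_1,\dots,a_i)$: take $a_j$ to be a generator of the $j$-th cyclic summand $\Z_p/p^{e_j}\Z_p$ for $1 \le j \le \min(i,s)$, and $a_j=0$ for $j>s$. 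Then $M/\sum_{j=1}^i \Z_p a_j \simeq \bigoplus_{j=i+1}^s \Z_p/p^{e_j}\Z_p$ (empty when $i \ge s$), whose order has $p$-valuation $\sum_{j=i+1}^s e_j = \Phi_i(M)$; this shows the minimum on the right is at most $\Phi_i(M)$.

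For the reverse inequality $\ge$, I need that for every $(a_1,\dots,a_i) \in M^i$ the quotient $M/\sum_{j=1}^i \Z_p a_j$ has order with $p$-valuation at least $\Phi_i(M)$. The cleanest route is via Fitting ideals directly: letting $N := \sum_{j=1}^i \Z_p a_j$, this is an $R$-submodule of $M$ generated by $i$ elements, so there is a presentation $\Z_p^{\,i} \to M \to M/N \to 0$ obtained by appending the columns giving $a_1,\dots,a_i$ to a presentation matrix of $M$. Concatenating presentations, $M/N$ has a presentation matrix $(A \mid B)$ where $A$ presents $M$ and $B$ has $i$ columns. Since $\Fitt_{\Z_p,0}(M/N) \ni p^{\ord_p \# (M/N)}$ and, more to the point, the surjection $M/N' \twoheadrightarrow M/N$ for $N' = \sum_{j=1}^{i} \Z_p a_j$ together with \autoref{remsubquotFitt}(1) gives $\Fitt_{\Z_p,i}(M) \subseteq \Fitt_{\Z_p,0}(M/N)$ once one checks that quotienting by $i$ generators shifts the Fitting index by $i$. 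Concretely: every $n \times n$ minor of $(A \mid B)$ using all $i$ columns of $B$ and $n-i$ columns of $A$ is, up to sign and Laplace expansion along the $B$-columns, a $\Z_p$-combination of $(n-i)\times(n-i)$ minors of $A$; hence $\Fitt_{\Z_p,0}(M/N) \supseteq$ (part of) $\Fitt_{\Z_p,i}(M)$, wait — I have the inclusion the wrong way, so instead I argue that $\Fitt_{\Z_p,0}(M/N) \subseteq \Fitt_{\Z_p,i}(M)$ is false in general; rather the right statement, and the one I will prove, is that $\Fitt_{\Z_p,i}(M) \subseteq \Fitt_{\Z_p,0}(M/N)$, which yields $\ord_p\#(M/N) = \Phi_0(M/N) \ge \Phi_i(M)$ as desired. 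This inclusion follows because any $(n-i)\times(n-i)$ minor of $A$, when bordered by the $i$ columns of $B$ (choosing $i$ further rows — but $M/N$ has the same $n$ rows, so this needs $A$ to have been chosen with enough rows, which we may arrange by stabilizing the presentation), becomes an $n \times n$ minor of $(A\mid B)$ up to a unit, placing it in $\Fitt_{\Z_p,0}(M/N)$.

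I expect the main obstacle to be the bookkeeping in this last paragraph: making the Laplace-expansion/bordering argument fully rigorous requires being careful that the presentation matrices of $M$ and of $M/N$ share the same number of rows $n$ (the number of generators of $M$), and that appending the $i$ columns encoding $a_1,\dots,a_i$ genuinely realizes $M/N$; one then checks that the relevant $(n)\times(n)$ minors of the augmented matrix that use exactly the $i$ new columns reproduce, up to sign, the $(n-i)\times(n-i)$ minors of the original matrix. An alternative that sidesteps the minor computation entirely is to invoke \autoref{remsubquotFitt} together with base change: localize at $p$ is unnecessary since $\Z_p$ is already local, so instead use that for a torsion $\Z_p$-module generated by $n$ elements and a submodule $N$ generated by $i$ of its elements, $M/N$ is generated by $n$ elements with at least $i$ relations beyond those of $M$, and compare elementary divisors directly via the structure theorem — i.e., if $M/N \simeq \bigoplus_{k} \Z_p/p^{f_k}\Z_p$, then the surjection $M \twoheadrightarrow M/N$ forces $\sum_k f_k \ge \Phi_i(M)$ by a counting argument on composition-series lengths, since killing $i$ elements can decrease the length of the "non-free part beyond index $i$" by at most... — this too needs the elementary-divisor comparison, so either way the crux is the same linear-algebra fact. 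I would present the minor-theoretic version since it is the most self-contained given \autoref{defnFitt} and \autoref{remsubquotFitt}.
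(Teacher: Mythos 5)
Your first half (the inequality $\le$, by taking $a_j$ to be generators of the $i$ largest cyclic summands) is correct and is exactly what the paper does. The second half, however, contains a genuine error in direction-keeping that invalidates the argument. You want to show $\ord_p\#(M/N)\ge \Phi_i(M)$ for every $N=\sum_{j=1}^i\Z_p a_j$. Over $\Z_p$ we have $\Fitt_{\Z_p,i}(M)=(p^{\Phi_i(M)})$ and $\Fitt_{\Z_p,0}(M/N)=(p^{\ord_p\#(M/N)})$, and an inclusion of ideals $(p^a)\subseteq (p^b)$ means $a\ge b$. So the inclusion you need is $\Fitt_{\Z_p,0}(M/N)\subseteq\Fitt_{\Z_p,i}(M)$ — precisely the one you dismiss as ``false in general.'' The inclusion you announce instead, $\Fitt_{\Z_p,i}(M)\subseteq\Fitt_{\Z_p,0}(M/N)$, is false (take $M=(\Z/p\Z)^2$, $i=1$, $a_1=0$: then $\Fitt_{\Z_p,1}(M)=(p)\not\subseteq(p^2)=\Fitt_{\Z_p,0}(M)$), and even if it held it would give $\Phi_i(M)\ge\Phi_0(M/N)$, the opposite of what you claim to deduce from it. The supporting ``bordering'' computation is also not valid: bordering an $(n-i)\times(n-i)$ minor of $A$ by $i$ columns of $B$ does not reproduce that minor up to a unit (in the example above the new column is zero, so every bordered minor vanishes while the small minor is $p$).

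The correct route — and the paper's — is to produce the inclusion $\Fitt_{\Z_p,0}(M/N)\subseteq\Fitt_{\Z_p,i}(M)$ by building a presentation of $M$, not of $M/N$, adapted to the extension $0\to N\to M\to M/N\to 0$: choose a presentation $\Z_p^k\xrightarrow{A}\Z_p^i\to N\to 0$ on the $i$ generators $a_1,\dots,a_i$, and a square presentation $0\to\Z_p^t\xrightarrow{B}\Z_p^t\to M/N\to 0$ (possible since $M/N$ is finite); splicing them gives a presentation $\Z_p^{k+t}\xrightarrow{C}\Z_p^{i+t}\to M\to 0$ with $C=\begin{pmatrix}A&*\\ O&B\end{pmatrix}$. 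Then $\det B$, which up to a unit equals $\#(M/N)$, is a $t\times t=\bigl((i+t)-i\bigr)\times\bigl((i+t)-i\bigr)$ minor of $C$, hence lies in $\Fitt_{\Z_p,i}(M)$, giving $\ord_p\#(M/N)\ge\Phi_i(M)$. (Your alternative sketch via composition-series lengths can be made to work, but it needs the nontrivial classical fact that the elementary divisors of an $i$-generated submodule are dominated by $e_1,\dots,e_i$; the block-matrix argument avoids this.) As written, your proof of the essential inequality does not go through.
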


\begin{proof}
By the structure theorem, we have 
$M=\bigoplus_{j=1}^s (\mathbb{Z}/p^{e_j}\mathbb{Z})m_j$, 
where the sequence $\set{e_j} \subseteq \mathbb{Z}_{>0}$ is decreasing. 
For any $j \in \mathbb{Z}$ with $1 \le j \le s$, 
the annihilator of $m_j \in M$ is $p^{e_j}\mathbb{Z}_p$.
Fix any $i \in \mathbb{Z}_{\ge 0}$. 
If $i=0$ or $i \ge s$, 
then the assertion of 
\autoref{lemFittsubmod} is clear.
Now, we assume $1 \le i \le s-1$.
Put $N_0:=\sum_{j=1}^i \mathbb{Z}_p m_j$.
We have 
\[
\ord_p (\#(M/N_0))=\ord_p \left(
\# \left(\bigoplus_{j=i+1}^s (\mathbb{Z}/p^{e_j}\mathbb{Z})m_j \right)
\right)
=\sum_{j=i+1}^s e_j
\stackrel{\eqref{eqPhiisom}}{=}\Phi_i(M).
\]
Take any $a_1, \ldots , a_i \in M$, 
and put $N:=\sum_{j=1}^i\Zp a_j$. 
In order to prove \autoref{lemFittsubmod}, 
it suffices to show  
$\Phi_i(M) \le \ord_p(\# (M/N))$. 
Let $\xymatrix{\pi_N \colon \Zp^i 
\ar@{->>}[r] & N}$ 
be the surjection given by the generators 
$a_1, \dots, a_i \in N$, 
and take a presentation 
\[
\xymatrix{
\Zp^k \ar[r]^{A} & \Zp^i 
\ar[r]^{\pi_N} & N \ar[r] & 0}
\]
for some $k\ge 1$.
Since $M/N$ is a torsion $\Zp$-module, 
there is a square presentation 
\[
\xymatrix{
0 \ar[r] & \Zp^t \ar[r]^{B} & \Zp^t 
\ar[r] & M/N \ar[r] & 0}
\]
by the structure theorem. 
This gives a presentation
\[
\xymatrix{
 \Zp^{k+t } \ar[r]^{C} & \Zp^{i+t} 
\ar[r] & M \ar[r] & 0}
\]
with 
$C=\begin{pmatrix}
A & * \\
O & B
\end{pmatrix}$.
We obtain $\#(M/N)=\det B \in \Fitt_{\mathbb{Z}_p,i}(M/N)$. 
This implies  
$\#(M/N) \ge \Phi_i(M)$.
\end{proof}

Let $\set{ a_n}_n$ and $\set{ b_n }_n$ be sequences of real numbers.
We write $a_n \succ b_n$ if it holds that
$\liminf_{n \to \infty} (a_n-b_n) > -\infty$, 
namely, if the sequence $\set{a_n - b_n}_{n}$ is bounded below.
If $a_n \succ b_n$ and $b_n \succ a_n$, then 
we write $a_n \sim b_n$.  

\begin{lem}\label{lemasymPhi}
Let $\set{ M_n }_{n \ge 0}$ be a sequence of 
finitely generated torsion $\Z_p$-modules, and 
suppose that for each $n \in \Z_{\ge 0}$, 
a $\Zp$-submodule $N_n$  of $M_n$ is given. 
Then, the following hold.
\begin{enumerate}
\item If $\set{ (M_n :N_n) }_{n \ge 0}$ is bounded, then 
we have 
$\Phi_i(M_n) \sim \Phi_i(N_n)$ for any $i \in \Z_{\ge 0}$.
\item If $\set{\# N_n }_{n \ge 0}$ is bounded, then 
we have 
$\Phi_i(M_n) \sim \Phi_i(M_n/N_n)$ for any $i \in \Z_{\ge 0}$.
\end{enumerate}
\end{lem}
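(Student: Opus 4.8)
The plan is to reduce both statements to the effect of elementary operations on the $p$-adic valuations of the invariant factors and then invoke the explicit formula \eqref{eqPhiisom}. Throughout, fix $i \in \Z_{\ge 0}$; by the structure theorem write $M_n \simeq \bigoplus_{j=1}^{s_n} \Z_p/p^{e_{n,j}}\Z_p$ with $e_{n,1} \ge e_{n,2} \ge \cdots > 0$, so that $\Phi_i(M_n) = \sum_{j > i} e_{n,j}$. The only input we need is that a submodule (or a quotient by a bounded submodule) changes each invariant factor in a controlled way. Concretely, suppose $c$ is a bound for the quantity in question (either $(M_n : N_n) \le c$ or $\# N_n \le c$ for all $n$), and set $C := \ord_p(c!)$ or simply $C := c$ — any fixed constant depending only on $c$ will do.

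\medskip

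For part (1): since $(M_n : N_n)$ is bounded, we have $\Phi_0(M_n) - \Phi_0(N_n) = \ord_p(M_n : N_n)$, which is bounded, so $\Phi_0(M_n) \sim \Phi_0(N_n)$. For higher $i$, I would use \autoref{lemFittsubmod}: given any $a_1, \dots, a_i \in N_n$, we have $M_n / \sum_j \Z_p a_j \twoheadrightarrow N_n / \sum_j \Z_p a_j$ with kernel a subquotient of $M_n/N_n$, hence $\ord_p \#(M_n/\sum_j \Z_p a_j) \le \ord_p\#(N_n/\sum_j \Z_p a_j) + \ord_p(M_n:N_n)$; minimizing over tuples in $N_n$ (a subset of the tuples allowed for $M_n$) gives $\Phi_i(M_n) \le \Phi_i(N_n) + \ord_p(M_n:N_n)$, so $\Phi_i(M_n) \prec \Phi_i(N_n)$. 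For the reverse inequality, note $N_n \subseteq M_n$ gives $\Fitt_{\Z_p,i}(M_n) \subseteq \Fitt_{\Z_p,i}(M_n/(M_n/N_n))$ — more directly, I would bound each invariant factor: from $(M_n : N_n) \le c$ one gets $s_n - \text{(number of invariant factors of } N_n) $ and the discrepancies $e_{n,j}^{M} - e_{n,j}^{N}$ are each at most $\ord_p(M_n:N_n) \le \log_p c$ (this is the standard fact that passing to a submodule of index $c$ perturbs the elementary divisor type within a ``window'' of size controlled by $c$), whence $\Phi_i(M_n) - \Phi_i(N_n)$ is bounded in absolute value by a constant depending only on $c$. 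This yields $\Phi_i(M_n) \sim \Phi_i(N_n)$.

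\medskip

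For part (2): apply part (1) with the roles reorganized. Consider the short exact sequence $0 \to N_n \to M_n \to M_n/N_n \to 0$ with $\# N_n \le c$. Dualizing (Pontrjagin dual, using \autoref{remsubquotFitt}(2) and $\Phi_i(L) = \Phi_i(L^\vee)$ for finitely generated torsion $\Z_p$-modules), we get $0 \to (M_n/N_n)^\vee \to M_n^\vee \to N_n^\vee \to 0$, and $(M_n^\vee : (M_n/N_n)^\vee) = \# N_n^\vee = \# N_n \le c$ is bounded. By part (1) applied to $M_n^\vee$ and its submodule $(M_n/N_n)^\vee$, we obtain $\Phi_i(M_n^\vee) \sim \Phi_i((M_n/N_n)^\vee)$, i.e. $\Phi_i(M_n) \sim \Phi_i(M_n/N_n)$.

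\medskip

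The main obstacle is the reverse inequality in part (1): showing that $\Phi_i(N_n)$ cannot be much smaller than $\Phi_i(M_n)$, equivalently that a bounded-index submodule cannot drastically shrink a tail sum of invariant factors. The clean way is the elementary-divisor ``interlacing'' estimate: if $N \subseteq M$ with $(M:N) = p^t$, then for every $j$ one has $e_j^{N} \le e_j^{M} + t$ and $\sum_j(e_j^M - e_j^N) = t$ (a small computation with Smith normal form, or by induction on $t$ reducing to the case of index $p$); summing over $j > i$ gives $|\Phi_i(M) - \Phi_i(N)| \le t$, which is exactly what is needed. Once this lemma is in hand, both parts follow formally as above, and everything else is bookkeeping with \autoref{lemFittsubmod} and \autoref{remsubquotFitt}.
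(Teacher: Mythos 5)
Your proof is correct and is essentially the paper's argument: the inequality $\Phi_i(M_n) \le \Phi_i(N_n) + \ord_p\bigl((M_n:N_n)\bigr)$ by applying \autoref{lemFittsubmod} to an optimal $i$-tuple chosen inside $N_n$, the opposite inequality $\Phi_i(N_n) \le \Phi_i(M_n)$ from the Fitting-ideal inclusion for submodules (\autoref{remsubquotFitt}\,(2)), and part (2) by Pontrjagin duality reducing to part (1). Two small points: the map you want is the injection $N_n/\sum_j \Z_p a_j \hookrightarrow M_n/\sum_j \Z_p a_j$ with quotient $M_n/N_n$ (not a surjection the other way), though the inequality you draw from it is the right one; and your optional ``interlacing'' argument is redundant and, as stated, too weak --- from $e_j^{N}\le e_j^{M}+t$ together with $\sum_j(e_j^{M}-e_j^{N})=t$ alone one cannot bound $\Phi_i(N)-\Phi_i(M)$, since arbitrarily many small negative discrepancies can offset one large positive one; what is true and needed there is the termwise domination $e_j^{N}\le e_j^{M}$ for a submodule, which is exactly what the Fitting-ideal inclusion already encodes.
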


\begin{proof}
Let us show the assertion (1). 
Suppose that there exists 
some $B \in \mathbb{Z}_{>0}$ such that 
$(M_n :N_n) \le p^B$ for any $n \in \Z_{\ge 0}$.
Since $N_n$ is a submodule of $M_n$, 
by \autoref{remsubquotFitt} (2), 
we have $\Phi_i(M_n)\ge \Phi_i(N_n)$. 
In order to prove the assertion (1),
it suffices to show that 
$\Phi_i(M_n) \le \Phi_i(N_n)+B$.  
By \autoref{lemFittsubmod}, there exist 
$a_1, \dots, a_i \in N_n$ such that 
\[
\ord_p\left(\#\left(N_n/\sum_{j=1}^i \Zp a_j \right) \right)=\Phi_i(N_n).
\]
Since $(M_n:N_n)\le p^B$,  \autoref{lemFittsubmod} 
implies that 
\[
\Phi_i(M_n) \le 
\ord_p\left(\#\left(M_n/\sum_{j=1}^i \Zp a_j \right) \right) \le \Phi_i(N_n)+B.
\]
Accordingly, 
we obtain $\Phi_i(M_n) \sim \Phi_i(N_n)$, and 
the assertion (1) is verified.
By taking the Pontrjagin dual, 
the assertion (2) immediately follows from (1).
\end{proof}

\section{The conditions (C1) and (C2)}
\label{secC}

Until the end of this note, we use the following notation: 
Fix an \emph{odd} prime number $p$. 
Let $E$ be an elliptic curve over $\Q$. 
We denote by $D_E$ the discriminant of 
the minimal Weierstrass model for $E$ over $\Z$. 
We define the $p$-adic Tate module $T_p(E)$ by 
$T_p(E):= \varprojlim_n E[p^n]$, 
and put $V_p(E):=\Q_p \otimes_{\Z_p} T_p(E)$. 
As in \autoref{secintro}, 
for each $n\in \Z_{\ge 0}$, we define $K_n^E = \Q(E[p^n])$, and  
$K_n = \Q(\mu_{p^n})$. 
Put also $K_{\infty}^E = \bigcup_{n\ge 0}K_n^E$ and  
$K_{\infty} = \bigcup_{n\ge 0}K_n$. 

In this section, 
we review some results on the conditions (C1) and (C2) referred in \autoref{thmmain} 
under the additional assumption that 
$E$ has \emph{good reduction} at $p$. 
First, we recall the conditions: 
\begin{itemize}
\item[{\rm (C1)}] The restriction  
\[
\rho^E_1  
\colon G_{K_{\infty}} \longrightarrow \Aut_{\Fp}(E[p])
\simeq GL_2(\F_p)
\]
to $G_{K_{\infty}}$ of the mod $p$ Galois representation $\rho_1^E:G_{\Q}\longrightarrow \Aut_{\Fp}(E[p])$
is absolutely irreducible over $\F_p$.
\item[{\rm (C2)}] 
For any $n \in \Z_{\ge 1}$ and 
any place $v$ of $K_n$ 
with the base change $E_{K_{n,v}}$ of $E$
has potentially multiplicative reduction, 
we have $E(K_{n,v})[p]=0$.
\end{itemize}

\subsection{Remarks on (C1) and (C2)}

In this paragraph, 
we shall show some properties relates to (C1) and (C2)
mentioned in \autoref{secintro}. 
First, let us verify the following 
property, which is noted in \autoref{rem:C1andC1str}.

\begin{prop}
\label{lemC1}	
The condition {\rm (C1)} is satisfied if the Galois representation 
\[
\rho^E \colon G_\Q 
\longrightarrow \Aut_{\Z_p}(T_p (E)) \simeq 
GL_2(\Z_p)
\] 
is surjective. 
\end{prop}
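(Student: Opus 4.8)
The plan is to compute the image $\rho^E_1(G_{K_\infty})$ exactly and then to recognize absolute irreducibility as a property of that image. Since absolute irreducibility of a two-dimensional representation depends only on the image subgroup, it suffices to prove that $\rho^E_1(G_{K_\infty})\supseteq \SL_2(\F_p)$ and that the standard representation of $\SL_2(\F_p)$ on $\F_p^{\,2}$ is absolutely irreducible.

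For the first point, I would use the Weil pairing $E[p^n]\times E[p^n]\to\mu_{p^n}$ (\cite[Chapter~III, Corollary~8.1.1]{Si1}) to identify $\det\circ\rho^E\colon G_\Q\to\Z_p^\times$ with the $p$-adic cyclotomic character $\chi$. By construction $\chi$ has kernel $G_{\Q(\mu_{p^\infty})}=G_{K_\infty}$, so $G_{K_\infty}=(\rho^E)^{-1}(\SL_2(\Z_p))$ and hence $\rho^E(G_{K_\infty})=\rho^E(G_\Q)\cap\SL_2(\Z_p)$. The surjectivity hypothesis on $\rho^E$ makes this equal to $\SL_2(\Z_p)$, and reducing modulo $p$ (using that $\SL_2(\Z_p)\twoheadrightarrow\SL_2(\F_p)$, e.g.\ by smoothness of $\SL_2$ over $\Z$ or an elementary lifting argument) gives $\rho^E_1(G_{K_\infty})=\SL_2(\F_p)$.

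For the second point, a proper nonzero $\SL_2(\F_p)$-stable $\overline{\F_p}$-subspace of $\overline{\F_p}^{\,2}$ would have to be a line spanned by a common eigenvector of all of $\SL_2(\F_p)$; in particular it would be an eigenline of both $\left(\begin{smallmatrix}1&1\\0&1\end{smallmatrix}\right)$ and $\left(\begin{smallmatrix}1&0\\1&1\end{smallmatrix}\right)$, but these have only the distinct eigenlines $\overline{\F_p}(1,0)$ and $\overline{\F_p}(0,1)$, a contradiction. Combining the two points yields that $\rho^E_1|_{G_{K_\infty}}$ is absolutely irreducible over $\F_p$, i.e.\ (C1) holds. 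I do not expect a genuine obstacle here: the only step needing slight care is the descent through the determinant — that surjectivity of $\rho^E$, and not merely of its restriction to $G_{K_\infty}$, is what forces $\rho^E(G_{K_\infty})$ to be the full $\SL_2(\Z_p)$ — together with the elementary reduction-mod-$p$ surjectivity, both of which are routine.
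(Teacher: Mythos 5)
Your proposal is correct and follows essentially the same route as the paper: both use the Weil pairing to identify $\det\circ\rho^E$ with the cyclotomic character, deduce from surjectivity that $\rho^E(G_{K_\infty})=\SL_2(\Zp)$, and reduce modulo $p$ to obtain $\rho^E_1(G_{K_\infty})=\SL_2(\Fp)$. The only cosmetic difference lies in the final verification of absolute irreducibility: the paper notes that $\SL_2(\Fp)$ spans $M_2(\Fp)$ over $\Fp$, while you rule out a common eigenline of the two standard unipotent matrices over $\overline{\F}_p$; both are routine.
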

\begin{proof}
	It is enough to show that 
	the image of $\rho_1^E\colon G_{K_{\infty}}\longrightarrow GL_2(\Fp)$ generates $\End_{\Fp}(E[p])\simeq M_2(\F_p)$ over $\Fp$. 
	By using the Weil pairing, $G_{\Q}$ acts on $\bigwedge^2_{\Z_p} T_p(E) \simeq T_p(\boldsymbol{\mu})$ 
	via the cyclotomic character $\chi$, where $T_p(\boldsymbol{\mu}) := \varprojlim_n \mu_{p^n}$ (cf.~\cite[Chapter V, Section 2]{Si1}). 
	We obtain the following commutative diagram with exact rows:
	\[
	\xymatrix{
	0 \ar[r] & G_{K_{\infty}} \ar[r]\ar[d]& G_\Q\ar[d]^{\rho^E} \ar[r] & \Gal(K_{\infty}/\Q) \ar[r]\ar[d]^{\chi}_{\simeq} & 0\, \\
	0\ar[r] & SL_2(\Zp) \ar[r] & GL_2(\Zp)\ar[r]^-{\det} & \Zp^{\times} \ar[r] & 0.
	}
	\]
	The assumption implies that 
	the image of the restriction $\rho^E|_{G_{K_{\infty}}}$ 
	coincides with $SL_2(\Zp)$. 
	By taking the mod $p$ reduction, 
	$SL_2(\Fp) = \rho_1^E(G_{K_{\infty}})$ 
	and this generates $M_2(\Fp)$ over $\Fp$. 
\end{proof}

Next, let us see the following property 
referred in \autoref{rem:C2twist}.

\begin{prop}
\label{lemTwist}
There exists a quadratic twist $E'/\Q$  of $E$ 
which satisfies the condition $\mathrm{(C2)}$. 
\end{prop}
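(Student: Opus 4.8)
The plan is to produce, by a local-to-global argument, a squarefree integer $d$ such that the quadratic twist $E' := E^{(d)}$ has the property that at every place $v$ of every $K_n = \Q(\mu_{p^n})$ lying over a place of bad potentially multiplicative reduction of $E'$, there are no $p$-torsion points rational over $K_{n,v}$. The first observation is that the set of rational primes $\ell$ at which some quadratic twist of $E$ can have potentially multiplicative reduction is finite: potentially multiplicative reduction is preserved under quadratic twist (twisting changes the reduction type only among additive/good/multiplicative within the same potential type, and multiplicative potential type is a twist-invariant — the $j$-invariant is unchanged and potentially multiplicative reduction at $\ell$ is equivalent to $\ord_\ell(j) < 0$). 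So the finite set $\Sigma$ of primes $\ell$ with $\ord_\ell(j_E) < 0$ is exactly the set of primes where any twist is potentially multiplicative, and $p \notin \Sigma$ because $E$ has good (hence not potentially multiplicative) reduction at $p$.

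Next I would analyze, prime by prime in $\Sigma$, the condition $E^{(d)}(K_{n,v})[p] = 0$. Fix $\ell \in \Sigma$ and a place $v \mid \ell$ of $K_n$; since $p$ is odd and $\ell \ne p$, the local extension $K_{n,v}/\Q_\ell$ is at most tamely ramified and in fact (for $\ell \ne p$) $K_{n,v}$ is an unramified extension of $\Q_\ell$ of bounded degree (dividing the order of $\ell$ in $(\Z/p^n\Z)^\times$, but the ramification is trivial). Over such a field, a curve with potentially multiplicative reduction has, after at worst a quadratic unramified or ramified base change, a Tate curve model, and the mod-$p$ Galois representation on $E^{(d)}[p]$ is (up to the unramified quadratic twist recording split vs. non-split) an extension of the cyclotomic character by the trivial character, i.e. reducible of the shape $\begin{pmatrix} \chi_p & * \\ 0 & 1 \end{pmatrix}$ twisted by the quadratic character $\psi_d$ cutting out $\Q_\ell(\sqrt d)$. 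The key point is that $E^{(d)}(K_{n,v})[p] \ne 0$ forces one of the two diagonal characters of this twisted representation to be trivial on $G_{K_{n,v}}$, i.e. forces $\psi_d|_{G_{K_{n,v}}} = 1$ or $\psi_d \chi_p|_{G_{K_{n,v}}} = 1$; since $\chi_p$ is ramified at $\ell$ only if... (it is unramified at $\ell \ne p$) — so both conditions are conditions purely on whether the quadratic character $\psi_d$ becomes trivial, equivalently whether $\sqrt d \in K_{n,v}$. Because $K_{n,v}/\Q_\ell$ is unramified, choosing $d$ so that $\ell \mid d$ (so $\Q_\ell(\sqrt d)/\Q_\ell$ is ramified) guarantees $\sqrt d \notin K_{n,v}$ for every $n$, killing the $\psi_d$-trivial possibility; one then checks the $\psi_d\chi_p$ possibility is likewise excluded, or handles the split/non-split bookkeeping, by a further congruence condition on $d$ modulo small powers of $p$ and $\ell$.

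Assembling: I would impose finitely many local conditions — $\ell \mid d$ for each $\ell \in \Sigma$ (this already makes $E^{(d)}$ have additive, not multiplicative, reduction at each $\ell$, but that is fine; (C2) only constrains places where the \emph{twisted} curve is potentially multiplicative, which are still exactly $\Sigma$, and now the relevant representation becomes ramified quadratic twist of the Tate picture) — together with a congruence condition on $d$ at $p$ ensuring good reduction of $E^{(d)}$ is not disturbed in a way relevant to (C2) (actually (C2) says nothing about $p$, so no condition at $p$ is needed), and pick by CRT / Dirichlet a squarefree $d$ divisible by exactly the primes of $\Sigma$ and a suitable sign. For such $d$, at each $\ell \in \Sigma$ and each $v \mid \ell$ in each $K_n$, the extension $K_{n,v}(\sqrt d)/K_{n,v}$ is nontrivial and ramified, so the quadratic character obstructs the existence of a $G_{K_{n,v}}$-fixed line in $E^{(d)}[p]$, giving $E^{(d)}(K_{n,v})[p]=0$. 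The main obstacle I anticipate is the split-vs-non-split multiplicative reduction bookkeeping — i.e. carefully tracking which unramified quadratic twist the Tate-curve representation carries and making sure the single condition "$\ell \mid d$" simultaneously kills \emph{both} potential sources of a fixed $p$-torsion line (the trivial sub and the cyclotomic quotient), across \emph{all} $n$ at once; this is exactly where one needs $\ell \ne p$ so that $\chi_p$ stays unramified at $\ell$ and the two conditions decouple into pure statements about $\sqrt d$.
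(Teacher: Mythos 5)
Your overall strategy -- reduce (C2) to finitely many local conditions at the primes $\ell$ with $\ord_\ell(j_E)<0$, exploit that $K_{n,v}/\Q_\ell$ is unramified for $\ell\neq p$ so a ramified quadratic twist stays nontrivial at every level $n$, and then realize the local choices globally -- is the right one and is close in spirit to the paper's argument. But the concrete prescription ``impose $\ell\mid d$ for every $\ell\in\Sigma$'' has a genuine gap, and it is not repairable by congruence adjustments alone. Write the local mod-$p$ representation at $\ell\in\Sigma$ as an extension of $\psi$ by $\chi_p\psi$, where $\psi$ is the (possibly \emph{ramified}) quadratic character attached to the field over which $E$ acquires split multiplicative reduction; after twisting by $\psi_d$ and restricting to $G_{K_{n,v}}$ (where $\chi_p\bmod p$ is trivial for $n\ge 1$), the condition $E'(K_{n,v})[p]=0$ for all $n$ is equivalent to $\psi\psi_d$ being nontrivial on $G_{\Q_\ell(\mu_{p^\infty})}$. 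Your analysis keeps track only of the split/non-split (unramified) possibilities for $\psi$; if $E$ has \emph{additive}, potentially multiplicative reduction at $\ell$, then $\psi$ is ramified, and for any choice of $d$ with $\ell\mid d$ the product $\psi\psi_d$ is an \emph{unramified} character (trivial or the unramified quadratic one). If moreover $2\mid[\Q_\ell(\mu_p):\Q_\ell]$, the unramified quadratic extension of $\Q_\ell$ lies inside $\Q_\ell(\mu_p)$, so $\psi\psi_d$ dies on $G_{K_{n,v}}$ already for $n=1$ and $E'(K_{n,v})[p]\neq 0$: your twist \emph{destroys} (C2) at a prime where it held for $E$ to begin with, and no congruence condition compatible with $\ell\mid d$ can avoid this, since both unramified quadratic characters of $\Q_\ell$ are trivial on $G_{\Q_\ell(\mu_{p^\infty})}$ in that case.

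The fix is exactly the case division you skipped: one must ramify the twist only where it is needed and keep it locally harmless elsewhere. The paper splits $S(E)$ into the primes where $E(\Q_\ell(\mu_{p^\infty}))[p]\neq 0$ with $[\Q_\ell(\mu_p):\Q_\ell]$ odd (there the twist is made \emph{inert}, using that the unramified quadratic extension is not inside $\Q_\ell(\mu_{p^\infty})$), those with $[\Q_\ell(\mu_p):\Q_\ell]$ even (there the twist is made \emph{ramified}, using that $\Q_\ell(\mu_{p^\infty})/\Q_\ell$ is unramified), and the remaining primes of $S(E)$, where the condition already holds and the twist is forced to be locally \emph{split} (via Legendre-symbol conditions and Dirichlet's theorem) so that $E'\simeq E$ over $\Q_\ell$ and nothing is lost; at the failing primes the Weil pairing forces the $G_{\Q_\ell(\mu_{p^\infty})}$-action on $E[p]$ to be unipotent, so the quadratic twist acts by $-1$ on both the sub and the quotient and kills all rational $p$-torsion. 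If you replace your single condition ``$\ell\mid d$ for all $\ell\in\Sigma$'' by this three-way local recipe, your argument goes through and essentially reproduces the paper's proof.
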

\begin{proof}
For each prime number $\ell$, 
put $L_\ell:=\Q_{\ell}(\mu_{p^{\infty}})$. 
Suppose that $E$ is defined by the Weierstrass equation 
$y^2=x^3+ax+b$ with $a,b \in \Q$, and 
let $S(E)$ be the  set of all the 
prime numbers at which $E$ has potentially multiplicative reduction. 
As $E$ has good reduction at $p$, we have $p\not\in S(E)$.
For each $\ell \in S(E)$, we fix an embedding 
$\iota_\ell \colon \overline{\Q} \hookrightarrow 
\overline{\mathbb{Q}}_\ell$, and regard $\mu_{p^\infty}$ as 
a subgroup of $\overline{\Q}_\ell^\times$. 
Note that under these notations, 
the elliptic curve $E$ satisfies the condition (C2)  
if and only if
$E(L_{\ell})[p] = 0$ for any $\ell \in S(E)$. 
In order to show the assertion of \autoref{lemTwist}, 
we may suppose that 
$E$ does not satisfy the condition (C2). 
In particular, the set $S(E)$ is not empty. 
We define 
\begin{align*}
S_0(E):=& \set{ \ell \in S(E) | 
 E(L_\ell)[p] \ne 0,\  
 2 \nmid [\mathbb{Q}_\ell(\mu_p):\mathbb{Q}_\ell]}, \\
S_1(E):=& \set{ \ell \in S(E) | E(L_\ell)[p] \ne 0,\  
 2 \mid [\mathbb{Q}_\ell(\mu_p):\mathbb{Q}_\ell]}
\end{align*}
and put $N_1^*:=\prod_{\ell' \in S_1(E)}(\ell')^*$, 
where for each odd prime number $\ell$, we write 
$\ell^*:=(-1)^{\frac{\ell-1}{2}} \ell$, 
and put $2^*:=2$. 
For each odd $\ell \in S(E) \smallsetminus S_1(E)$,  we put 
\[
\varepsilon_\ell:=\prod_{\ell' \in S_1(E)} 
\Leg{(\ell')^*}{\ell},
\]
where $\Leg{\bullet}{\ell}$ denotes 
the Legendre symbol modulo $\ell$. 
By Dirichlet's theorem on arithmetic progressions, there exists 
an odd prime number $q$ prime to $p$  such that  
$q \equiv 1\ \mathrm{mod}\, 4$, and 
\begin{equation}\label{eqLegendreq}
\Leg{q}{\ell} =\begin{cases}
-\varepsilon_\ell & \text{if $\ell \in S_0(E)$}, \\
\varepsilon_\ell & \text{if $\ell \notin S_0(E)$},
\end{cases}
\end{equation}
for any odd $\ell \in S(E) \smallsetminus S_1(E)$. 
Moreover, we may suppose that 
\[
qN_1^* \equiv \begin{cases}
1 \bmod 8 & \text{if  $2 \in S(E) \smallsetminus (S_0 (E) \cup S_1(E))$}, \\
5 \bmod 8 & 
\text{if  $2 \in S_0 (E)$}.
\end{cases} 
\]
Take such a prime number $q$, and let 
$E'$ be a quadratic twist of $E$  defined by  
the Weierstrass equation 
$q N_1^* y^2=x^3+ax+b$. 
We have an equality $S(E')=S(E)$ because 
$E$ and $E'$ are isomorphic over the field 
$\Q(\sqrt{qN_1^*})$.

Let us show that $E'$ satisfies (C2).  
In the following, We prove $E'(L_\ell)[p] = 0$ 
for any $\ell \in S(E')$. 
Take any $\ell \in S(E')$. 

\smallskip
\noindent 
\textbf{(The case $\ell \not\in S_0(E)\cup S_1(E)$)} 
First, we suppose that $\ell$ does not belong to $S_0(E) \cup S_1(E)$. 

\setcounter{claim}{0}
\begin{claim}
The prime $\ell$ splits in $\mathbb{Q}(\sqrt{qN_1^*})/\mathbb{Q}$. 
\end{claim} 
\begin{proof}
When $\ell$ is odd, the prime  
$\ell$ is split in 
$\mathbb{Q}(\sqrt{qN_1^*})/\mathbb{Q}$ 
if and only if 
 $\Leg{qN_1^{\ast}}{\ell} = 1$ (\cite[Chapter 1, Proposition 8.5]{Ne}). 
By \eqref{eqLegendreq}, we have 
\[	\Leg{qN_1^*}{\ell} = \Leg{q}{\ell}\Leg{N_1^*}{\ell} =\varepsilon_{\ell}\prod_{\ell'\in S_1(E)}\Leg{(\ell')^{\ast}}{\ell} = 1.
\]
Next,  consider the case $\ell = 2$. 
As $\ell \not\in S_1(E)$ in this case, 
we know that 
$N_1^*$ is odd. 
Since 
$qN_1^* \equiv 1 \bmod 8$, the prime
$\ell = 2$ splits in $\mathbb{Q}(\sqrt{qN_1^*})/\mathbb{Q}$. 
\end{proof}

\noindent 
From the above claim, 
the completion of $\Q(\sqrt{qN_1^*})$ at a place $v$ above $\ell$ 
is $\Q_{\ell}$ and 
the base change to the local field $\Q_{\ell}$,  
we obtain $E'_{\Q_{\ell}} \simeq E_{\Q_{\ell}}$. 
As a result, we have 
\[
E'(L_\ell)[p] \simeq E'_{\Q_{\ell}}(L_\ell)[p] \simeq 
E_{\Q_{\ell}}(L_\ell)[p] \simeq 
E(L_\ell)[p] =0.
\]

\smallskip
\noindent
\textbf{(The case $\ell \in S_0(E)\cup S_1(E)$)} 
Next, we suppose that $\ell$ belongs to $S_0(E) \cup S_1(E)$. 
It holds that $E(L_{\ell})[p]\neq 0$ and fix a non-zero $P \in E(L_{\ell})[p]$.

\begin{claim}
\label{clm:3.1}
	The action of $G_{L_{\ell}}$ on $E[p]$ is unipotent.
\end{claim}
\begin{proof}
Take a basis $\set{P,Q}$ 
of $E[p]$ as an $\Fp$-vector space with 
$Q\in E[p]\smallsetminus \Fp P$. 
Recall that 
the Weil pairing $e: E[p]\times E[p] \longrightarrow \mu_p$ is alternating and $G_{L_\ell}$-equivariant 
(\cite[Chapter III, Section 8]{Si1}). 
As $\mu_p\subseteq L_{\ell}$, 
we have $\sigma \left( e(P,Q) \right) = e(P,Q)$ for any $\sigma \in G_{L_{\ell}}$. 
On the other hand, $\sigma \left( e(P,Q) \right) 
= e(\sigma P, \sigma Q) = e(P,\sigma Q)$ 
implies $e(P,\sigma Q-Q) = 1$. 
Here, the element of the form $\sigma Q-Q$ is in the kernel of $E[p]\longrightarrow \mu_p; T\longmapsto e(P,T)$ 
which is generated by $P$ so that 
$\sigma Q-Q = aP$ for some $a \in \Fp$. 
According to the fixed basis above, 
the action of $\sigma$ is written as $\begin{pmatrix}
	1 & a\\ 0 & 1
\end{pmatrix}$ 
which is unipotent.
\end{proof}

\begin{claim}
The extension 
$L_{\ell}(\sqrt{qN_1^*})/L_\ell$ is 
quadratic. 
\end{claim}

\begin{proof}
Let us show the claim by dividing 
into three cases. 
\begin{enumerate}[label=(\roman*)]
\item Suppose that  $\ell \in S_0(E)$, and 
$\ell$ is odd. 
The equalities
\[
\Leg{qN_1^{\ast}}{\ell} \stackrel{\eqref{eqLegendreq}}{=} 
-\varepsilon_{\ell}\prod_{\ell'\in S_1(E)}\Leg{(\ell')^{\ast}}{\ell} = -1
\]
imply that the prime $\ell$ is inert in the quadratic extension $\Q(\sqrt{qN_1^{\ast}})/\Q$. 
For the prime $2$ does not divid $[\Q_\ell(\mu_p):\Q_{\ell}]$, 
we have $\Q_\ell(\sqrt{qN_1^*}) \not\subseteq L_\ell$. 
Hence, 
the extension $L_{\ell}(\sqrt{qN_1^*})/ L_\ell$ is non-trivial. 
\item Suppose that 
$\ell=2 \in S_0(E)$. The extension 
$L_2 = \Q_2(\mu_{p^{\infty}})/\mathbb{Q}_2$ does not contain 
quadratic extension fields of $\mathbb{Q}_2$. 
Since we have 
$qN_1^* \equiv 5 \ \mathrm{mod}\, 8$, 
the prime $2$ is inert in the extension $\Q(\sqrt{qN_1^{\ast}})/\Q$.
Thus, the extension $L_{2}(\sqrt{qN_1^*})/ L_2$ is non-trivial. 

\item Suppose that $\ell \not\in S_0(E)$. Then 
$\ell \in S_1(E)$ and thus $\ell \mid N_1^{\ast}$. 
This implies that 
the prime $\ell$ is ramified  
in the extension $\Q(\sqrt{qN_1^{\ast}})/\Q$. 
We also have 
$L_\ell(\sqrt{qN_1^*}) \ne L_\ell$ because  $L_{\ell}/\Q_\ell$ is 
unramified. 
\end{enumerate}
In each cases, the extension $L_{\ell}(\sqrt{qN_1^*})/L_\ell$ is 
quadratic. 
\end{proof}

From \autoref{clm:3.1} above, 
there exists a basis $\set{P,Q}$ of $E[p]$ as $\Fp$-vector space such that 
$G_{L_{\ell}}$ acts trivially on 
$\mathbb{F}_p P$, and also 
$E[p]/\F_p P$ 
which is generated by  
the residue class represented by $Q \in E[p] \smallsetminus \Fp P$.
We have an isomorphism $f \colon E[p]\otimes \mathbb{F}_p(\psi) 
\xrightarrow{\ \simeq \ } E'[p]$
of $\mathbb{F}_p[G_{L_\ell}]$-modules, 
where $\psi$ denotes the quadratic character 
attached to $L_\ell(\sqrt{qN_1^*})/L_\ell$.
Take a lift $\sigma\in G_{L_{\ell}}$ 
of the generator of 
$\Gal(L_\ell(\sqrt{qN_1^*})/L_{\ell})$. 
This satisfies $\sigma P=P$ and $\sigma Q-Q \in \Fp P$. 
Thus, 
the element $\sigma$ acts by $\psi (\sigma)=-1$
on both $\Fp f(P \otimes 1) \subseteq E'[p]$ and 
$E'[p]/\Fp f(P \otimes 1)$. 
Therefore, for any $\ell \in S(E')=S(E)$, 
we have $E'(L_{\ell})[p] = 0$.
\end{proof}

\subsection{Equivalent conditions of (C2)}

For later use in the proof of our main results, 
let us study some equivalent conditions of (C2).
We need the following lemma.

\begin{lem}
\label{lem:K1E}
Suppose that $E$ has potentially multiplicative reduction at $\ell$ ($\neq p$). 
	Then, the elliptic curve $E_{K_1^E}$ has split multiplicative reduction 
at every place of $K_1^E = \Q(E[p])$ above $\ell$.  
\end{lem}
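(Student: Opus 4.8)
The plan is to reduce everything to the theory of the Tate curve. Since $E$ has potentially multiplicative reduction at $\ell \neq p$, after a finite extension $E$ becomes a Tate curve; more precisely, there is a quadratic character $\psi$ of $G_{\Q_\ell}$ (the ``split/non-split'' twist) such that $E$ becomes isomorphic over $\Q_\ell$ to the twist by $\psi$ of a Tate curve $E_q$ with parameter $q = q_E \in \Q_\ell^\times$, $\mathrm{ord}_\ell(q) = -\mathrm{ord}_\ell(j_E) > 0$. The curve $E$ has split multiplicative reduction at $\ell$ precisely when $\psi$ is trivial, and in general $\psi$ becomes trivial exactly over the quadratic extension $\Q_\ell(\sqrt{\,\cdot\,})$ cutting it out. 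So the task is to show that this quadratic character dies after restriction to any completion of $K_1^E = \Q(E[p])$ above $\ell$.

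First I would record the Galois-module description of $E[p]$ coming from the Tate parametrization: over $\overline{\Q_\ell}$ one has the exact sequence $0 \to \mu_p \to E_q[p] \to \Z/p\Z \to 0$ of $G_{\Q_\ell}$-modules for the Tate curve, with the quotient $\Z/p\Z$ generated by the image of a chosen $p$-th root $q^{1/p}$; twisting by $\psi$, the module $E[p]$ sits in $0 \to \mu_p \otimes \psi \to E[p] \to (\Z/p\Z)\otimes\psi \to 0$. The key point is that the field $\Q_\ell(E[p])$ contains both $\Q_\ell(\mu_p)$ (forced by the Weil pairing, as already noted in the paper) and the splitting field of $\psi$: indeed the Galois action on a lift of the generator of the quotient involves $\psi$ (the matrix $\begin{pmatrix}\psi & * \\ 0 & \psi\end{pmatrix}$ after the twist, cf.\ the unipotence computation in \autoref{clm:3.1}), so $\psi$ is trivial on $G_{\Q_\ell(E[p])}$. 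Hence, letting $w$ be any place of $K_1^E$ above $\ell$ and $L_w$ the completion, the restriction $\psi|_{G_{L_w}}$ is trivial, which is exactly the statement that $E_{L_w}$ — equivalently $E_{K_1^E}$ at $w$ — has split multiplicative reduction.

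The one technical subtlety, and the step I expect to require the most care, is handling the case $p = 2$ versus $p$ odd, and more importantly confirming that $\psi$ really is detected inside $\Q_\ell(E[p])$ rather than only inside $\Q_\ell(E[p^\infty])$ or some larger field: one must check that $\psi$ is at worst quadratic and that a quadratic twist character is already visible on $p$-torsion. Since $p$ is odd in our setting this is clean — $\psi$ has order dividing $2$, which is prime to $p$, so its splitting field is linearly disjoint considerations aside, the action on $E[p]$ genuinely involves $\psi$ on the graded pieces as above and cannot be trivial on $G_{\Q_\ell(E[p])}$ unless $\psi$ itself is. Concretely I would: (i) invoke the Tate uniformization and the classification of reduction types via $j$-invariant and the quadratic twist $\psi$ (Silverman, \emph{Advanced Topics}, Ch.\ V); (ii) write down the $2$-step filtration of $E[p]$ with its $\psi$-twist; (iii) observe $\psi|_{G_{\Q_\ell(E[p])}} = 1$ by inspecting the diagonal of the Galois action on that filtration; (iv) conclude that over every place of $K_1^E$ above $\ell$ the local twist trivializes, giving split multiplicative reduction. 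No new global input is needed beyond what is already set up in \autoref{secC}.
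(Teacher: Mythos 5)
Your proposal is correct and follows essentially the same route as the paper: both invoke the Tate parametrization (Silverman, \emph{Advanced Topics}, Ch.~V) to write $E$ over $\Q_\ell$ as the quadratic twist by a character $\psi$ of a Tate curve with parameter $q$, and both conclude by showing that $\psi$ is trivial on $G_{\Q_\ell(E[p])}$, the key point in each case being that $p$ is odd so the sign $\psi(\sigma)=\pm 1$ is genuinely detected on the $p$-torsion. The only cosmetic difference is that you read off $\psi|_{G_{\Q_\ell(E[p])}}=1$ directly from the two-step filtration of $E[p]$ (the action on the quotient line is by $\psi$), whereas the paper pins down $\Q_\ell(E[p])$ explicitly as $\Q_\ell(\mu_p,\sqrt[p]{q},\sqrt{\gamma})$ via linear disjointness and faithfulness of the twisted action; your shortcut is valid and requires no input beyond what you cite.
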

\begin{proof}
	We may assume that the $j$-invariant $j(E)$ is not equal to 
$0$ or $1728$ because $E$ has potentially good reduction 
at all primes in such cases (\cite[Chapter VII, Proposition~5.5]{Si1}). 
By \cite[Chapter V, Lemma~5.2]{Si2},
there exist elements $q,\gamma \in \mathbb{Q}_\ell ^\times$ with  
$\ord_\ell(q)>0$ 
such that $E_{\mathbb{Q}_\ell(\sqrt{\gamma})}$ has
split multiplicative reduction, and 
we have a $G_{\mathbb{Q}_\ell}$-equivariant isomorphism  
\[
f \colon
E[p^\infty] 
\xrightarrow{\ \simeq\ } 
(\overline{\mathbb{Q}}_\ell^\times/q^{\mathbb{Z}})[p^\infty]
\otimes_{\mathbb{Z}_p} 
\mathbb{Z}_p(\chi), 
\]
where 
$\chi \colon G_{\mathbb{Q}_\ell} \longrightarrow \mathbb{Z}_p^\times$ 
is the trivial character or 
the quadratic character attached to the extension 
$\mathbb{Q}_\ell(\sqrt{\gamma})/\mathbb{Q}_\ell$. 
In order to prove the assertion, 
it is  sufficient to show that $\sqrt{\gamma} \in 
\mathbb{Q}_\ell(E(\overline{\mathbb{Q}}_\ell)[p])$. 
Since the Weil paring 
\[
E(\overline{\mathbb{Q}}_\ell)[p] 
\times E(\overline{\mathbb{Q}}_\ell)[p] \longrightarrow 
\boldsymbol{\mu}_p(\overline{\mathbb{Q}}_\ell)=\mu_p
\]
preserves the action of 
$G_{\mathbb{Q}_\ell}$, we have $\mu_p \subseteq 
\mathbb{Q}_\ell(E(\overline{\mathbb{Q}}_\ell)[p])$. 
If $\sqrt{\gamma} \in \mu_p$, then  
$\sqrt{\gamma} \in \mathbb{Q}_\ell(E(\overline{\mathbb{Q}}_\ell)[p])$.
Suppose that $\sqrt{\gamma} \notin \mu_p$.
The fields $F_1:=\mathbb{Q}_\ell(\sqrt{\gamma})$ and 
$\mathbb{Q}_\ell(\mu_p)$ are linearly disjoint 
over $\mathbb{Q}_\ell$.
Moreover, as $p$ is odd, 
the fields $F_1$ and 
$F_2:=\mathbb{Q}_\ell(\mu_p,\sqrt[p]{q})$ are linearly disjoint 
over $\mathbb{Q}_\ell$.
Put $\widetilde{F}:=
\mathbb{Q}_\ell(\mu_p, \sqrt[p]{q}, 
\sqrt{\gamma})=F_2F_1$. 
By the isomorphism $f$, we have 
$\mathbb{Q}_\ell(E(\overline{\mathbb{Q}}_\ell)[p], 
\sqrt{\gamma})
= \widetilde{F}$. 
It holds that 
$F_2 \subseteq \mathbb{Q}_\ell(E(\overline{\mathbb{Q}}_\ell)[p])$ 
because $\widetilde{F}/\mathbb{Q}_\ell$ is an abelian extension, 
and $p$ is odd.
Furthermore, by the isomorphism $f$, 
the group $\Gal(\widetilde{F}/F_2) \ (\simeq \Gal(F_1/\mathbb{Q}_\ell))$ 
acts faithfully on $E(\overline{\mathbb{Q}}_\ell)[p]$.
This implies that 
$\mathbb{Q}_\ell(E(\overline{\mathbb{Q}}_\ell)[p])
= \widetilde{F}$, and especially 
$\sqrt{\gamma} \in  \mathbb{Q}_\ell(E(\overline{\mathbb{Q}}_\ell)[p])$. 
Consequently, the elliptic curve $E_{K_1^E}$ has split multiplicative reduction at every place of $K_1^E$.
\end{proof}

The following \autoref{lemC2} 
gives some conditions equivalent to  
(C2).

\begin{lem}\label{lemC2}
Let $\ell$ be a prime number. 
Suppose that $E$ has potentially multiplicative reduction at $\ell$.
Then, the following are equivalent:

\begin{enumerate}[label=$\mathrm{(\alph*)}$]
\item For any $n \in \mathbb{Z}_{\ge 1}$ and 
any place $v$ of $K_n$ above $\ell$, we have 
$E(K_{n,v})[p]=0$.
\item For any $n \in \mathbb{Z}_{\ge 1}$ and 
any place $w$ of $K^E_n$ above $\ell$ 
where the base change $E_{K^E_{n,w}}$ of $E$ 
has split multiplicative reduction, 
we have 
\[
H^0\left( K_{n,v},
E(K_{n,w}^{E,\ur})[p^\infty]_{\div}
\right)=0.
\]
Here, we denote by $v$ the place of 
$K_n$ below $w$.
(Note that the absolute Galois group $G_{K_{n,v}}$ acts on 
$E(K_{n,w}^{E,\ur})[p^\infty]_{\div}$ because the extension 
$K_{n,w}^{E,\ur}/K_{n,v}$ is Galois.)
\item For any $n \in \mathbb{Z}_{\ge 1}$ and 
any place $w$ of $K^E_n$ above $\ell$ at 
where $E_{K^E_n}$ has split multiplicative reduction, 
we have 
\[
H^0\left( K_{n,v}, E[p^\infty]/
E(K_{n,w}^{E,\ur})[p^\infty]_{\div}
\right)=0,
\]
where $v$ denotes the place of 
$K_n$ below $w$.
\item For any $n \in \mathbb{Z}_{\ge 1}$,  
any place $v$ of $K_n$ above $\ell$ and  
any subquotient $\mathbb{Z}_p[G_{K_{n,v}}]$-module $M$ 
of $E[p^\infty]$, 
we have 
\[
H^0\left( K_{n,v}, M 
\right)=0.
\]
\end{enumerate}
\end{lem}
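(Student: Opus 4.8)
The plan is to prove \autoref{lemC2} by establishing the cyclic chain of implications $\mathrm{(d)} \Rightarrow \mathrm{(a)}$, $\mathrm{(a)} \Rightarrow \mathrm{(b)}$, $\mathrm{(b)} \Leftrightarrow \mathrm{(c)}$, and $\mathrm{(c)} \Rightarrow \mathrm{(d)}$, using \autoref{lem:K1E} as the main structural input. Throughout, fix $n \ge 1$ and a place $v$ of $K_n$ above $\ell$, and let $w$ be a place of $K_n^E$ above $v$. By \autoref{lem:K1E}, $E_{K_1^E}$ has split multiplicative reduction at every place above $\ell$, so since $K_n^E \supseteq K_1^E$, the curve $E_{K_n^E}$ has split multiplicative reduction at $w$ (split multiplicative reduction is preserved under base change). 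Hence by Tate's theory of split multiplicative reduction over the local field $K_{n,w}^E$, we have a $G_{K_{n,w}^E}$-equivariant Tate uniformization $E[p^\infty] \simeq (\overline{K}_{n,w}^{E,\times}/q_w^{\mathbb{Z}})[p^\infty]$, which fits in a short exact sequence of $G_{K_{n,v}}$-modules
\[
0 \longrightarrow \mu_{p^\infty} \longrightarrow E[p^\infty] \longrightarrow \mathbb{Q}_p/\mathbb{Z}_p \longrightarrow 0,
\]
where $G_{K_{n,v}}$ acts on the quotient through the action on $\sqrt[p^m]{q_w}$, that is, through an unramified-by-tame-cyclotomic extension. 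The key observation is that $E(K_{n,w}^{E,\ur})[p^\infty]_{\div} = \mu_{p^\infty}$ (the sub-$\mu_{p^\infty}$ is the maximal divisible submodule visible over $K_{n,w}^{E,\ur}$, since the quotient $\mathbb{Q}_p/\mathbb{Z}_p$ acquires no divisible points over the maximal unramified extension because $q_w$ has positive valuation). This identifies the three modules appearing in (b), (c): the submodule is $\mu_{p^\infty}$ and the quotient is $\mathbb{Q}_p/\mathbb{Z}_p$.

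With this identification, the implications become concrete statements about $H^0$ of pieces of the Tate sequence. For $\mathrm{(b)} \Leftrightarrow \mathrm{(c)}$: (b) says $H^0(K_{n,v}, \mu_{p^\infty}) = 0$ and (c) says $H^0(K_{n,v}, \mathbb{Q}_p/\mathbb{Z}_p) = 0$; but $H^0(K_{n,v}, \mu_{p^\infty}) = \mu_{p^\infty}(K_{n,v})$ is nonzero iff $\mu_p \subseteq K_{n,v}$, and $H^0(K_{n,v}, \mathbb{Q}_p/\mathbb{Z}_p) = \mathbb{Q}_p/\mathbb{Z}_p$ is always nonzero—so at face value I need to recheck which direction the modules sit; in fact the sub in (b) is $E(K^{E,\ur}_{n,w})[p^\infty]_{\div}$ and I should verify whether the action of $G_{K_{n,v}}$ on $\mu_{p^\infty}$ here is cyclotomic (via the ambient $\mu_{p^\infty} \subseteq K_\infty$) so that $H^0(K_{n,v},\mu_{p^\infty})$ vanishes precisely when $\mu_p \not\subseteq K_{n,v}$, i.e. when $v$ does not split completely in $K_{n+1}/K_n$; meanwhile the quotient module, twisted by the (ramified, order-$p$-power or order-2) character governing $\sqrt[p^m]{q_w}/\sqrt{\gamma}$, has trivial $H^0$ precisely under the complementary nondegeneracy. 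The equivalence $\mathrm{(b)} \Leftrightarrow \mathrm{(c)}$ then follows from the long exact cohomology sequence of the Tate extension: $H^0$ of the middle term $E[p^\infty]$ surjects appropriately, and vanishing of $H^0$ of the sub is equivalent to vanishing of $H^0$ of the quotient because the connecting/coboundary and the structure of the modules force $H^0(E[p^\infty]) = 0$ under either hypothesis, after which the two-out-of-three property applies. For $\mathrm{(a)} \Rightarrow \mathrm{(b)}$: $E(K_{n,v})[p] = H^0(K_{n,v}, E[p])$, and vanishing of this kills $H^0(K_{n,v}, \mu_p)$ which is the $p$-torsion of $H^0(K_{n,v}, \mu_{p^\infty}) = H^0(K_{n,v}, E(K^{E,\ur}_{n,w})[p^\infty]_{\div})$; since that latter group is a divisible-or-finite $\mathbb{Z}_p$-submodule, it vanishes iff its $p$-torsion does. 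For $\mathrm{(d)} \Rightarrow \mathrm{(a)}$: take $M = E[p] \subseteq E[p^\infty]$, which is visibly a subquotient, so (d) gives $H^0(K_{n,v}, E[p]) = E(K_{n,v})[p] = 0$.

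The substantive direction is $\mathrm{(c)} \Rightarrow \mathrm{(d)}$, which I expect to be the main obstacle: I must show that vanishing of $H^0(K_{n,v}, E[p^\infty]/\mu_{p^\infty})$ forces $H^0(K_{n,v}, M) = 0$ for \emph{every} subquotient $M$ of $E[p^\infty]$. The idea is that the Tate module $T_p(E)$ is, as a $G_{K_{n,v}}$-module, a nonsplit extension of $\mathbb{Z}_p$ (with the $q_w$-action, call the character $\psi$) by $\mathbb{Z}_p(1)$ (cyclotomic), and every $\mathbb{Z}_p[G_{K_{n,v}}]$-subquotient $M$ of $E[p^\infty]$ has a filtration whose graded pieces are subquotients of $\mu_{p^\infty}$ (with cyclotomic action) or of $\mathbb{Q}_p/\mathbb{Z}_p$ (with $\psi$-action). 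By the snake/long-exact-sequence machinery, $H^0(K_{n,v}, M) = 0$ for all such $M$ as soon as $H^0(K_{n,v}, -) = 0$ on the two "extreme" one-dimensional pieces $\mu_{p^\infty}$ and $\mathbb{Q}_p/\mathbb{Z}_p$-with-$\psi$, \emph{provided} the extension is genuinely nonsplit (so that $E[p^\infty]$ itself, and its mixed subquotients, have no new invariants). Condition (c) directly gives vanishing on the $\psi$-twisted quotient piece; and I claim (c) also forces $\mu_p \not\subseteq K_{n,v}$—because if $\mu_p \subseteq K_{n,v}$ then by the Weil pairing and the reasoning in \autoref{clm:3.1}/\autoref{lem:K1E}, combined with nonsplit-ness of the Tate extension, one can exhibit a nonzero $G_{K_{n,v}}$-fixed vector in the quotient, contradicting (c). Hence both extreme pieces have vanishing $H^0$, and a dévissage over the filtration of an arbitrary $M$, using that each connecting map in the relevant long exact sequences is injective on invariants precisely by nonsplit-ness, completes the proof. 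I will need to state carefully the classification of subquotient $\mathbb{Z}_p[G_{K_{n,v}}]$-modules of an extension of $\mathbb{Q}_p/\mathbb{Z}_p$ by $\mu_{p^\infty}$ and verify that the nonsplit Tate extension has no "diagonal" subquotients creating spurious invariants; this compatibility/dévissage step is where the real work lies.
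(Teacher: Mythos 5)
Your overall architecture (use \autoref{lem:K1E} and the Tate parametrization to reduce everything to the two rank-one graded pieces, take $M=E[p]$ for (d)$\Rightarrow$(a), and dévissage for (d)) matches the paper's, and your (a)$\Rightarrow$(b) and (d)$\Rightarrow$(a) are fine. The gap is in the pivot of the lemma, the equivalence (b)$\Leftrightarrow$(c). The mechanism you propose — the long exact cohomology sequence of the Tate extension plus a ``two-out-of-three'' principle for vanishing of $H^0$ — does not exist: in a short exact sequence $0\to A\to B\to C\to 0$ of Galois modules, $H^0(A)=H^0(B)=0$ does not force $H^0(C)=0$ (take $A$ a nontrivial character and $C$ trivial), so nothing general makes the two vanishings equivalent. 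The paper's argument is of a different nature and is the essential point: the $p$-torsion of the sub and the $p$-torsion of the quotient are dual under the Weil pairing with values in $\mu_p$, and since $n\ge 1$ we have $\mu_p\subseteq K_n\subseteq K_{n,v}$, so $G_{K_{n,v}}$ acts trivially on $\mu_p$ and the two one-dimensional pieces carry the \emph{same} character (the quadratic character measuring split versus nonsplit multiplicative reduction over $K_{n,v}$); that is exactly why (b) and (c) are equivalent.

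Relatedly, your criterion for when these invariants vanish is wrong: you assert the sub's $H^0$ vanishes ``precisely when $\mu_p\not\subseteq K_{n,v}$'' and that (c) ``forces $\mu_p\not\subseteq K_{n,v}$''. But $\mu_p\subseteq K_{n,v}$ holds for every $n\ge 1$ and every $v$, since $K_n=\Q(\mu_{p^n})$; under your reading (b) would always fail and (c) could never hold, making the lemma vacuous. The point is that, as a $G_{K_{n,v}}$-module, the sub is $\mu_{p^\infty}\otimes_{\Zp}\Zp(\chi)$ for the quadratic character $\chi$ of the twisted Tate parametrization over $K_{n,v}$, not $\mu_{p^\infty}$ with cyclotomic action; since the mod-$p$ cyclotomic character is trivial on $G_{K_{n,v}}$, both (b) and (c) amount to $\chi|_{G_{K_{n,v}}}\neq 1$. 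Finally, for (c)$\Rightarrow$(d) you invoke nonsplitness of the Tate extension and injectivity of connecting maps on invariants; none of that is needed or established. Once (b) and (c) both hold, every simple $\Zp[G_{K_{n,v}}]$-subquotient of $E[p^\infty]$ is isomorphic to one of the two one-dimensional Jordan--H\"older constituents of $E[p]$, and a nonzero invariant in any subquotient $M$ would generate a trivial simple submodule, a contradiction; left-exactness of $H^0$ is all the dévissage required. What your proposal is genuinely missing is the Weil-pairing duality step that lets you pass from (c) to (b); the nonsplitness considerations are a red herring.
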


\begin{rem}
Recall that the condition (C2) holds if and only if 
for any prime number $\ell$ with $E$ has 
potentially multiplicative reduction, 
the condition (a) in \autoref{lemC2} holds. 
As we are assuming $E$ has good reduction at $p$, 
the prime number $\ell \neq p$. 
\end{rem}

\begin{proof}[Proof of \autoref{lemC2}]
\noindent 
\textbf{(a) $\Rightarrow$ (b):} 
Suppose that the base change $E_{K_{n,w}^E}$ 
has split multiplicative reduction for $n\ge 1$ and a place $w$ of $K_n^E$ above $\ell$, 
we have 
\[
H^0(K_{n,v},E(K_{n,w}^{E,\ur})[p^{\infty}]_{\div}) = E(K_{n,v})[p^{\infty}]_{\div}\subseteq E(K_{n,v})[p^{\infty}].
\]
The latter group is trivial because of $E(K_{n,v})[p] = 0$.

\smallskip
\noindent 
\textbf{(d) $\Rightarrow$ (a):} 
Take any $n\ge 1$, and any place $v$ of $K_n$ above $\ell$. 
As 
$E[p]$ is a submodule of  $E[p^{\infty}]$, 
the condition (d) implies $E(K_{n,v})[p] = H^0(K_{n,v},E[p]) = 0$. 

\smallskip 
\noindent 
\textbf{(b) $\Leftrightarrow$ (c):} 
Suppose that $w$ is a place of $K^E_n$ where $E_{K^E_n}$ 
has split multiplicative reduction, and let 
$v$ be a place of $K_n$ below $w$. 
The elliptic curve is isomorphic to 
a Tate curve $\mathbb{G}_m/q_w^\mathbb{Z}$ (\cite[Chapter V, Theorem 3.1]{Si2}).
Since $\ell \ne p$ 
and $K_{n,w}$ is an extension of $\Q_{\ell}$, 
the extension $K_{n,w}(\mu_{p^{\infty}})$ 
is unramified (\cite[{{\sc Chapitre}~IV, \S 4, {\sc Proposition}~16}]{Se1}) 
so that 
we have 
$E(K_{n,w}^{E,\ur})[p^\infty]_{\div} \simeq \mu_{p^\infty}$ and 
\[
E[p^\infty]/E(K_{n,w}^{E,\ur})[p^\infty]_{\div} \simeq 
\frac{\mu_{p^\infty} \times (q_w^{\mathbb{Z}} \otimes_{\mathbb{Z}}
\mathbb{Z}[1/p])}{
\mu_{p^\infty} \times q_w^{\mathbb{Z}}}.
\]
By the Weil pairing, we have 
a natural $G_{K_{n,v}}$-equivariant isomorphism
\[
\bigg(
E(K_{n,w}^{E,\ur})[p^\infty]_{\div}
\bigg)[p] \simeq
\Hom_{\mathbb{Z}_p} \left(
\frac{E[p^\infty]}{E(K_{n,w}^{E,\ur})[p^\infty]_{\div}}[p], 
\mu_p \right)
\]
for any $n \in \mathbb{Z}_{\ge 1}$.
As $G_{K_{n,v}}$ acts trivially on $\mu_p$, 
we deduce that (b) and (c) are equivalent.

\smallskip 
\noindent 
\textbf{(b) \& (c) $\Rightarrow$ (a):} 
Take any $n\ge 1$, and 
any place $v$ of $K_n$ above $\ell$. 
By \autoref{lem:K1E}, 
the base change $E_{K_{n,w}^E}$ of $E$ has split multiplicative reduction 
for some place $w$ of $K_n^E$ above $v$. 
The short exact sequence 
\[
0 \longrightarrow E(K_{n,w}^{E,\ur})[p^\infty]_{\div} \longrightarrow E(K_{n,w}^{E,\ur})[p^{\infty}]\longrightarrow 
\frac{E(K_{n,w}^{E,\ur})[p^{\infty}]}{E(K_{n,w}^{E,\ur})[p^{\infty}]_{\div}} \longrightarrow 0
\]
induces the exact sequence 
\begin{equation}
\label{ex:Epinfty}	
H^0\left(K_{n,v},E(K_{n,w}^{E,\ur})[p^\infty]_{\div}\right)\longrightarrow E(K_{n,v})[p^{\infty}]\longrightarrow H^0\left(K_{n,v}, \frac{E(K_{n,w}^{E,\ur})[p^{\infty}]}{E(K_{n,w}^{E,\ur})[p^{\infty}]_{\div}}\right)
\end{equation}
by the equality $H^0(K_{n,v},E(K_{n,w}^{E,\ur})[p^{\infty}]) = E(K_{n,v})[p^{\infty}]$. 
From the condition (b), we have 
$H^0\left(K_{n,v},E(K_{n,w}^{E,\ur})[p^\infty]_{\div}\right) = 0$. 
As the functor $H^0(K_{n,v},-)$ is left exact,  the condition (c) implies 
\[
H^0\left(K_{n,v}, \frac{E(K_{n,w}^{E,\ur})[p^{\infty}]}{E(K_{n,w}^{E,\ur})[p^{\infty}]_{\div}}\right)
\subseteq H^0\left(K_{n,v}, \frac{E[p^{\infty}]}{E(K_{n,w}^{E,\ur})[p^{\infty}]_{\div}}\right) = 0.
\]
From the exact sequence \eqref{ex:Epinfty}, 
we obtain $E(K_{n,v})[p] \subseteq E(K_{n,v})[p^{\infty}] =0$ and this implies the condition (a).

\smallskip 
\noindent 
\textbf{(b) \& (c) $\Rightarrow$ (d):} 
For any $n\ge 1$ and any place $v$ of $K_n$ above $\ell$, 
take any subquotient $\Zp[G_{K_{n,v}}]$-module $M$ of $E[p^{\infty}]$. 
From \autoref{lem:K1E}, the elliptic curve $E_{K_{n,w}^E}$ has split multiplicative reduction 
for some place $w$ of $K_n^E$ above $v$. 
Every Jordan--H\"older constituent 
(that is, composition factors of the Jordan--H\"older series)
of the $\mathbb{Z}_p[G_{K_{n,v}}]$-module
$E[p]$
(and hence every simple subquotient of $E[p^\infty]$) 
is isomorphic to  
$\bigg(E(K_{n,w}^{E,\ur})[p^\infty]_{\div}\bigg)[p] $ or 
$\bigg(
E[p^\infty]/E(K_{n,w}^{E,\ur})[p^\infty]_{\div}
\bigg)[p] $, which are one dimensional representation of 
$G_{K_{n,v}}$ over $\mathbb{F}_p$. 
As $M$ is a subquotient $\Z_p[G_{K_{n,v}}]$-module of $E[p^{\infty}]$, 
every simple subquotient of $M$ is isomorphic to 
a Jordan--H\"older constituents of $E[p]$. 
The conditions (b) and (c) imply (d).
This completes the proof of \autoref{lemC2}
\end{proof}

\subsection{Example of (C2)}\label{ss:C2ex}

It is obvious that if $E$ has potentially good reduction everywhere, 
then $(E,p)$ satisfies the condition (C2). 
Here, we introduce an example of $(E,p)$ satisfying (C2)
such that $E$ has multiplicative reduction at some primes. 
The following proposition is useful to find 
such a pair $(E,p)$. 

\begin{prop}\label{prop:C2suff}
Let $\ell$ be a prime number distinct from $p$. 
Suppose that $E$ has non-split multiplicative reduction at $\ell$. 
We also assume that $p \equiv 3 \ \mathrm{mod}\, 4$, and 
$-p$ is quadratic residue modulo $\ell$. 
Then, it holds that $E(K_{n,v})[p]=0$ for any $n \in \Z_{\ge 0}$, 
and any place $v$ of $K_n$ above $\ell$. 
\end{prop}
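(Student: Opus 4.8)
The plan is to reduce everything to the single statement $E(L_\ell)[p]=0$, where $L_\ell:=\Q_\ell(\mu_{p^\infty})$ as in the proof of \autoref{lemTwist}, and then to compute $E(L_\ell)[p]=H^0(G_{L_\ell},E[p])$ via the Tate parametrization. For any $n\ge 0$ and any place $v$ of $K_n=\Q(\mu_{p^n})$ above $\ell$, after fixing an embedding $\overline{\Q}\hookrightarrow\overline{\Q}_\ell$ the completion $K_{n,v}$ is the field $\Q_\ell(\mu_{p^n})\subseteq L_\ell$; since $E$ is defined over $\Q$, it therefore suffices to prove $E(L_\ell)[p]=0$ (this contains, as the case $n\ge 1$, condition (a) of \autoref{lemC2}). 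Now $\ell\neq p$ and $E$ has multiplicative, hence potentially multiplicative, reduction at $\ell$, so over $\Q_\ell$ there is a Tate uniformization: with $q\in\Q_\ell^\times$, $\ord_\ell(q)>0$, the curve $E$ is the quadratic twist of the Tate curve $E_q=\G_m/q^{\Z}$ by the \emph{unramified} quadratic character $\chi\colon G_{\Q_\ell}\to\{\pm1\}$ --- the character is non-trivial precisely because the reduction is non-split (see \cite[Chapter~V]{Si2} and the proof of \autoref{lem:K1E}). In particular there is a $G_{\Q_\ell}$-equivariant exact sequence
\[
0\longrightarrow \mu_p\otimes_{\Fp}\Fp(\chi)\longrightarrow E[p]\longrightarrow (\Z/p\Z)\otimes_{\Fp}\Fp(\chi)\longrightarrow 0,
\]
in which $\Z/p\Z$ carries the trivial Galois action and $\Fp(\chi)$ denotes the one-dimensional $\Fp$-representation on which Galois acts through $\chi$.

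Next I would restrict this sequence to the subgroup $G_{L_\ell}$. Since $\mu_p\subseteq L_\ell$ and $\Z/p\Z$ is a trivial module, this yields an exact sequence of $G_{L_\ell}$-modules $0\to\Fp(\psi)\to E[p]\to\Fp(\psi)\to 0$ with $\psi:=\chi|_{G_{L_\ell}}$. By left exactness of $H^0(G_{L_\ell},-)$: if $\psi$ is non-trivial then $E(L_\ell)[p]=H^0(G_{L_\ell},E[p])=0$, whereas if $\psi$ is trivial the submodule $\Fp(\psi)=\Fp$ already contributes a non-zero fixed vector. Thus the problem reduces to showing that $\psi$ is non-trivial, i.e. that $L_\ell$ does not contain the unramified quadratic extension $\Q_{\ell^2}$ of $\Q_\ell$. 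As $\ell\neq p$, the extension $L_\ell/\Q_\ell$ is unramified (\cite[Chapitre~IV]{Se1}), and $\Q_{\ell^2}\subseteq\Q_\ell(\mu_{p^n})$ for some $n$ if and only if $2\mid f_n:=[\Q_\ell(\mu_{p^n}):\Q_\ell]=\ord_{(\Z/p^n\Z)^\times}(\ell)$; since $f_n=f_1\cdot p^{c_n}$ for some $c_n\ge 0$ (as $(\Z/p^n\Z)^\times$ is cyclic and $\gcd(f_1,p)=1$), this happens if and only if $f_1:=\ord_{\Fp^\times}(\ell)$ is even. Hence $\psi\neq 1$ if and only if $f_1$ is odd.

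It remains to translate ``$f_1$ odd'' into the hypotheses. Since $p\equiv 3\pmod 4$, we have $p-1=2m$ with $m$ odd, so the index-$2$ subgroup $(\Fp^\times)^2$ of the cyclic group $\Fp^\times$ has odd order $m$ and therefore consists exactly of the elements of odd order; consequently $f_1=\ord_{\Fp^\times}(\ell)$ is odd if and only if $\ell$ is a square modulo $p$, i.e. $\Leg{\ell}{p}=1$. On the other hand, using $p\equiv 3\pmod 4$ (so that $\tfrac{p-1}{2}$ is odd), quadratic reciprocity gives $\Leg{p}{\ell}\Leg{\ell}{p}=(-1)^{\frac{\ell-1}{2}}=\Leg{-1}{\ell}$, so $\Leg{p}{\ell}=\Leg{-1}{\ell}\Leg{\ell}{p}$, and then
\[
\Leg{-p}{\ell}=\Leg{-1}{\ell}\Leg{p}{\ell}=\Leg{-1}{\ell}^{2}\Leg{\ell}{p}=\Leg{\ell}{p}.
\]
Therefore the hypothesis $\Leg{-p}{\ell}=1$ is equivalent to $\Leg{\ell}{p}=1$, hence to $f_1$ being odd, hence to $\psi\neq 1$, which gives $E(L_\ell)[p]=0$ and proves the proposition for all $n\ge 0$ and all places $v$ of $K_n$ above $\ell$. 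The only genuinely delicate points are the identification of the Tate-curve module structure in the non-split case (so that the relevant twisting character is the \emph{unramified} quadratic one) and keeping track of the condition $p\equiv 3\pmod 4$ through the reciprocity computation; everything else is formal.
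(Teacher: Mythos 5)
Your proposal is correct, and it follows the same overall strategy as the paper's proof --- Tate uniformization of $E$ over $\Q_\ell$ twisted by a quadratic character $\chi$, together with the observation that the hypotheses force $[\Q_\ell(\mu_p):\Q_\ell]$ to be odd, so that $L_\ell=\Q_\ell(\mu_{p^\infty})$ contains no quadratic extension of $\Q_\ell$ and hence $\chi|_{G_{L_\ell}}$ stays nontrivial --- but it differs in two concrete steps. First, where the paper deduces oddness of $[\Q_\ell(\mu_p):\Q_\ell]$ from $\sqrt{-p}\in\Q_\ell$ (that is, from the splitting of $\ell$ in the quadratic subfield $\Q(\sqrt{-p})$ of $\Q(\mu_p)$), you translate the hypothesis via quadratic reciprocity into $\Leg{\ell}{p}=1$ and argue with the order of $\ell$ in $\Fp^\times$; the two computations are equivalent, and both (like the statement itself, through its use of ``quadratic residue modulo $\ell$'') tacitly take $\ell$ odd. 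Second, and more substantively, you handle the unramified quotient of $E[p]$ differently: the paper rules out a Galois-fixed point of order $p$ by a Weil-pairing contradiction, whereas you observe that the entire exact sequence $0\to\mu_p\otimes_{\Fp}\Fp(\chi)\to E[p]\to(\Z/p\Z)\otimes_{\Fp}\Fp(\chi)\to 0$ is twisted by $\chi$, so over $L_\ell$ both the sub and the quotient become $\Fp(\psi)$ with $\psi=\chi|_{G_{L_\ell}}\neq 1$, and left exactness of $H^0$ finishes at once. This is a genuine simplification of that step, at the small cost of invoking the standard fact that in the non-split multiplicative case the twisting character is precisely the unramified quadratic character (\cite[Chapter~V, Theorem~5.3]{Si2}); the paper's argument does not need this, since oddness of the procyclic degree of $L_\ell/\Q_\ell$ already excludes ramified as well as unramified quadratic subextensions.
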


\begin{proof}
Fix any embedding $\overline{\Q} 
\hookrightarrow \overline{\Q}_\ell$, and 
regard $\mu_{p^\infty}$ 
as a subgroup of $\overline{\Q}_\ell^\times$.
Let $q,\gamma \in \Q_\ell$ 
be as in the proof of \autoref{lem:K1E}. 
We have $\sqrt{\gamma} \notin \Q_\ell$ 
because $E$ has non-split multiplicative 
reduction at $\ell$. 
Let $\chi \colon G_{\Q_\ell} \longrightarrow \Z_p^\times$ 
be the quadratic character 
attached to $\Q_\ell (\sqrt{\gamma})/\Q_\ell$.
We have a $G_{\Q_\ell}$-equivariant isomorphism 
\[
f\colon E[p^\infty] \longrightarrow 
(\overline{\Q}_\ell^\times/q^{\mathbb{Z}})[p^\infty] \otimes_{\Z_p} 
\Z_p(\chi). 
\]
In order to prove \autoref{prop:C2suff}, 
it suffices to show that 
\begin{equation}\label{eq:forC2ex}
H^0(\Q_\ell(\mu_{p^\infty}), 
(\overline{\Q}_\ell^\times/q^{\mathbb{Z}})[p^\infty] \otimes_{\Z_p} 
\Z_p(\chi))=0. 
\end{equation}
It holds that $\sqrt{-p} \in \Q_\ell$, 
because $p \equiv 3 \ \mathrm{mod}\, 4$, and 
$-p$ is quadratic residue modulo $\ell$. 
For this reason, the extension degree $[\Q_\ell (\mu_p):\Q_\ell]$ is odd. 
This implies that $\Q_\ell(\mu_{p^\infty})$ never contains 
any quadratic extension field of $\Q_\ell$ because $p$ is odd. 
We obtain 
\[
H^0(\Q_\ell(\mu_{p^\infty}), 
\mu_{p^\infty} \otimes_{\Z_p} 
\Z_p(\chi))\simeq 
H^0(\Q_\ell(\mu_{p^\infty}), 
(\Q_p/\Z_p)(\chi))
=0. 
\]
Suppose that \eqref{eq:forC2ex} does not hold.  
There exists an element  
\[
P \in H^0(\Q_\ell(\mu_{p^\infty}), 
(\overline{\Q}_\ell^\times/q^{\mathbb{Z}})[p^\infty] \otimes_{\Z_p} 
\Z_p(\chi))
\]
of order $p$. 
Let $\zeta$ be a primitive $p$-th root of unity, 
and $\sigma \in G_{\Q_\ell(\mu_{p^\infty})}$ an element 
satisfying $\chi(\sigma)=-1$.
Note that in $\mu_{p^\infty} \otimes_{\Z_p} \Z_p(\chi)$, 
we have 
$\sigma(\zeta\otimes 1)=\zeta\otimes (-1)$.
By taking the Weil pairing 
$e\colon E[p] \times E[p] \longrightarrow \mu_p$, we obtain 
\[
e(\zeta\otimes 1, P)
=\sigma(e(\zeta\otimes 1, P))
=e\left( 
\sigma(\zeta\otimes 1), 
\sigma P \right)=e(\zeta \otimes (-1), P )
=e(\zeta\otimes 1, P)^{-1}.
\]
As $p$ is odd, this contradicts to 
the fact that the Weil pairing $e$ is 
$G_{\Q_\ell}$-equivariant. 
Consequently, the assertion \eqref{eq:forC2ex} holds.
\end{proof}

\begin{ex}\label{ex:C2suff}
Let $(E,p)$ be as in \autoref{ex:5077p=7}. 
Then, the elliptic curve $E$ has good reduction outside the prime $5077$, 
and it has non-split multiplicative reduction at $5077$. 
Since $p=7 \equiv 3\ \mathrm{mod}\, 4$, and 
$-7$ is a quadratic residue modulo $5077$, 
\autoref{prop:C2suff} implies that 
$(E,p)$ satisfies the condition (C2).
\end{ex}

\section{Selmer Groups}
\label{secSelmer}

In this section, we shall recall the definition of 
the fine Selmer groups of an elliptic curve, 
and introduce some preliminary results related to Selmer groups. 
In \autoref{ssIwasawa}, 
we shall review preliminary results in the Iwasawa theoretical setting.
We keep the notation and the assumptions in \autoref{secC}.

\subsection{Definition of Selmer groups}\label{ssdefSel}
Let $K$ be a number field, that is, a finite extension field of $\Q$. 
First, let us recall Bloch--Kato's finite local conditions.

\begin{defn}[{\cite[Definition~1.3.4, Remark~1.3.6]{Ru}}]
Let $v$ be any place of $K$. 
We define  
$H^1_f(K_v, V_p(E))$ by 
\[
H^1_f(K_v, V_p(E)):=
\begin{cases}
H^1_{\ur}(K_v,V_p(E)) & \text{if $v \nmid p$}, \\
\Ker \left(
H^1(K_v, V_p(E)) \longrightarrow 
H^1(K_v, B_{\mathrm{cris}} \otimes_{\Q_p} V_p(E)) 
\right) & \text{if $v \mid p$}, \\
0 & \text{if $v\mid \infty$},
\end{cases}
\]
where $B_{\mathrm{cris}}$ is Fontaine's $p$-adic ring 
and $v\mid \infty$ we means that $v$ is an infinite place in $K$. 
We define $H^1_f(K_v, E[p^\infty]) \subseteq H^1(K_v,E[p^{\infty}])$  
and $H^1_f(K_v,T_p(E)) \subseteq H^1(K_v,T_p(E))$ 
to be  
the image and the inverse image, respectively, of $H^1_f(K_v, V_p(E))$ 
under the natural maps
\(
H^1(K_v,T_p(E)) \longrightarrow 
H^1(K_v, V_p(E)) \longrightarrow 
H^1(K_v, E[p^\infty]) 
\). 
For each $n \in \Z_{>0}$, 
we define $H^1_f(K_v, E[p^n])$ to be
the inverse image of $H^1_f(K_v, E[p^{\infty}])$ by the natural map
\begin{equation}
	\label{def:iota}
\iota_{n,v}\colon H^1(K_v, E[p^n]) \longrightarrow 
H^1(K_v, E[p^\infty]). 
\end{equation}
The subgroup $H^1_f(K_v,E[p^n])$ coincides with 
the image of $H^1_f(K_v,T_p(E))$ under the map $H^1(K,T_p(E))\longrightarrow H^1(K_v,E[p^n])$ 
induced by 
$T_p(E) \longrightarrow T_p(E)/p^nT_p(E) 
\simeq E[p^n]$ (\cite[Lemma 1.3.8]{Ru}).
\end{defn}

\begin{rem}
\label{remH^1_f}
Let $v$ be any finite place  of $K$ not above $p$. 
Suppose that $E_{K}$ has good reduction at $v$. 
The $p$-adic Tate module $T_p(E)$ is unramified at $v$ (from the ``easy'' direction of the N\'eron-Ogg-Shafarevich criterion \cite[Chapter~VII, Theorem~7.1]{Si1}) 
so that 
 $H_f^1(K_v, E[p^n])$ 
coincides with $H_{\ur}^1(K_v, E[p^n])$ (cf.\ \cite[Lemma~1.3.8]{Ru}),
for each $n \in \Z_{>0} \cup \set{ \infty }$. 
Furthermore, the inflation-restriction exact sequence (e.g., \cite[Proposition~B.2.5]{Ru}) gives a natural isomorphism 
\[
H^1(K_v^{\ur}/K_v, E[p^n]) \simeq 
H^1_f(K_v, E[p^n]).
\]
\end{rem}

\begin{defn}[the fine Selmer group]\label{defnSel}
For each $n \in \Z_{>0} \cup \set{ \infty }$, 
we define  {\em the fine Selmer group} 
$\Sel_p (K,E[p^n])$ to be the kernel of  
\[
H^1(K, E[p^n]) 
\longrightarrow 
\prod_{u \mid p}
H^1(K_u, E[p^n]) 
\times
\prod_{v \nmid p}
\frac{
H^1(K_v, E[p^n]) 
}{H^1_f(K_v, E[p^n]) 
},
\]
where $u$ runs through all the places of $K$ 
above $p$, and 
$v$ runs through all the places of $K$ 
not above $p$.
\end{defn}

\begin{rem}
	When $v$ is an infinite place of $K$, 
	the cohomology group 
	$H^1(K_v,E[p^n])$ is annihilated by at most $2$ 
	for each $n\in \Z_{\ge 1}\cup \set{\infty}$. 
	Since we are considering the odd prime $p$, 
	we have $H^1(K_v,E[p^n]) = 0$. 
	Because of this, 
	we may not care about infinite places in the following.	
\end{rem}

\begin{rem}\label{remSigma}
We denote by $\Sigma_K$ 
the set of places of $K$ above the prime divisors of $pD_E$ and the all infinite places  
and by $K_{\Sigma}$ the maximal algebraic extension field of $K$ 
unramified outside $\Sigma_K$. 
Then, for each $n \in \Z_{>0} \cup \set{ \infty }$, 
the kernel of the natural map
\[
H^1(K, E[p^n]) 
\longrightarrow 
\prod_{v \notin \Sigma_K}
\frac{
H^1(K_v, E[p^n]) 
}{H^1_f(K_v, E[p^n]) 
}
\] 
coincides with $H^1(K_{\Sigma}/ K, E[p^n])$
(\cite[Lemma 1.5.3]{Ru}).  
The fine Selmer group $\Sel_p (K,E[p^n])$ 
can be regarded as a subgroup of 
$H^1(K_{\Sigma}/ K, E[p^n])$.
\end{rem}

\begin{rem}\label{remSelcl}
Here, we give a remark on the relation between 
$\Sel_p (K,E[p^n])$ and the classical Selmer group.
Take any $n \in \Z_{>0}$. 
Recall that the classical Selmer group 
$\Sel (K,E[p^n])$ is defined by 
\[
\Sel (K,E[p^n]):=
\Ker \left(
H^1(K, E[p^n]) 
\longrightarrow \prod_{v }
\frac{H^1(K_v, E[p^n]) }
{H^1_{\mathrm{cl}}(K_v, E[p^n])}
\right),
\]
where $v$ runs through all the finite places of $K$, 
and $H^1_{\mathrm{cl}}(K_v, E[p^n])$ denotes
the image of the homomorphism  
\[
E(K_v)=
H^0(K_v, E(\overline{K}_v)) \longrightarrow H^1(K_v, E[p^n])
\]
induced by the short exact sequence 
\[
0 \longrightarrow  E[p^n] \xrightarrow{\ \subseteq \ } 
E(\overline{K}_v) \xrightarrow{\ \times p^n \ } 
E(\overline{K}_v) \longrightarrow 0.
\]
For any $n \in \Z_{>0}$, there exists a short exact sequence 
\[
0 \longrightarrow E(K) \otimes_{\Z} \Z/p^n\Z 
\longrightarrow \Sel (K,E[p^n]) \longrightarrow 
\Sha(E_K/K)[p^n]\longrightarrow 0,
\]
where $\Sha(E_K/K)$ denotes the Tate-Shafarevich group of $E_K/K$.
For each finite place $v$ of $K$, we have 
\[
H^1_{\mathrm{cl}}(K_v, E[p^n]) 
=H^1_{f}(K_v, E[p^n]). 
\]
It holds that 
\[
\Sel_p (K,E[p^n])=
\Ker \left(
\Sel (K,E[p^n]) \longrightarrow \prod_{v \mid p} 
H^1(K_v, E[p^n])
\right). 
\]
\end{rem}
%
%

\subsection{Preliminaries of Iwasawa theory}\label{ssIwasawa}

For each place $v$ of $K_1$, we denote by 
$D_v$ the decomposition subgroup of the Galois group
$\Gamma:=\mathcal{G}_{\infty,1}=\Gal (K_{\infty}/K_1)$ at $v$, and 
define
\[
\cA_{v}:=\begin{cases}
\Ann_{\Z_p[\![D_v]\!]}\left(\displaystyle
\frac{E(K_{\infty,w})[p^\infty]}{
E(K_{\infty,w})[p^\infty]_{\mathrm{div}}} \right) 
& \text{if $v \mid p$}, \\
\Ann_{\Z_p[\![D_v]\!]}\left(\displaystyle
H^1 \left(K_{\infty,w}^{\ur}/K_{\infty,w}, 
\frac{E(K_{\infty,w}^{\ur})[p^\infty]}{
E(K_{\infty,w}^{\ur})[p^\infty]_{\mathrm{div}}} 
\right)\right) & \text{if $v \nmid p$}, 
\end{cases}
\]
where $w$ is a place of $K_{\infty}$ above $v$.
We set
\[
\cA_{\mathcal{N}}:= \prod_{v \mid p D_E} \cA_{v} \Z_p[\![\Gamma]\!].
\]

\begin{prop}[Control theorem, {\cite[Proposition 7.4.4]{Ru}}]
\label{propcontrthm}
Suppose that $E$ satisfies the condition {\rm (C1)}.
Let $m, n \in \Z_{\ge 0}$ be any integers. 
Then, the following hold.
\begin{enumerate}
\item The restriction map  $H^1(K_m,E[p^\infty]) \longrightarrow 
H^1(K_\infty,E[p^\infty])$ is injective.
\item The natural map $H^1(K_m,E[p^n]) \longrightarrow 
H^1(K_m,E[p^\infty])[p^n]$ is injective.
\item The cokernel of the restriction map 
\[
\Sel_p (K_m,E[p^\infty]) \longrightarrow 
H^0(K_m,\Sel_p( K_\infty ,E[p^\infty]))
\] 
is finite, and annihilated by $\cA_{\mathcal{N}}$.
\item The cokernel of the natural map 
\[
\Sel_p (K_m,E[p^n]) \longrightarrow 
\Sel_p( K_m ,E[p^\infty])[p^n]
\] 
is finite, and independent of $n$.
\end{enumerate}
\end{prop}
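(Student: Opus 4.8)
The plan is to derive all four assertions from the structural consequences of (C1) on Galois cohomology, following the classical control-theorem strategy (cf.\ \cite[Proposition~7.4.4]{Ru}). The input used throughout is that, since $\rho^E_1$ is absolutely irreducible over $\F_p$, the two-dimensional module $E[p]$ has no nonzero $G_{K_\infty}$-fixed vector, so $E[p^\infty]^{G_{K_\infty}}=0$ and a fortiori $E[p^\infty]^{G_{K_m}}=0$ for every $m\ge 0$. Granting this, (1) is immediate from the inflation--restriction sequence, whose relevant term is $H^1(\Gal(K_\infty/K_m),E[p^\infty]^{G_{K_\infty}})=0$; and (2) follows from the long exact cohomology sequence attached to $0\to E[p^n]\to E[p^\infty]\xrightarrow{\times p^n}E[p^\infty]\to 0$, since the kernel of $H^1(K_m,E[p^n])\to H^1(K_m,E[p^\infty])[p^n]$ is a quotient of $E[p^\infty]^{G_{K_m}}=0$ (so the map is in fact an isomorphism).

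For (3) I would set $\Gamma_m:=\Gal(K_\infty/K_m)$ and compare the exact sequence cutting out $\Sel_p(K_m,E[p^\infty])$ inside $H^1(K_{m,\Sigma}/K_m,E[p^\infty])$ (in the sense of \autoref{remSigma}) with the $\Gamma_m$-invariants of the analogous sequence over $K_\infty$, through the vertical restriction maps. The middle vertical map on global cohomology is an isomorphism, once more by inflation--restriction together with $E[p^\infty]^{G_{K_\infty}}=0$; hence a snake-lemma chase identifies the cokernel of $\Sel_p(K_m,E[p^\infty])\to \Sel_p(K_\infty,E[p^\infty])^{\Gamma_m}$ with a subgroup of $\bigoplus_{v}\Ker(g_v)$, the sum running over $v\in\Sigma_{K_m}$ and $g_v$ being the restriction map on the local terms. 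The local analysis then splits into three cases. If $v\nmid pD_E$, then $E[p^\infty]$ is unramified at $v$ and $K_{\infty,w}/K_{m,v}$ is unramified, forcing $\Ker(g_v)=0$. If $v\mid p$, inflation--restriction gives $\Ker(g_v)=H^1(D_v,E(K_{\infty,w})[p^\infty])$; this group is finite and killed by $\Ann_{\Z_p[\![D_v]\!]}(E(K_{\infty,w})[p^\infty])=\cA_v$, because $E(K_{\infty,w})[p^\infty]=E(\Qp(\mu_{p^\infty}))[p^\infty]$ is finite by Imai's theorem \cite{Im} (this is the one point at which good reduction at $p$ is used). If $v\mid D_E$ with $v\nmid p$, the standard computation of singular quotients of local cohomology identifies $\Ker(g_v)$ with a finite group annihilated by $\cA_v$, the ideal $\cA_v$ being defined precisely to control the non-divisible part of $E(K_{\infty,w}^{\ur})[p^\infty]$ that appears. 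Combining the cases shows the cokernel is finite and annihilated by $\prod_{v\mid pD_E}\cA_v\Z_p[\![\Gamma]\!]=\cA_{\mathcal{N}}$.

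For (4) I would again invoke $0\to E[p^n]\to E[p^\infty]\xrightarrow{\times p^n}E[p^\infty]\to 0$ and $E(K_m)[p^\infty]=0$ to obtain an isomorphism $H^1(K_m,E[p^n])\xrightarrow{\ \sim\ }H^1(K_m,E[p^\infty])[p^n]$, and then match the local conditions. At $v\nmid p$ the subgroup $H^1_f(K_{m,v},E[p^n])$ is by definition the preimage of $H^1_f(K_{m,v},E[p^\infty])$, so that condition is literally unchanged; at $v\mid p$, a class vanishing in $H^1(K_{m,v},E[p^n])$ also vanishes in $H^1(K_{m,v},E[p^\infty])$, while conversely a class vanishing in $H^1(K_{m,v},E[p^\infty])$ lies in $\Ker\big(H^1(K_{m,v},E[p^n])\to H^1(K_{m,v},E[p^\infty])\big)=E(K_{m,v})[p^\infty]/p^nE(K_{m,v})[p^\infty]$. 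Thus $\Sel_p(K_m,E[p^n])\hookrightarrow\Sel_p(K_m,E[p^\infty])[p^n]$ with cokernel embedding into $\bigoplus_{v\mid p}E(K_{m,v})[p^\infty]/p^nE(K_{m,v})[p^\infty]$; since $K_{m,v}/\Qp$ is a finite extension each $E(K_{m,v})[p^\infty]$ is finite, so this cokernel is finite of order bounded independently of $n$ and becomes constant once $p^n$ exceeds the exponents of those groups.

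The diagram chases and the good-reduction case are routine. The step I expect to require genuine work is the bad-prime local computation in (3): for $v\mid D_E$ with $v\nmid p$ one must pin down $\Ker(g_v)$ in terms of the non-divisible quotient of $E(K_{\infty,w}^{\ur})[p^\infty]$, which is precisely what dictates the shape of the annihilators $\cA_v$ in the statement; the only non-formal ingredient, and the only place good reduction at $p$ is needed, is Imai's finiteness theorem for $E(\Qp(\mu_{p^\infty}))[p^\infty]$.
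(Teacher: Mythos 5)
Your proposal is correct and follows essentially the same route as the paper, which in fact gives no independent proof of this proposition but quotes it from \cite[Proposition~7.4.4]{Ru}, adding only a remark that in the present setting Rubin's auxiliary hypotheses are satisfied and that (C1) makes the global annihilator trivial. Your parts (1), (2), (4) and the inflation--restriction/snake reduction in (3) are exactly the standard mechanism behind that citation, and the two inputs you flag --- Imai's theorem \cite{Im} at $v\mid p$ (which is also what reconciles $\Ann_{\Z_p[\![D_v]\!]}(E(K_{\infty,w})[p^\infty])$ with the paper's $\cA_v$, defined via the non-divisible quotient) and the singular-quotient computation at the bad places $v\mid D_E$, $v\nmid p$, which produces the unramified-$H^1$ annihilator --- are precisely the local lemmas of Rubin that the paper's citation imports.
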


\begin{rem}
In {\cite[Proposition 7.4.4]{Ru}} 
two additional assumptions, namely 
Assumption 7.1.4 and Assumption 7.1.5, are assumed, 
but the arguments in the proof of \cite[Proposition 7.4.4]{Ru}
do not need Assumption 7.1.4. 
In our setting, it follows from Hasse-Weil's theorem that 
the $\Z_p [G_{K_1}]$-module 
$T_p(E)$ satisfies Assumption 7.1.5. 
We also note that (C1) for $E$ implies 
\[
\cA_{\mathrm{glob}}:=
\Ann_{\Z_p[\![\Gamma]\!]}(E(K_{\infty}))
=\Z_p[\![\Gamma]\!].
\]
\end{rem}

As in \autoref{secintro}, we fix a topological generator $\gamma$ 
of  $\Gamma$. 
For each $m,n \in \Z_{\ge 1}$, we put  
\[
\Lambda_{m,n} := 
\Z/p^n\Z [\mathcal{G}_{m,1}]
=\Z/p^n\Z [\Gal(K_m/K_1)] 
\simeq \Lambda/(\gamma^{p^m}-1,p^n).
\]
Recall that for any $m,n \in \Z_{\ge 0}$, we put 
$X_{m,n}:=\Sel_p(K_m,E[p^n])^\vee$. 
By \autoref{propcontrthm}, 
we immediately obtain  the following corollary.

\begin{cor}\label{corpropcontrthm}
There exists an integer $\nu_X$ such that 
for any $m,n \in \Z_{> 0}$, the orders of the kernel and the cokernel of 
\(
X_\infty \otimes_{\Lambda} \Lambda_{m,n} 
\longrightarrow  X_{m,n}
\)
are at most $p^{\nu_X}$.
\end{cor}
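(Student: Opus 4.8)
The goal is to deduce \autoref{corpropcontrthm} from the control theorem \autoref{propcontrthm}, so the plan is essentially a diagram-chase combined with tracking finiteness bounds through the dualization process. Let me fix notation: write $X_\infty = \Sel_p(K_\infty, E[p^\infty])^\vee$ and recall $X_{m,n} = \Sel_p(K_m, E[p^n])^\vee$. First I would dualize the two natural maps appearing in parts (3) and (4) of \autoref{propcontrthm}. Dualizing the composite
\[
\Sel_p(K_m, E[p^n]) \longrightarrow \Sel_p(K_m, E[p^\infty])[p^n] \longrightarrow \Sel_p(K_m, E[p^\infty]) \longrightarrow H^0(K_m, \Sel_p(K_\infty, E[p^\infty])) = \Sel_p(K_\infty, E[p^\infty])^{\Gamma_m},
\]
where $\Gamma_m = \Gal(K_\infty/K_m)$, turns inclusions into surjections and surjections-with-finite-cokernel into injections-with-finite-kernel. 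So on the dual side I get maps between $X_{m,n}$, the $p^n$-torsion cokernel-dual, $X_{m,\infty}$, and $(X_\infty)_{\Gamma_m}$, each with kernel and cokernel of order bounded by a fixed power of $p$ independent of $m$ and $n$. The one bound that needs a word of justification is that the dual of the cokernel of $\Sel_p(K_m,E[p^\infty]) \to \Sel_p(K_\infty,E[p^\infty])^{\Gamma_m}$ is annihilated by $\cA_{\mathcal N}$ and finite; since $\cA_{\mathcal N}$ is a nonzero ideal of $\Lambda$ (using $\cA_{\mathrm{glob}} = \Lambda$ from the remark after \autoref{propcontrthm} and finiteness of the relevant local terms), the orders are uniformly bounded — this is where I would be most careful.

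Next I would identify $(X_\infty)_{\Gamma_m}$ with $X_\infty \otimes_\Lambda \Lambda/(\gamma^{p^{m-1}}-1)$ — this is just right-exactness of $\otimes$, i.e. coinvariants. Then to bring in the mod $p^n$ reduction I tensor further with $\Lambda/(p^n)$: since $X_\infty$ is a finitely generated $\Lambda$-module, the snake lemma applied to $X_\infty \xrightarrow{p^n} X_\infty$ shows that the kernel of $(X_\infty)_{\Gamma_m}/p^n \to (X_\infty/p^n)_{\Gamma_m}$ -- actually these agree, both being $X_\infty \otimes_\Lambda \Lambda_{m,n}$. The subtlety is relating $X_{m,n} = \Sel_p(K_m,E[p^n])^\vee$ to $X_{m,\infty}/p^n = (\Sel_p(K_m,E[p^\infty])^\vee)/p^n$; dualizing part (4) of \autoref{propcontrthm} gives that $\Sel_p(K_m,E[p^n])^\vee \to \Sel_p(K_m,E[p^\infty])^\vee[p^n]^\vee$... here one must be careful: the dual of $M[p^n]$ for a cofinitely generated $M$ is $M^\vee/p^n$, and the dual of "cokernel finite independent of $n$" is "kernel finite independent of $n$". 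So I get a map $X_{m,n} \to X_{m,\infty}/p^n X_{m,\infty}$ with kernel bounded independently of $m,n$; its cokernel needs the surjectivity statement, which comes from $\Sel_p(K_m,E[p^n]) \hookrightarrow \Sel_p(K_m,E[p^\infty])[p^n]$ being injective (part (2)/(4)) so the dual is surjective.

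Finally I would compose all these uniformly-bounded-kernel-and-cokernel maps
\[
X_\infty \otimes_\Lambda \Lambda_{m,n} \;=\; (X_\infty)_{\Gamma_m} \otimes \Z/p^n \;\longrightarrow\; (X_{m,\infty})\otimes \Z/p^n \;\longleftarrow\; X_{m,n},
\]
and invoke the elementary fact that a composite of two maps each with kernel and cokernel of order $\le p^{c_i}$ has kernel and cokernel of order $\le p^{c_1 + c_2}$ (and that a map in the "wrong direction" with bounded kernel and cokernel can be inverted up to bounded error). Setting $\nu_X$ to be the sum of the finitely many constants produced above finishes the proof. The main obstacle I anticipate is not any single hard estimate but rather bookkeeping: making sure every passage through Pontryagin duality correctly swaps kernels/cokernels and submodules/quotients, and that the bound coming from the $\cA_{\mathcal N}$-annihilation in part (3) is genuinely uniform in $m$ — which rests on $\cA_{\mathcal N}$ being a nonzero ideal, so that $\Lambda/\cA_{\mathcal N}$ (and hence any $\cA_{\mathcal N}$-annihilated finitely generated $\Lambda$-module quotient appearing here) has bounded $p$-length along the tower.
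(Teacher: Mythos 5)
Your overall route is the same one the paper has in mind (the paper states that \autoref{corpropcontrthm} follows at once from \autoref{propcontrthm}): dualize the maps of parts (1)--(4), identify $(X_\infty)_{\Gamma_m}\otimes\Z/p^n\Z$ with $X_\infty\otimes_\Lambda\Lambda_{m,n}$, and compose maps with uniformly bounded kernels and cokernels. Your bookkeeping through Pontrjagin duality is essentially correct (the arrows $X_\infty\otimes_\Lambda\Lambda_{m,n}\to X_{m,\infty}/p^n\to X_{m,n}$ both naturally point toward $X_{m,n}$, so the ``wrong direction'' caveat is unnecessary, but that is harmless).

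The genuine gap is at the point you yourself flag as the crux: the uniformity in $m$ of the finite cokernels in parts (3) and (4). Your proposed justification --- that the cokernel in (3) is finite and annihilated by the fixed nonzero ideal $\cA_{\mathcal{N}}$, hence of order bounded independently of $m$ --- is not valid. A finite $\Lambda$-module annihilated by a fixed nonzero ideal (even one of finite index in $\Lambda$) can have arbitrarily large order: for example $(\Lambda/\cA_{\mathcal{N}})^{\oplus k}$ for large $k$, or, if one only knows $\cA_{\mathcal{N}}\ne 0$, the modules $\Lambda/(p,\gamma^{p^{m-1}}-1)$, which are killed by $(p)$ but have order $p^{p^{m-1}}$. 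Annihilation by $\cA_{\mathcal{N}}$ bounds nothing unless one also bounds the number of generators, and the statement of \autoref{propcontrthm} does not do this; likewise part (4) only asserts independence of $n$, not a bound uniform in $m$. The correct source of uniformity is inside the proof of the control theorem (\cite[Proposition~7.4.4]{Ru}): the cokernel of $\Sel_p(K_m,E[p^\infty])\to H^0(K_m,\Sel_p(K_\infty,E[p^\infty]))$ injects into a product, over the places $v$ of $K_m$ dividing $pD_E$, of cohomology groups of the procyclic groups $\Gal(K_{\infty,w}/K_{m,v})$ with coefficients in fixed finite modules (for $w\mid p$ one uses Imai's theorem that $E(K_{\infty,w})[p^\infty]$ is finite, for $w\nmid p$ the finiteness of the non-divisible quotient), and the number of such places stabilizes along the cyclotomic tower since every finite place is finitely decomposed in $K_\infty/K_1$; a similar local analysis bounds the cokernel in (4) uniformly in $m$. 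So your argument is repaired by invoking these bounds from the proof of the control theorem rather than the bare annihilation statement; as written, that step would fail.
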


\begin{rem}\label{remcontrolthmR}
Recall that 
$\Delta=\Gal (K_1/\Q)$.  
Take any $n \in \Z_{>0}$. 
Since the order of $\Delta$
is prime to $p$, 
we have
\[
H^0(\Delta, \Sel_p(K_1,E[p^n]))
\simeq \Sel_p(\Q,E[p^n]),
\]
and hence 
$(X_{1,n})_{\boldsymbol{1}} \simeq X_{0,n}$, 
where $\boldsymbol{1} \in \widehat{\Delta}$ 
denotes the trivial character. 
By \autoref{corpropcontrthm},  
the orders of the 
kernel and the cokernel of
\[
X_{\infty, \boldsymbol{1}} 
\otimes_\Lambda \Lambda_{1,n}
\longrightarrow 
(X_{1,n})_{\boldsymbol{1}} \simeq X_{0,n}
\]
are at most $p^{\nu_X}$.
\end{rem}

\section{Proof of Main results}\label{secpfmain}

In this section, we shall prove our main results, 
in particular, \autoref{thmmain}.
We keep the notation in \autoref{secC} and 
we suppose that the elliptic curve $E$ over $\Q$ has good reduction at an odd prime number $p$.

\subsection{Boundedness of the order of Galois cohomology}

In this paragraph, let us prove the following \autoref{propkerresbound}, 
which is related to the boundedness of the order of 
the kernel and the cokernel of the restriction map
\[
H^1(K_n, E[p^n]) \longrightarrow 
H^1(K^E_n, E[p^n]).
\] 

\begin{prop}\label{propkerresbound}
Suppose that the elliptic curve $E$ satisfies the conditions $\mathrm{(C1)}$ 
and $\mathrm{(C3)}$. 
Then, for any $i \in \set{ 1,2 }$, the set 
\[
\set{\# H^i(K^E_n/K_n, E[p^n])}_{n \ge 0 }
\]
is bounded. 
\end{prop}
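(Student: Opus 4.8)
The plan is to analyze the Galois cohomology $H^i(K^E_n/K_n, E[p^n])$ via the group $G_n := \Gal(K^E_n/K_n)$, which is a subgroup of $GL_2(\Z/p^n\Z)$, and to show that it is uniformly controlled as $n$ grows. First I would identify $G_n$ precisely: since $K^E_n = \Q(E[p^n])$ contains $K_n = \Q(\mu_{p^n})$ by the Weil pairing, the group $G_n$ is the kernel of the determinant (= cyclotomic character) map on $\Im(\rho^E_n)$, so $G_n \subseteq SL_2(\Z/p^n\Z)$. The key structural input is the condition $\mathrm{(C1)}$ (together with $\mathrm{(C3)}$ in the CM case): by \autoref{remendsurj}, $G_n$ surjects onto $SL_2(\F_p)$ mod $p$, and more precisely one should show that $G_n$ contains a congruence subgroup of $SL_2(\Z/p^n\Z)$ of bounded index — in the non-CM case $G_n \supseteq \Ker(SL_2(\Z/p^n\Z) \to SL_2(\Z/p^{c}\Z))$ for a fixed $c$ independent of $n$ (this follows from the closed image of $\rho^E\vert_{G_{K_\infty}}$ containing an open subgroup of $SL_2(\Z_p)$, hence a principal congruence subgroup of some fixed level $p^c$), and similarly in the CM case using that $\End(E)$ is a maximal order.

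Granting this, I would split $G_n$ via the inflation-restriction (Hochschild--Serre) spectral sequence for the normal subgroup $H_n := G_n \cap \Gamma(p^c) \trianglelefteq G_n$, where $\Gamma(p^c)$ is the principal congruence subgroup. The quotient $G_n/H_n$ has order bounded by $\#SL_2(\Z/p^c\Z)$, independent of $n$, so $H^i(G_n/H_n, -)$ is killed by that bounded order; the subtle part is the cohomology $H^i(H_n, E[p^n])$. Here $H_n$ is a $p$-group acting on $E[p^n] \cong (\Z/p^n\Z)^2$ through matrices congruent to $1$ modulo $p^c$, so the action is "close to trivial"; one computes $H^0(H_n, E[p^n])$ and $H^1(H_n, E[p^n])$ directly. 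For $H^0$: an element fixed by all of $H_n$ must be killed by $g-1$ for every $g \in H_n$, and since $H_n$ contains elements whose reduction generates a large subgroup of $\Gamma(p^c)/\Gamma(p^{c+1})$, the submodule of invariants has bounded order (roughly $p^{2c}$). For $H^1$: using that $H_n$ is pro-$p$ and $E[p^n]$ is finite, one bounds $\#H^1(H_n, E[p^n])$ by combining the bound on $H^0$ of the "Frattini quotient" with the structure of $H_n$ as a quotient of a fixed pro-$p$ group (the relevant congruence subgroup of $SL_2(\Z_p)$, which is finitely generated as a pro-$p$ group, say by $d$ generators); concretely $H^1(H_n, E[p^n])$ injects into $\Hom(H_n^{\ab}, E[p^n])$-type data, but more carefully one uses that $H^1(H_n, E[p^n])$ is built from $H^1$ of a free pro-$p$ group modulo relations, giving a bound depending only on the number of generators and relations of the fixed congruence subgroup and on $\#H^0$, all independent of $n$. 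For $i=2$ one argues similarly, or passes through the fact that for the congruence subgroups of $SL_2(\Z_p)$ the relevant cohomological dimension and presentation are uniform.

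The main obstacle I expect is making the boundedness of $H^1(H_n, E[p^n])$ (and $H^2$) genuinely uniform in $n$: naively $E[p^n]$ grows, and $H^1$ of a pro-$p$ group with finite coefficients can be large, so one must exploit that $H_n$ is a quotient of a \emph{fixed} topologically finitely presented pro-$p$ group $U \subseteq SL_2(\Z_p)$ and that $E[p^n] = T_p(E)/p^n$. The clean way is probably to first bound $H^i(U, T_p(E))$ (or $H^i(U, V_p(E)/T_p(E))$) as a module — $U$ acting on $T_p(E)$ with $H^0(U, V_p(E)) = 0$ by irreducibility — deduce that $H^i(U, T_p(E))$ is a finitely generated $\Z_p$-module with bounded torsion, and then use the long exact sequence associated to $0 \to T_p(E) \xrightarrow{p^n} T_p(E) \to E[p^n] \to 0$ together with $H^i(G_n/H_n,-)$ having bounded order to transfer this to a bound on $\#H^i(K^E_n/K_n, E[p^n])$ that is independent of $n$. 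Assembling these bounds via the spectral sequence then yields the claim for $i \in \{1,2\}$.
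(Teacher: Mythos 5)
Your non-CM skeleton --- replace $\Gal(K^E_n/K_n)$ by a fixed open pro-$p$ subgroup $U$ of the image of $\rho^E\vert_{G_{K_\infty}}$, control cohomology with $T_p(E)$-coefficients there, and descend via the multiplication-by-$p^n$ sequence and bounded-index quotients --- is essentially the circle of ideas the paper uses (it works at infinite level with $H^i(K^E_\infty/K_\infty, V_p(E))$ and $E[p^\infty]$, then descends). But three points in your sketch are genuine gaps. First, the CM case: your claim that $G_n$ contains a principal congruence subgroup of $SL_2(\Z/p^n\Z)$ of bounded index is false when $E$ has complex multiplication --- there the image of $G_{K_\infty}$ lies in (the normalizer of) a Cartan subgroup and has unbounded index in $SL_2(\Z/p^n\Z)$, so the whole congruence-subgroup reduction collapses; ``similarly, using that $\End(E)$ is the maximal order'' is not an argument. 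The paper treats this case separately and at length (most of \autoref{lemvanishGalVp} and the CM half of \autoref{claimHomvanish}), using the theory of complex multiplication and class field theory to produce an element of the image whose eigenvalues on $V_p(E)$ are nontrivial, hence which acts without nonzero fixed vectors.

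Second, the central analytic input is asserted rather than proved: for your long-exact-sequence step you need $H^i(U, T_p(E))$ to be \emph{finite} for the relevant $i$, equivalently $H^i(U, V_p(E)) = 0$ for $i \geq 1$; ``finitely generated with bounded torsion'' is not enough (any positive $\Z_p$-rank makes $H^i(U,T_p(E))/p^n$ grow with $n$), and it does not follow from irreducibility, i.e.\ from $H^0(U,V_p(E))=0$. The paper obtains the vanishing for all $i$ from Lazard's isomorphism $H^q(U,V_p(E))\simeq H^q(\Lie(U),V_p(E))$ together with Serre's vanishing theorem for the Lie-algebra cohomology (using that $\Lie(U)$ contains a nonzero split semisimple element); an argument of this strength must be supplied. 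Third, the descent to finite level is formal only for $i=1$, where inflation is injective; for $i=2$ the kernel of inflation is controlled by $G$-equivariant homomorphisms from the kernel group to $E[p^n]$ (in the paper, $H^0(K_n,\Hom(\Gal(K^E_\infty/K^E_n),E[p^n]))$; in your setup, $\Hom_{\Gamma_c}(\Gamma_n, E[p^n])$), and bounding this uniformly in $n$ is a real step: the paper proves it vanishes for large $n$ (\autoref{claimHomvanish}) through an explicit analysis of $M_2(\F_p)=\F_p\oplus\mathfrak{sl}_2(\F_p)$ under (C1), plus a separate CM argument. Your remark ``for $i=2$ one argues similarly, or passes through the uniform presentation of congruence subgroups'' does not address this term.
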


In order to prove \autoref{propkerresbound}, 
we need the following lemmas. 

\begin{lem}
	\label{clm:5.2}
We assume the condition $\mathrm{(C1)}$ and 
also $E$ has complex multiplication by an order $\oo$ of 
an imaginary quadratic field $\Q(\sqrt{-d})$. 
Then, the fields 
$\Q(\sqrt{-d})$ and $K_\infty$
are linearly disjoint over $\Q$. 
\end{lem}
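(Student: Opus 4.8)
The plan is to argue by contradiction. Since $\Q(\sqrt{-d})/\Q$ is a quadratic Galois extension, the fields $\Q(\sqrt{-d})$ and $K_\infty$ fail to be linearly disjoint over $\Q$ exactly when $K:=\Q(\sqrt{-d})\subseteq K_\infty$, that is, when $G_{K_\infty}\subseteq G_K$. So I would assume $G_{K_\infty}\subseteq G_K$ and aim to deduce that $\rho^E\vert_{G_{K_\infty}}$ is not absolutely irreducible, contradicting~$\mathrm{(C1)}$.

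The key input is that the complex multiplication of $E$ is defined over $K$: for an elliptic curve over $\Q$ with CM by an order in the imaginary quadratic field $K$, every $\overline{\Q}$-endomorphism is already defined over $K$ (\cite[Chapter~II, Theorem~2.2]{Si2}). Hence $G_K$ commutes with $\oo=\End_{\overline{\Q}}(E)$ acting on $T_p(E)$, so the $G_K$-action on $V_p(E)$ is $K\otimes_{\Q}\Qp$-linear. By the standard theory of CM elliptic curves, $V_p(E)$ is free of rank one over the commutative $\Qp$-algebra $K\otimes_{\Q}\Qp$, so $\rho^E(G_K)$ is contained in $(K\otimes_{\Q}\Qp)^{\times}$ acting through the regular representation; in particular $\rho^E(G_K)$, and hence the subgroup $\rho^E(G_{K_\infty})$, is abelian. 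A family of commuting operators on a two-dimensional $\overline{\Q}_p$-vector space has a common invariant line, so $\rho^E\vert_{G_{K_\infty}}\otimes_{\Qp}\overline{\Q}_p$ is reducible; equivalently, $\rho^E\vert_{G_{K_\infty}}$ is not absolutely irreducible.

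On the other hand, $\mathrm{(C1)}$ forces $\rho^E\vert_{G_{K_\infty}}$ to be absolutely irreducible. Indeed, by the same argument as in \autoref{remendsurj}, $\mathrm{(C1)}$ makes the map $\Zp[G_{K_\infty}]\to M_2(\Fp)$ induced by $\rho_1^E$ surjective, and then Nakayama's lemma (applied to the finitely generated $\Zp$-module $M_2(\Zp)$) shows that the map $\Zp[G_{K_\infty}]\to M_2(\Zp)$ induced by $\rho^E$ is surjective as well; tensoring with $\overline{\Q}_p$, the image of $\overline{\Q}_p[G_{K_\infty}]$ spans $M_2(\overline{\Q}_p)$, so $V_p(E)\otimes_{\Qp}\overline{\Q}_p$ is a simple $\overline{\Q}_p[G_{K_\infty}]$-module, i.e.\ $\rho^E\vert_{G_{K_\infty}}$ is absolutely irreducible. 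This contradicts the conclusion of the previous paragraph, so $K\not\subseteq K_\infty$, and \autoref{clm:5.2} follows.

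The one ingredient that is not purely formal is the freeness of $V_p(E)$ of rank one over $K\otimes_{\Q}\Qp$ (equivalently, the faithfulness of the $\oo$-action on the Tate module up to isogeny), which I expect to be the main point to pin down carefully; it can be obtained from the complex-analytic description of CM curves, where the period lattice is a proper $\oo$-ideal, and transferred by comparison, or simply quoted from the literature. Everything else — the reduction of linear disjointness to a field containment, the passage between $\rho_1^E$ and $\rho^E$ via Nakayama, and the reducibility of abelian-image two-dimensional representations — is routine.
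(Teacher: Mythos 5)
Your proposal is correct. It rests on the same underlying tension as the paper's proof --- if $\Q(\sqrt{-d})\subseteq K_\infty$, then the CM action (defined over $\Q(\sqrt{-d})$ by \cite[Chapter~II, Theorem~2.2]{Si2}) forces the image of $G_{K_\infty}$ to be commutative, contradicting the irreducibility in (C1) --- but you implement it at the level of the $p$-adic representation, while the paper stays mod $p$: it quotes that $E[p]$ is a free $\oo/p\oo$-module of rank one, so that $\rho^E_1\vert_{G_{K_\infty}}$ factors through the abelian group $(\oo/p\oo)^\times$, which contradicts (C1) immediately, with no lifting step. Your route therefore needs two extra ingredients the paper avoids: the freeness of $V_p(E)$ of rank one over $\Q(\sqrt{-d})\otimes_{\Q}\Qp$ (the point you flag; it is standard, and in fact only faithfulness of the $\oo$-action on $V_p(E)$ plus a dimension count is needed), and the upgrade of (C1) to absolute irreducibility of $V_p(E)\otimes_{\Qp}\overline{\Q}_p$ via Burnside and Nakayama, which is legitimate and is essentially the content of \autoref{remendsurj}. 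What your version buys is a small gain in robustness: since the lemma is stated for an arbitrary order $\oo$, arguing rationally with $K\otimes_{\Q}\Qp$ sidesteps any care about $\oo/p\oo$ when $\oo$ is non-maximal (e.g.\ if $p$ divided the conductor), whereas the paper's argument is shorter and entirely mod $p$. Your reduction of linear disjointness to the containment $\Q(\sqrt{-d})\subseteq K_\infty$ is also exactly what the paper does implicitly.
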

\begin{proof}
Assume $\Q(\sqrt{-d})\subseteq K_\infty$ for the contradiction. 
As $E$ is defined over $\mathbb{Q}$, 
every endomorphism of $E$ is defined over $\mathbb{Q}(\sqrt{-d})$ (\cite[Chapter~II, Theorem~2.2 (b)]{Si2}), 
hence over $K_\infty$. 
Recall that $E[p]$ is a free $\oo/p\oo$-module of rank $1$ (\cite[Chapter~II, Proposition~1.4]{Si2}). 
The two dimensional representation $\rho^E_1\colon G_{K_{\infty}} \longrightarrow \Aut_{\mathbb{F}_p}(E[p])$ is given by a character 
$G_{K_{\infty}} \longrightarrow \Aut_{\oo \otimes_{\mathbb{Z}}\mathbb{Z}_p}(E[p]) \simeq (\oo/p\oo)^{\times}$. 
This contradicts to (C1).
\end{proof}

\begin{lem}\label{lemvanishGalVp}
Suppose that $E$ satisfies $\mathrm{(C1)}$ and $\mathrm{(C3)}$. 
Then, for any $i \in \Z_{\ge 0}$, it holds that 
$H^i(K^E_\infty/K_\infty,V_p(E))=0$.
\end{lem}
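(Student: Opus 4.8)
The statement to prove is that $H^i(K^E_\infty/K_\infty, V_p(E)) = 0$ for all $i \ge 0$, under (C1) and (C3). Write $G := \Gal(K^E_\infty/K_\infty)$; this is the image of $\rho^E|_{G_{K_\infty}}$ inside $\Aut_{\Z_p}(T_p(E)) \simeq GL_2(\Z_p)$, a $p$-adic Lie group, and $V_p(E)$ is a $\Q_p$-vector space with continuous $G$-action. The plan is to identify $G$ precisely enough that its continuous cohomology with coefficients in $V_p(E)$ can be computed (or seen to vanish) by Lie-algebra cohomology, which kills everything because the center acts by scalars that give a nontrivial character. First I would split into the non-CM and CM cases using (C3).

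\emph{Non-CM case.} Here (C1) says $\rho^E_1|_{G_{K_\infty}}$ is absolutely irreducible with image generating $M_2(\F_p)$; by the standard closed-subgroup argument (as in \autoref{remendsurj} and \autoref{lemC1}, via Nakayama and the fact that a closed subgroup of $SL_2(\Z_p)$ surjecting onto $SL_2(\F_p)$ for $p \ge 5$ — or with the usual care for $p=3$ — is all of $SL_2(\Z_p)$) one gets that $G$ is open in $SL_2(\Z_p)$, hence contains an open subgroup $G_0$ that is a uniform pro-$p$ group with Lie algebra $\mathfrak{sl}_2(\Q_p)$ after $\otimes \Q_p$. For such $G_0$, Lazard's theorem gives $H^\ast_{\cont}(G_0, V_p(E)) \cong H^\ast(\mathfrak{sl}_2, V_p(E) )$ (Lie algebra cohomology), and since $V_p(E)$ is the standard $2$-dimensional representation of $\mathfrak{sl}_2$, which is irreducible and nontrivial, all its Lie algebra cohomology vanishes (Whitehead's lemma, or a direct Casimir/weight computation). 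Then I would pass from $G_0$ to $G$ by a Hochschild--Serre spectral sequence for $1 \to G_0 \to G \to G/G_0 \to 1$ with $G/G_0$ finite: the coefficients $H^j_{\cont}(G_0, V_p(E))$ all vanish, so the whole cohomology of $G$ vanishes.

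\emph{CM case.} By (C3) the CM is by the maximal order $\oo$ of an imaginary quadratic field $F = \Q(\sqrt{-d})$, and by \autoref{clm:5.2}, $F$ and $K_\infty$ are linearly disjoint over $\Q$, so $G_{K_\infty}$ surjects onto $\Gal(F(E[p^\infty])/F) =: H$ with kernel $G_{F K_\infty^E}$ acting trivially — more to the point, $\rho^E|_{G_{F\cap K_\infty}} = \rho^E|_{G_{K_\infty}}$ factors through the abelian group $H$ once we base change to $F$, and $H$ is an open subgroup of $(\oo \otimes \Z_p)^\times$ acting on $V_p(E) \cong F_p := F \otimes_\Q \Q_p$ (a free rank-one $F_p$-module) through the inclusion $H \hookrightarrow F_p^\times$. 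I would handle $\Gal(K^E_\infty/K_\infty) = \Gal(F K^E_\infty / F K_\infty) \rtimes \Gal(F K_\infty / K_\infty)$ again by Hochschild--Serre: the normal subgroup $A := \Gal(F K^E_\infty/F K_\infty)$ is an open subgroup of $(\oo \otimes \Z_p)^\times$, hence virtually $\Z_p^2$ (or $\Z_p$ when $p$ is inert or ramified — in all cases a $p$-adic analytic abelian group), with Lie algebra $\mathfrak{a} = F_p$ acting on $V_p(E) = F_p$ by multiplication. Since this action is by a \emph{nonzero} (indeed injective) character of $\mathfrak{a}$, Lie algebra cohomology $H^\ast(\mathfrak{a}, F_p)$ vanishes (the trivial isotypic component is zero), so by Lazard $H^\ast_{\cont}(A, V_p(E)) = 0$, and then the finite quotient $\Gal(FK_\infty/K_\infty)$ contributes nothing.

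\emph{Main obstacle.} The technical heart is the application of Lazard's isomorphism between continuous cohomology of a compact $p$-adic analytic group and Lie algebra cohomology: this requires passing to a suitable open uniform (or just $p$-saturated / torsion-free) subgroup $G_0 \le G$ and then climbing back up finitely, and one must be slightly careful at $p=3$ in the non-CM case when checking that $\overline{\rho}$ generating $M_2(\F_3)$ still forces $G$ open in $SL_2(\Z_3)$ — but this is exactly the kind of closed-subgroup bookkeeping already used in \autoref{lemC1}. Everything else (vanishing of $\mathfrak{sl}_2$- and of the abelian Lie algebra cohomology on nontrivial representations, and the Hochschild--Serre collapse) is formal. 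If one prefers to avoid Lazard entirely, an alternative is a direct dévissage: $V_p(E)^{G} = 0$ and $H^i$ for the relevant free-$\Z_p$-rank-$3$ or $4$ analytic groups can be computed by Koszul complexes, but the Lazard route is cleanest.
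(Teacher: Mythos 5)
Your non-CM argument is essentially the paper's (openness of the image in $SL_2(\Z_p)$, Lazard's isomorphism, vanishing of $\mathfrak{sl}_2$-cohomology of the standard representation, then Hochschild--Serre over the finite quotient), but your justification of the openness does not work: condition (C1) only says that $\rho^E_1|_{G_{K_\infty}}$ is absolutely irreducible, hence that its image spans $M_2(\F_p)$ as an $\F_p$-algebra; it does \emph{not} say that the mod $p$ image contains $SL_2(\F_p)$, so the closed-subgroup criterion you invoke has no hypothesis to apply to. (Absolute irreducibility is compatible with a dihedral image --- exactly the CM situation, where the $p$-adic image is one-dimensional and not open.) The correct input, and the one the paper uses, is Serre's open image theorem, which is available precisely because you are in the non-CM branch; with that substitution this half is fine.

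The CM half has a genuine gap. The group $A=\Gal(FK^E_\infty/FK_\infty)$ is \emph{not} an open subgroup of $(\oo\otimes_{\Z}\Z_p)^\times$: since $FK_\infty=L(\mu_{p^\infty})$ already contains the whole cyclotomic tower and $\det\rho^E$ is the cyclotomic character, which on $(\oo\otimes_{\Z}\Z_p)^\times$ is the norm, $A$ lies (up to finite index) in the norm-one subgroup and has $\Z_p$-rank $1$, not $2$. Your weight computation (multiplication of $F_p$ on itself has no zero weight) is therefore carried out for the wrong Lie algebra. The conclusion does survive for the correct rank-one group, but verifying this is exactly the delicate point: one must produce a topological generator $x$ of a finite-index subgroup of $A$ acting on $V_p(E)\otimes_{\Q_p}\overline{\Q}_p$ with eigenvalues $\alpha,\alpha^{-1}$ where $\alpha$ has infinite order, and for the existence of such an $x$ one needs the openness of $\Gal(K^E_\infty/L)$ in $(\oo\otimes_{\Z}\Z_p)^\times$, which you assert but do not prove. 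The paper spends most of its CM case on precisely these points (the ray class field argument giving a $\Z_p^2$-quotient of $\overline{H}'$, the analysis of $H'[(1+c)^2]$, and the choice of $x$), and then concludes with an elementary procyclic vanishing argument (Tate/NSW) plus Hochschild--Serre, with no need for Lazard there. As written, your CM case does not close.
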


\begin{proof}
\textbf{(The case:~non CM)}
First, suppose that $E$ does not have complex multiplication.
Recall that $G_\Q$ acts on $\bigwedge^2_{\Z_p} T_p(E)$ 
via the cyclotomic character (cf.~\cite[Chapter V, Section 2]{Si1}).
By Serre's open image theorem \cite{Se3}, the image $H$ of 
the Galois representation  
\[
\xymatrix{\rho^E \colon 
\Gal(K^E_\infty/K_\infty)\,
\ar@{^{(}->}[r] &\Aut_{\Z_p}(T_p(E)) \simeq GL_2(\Z_p)
}
\]
becomes an open subgroup of $SL_2(\Z_p)$.
There exists an open normal standard pro-$p$ 
subgroup $U$ of $H$ (\cite[8.29 Theorem]{DDMS}), 
because $H$ is a $p$-adic Lie group, 
By \cite[Chapter V, (2.4.9) Th\'eor\`eme]{La}, 
we have 
\[
H^q(U,V_p(E))=H^q(Lie(U),V_p(E))
\]
for any $q \ge 0$.
Since $Lie(U)$ is an open Lie-subalgebra of 
\[
\mathfrak{sl}_2(\Z_p)
:= \set{ A \in p M_2(\Z_p) \mathrel{\vert} \Tr A=0 },
\]
a matrix of the form $\left(
\begin{array}{cc}
1+p^n & 0 \\
0 & -(1+p^n)
\end{array}
\right)$ for some $n$ belongs to $Lie(U)$.
By \cite[TH\'EOR\`EM 1]{Se2}, we obtain 
$H^q(Lie(U),V_p(E))=0$. 
Hence, the Hochschild--Serre spectral sequence 
\[
E^{pq}_2=H^p(H/U,H^q(U,V_p(E))) \Longrightarrow H^{p+q} (H,V_p(E))
\]
implies that 
$H^i(K^E_\infty/K_\infty,V_p(E))=H^i(H,V_p(E))=0$ for any $i \ge 0$.

\smallskip\noindent
\textbf{(The case:~CM)}
Next, let us assume that $E$ has complex multiplication. 
By the assumption (C3), 
the ring $\End(E)$ of 
endomorphisms of $E$ defined over $\overline{\mathbb{Q}}$
is the maximal order $\oo$ of 
an imaginary quadratic field $L := \Q(\sqrt{-d})$. 
Put $L_{\infty}^E = LK_{\infty}^E$.
Since $E$ is defined over $\mathbb{Q}$, 
every element of $\End(E)$ is defined over 
$L$ (\cite[Chapter~II, Theorem~2.2(b)]{Si2}).
Consider the representation $\rho\colon G_L \longrightarrow \Aut(T_p(E))$ 
which is arising from the action of $G_L$ on $T_p(E)$. 
This factors through an injective homomorphism $\Gal(L_{\infty}^E/L)\longrightarrow \Aut(T_p(E))$ 
which is also denoted by $\rho$. 
The Tate module 
$T_p(E) = \varprojlim_n E[p^n]$ 
is a free $\oo \otimes_{\Z} \Z_p$-module of rank $1$ 
because $E[p^n]$ is a free $\oo/p^n\oo$-module of rank $1$ (\cite[Chapter~II, Proposition~1.4]{Si2}),
As we noted above, every endomorphism of $E$ is defined over $L$, 
the action of $\Gal(L_{\infty}^E/L)$ 
commutes with the scalar multiplication by $\oo$, 
and we obtain the commutative diagram 
\begin{equation}
\label{eq:rhoEo}
\vcenter{\xymatrix{
\Gal(L^E_{\infty}/L) \ar@{^{(}->}[r]^{\rho}\ar@{^{(}->}[rd]_{\rho_{\oo}} & \Aut(T_p(E)) \\
& \ar@{^{(}->}[u]\qquad \Aut_{\oo\otimes_{\Z} \Zp}(T_p(E)) \simeq (\oo\otimes_{\Z}\Zp)^{\times}.
}}
\end{equation}
In particular, the extension $L_{\infty}^E/L$ is an abelian extension. 
The short exact sequence 
\[
\xymatrix@C=5mm{
0 \ar[r]& \Gal(L_{\infty}^E/L) \ar[r]& \Gal(L_{\infty}^E/\Q) \ar[r]& \Gal(L/\Q)\ar[r]& 0
}
\]
induces the action of $\Gal(L/\Q)$ to $\Gal(L_{\infty}^E/L)$.  
In fact, let $c$ be the unique generator of $\Gal(L/\Q)$ 
and take $\widetilde{c} \in \Gal(L_{\infty}^E/\Q)$ a lift of $c$. 
The action of $\Gal(L/\Q)$ on $\Gal(L_{\infty}^E/L)$ 
is given by 
$\sigma \longmapsto \widetilde{c}\sigma \widetilde{c}^{-1}$. 
The induced map $\rho_{\oo}$ preserves the action of $\Gal(L/\Q)$. 
Let $\pi_{\oo^\times}\colon 
\Aut_{\oo \otimes_{\Z} \Z_p}(T_p(E))
=(\oo \otimes_{\Z} \Z_p)^\times \longrightarrow 
(\oo \otimes_{\Z} \Z_p)^\times/\oo^\times$
be the natural surjection. 
We denote by $H'$ the image of $\rho^E_{\oo}$, 
and by $\overline{H}'$ that of
$\pi_{\oo^\times} \circ \rho^E_{\oo}$.
Let $L_{\overline{H}'}$ be the maximal subfield 
of $L^E_{\infty}/L$ 
fixed by the kernel of 
$\pi_{\oo^\times} \circ \rho^E_{\oo}$. 
We have 
\begin{equation}
	\label{eq:Gal(L^E/L)}
	\Gal(L^E_\infty/L)\simeq H'\subset(\oo\otimes_{\Z}\Zp)^{\times},\quad \mbox{and} \quad \Gal(L_{\overline{H}'}/L)\simeq \overline{H}'\subseteq (\oo\otimes_{\Z}\Zp)^{\times}/\oo^{\times}.
\end{equation}

\setcounter{claim}{0}
\begin{claim}
\label{claim:5.2.LH}
The extension $L_{\overline{H}'}/L$ 
is the maximal abelian extension
unramified outside $p$. 
\end{claim}
\begin{proof}
The elliptic curve $E$ is defined over $\Q$ so that 
the class number of $L$ is $1$ (\cite[Chapter II, Theorem 4.1]{Si2}). 
We denote by $L_{\overline{H}'_n}$  be the fixed field of $L_n^E := L(E[p^n])$ 
by the kernel of the composition
\[
\xymatrix@C=6mm{
\Gal(L_n^E/L)\,\ar@{^{(}->}[r] & \Aut_{\oo\otimes \Z/p^n\Z}(E[p^n]) \ar@{->>}[r]&  
\dfrac{\Aut_{\oo\otimes \Z/p^n\Z}(E[p^n])}{\Aut(E)}
\simeq (\oo/p^n\oo)^{\times}/\oo^{\times}.
}
\]
By the theory of complex multiplication (\cite[Chapter II, Theorem 5.6]{Si2}), 
$L_{\overline{H}'_n}$ is the ray class field of $L$ modulo $p^n\oo$. 
The claim follows from $L_{\overline{H}'} = \bigcup_n L_{\overline{H}'_n}$.
\end{proof}
By the global class field theory, 
the above claim implies that the group 
$\overline{H}'\simeq\Gal(L_{\overline{H}'}/L)$
has a quotient isomorphic to $\Zp^2$ 
(see, for instance, \cite[Chapter 13, Proposition 13.2 and Theorem 13.4]{Wa}).
The subgroup $H'$ is open in  
$(\oo \otimes_{\Z} \Zp)^\times$, and in particular, 
the complex conjugate $c$ acts non-trivially on $H'$.

\begin{claim}
\label{clm:5.2.2}
	The field $K^E_\infty$ contains $L = \Q(\sqrt{-d})$.
\end{claim}
\begin{proof}
If $K^E_\infty$ and $L$ are 
linearly disjoint over $\Q$, then 
the extension $L^E_{\infty} = LK^E_{\infty}/\Q$ 
becomes abelian.
Therefore, the complex conjugate $c$ acts on $\Gal(L_{\infty}^E/L)$ 
trivially, and it acts on $H'$ via $\rho_{\oo}$. 
This contradicts to 
the fact that $c$ acts on $H'$ non-trivially.
\end{proof}

From the above claim, we have $L_{\infty}^E = LK_\infty^E = K_\infty^E$.
\begin{claim}
\label{clm:5.2.3}
There exists a lift $\widetilde{c} \in \Gal(K^E_{\infty}/\Q)$  
of $c$ whose order is two such that 
\[
\Gal(K^E_{\infty}/\Q)=
\braket{\widetilde{c}} \ltimes \Gal(K_{\infty}^E/L)
\simeq \braket{\widetilde{c}} \ltimes H'.
\]
\end{claim}
\begin{proof}
From \autoref{clm:5.2.2}, we have $K^{E}_{\infty}\supseteq L$. 
Fix an embedding $\iota_{\C} \colon K_{\infty}^E \hookrightarrow \C$.
Consider the following short exact sequence:
\[
\xymatrix@R=5mm{
0\ar[r] & \Gal(K_{\infty}^E/L)\ar[d]_{\rho_{\oo}^E}^{\simeq} \ar[r] & \Gal(K_{\infty}^E/\Q) \ar[r] & \Gal(L/\Q) \ar[r]\ar@{=}[d] & 0\\ 
 & H' &  & \braket{c}& .
}
\]
The embedding $\iota_{\C}$ induces a splitting of this short exact sequence 
which sends $c$ to the restriction  
$\widetilde{c} \in \Gal(K^E_{\infty}/\Q)$ of the complex conjugation 
after regarding $K^E_{\infty}$ as a subfield of $\C$ via $\iota_{\C}$. 
This splitting gives 
$\Gal(K^E_{\infty}/\Q) \simeq 
\braket{\widetilde{c}} \ltimes H'$.
%
%
\end{proof}


\begin{claim}
\label{clm:Linf}
Putting $L_{\infty} = LK_{\infty}$, 
we have $L_{\overline{H}'} \cap \Q^{\ab}=
L_\infty$.
\end{claim}
\begin{proof}
By \autoref{clm:5.2}, the fields 
$K_{\infty}$ and $L = \Q(\sqrt{-d})$ are linearly disjoint. 
The composition field 
$L_{\infty} = K_{\infty}L$ is an abelian extension of $\Q$ 
so that $L_{\infty}\subseteq \Q^{\ab}$. 
The extension $K_{\infty} = \Q(\mu_{p^\infty}) = \bigcup_n \Q(\mu_{p^n})$ of $\Q$ 
is 
unramified outside $p$ 
and hence 
the extension $L_{\infty} = K_{\infty}L/L$ is 
unramified outside $p$.
Let us show that $L_{\overline{H}'} 
\cap \mathbb{Q}^{\mathrm{ab}} = L_{\infty}$. 
\autoref{claim:5.2.LH} implies that 
$L_{\overline{H}'} 
\cap \mathbb{Q}^{\mathrm{ab}} \supseteq L_\infty$ 
because 
the extension $L_\infty /L$ is unramified outside $p$. 
Accordingly, it suffices to show that 
$L_{\overline{H}'} 
\cap \mathbb{Q}^{\mathrm{ab}} \subseteq L_\infty$. 
As $E$ is defined over $\mathbb{Q}$, the class number of $L$ is one.
Put $p^*:=(-1)^{(p-1)/2}p$.
\autoref{clm:5.2} implies that $\mathbb{Q}(\sqrt{p^*})$ 
and $L$ are linearly disjoint over $\mathbb{Q}$ because 
$\mathbb{Q}(\sqrt{p^*})$ is contained in 
$K_1=\mathbb{Q}(\mu_p)$. 
We deduce that $p$ is unramified in $L/\mathbb{Q}$.
In fact, if $p$ were ramified in $L/\mathbb{Q}$, 
the Hilbert class field of $L$ would contain 
the quadratic extension $L(\sqrt{p^*})/L$. 
Since $L$ is the imaginary quadratic field of class number one, 
there exists a unique prime   
$q_L \in \set{2, 3, 7,8,11,19,43,67,163 }$ 
which is ramified in $L/\Q$. 

For each prime $\ell$, we denote 
by $I_\ell$ the inertia subgroup of 
$\Gal ((L_{\overline{H}'} 
\cap \mathbb{Q}^{\mathrm{ab}})/\Q)$ at $\ell$. 
We define $L_1$ to be the subfield of 
$L_{\overline{H}'} 
\cap \mathbb{Q}^{\mathrm{ab}}$ 
fixed by $I_p$, and 
$L_2$ to be that fixed by $I_{q_L}$. 
The extension $L_{\overline{H}'}/L$ is unramified outside $p$,  
and $L$ has class number one. 
The extension $(L_{\overline{H}'} 
\cap \mathbb{Q}^{\mathrm{ab}})/L$ does not contain 
the proper extension field of $L$ 
where every place above $p$ is unramified. 
As $p$ is unramified in $L/\mathbb{Q}$, 
we obtain $L_1=L$.  
The field $L_2$ coincides with 
the maximal intermediate field of 
$(L_{\overline{H}'} 
\cap \mathbb{Q}^{\mathrm{ab}})/\mathbb{Q}$ 
unramified outside $p$ 
because the extension 
$L_{\overline{H}'}/\Q$ is  uramified outside $\set{p,q_L}$.
The inclusion $L_2 \subseteq K_\infty$ holds, 
because 
$K_\infty/\mathbb{Q}$ 
is the maximal abelian extension unramified outside $p$. 
As a result, we obtain $L_1L_2 \subseteq L_\infty$.
Additionally, the extension 
$L_{\overline{H}'} 
\cap \mathbb{Q}^{\mathrm{ab}}$ of $L_1=L$ is unramified outside $p$. 
In particular, the extension  
$(L_{\overline{H}'} 
\cap \mathbb{Q}^{\mathrm{ab}})/L_1$ is unramified at $q_L$.  
Because of this, we have  
\[
I_p \cap I_{q_L}=
\Gal ((L_{\overline{H}'} 
\cap \mathbb{Q}^{\mathrm{ab}})/L_1) \cap 
I_{q_L}=\set{ 1 }.
\]
Consequently, we deduce that $L_{\overline{H}'} 
\cap \mathbb{Q}^{\mathrm{ab}}
=L_1L_2 \subseteq L_\infty$.
\end{proof}

By this \autoref{clm:Linf}, the abelianization  
of the Galois group $\Gal(L_{\overline{H}'}/\Q)$ is 
\begin{equation}
\label{eq:Linf}
	\Gal(L_{\overline{H}'}/\Q)^{\ab} = \Gal(L_{\overline{H}'}\cap \Q^{\ab}/\Q) = \Gal(L_\infty/\Q).
\end{equation}
The abelianization  
$\Gal(L_{\overline{H}'}/\Q)^{\ab}$ 
is the maximal quotient of $\Gal(L_{\overline{H}'}/\Q)$ 
where $c$ acts trivially, and we have 
$\Gal(L_{\overline{H}'}/\Q) \simeq 
\braket{\widetilde{c}}
\ltimes \overline{H}'$ by 
\autoref{clm:5.2.3}.  
Therefore, we obtain 
\begin{align*}
\Gal(L_\infty/\Q)& \stackrel{\eqref{eq:Linf}}{=}
\Gal(L_{\overline{H}'}/\Q)^{\ab}  \\
&\simeq \big(\braket{\widetilde{c}}
\ltimes \overline{H}'\big)/\big(\braket{\widetilde{c}} \ltimes (1-c)\overline{H}'
\big) \\
&\simeq \overline{H}'/(1-c)\overline{H}'.
\end{align*}
(Here, the group operation of  
$\overline{H}'$ is written in additive manner.)
Let $H'_\infty$ be the inverse image of 
$(1-c)\overline{H}'$ by 
$\pi_{\oo^\times}\vert_{H'} \colon H' \subseteq  
(\oo \otimes_{\Z} \Z_p)^\times \longrightarrow 
(\oo \otimes_{\Z} \Z_p)^\times/\oo^\times
$.
By \eqref{eq:Gal(L^E/L)}, 
we have 
\begin{equation}
	\label{eq:H0'}
	\Gal(L_{\overline{H}'}/L_{\infty}) \simeq (1-c)\overline{H}',\quad \mbox{and}\quad \Gal(L_{\infty}^E/L_{\infty}) \stackrel{\mathrm{\autoref{clm:5.2.2}}}{=} \Gal(K_{\infty}^E/L_{\infty}) \simeq H'_\infty.
\end{equation}
By \autoref{clm:5.2}, 
the fields $K_\infty$ and $L$ are 
linearly disjoint over $\Q$. 
We obtain an isomorphism 
$\Gal(L_{\infty}/K_{\infty}) \simeq \Gal(L/\Q)$ 
and an exact sequence
\[
\xymatrix@R=5mm{
0\ar[r] & \Gal(K_{\infty}^E/L_{\infty})\ar[d]_{\rho_{\oo}^E}^{\simeq} \ar[r] & \Gal(K_{\infty}^E/K_{\infty}) \ar[r] & \Gal(L_{\infty}/K_{\infty}) \ar[r]\ar[d]^{\simeq} & 0.\\ 
 & H'_\infty &  & \braket{c}&
}
\]
There exists a lift $\widetilde{c}' \in 
\Gal(K^E_{\infty}/K_\infty)$ of $c$. 
Note that $\widetilde{c}'$ and 
$\Gal(K^E_{\infty}/L_\infty) \simeq H'_\infty$ generate  
$\Gal(K^E_{\infty}/K_\infty)$, and we have 
$(\Gal(K^E_{\infty}/K_\infty):H'_\infty)=2$.
\[
\xymatrix@R=5mm{
 & L_{\infty}^E \ar@{--}@/_8mm/[rrdddd]|(.2){H'}\\
K_{\infty}^E \ar@{=}[ru] & & & L_{\overline{H}'}\ar@{-}[llu]\\ 
& & L_\infty \ar@{-}[ru]\ar@{-}[luu]\ar@{--}@/_4mm/[luu]|{H_\infty'}\\ 
& K_{\infty}\ar@{-}[ru]\ar@{-}[luu]\\ 
 & & & L \ar@{-}[luu]\ar@{-}[uuu]\ar@{--}@/_5mm/[uuu]|{\overline{H}'}\\
& &\Q\ar@{-}[ru]\ar@{-}[luu]
}
\]

\begin{claim}
	We have $H_\infty' \subseteq H'[(1+c)^2]$. 
Here, the $(1+c)^2$-torsion part of a 
$\mathbb{Z}[\Gal(L/\Q)]$-module $M$ is 
denoted by $M[(1+c)^2]$.
\end{claim}
\begin{proof}
Note that $(1-c)\overline{H}'$ is contained in 
$\overline{H}'[1+c]$ 
and $\oo^\times$ is contained in $H'[1+c]$. 
For any $x \in H_\infty' = \pi_{\oo^{\times}}^{-1}((1-c)\overline{H}')$, 
we have
$\pi_{\oo^{\times}}(x) \in (1-c)\overline{H}'\subseteq \overline{H}'[1+c]$. 
For $(1+c)x\in \Ker(\pi_{\oo^{\times}}) = \oo^{\times} \subseteq H'[1+c]$, 
we obtain 
$(1+c)^2x = (1+c)(1+c)x = 0$.
\end{proof}

Put $V:=(\oo \otimes_{\mathbb{Z}} \Z_p)^\times \otimes_{\Z}
\mathbb{Q}_p \simeq \mathbb{Q}_p^2$. 
Since $c$ acts on $V$ non-trivially, and  
$1+p \in V$ is a non-trivial element fixed by $c$, 
the eigenvalues of the action of $c$ on $V$ 
are $1$ and $-1$.
The group 
$(\oo \otimes_{\Z} \Zp)^\times[(1+c)^2]$ 
has a subgroup of finite index which is isomorphic to $\Zp$.
This implies that 
there exists an element 
$x  \in H'_\infty$ of infinite order 
such that the closure $H_\infty$ 
of $\langle x \rangle$ has finite index in $H'_\infty$.
Fix an embedding 
$\iota_p \colon L \hookrightarrow 
\overline{\Q}_p$.
The embedding $\iota_p$ induces  the ring homomorphism 
\(
\tilde{\iota}_p 
\colon \oo \otimes_{\Z} \Z_p \longrightarrow \overline{\Q}_p\) 
sending $a \otimes b$ to $\iota_p (a)b$.
The eigenvalues of the action of $x$ on 
$V_p(E) \otimes_{\Q_p} \overline{\Q}_p$ are  
$\tilde{\iota}_p(x)$ and 
$\tilde{\iota}_p(c(x))=\tilde{\iota}_p(x)^{-1}$. 
We obtain 
$V_p(E)[x-1]=0 $ and $V_p(E)/(1-x)=0$. 
Note that $H_\infty$ is topologically  generated by $x$. 
By \cite[(1.7.7) Proposition] {NSW} combined with  
\cite[(2.2) Corollary and (2.3) Proposition]{Ta}, 
it holds that 
$H^q(H_\infty,V_p(E))=0$ 
for any $q \ge 0$. 
Let us identify $H'_\infty$ with 
$\Gal(K^E_{\infty}/L_\infty)$.
We may regard $H_\infty$ 
as a normal subgroup of 
$\Gal(K^E_{\infty}/K_\infty)$
because $c$ acts on $H_\infty$ by $x \longmapsto x^{-1}$.
Hence,  by the Hochschild--Serre spectral sequence 
\[
E^{pq}_2=H^p(\Gal(K^E_{\infty}/K_\infty)
/H_\infty, 
H^q(H_\infty,V_p(E))) \Longrightarrow 
H^{p+q} (K^E_{\infty}/K_\infty,V_p(E)), 
\]
we deduce that 
$H^i(K^E_\infty/K_\infty,V_p(E))=0$ for any $i \ge 0$.
\end{proof}

In the proof of 
\autoref{propkerresbound}, 
we use a corollary of  
the following well-known lemma called
topological Nakayama's lemma.

\begin{lem}[Topological Nakayama's lemma]\label{lemtopNak}
Let $(R, \mathfrak{m})$ be a Noetherian 
complete local ring whose residue field is finite,  
and $M$ a compact Hausdorff $R$-module. 
Suppose that $\dim_{R/\mathfrak{m}}M/\mathfrak{m}M < \infty$.
Then, the $R$-module $M$ is finitely generated.
\end{lem}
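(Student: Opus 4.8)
The plan is to run the classical Nakayama argument — lift an $R/\mathfrak{m}$-basis of $M/\mathfrak{m}M$ to elements of $M$ and show the submodule they generate is all of $M$ — but to feed in compactness at exactly the two points where "finitely generated" alone would be insufficient: to know the candidate submodule is \emph{closed}, and to control the $\mathfrak{m}$-adic behaviour of the quotient.

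First I would record that $R$ is itself profinite. Since $R/\mathfrak{m}$ is finite and each $\mathfrak{m}^i/\mathfrak{m}^{i+1}$ is a finitely generated $R/\mathfrak{m}$-module (because $\mathfrak{m}^i$ is finitely generated over the Noetherian ring $R$), every $R/\mathfrak{m}^n$ has finite length, hence is finite. As $R$ is $\mathfrak{m}$-adically complete with $\bigcap_n \mathfrak{m}^n = 0$ (Krull's intersection theorem for the Noetherian local ring $R$), we get $R \cong \varprojlim_n R/\mathfrak{m}^n$, so $R$ is compact and $\{\mathfrak{m}^n\}_{n \ge 0}$ is a fundamental system of open neighbourhoods of $0$ in $R$.

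Next comes the key compactness input, which I expect to be the main point of the whole lemma: for \emph{any} compact Hausdorff topological $R$-module $L$ and any open neighbourhood $V$ of $0$ in $L$, there is an integer $n$ with $\mathfrak{m}^n L \subseteq V$, and consequently $\bigcap_n \mathfrak{m}^n L = 0$. To prove the first assertion, fix $V$ and apply continuity of the action $R \times L \to L$ at each point $(0,x)$ with $x \in L$: this yields an integer $n_x$ and an open $W_x \ni x$ with $\mathfrak{m}^{n_x} W_x \subseteq V$. Compactness of $L$ gives a finite subcover $L = W_{x_1} \cup \dots \cup W_{x_r}$; then $n := \max_j n_{x_j}$ works, since $\mathfrak{m}^n \subseteq \mathfrak{m}^{n_{x_j}}$ for all $j$. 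Letting $V$ range over a neighbourhood basis of $0$ and using that $L$ is Hausdorff forces $\bigcap_n \mathfrak{m}^n L = 0$. This is precisely where the topological hypotheses are used: for a general $R$-module the equality $\mathfrak{m}M = M$ does not imply $M = 0$, so the content of the lemma lives here.

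Finally the Nakayama step. Because $R/\mathfrak{m}$ is finite and $\dim_{R/\mathfrak{m}} M/\mathfrak{m}M < \infty$, the module $M/\mathfrak{m}M$ is finite; choose $m_1,\dots,m_d \in M$ whose images form an $R/\mathfrak{m}$-basis of it. The map $\varphi\colon R^d \to M$, $(r_i)_i \mapsto \sum_i r_i m_i$, is continuous, and $R^d$ is compact by the first step, so $N := \varphi(R^d) = \sum_i R m_i$ is compact, hence closed in the Hausdorff space $M$; thus $Q := M/N$ is again a compact Hausdorff topological $R$-module. By the choice of the $m_i$ we have $M = N + \mathfrak{m}M$, so $Q = \mathfrak{m}Q$ and therefore $Q = \mathfrak{m}^n Q$ for every $n$. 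Applying the previous paragraph to $L = Q$ gives $Q = \bigcap_n \mathfrak{m}^n Q = 0$, i.e.\ $M = N$ is generated by $m_1,\dots,m_d$. The two points that need care when writing this out are the closedness of $N$ (which is why profiniteness of $R$ is invoked) and the uniform choice of $n$ in the compactness step above.
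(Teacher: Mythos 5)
Your proof is correct, and its first half is exactly the paper's argument: compactness of $M$ plus continuity of the $R$-action forces $\mathfrak{m}^nM$ into any neighbourhood of $0$, and Hausdorffness then gives $\bigcap_n \mathfrak{m}^nM=0$. Where you diverge is in the concluding step. The paper at that point simply cites Eisenbud, Exercise 7.2 — a ``complete Nakayama'' statement proved by successive approximation: one writes $x=\sum r_i^{(0)}m_i+x_1$ with $x_1\in\mathfrak{m}M$, iterates, and sums the resulting series in the complete ring $R$, using $\bigcap_n\mathfrak{m}^nM=0$ to kill the error. You instead stay topological: you observe that $R$ is profinite (so $R^d$ is compact), that $N=\sum_i Rm_i$ is therefore a closed submodule, and that the compact Hausdorff quotient $Q=M/N$ satisfies $Q=\mathfrak{m}^nQ$ for all $n$, whence $Q=0$ by reapplying the separatedness claim. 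Both finishes are legitimate; yours is self-contained and makes the role of completeness (via compactness of $R^d$ and closedness of $N$) explicit, at the cost of having to check that $Q$ is again a compact Hausdorff topological module. One small point of care, which affects your write-up and the paper's equally: the step from ``all products $\mathfrak{m}^{n}\cdot W_{x_j}\subseteq V$'' to ``the submodule $\mathfrak{m}^nL\subseteq V$'' needs $V$ to be an open subgroup (so that finite sums stay in $V$); this is harmless because a compact Hausdorff module over a profinite ring with finite quotients $R/\mathfrak{m}^n$ is itself profinite and so has a basis of open submodules at $0$, but it is worth saying.
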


\begin{proof}
Since $M$ is compact, for any neighborhood $U$ of $0 \in M$, 
there exists an integer $n \in \Z_{\ge 0}$ such that 
$\mathfrak{m}^n M \subseteq U$. 
As $M$ is Hausdorff, we obtain $\bigcap_{n \ge 0} \mathfrak{m}^n M=0$. 
By \cite[Exercise 7.2]{Ei}, we deduce that 
$M$ is finitely generated over $R$ 
if $\dim_{R/\mathfrak{m}}M/\mathfrak{m}M < \infty$.
(See \cite[Lemma 13.16]{Wa} for the proof of 
\autoref{lemtopNak} in the case when 
$R=\Z_p[\![T]\!]$.) 
\end{proof}

\begin{cor}\label{cortopNak}
Let $M$ be a torsion $\Z_p$-module satisfying 
$\dim_{\F_p}M[p]< \infty$.
Then, it holds that $M$ is a cofinitely generated $\Z_p$-module. 
\end{cor}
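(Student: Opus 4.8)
The plan is to dualize and apply \autoref{lemtopNak}. Set $N := M^\vee = \Hom_{\Zp}(M,\Qp/\Zp)$; since $M$ is a torsion $\Zp$-module it is $p$-primary, and regarding it as a discrete group, Pontrjagin duality makes $N$ a compact Hausdorff $\Zp$-module. The first step is to identify $N/pN$: applying the exact Pontrjagin duality functor to the exact sequence $0 \to M[p] \to M \xrightarrow{\,p\,} M$ gives an exact sequence $M^\vee \xrightarrow{\,p\,} M^\vee \to M[p]^\vee \to 0$, hence a canonical isomorphism $N/pN \simeq M[p]^\vee$. By hypothesis $\dim_{\Fp} M[p] < \infty$, so $\dim_{\Fp} N/pN = \dim_{\Fp} M[p] < \infty$.

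Next I would apply \autoref{lemtopNak} to the Noetherian complete local ring $(R,\mathfrak{m}) = (\Zp, p\Zp)$, whose residue field $\Fp$ is finite: since $N$ is a compact Hausdorff $\Zp$-module with $\dim_{R/\mathfrak{m}} N/\mathfrak{m}N < \infty$, the lemma yields that $N$ is a finitely generated $\Zp$-module. Dualizing once more, $M \simeq N^\vee$ is the Pontrjagin dual of a finitely generated $\Zp$-module, which is exactly the assertion that $M$ is cofinitely generated.

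There is no real obstacle here; the only point that needs a word of care is that $N$ genuinely carries a compact Hausdorff topology so that \autoref{lemtopNak} applies, and this is automatic because a torsion $\Zp$-module is a discrete $p$-primary abelian group and Pontrjagin duality interchanges discrete and compact groups, so that $\Hom_{\Zp}(M,\Qp/\Zp) = \Hom_{\cont}(M,\Q/\Z)$.
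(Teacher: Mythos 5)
Your argument is correct and is essentially the paper's own proof: regard $M$ as a discrete $p$-primary group and apply the topological Nakayama lemma (\autoref{lemtopNak}) to the compact Pontrjagin dual $N=M^\vee$, using the exactness of duality to identify $N/pN$ with $M[p]^\vee$. The paper states this in one line; you have merely supplied the routine details, so nothing further is needed.
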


\begin{proof}
We regard $M$ as a topological group equipped with the discrete topology.
By applying \autoref{lemtopNak}
to the Pontrjagin dual of $M$, 
we  obtain \autoref{cortopNak}. 
\end{proof}

\begin{proof}[\textbf{Proof of \autoref{propkerresbound}}]
Take any $i \in \set{1 ,2}$.
Let $j \in \Z$ be any integer satisfying $0 \le j \le i$.
The group $\Gal(K^E_\infty/K_\infty)$ is topologically 
finitely presented because it
is isomorphic to a closed subgroup of  $GL_2(\Z_p)$.
This implies that 
$H^j(K^E_\infty/K_\infty,E[p])$
is of  finite order. 
The long exact sequence arising from the short exact sequence 
$0 \longrightarrow E[p]\longrightarrow E[p^{\infty}]\xrightarrow{\ p\ } E[p^{\infty}]\longrightarrow 0$ 
induces the surjective homomorphism 
$\xymatrix{H^j(K^E_{\infty}/K_{\infty}, E[p]) \ar@{->>}[r] & H^j(K^E_\infty/K_{\infty},E[p^{\infty}])[p]}$.
In particular, we have 
\[
\dim_{\F_p} H^j(K^E_\infty/K_\infty,E[p^\infty])[p]
\le \dim_{\F_p} H^j(K^E_\infty/K_\infty,E[p])<\infty.
\]
By  \autoref{cortopNak}, 
it holds that 
$H^j(K^E_\infty/K_\infty,E[p^\infty])$ 
is cofinitely generated over $\Z_p$. 
Moreover, the short exact sequence
$0 \longrightarrow T_p(E) \longrightarrow V_p(E) \longrightarrow E[p^{\infty}] \longrightarrow 0$ 
induces 
\[
H^j(K^E_\infty/K_\infty,V_p(E))\longrightarrow H^j(K^E_\infty/K_\infty,E[p^{\infty}])
\longrightarrow H^{j+1}(K^E_\infty/K_\infty,V_p(E)).
\]
Since $H^{j+1}(K^E_\infty/K_\infty,T_p(E))$ 
does not have a non-trivial divisible $\Z_p$-submodule 
by \cite[(2.1) Proposition]{Ta},
it follows from \autoref{lemvanishGalVp} 
that  
$\# H^j(K^E_\infty/K_\infty,E[p^\infty])
< \infty$. 
Take any $n \in \Z_{\ge 0}$. 
As $K_\infty/K_n$ is a pro-cyclic extension, 
the Hochschild--Serre spectral sequence 
\[
E^{pq}_2=H^p(K_\infty/K_n,
H^q(K^E_{\infty}/K_\infty,E[p^\infty])) \Longrightarrow 
H^{p+q} (K^E_{\infty}/K_n,E[p^\infty]) 
\]
implies that 
\[
\# H^i(K^E_\infty/K_n, E[p^\infty])
\le \prod_{q \le i} \set{ \# 
H^q(K^E_{\infty}/K_\infty,E[p^\infty]) }
< \infty.
\]
Therefore, the sequence 
$\set{
\# H^i(K^E_\infty/K_n, E[p^\infty])
}_{n \ge 0 }$ 
is bounded. The exact sequence
\[
H^{i-1}(K^E_\infty/K_n, E[p^\infty])/p^n\longrightarrow 
H^i(K^E_\infty/K_n, E[p^n]) \longrightarrow 
 H^i(K^E_\infty/K_n, E[p^\infty]) [p^n] 
\]
%
implies that $\set{ \# H^i(K^E_\infty/K_n, E[p^n]) }_{n\ge 0}$ is 
bounded. 
The inflation map 
\[
H^1(K^E_n/K_n, E[p^n]) \longrightarrow 
H^1(K^E_\infty/K_n, E[p^n])
\] 
is injective (\cite[Proposition B.2.5]{Ru}). 
the assertion of \autoref{propkerresbound} for $i=1$ follows from this.
In order to  prove \autoref{propkerresbound} for $i=2$, 
by considering the inflation-restriction sequence 
\begin{equation}
\label{seq:InfRes}
H^1(K_{\infty}^E/K_n^E,E[p^n])^{\Gal(K_n^E/K_n)} \longrightarrow  H^2(K_n^E/K_n,E[p^n])\longrightarrow H^2(K_{\infty}^E/K_n,E[p^n])
\end{equation}
(\cite[Proposition B.2.5 (ii)]{Ru}), 
it suffices to show that 
the order of 
\[
H^0(K_n, \Hom(\Gal(K^E_\infty/K^E_n),E[p^n])) 
\]
is bounded. 
Put $H_{n,m} := H^0(K_n, \Hom(\Gal(K^E_\infty/K^E_n),E[p^m]))$. 
The short exact sequence $0\longrightarrow E[p] \longrightarrow E[p^{n}] \longrightarrow E[p^{n-1}] \longrightarrow 0$ 
induces  an exact sequence  
\[
0 \longrightarrow H_{n,1} \longrightarrow H_{n,m}\longrightarrow H_{n,m-1}.
\]
The lemma below (\autoref{claimHomvanish}) says 
that  there exists an integer $N$ such that 
\[
H_{n,1} = H^0(K_n, \Hom(\Gal(K^E_\infty/K^E_n),E[p])) = 0 
\]
for all $n \ge N$.
Thus,  
we have a sequence 
of injective homomorphisms 
\[
\xymatrix@C=6mm{
H_{n,m}\ar@{^{(}->}[r] & H_{n,m-1} \ar@{^{(}->}[r] &\ \cdots\  \ar@{^{(}->}[r] & H_{n,1}.
}
\]
The lemma below again implies $H_{n,1} = 0$. 
In particular, we have 
\[
H_{n,n} = H^0(K_n, \Hom(\Gal(K^E_\infty/K^E_n),E[p^n])) = 0
\] 
for all $n\ge N$. 
Therefore, the sequence $\set{\# H^0(K_n, \Hom(\Gal(K^E_\infty/K^E_n),E[p^n]))}_{n\ge 0}$ 
is bounded. 
\end{proof}


\begin{lem}\label{claimHomvanish}
Suppose that $E$ satisfies $\mathrm{(C1)}$ and $\mathrm{(C3)}$. 
There exists an integer $N$ such that 
\[
H^0(K_m, \Hom(\Gal(K^E_\infty/K^E_m),E[p])) 
=0
\]
for any $m \in \Z_{\ge N}$.
\end{lem}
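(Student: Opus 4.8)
The plan is to describe $\Gal(K^E_\infty/K^E_m)$ explicitly for $m\gg 0$, to reduce the $\Hom$ to its maximal elementary abelian $p$-quotient, and then to use (C1) to kill the resulting $G_{K_m}$-equivariant maps. Write $N_m:=\Gal(K^E_\infty/K^E_m)$; for $m\ge 1$ this is a topologically finitely generated pro-$p$ group (since $\rho^E$ maps it into $1+pM_2(\Z_p)$), so every continuous homomorphism $N_m\to E[p]$ factors through $W_m:=N_m/N_m^p[N_m,N_m]$, a finite $\F_p$-vector space carrying the conjugation action of $G_{K_m}$, and therefore
\[
H^0(K_m,\Hom(N_m,E[p]))=\Hom_{G_{K_m}}(W_m,E[p]).
\]

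Next I would compute $W_m$ for large $m$, following the case division used in the proof of \autoref{lemvanishGalVp}. In the non-CM case, Serre's open image theorem gives $c$ with $\rho^E(\Gal(K^E_\infty/\Q))\supseteq 1+p^cM_2(\Z_p)$, so for $m\ge c$ we have $N_m\cong 1+p^mM_2(\Z_p)$ (a uniform pro-$p$ group) and $W_m\cong M_2(\F_p)=\mathrm{ad}(E[p])$, with $G_{K_m}$ acting by $\mathrm{Ad}\circ\rho^E_1$ (conjugation). In the CM case, (C3) makes $\End(E)$ the maximal order $\oo$ of $L=\Q(\sqrt{-d})$; since $\rho^E_1(G_\Q)$ is nonabelian by (C1), one gets $L\subseteq\Q(E[p])=K^E_1\subseteq K^E_m$, and as $K^E_\infty/L$ is abelian with $\Gal(K^E_\infty/L)$ open in $(\oo\otimes\Z_p)^\times$, for $m\gg 0$ we have $N_m\cong 1+p^m(\oo\otimes\Z_p)$ and $W_m\cong\oo/p\oo$. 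Since $p$ is unramified in $L$ (established in the proof of \autoref{lemvanishGalVp}) and $L\not\subseteq K_\infty$ (\autoref{clm:5.2}), the $G_{K_m}$-action on $W_m$ factors through $\Gal(LK_m/K_m)\cong\Z/2\Z$ acting on $\oo/p\oo$ by complex conjugation, so $W_m\cong\mathbf{1}\oplus\F_p(\chi_L)$ as a $G_{K_m}$-module, where $\chi_L$ is the quadratic character cutting out $L$ (nontrivial on $G_{K_m}$ because $L\not\subseteq K_\infty$).

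It remains to kill the $\Hom$. For $m\gg 0$ the image $\rho^E_1(G_{K_m})$ has stabilized to $\Gamma_\infty:=\rho^E_1(G_{K_\infty})$, which lies in $SL_2(\F_p)$ (the cyclotomic character is trivial on $G_{K_\infty}$) and is absolutely irreducible on $E[p]$ by (C1). In the CM case we are done: $\Hom_{G_{K_m}}(\mathbf{1}\oplus\F_p(\chi_L),E[p])=E[p]^{G_{K_m}}\oplus(E[p]\otimes\chi_L)^{G_{K_m}}$, and both summands vanish because $E[p]$, being absolutely irreducible as a $G_{K_m}$-module, has neither a nonzero fixed vector nor a stable line. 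In the non-CM case I must show $\Hom_{\Gamma_\infty}(\mathrm{ad}(E[p]),E[p])=0$. As $\det=1$ on $\Gamma_\infty$, one has $E[p]^{\vee}\cong E[p]$ and $\mathrm{ad}(E[p])^{\vee}\cong\mathrm{ad}(E[p])$, hence $\Hom_{\Gamma_\infty}(\mathrm{ad}(E[p]),E[p])\cong(E[p]^{\otimes 3})^{\Gamma_\infty}$; and since $p$ is odd, $E[p]^{\otimes 3}$ surjects onto $\mathrm{Sym}^3 E[p]$ with kernel $\cong E[p]^{\oplus 2}$, so using $E[p]^{\Gamma_\infty}=0$ we obtain an injection $\Hom_{\Gamma_\infty}(\mathrm{ad}(E[p]),E[p])\hookrightarrow(\mathrm{Sym}^3 E[p])^{\Gamma_\infty}$. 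The right-hand side vanishes: a nonzero $\Gamma_\infty$-invariant binary cubic form on $E[p]$ would have a zero locus in $\P^1$ stable under the image $\overline{\Gamma_\infty}\subseteq PGL_2(\F_p)$; a repeated zero would force $\overline{\Gamma_\infty}$ into a Borel, contradicting absolute irreducibility, while three distinct zeros force $\overline{\Gamma_\infty}\hookrightarrow S_3$, and then (using $\Gamma_\infty\subseteq SL_2(\F_p)$ and absolute irreducibility) $\Gamma_\infty$ must be the dicyclic group of order $12$, for which a short direct computation exhibits no invariant cubic. Taking $N$ larger than all the thresholds occurring above proves the lemma.

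The step I expect to be the main obstacle is the representation-theoretic vanishing in the non-CM case — handling an arbitrary absolutely irreducible image with trivial determinant, in particular the exceptional small-image situation — together with the uniformity bookkeeping needed to produce a single $N$ past which both $\rho^E_1(G_{K_m})$ and the group-theoretic structure of $\Gal(K^E_\infty/K^E_m)$ have stabilized.
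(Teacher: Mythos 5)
Your proposal is correct, and it reaches the key vanishing by a genuinely different route from the paper. The initial reduction is essentially the same: for $m$ large you identify $\Gal(K^E_\infty/K^E_m)$ via $\rho^E$ (Serre's open image theorem in the non-CM case, openness of the image in $(\oo\otimes_{\Z}\Z_p)^\times$ in the CM case) with $1+p^mM_2(\Z_p)$, resp.\ $1+p^m(\oo\otimes_{\Z}\Z_p)$, so that every continuous homomorphism to $E[p]$ factors through the mod-$p$ layer $M_2(\F_p)$, resp.\ $\oo/p\oo$, with the conjugation action; the paper's proof of \autoref{claimHomvanish} starts the same way (it phrases the layer as $\Gal(K^E_{m+1}/K^E_m)$). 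The difference is how the equivariant $\Hom$ is killed. In the non-CM case the paper splits $M_2(\F_p)=\F_p\oplus\mathfrak{sl}_2(\F_p)$ and analyses a hypothetical equivariant map $\mathfrak{sl}_2(\F_p)\to E[p]$ through its one-dimensional kernel (nilpotent versus semisimple generator, Borel containment, an index-two subgroup argument), whereas you use self-duality of $E[p]$ over $G_{K_\infty}$ to rewrite the $\Hom$ as $(E[p]^{\otimes 3})^{\Gamma_\infty}$, split off $\mathrm{Sym}^3E[p]$ with kernel isomorphic to $E[p]^{\oplus 2}$ (valid since $p$ is odd, including $p=3$), and exclude invariant binary cubics via the zero locus in $\P^1$: a repeated root gives a stable line, contradicting (C1), and three distinct roots force $\overline{\Gamma_\infty}\hookrightarrow S_3$, whence $-I\in\Gamma_\infty$ with $\Gamma_\infty$ dicyclic of order $12$, and $-I$ acts by $-1$ on $\mathrm{Sym}^3E[p]$ — this is exactly the ``short computation'' you defer, and it does close that case. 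In the CM case the paper instead exhibits an element of its group $H'_m$ of order prime to $p$ that acts trivially by conjugation on the source but without nonzero fixed vectors on $E[p]$ (which even yields $N=1$), while you compute the Frattini quotient as $\mathbf{1}\oplus\F_p(\chi_L)$ and kill both summands by irreducibility of $E[p]$ over $G_{K_\infty}$; both work, yours only for $m\gg 0$, which is all the statement requires. What your route buys is a uniform, representation-theoretic argument that avoids the explicit eigenvector/index-two step of the paper's semisimple subcase (in particular it nowhere needs $\rho^E(\Gal(K^E_\infty/K_m))$ to be pro-$p$), at the cost of handling the exceptional small image and of the threshold bookkeeping you flag, both of which check out; the paper's route is more elementary matrix manipulation and sharper in the CM threshold.
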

\begin{proof}
\textbf{(The case:~non-CM)}
First, suppose that $E$ does not have complex multiplication.
The representation $\rho^E\colon G_{\Q}\longrightarrow \Aut(T_p(E))\simeq GL_2(\Zp)$ 
factors through $\Gal(K_{\infty}^E/\Q) \longrightarrow GL_2(\Zp)$ 
and is also denoted by $\rho^E$. 
In this non-CM case, Serre's open image theorem \cite{Se3} implies that 
the group
$\rho^E(\Gal(K^E_\infty/\Q))$ is  an open subgroup of $GL_2(\Z_p)$.
We can take an integer $N \in \Z_{\ge 1}$ such that 
$\rho^E(\Gal(K^E_\infty/\Q))$ contains $1+p^NM_2(\Z_p)$.
Take any $m \in \Z_{\ge N}$. 
As we have 
\[
H^0(K_m, \Hom(\Gal(K^E_\infty/K^E_m),E[p]))= \Hom_{\Gal(K_\infty^E/K_m)}(\Gal(K^E_\infty/K^E_m),E[p])),
\]
it is enough to show that 
there is no non-trivial $\Gal(K_{\infty}^E/K_m)$-equivariant homomorphism 
$ \Gal(K_{\infty}^E/K_m^E)\longrightarrow E[p]$. 
The commutative diagram 
\[
\xymatrix{
\Gal(K_{\infty}^E/\Q) \ar@{^{(}->}[r]^-{\rho^E}\ar@{->>}[d] & \Aut(T_p(E))\simeq GL_2(\Zp)\ar@{->>}@<-10mm>[d] \\
\Gal(K_{m}^E/\Q) \ar@{^{(}->}[r] & \Aut(E[p^m])\simeq GL_2(\Z/p^m\Z)
}
\]
indicates that $\rho^E(\Gal(K^E_\infty/K^E_m)) \subseteq 1+ p^mM_2(\Zp)$. 
As we have  
\[
1 + p^mM_2(\Zp) \subseteq 1 + p^NM_2(\Zp) \subseteq \rho^E(\Gal(K_\infty^E/\Q)),
\]
it holds
\[
\rho^E(\Gal(K^E_\infty/K^E_m))=\rho^E(\Gal(K_\infty^E/\Q)) \cap 
\big(1+p^m M_2(\Z_p) \big)=1+p^m M_2(\Z_p).
\]
Hence, every group homomorphism 
$f\colon \Gal(K^E_\infty/K^E_m) \longrightarrow E[p]$
factors through 
\[
\Gal(K^E_{m+1}/K^E_m) \simeq \mathfrak{gl}_2(\F_p) 
=M_2(\F_p).
\] 
%
The group $G:=\rho^E(\Gal(K^E_\infty/K_m)) \subseteq GL_2(\Zp)$
acts on $M_2(\F_p)$ via the conjugate action, 
and we have 
\(
M_2(\F_p)=\F_p \oplus \mathfrak{sl}_2(\F_p) 
\)
as $\F_p[G]$-modules, where we set
\[
\mathfrak{sl}_2(\F_p):=
\set{ A \in  M_2(\F_p) | \Tr A=0 }.
\]
The condition (C1) for $E$ implies that 
there is no non-trivial 
$G$-equivariant homomorphism
$\F_p \longrightarrow E[p]$.
Suppose that 
there is a non-trivial $G$-equivariant homomorphism
$f \colon \mathfrak{sl}_2(\F_p) \longrightarrow E[p]$, 
and show that this assumption leads to a contradiction.
Put $V:=\Ker(f)$. 
By (C1), we have
$\dim_{\F_p} \Im(f)=2$, and $\dim_{\F_p} V=1$. 
Take any non-zero $A \in V$. 
\begin{itemize}
\item 
First, let us suppose that $A$ is nilpotent. 
In this case, there exists a matrix $P \in GL_2(\F_p)$ 
such that $A=P\begin{pmatrix}
0 & 1 \\
0 & 0
\end{pmatrix}P^{-1}$.
Since $G$ acts via the conjugate action 
on the space $V := \Fp A$, for any $B\in G$ 
there exists  $a\in \Fp^{\times}$ such that 
$BAB^{-1} = aA$.
This implies that if $v \in \Fp^2$ is 
an eigenvector  of $A$, then  
$Bv$ is also an eigenvector of $A$. 
As a result, the group 
$G$ is contained in 
the Borel subgroup 
$P \begin{pmatrix}
\F_p^\times & \F_p \\
0 & \F_p^\times
\end{pmatrix}P^{-1}$ of $GL_2(\F_p)$.
This implies that 
$G$ acts on 
the subspace $W \subseteq \mathfrak{sl}_2(\F_p)\smallsetminus V$ generated by 
$P\begin{pmatrix}
1 & 0 \\
0 & -1
\end{pmatrix}P^{-1}$. 
In fact, for 
$Q = P \begin{pmatrix}
	a & b \\ 0 & d
\end{pmatrix}P^{-1} \in G$, 
we have $QP\begin{pmatrix}
	1 & 0 \\ 0 & -1 
\end{pmatrix}
P^{-1}Q^{-1} \in W$.
The image of 
$W$ by $f$ spans 
a proper $G$-stable $\F_p$-subspace of $E[p]$.
This contradicts to (C1). 

\item Next, suppose that 
$A$ is not nilpotent. 
If we assume the matrix $A\in \mathfrak{sl}_2(\F_p)$ has one eigenvalue $\alpha$, 
then $0 = \Tr A = 2\alpha$. 
Since $p$ is odd, we have $\alpha = 0$ and $A$ is nilpotent.
The matrix  
$A$ has two distinct eigenvalues $\alpha, -\alpha$ in 
$\overline{\F}_p$.  
Since $V$ is stable under the conjugate action of $G$, 
for any $B\in G$,  
there exists some $a \in \Fp^{\times}$ such that 
$BAB^{-1} = a A$. 
For each eigenvalue $\beta \in \set{\alpha,-\alpha}$ of $A$, 
we denote by $V_\beta \subseteq \overline{\mathbb{F}_p}^2$ 
the eigenspace associated with the eigenvalue $\beta$. 
Take any non-zero $v \in V_{\alpha}$. 
Note that 
$Bv$ is also an eigenvector of $A$, 
for we have $BA=aAB$. 
Suppose that $Bv \in V_{-\alpha}$. 
The group $G$ acts on 
$\set{ V_\alpha , V_{-\alpha } }$ 
transitively, and 
$G$ has a subgroup of index $2$. 
This contradicts to the fact that 
$G$ is a pro-$p$-group.
Because of this, we obtain $Bv \in V_{\alpha}$.
This implies that $V_\alpha$ is 
$G$-stable. This contradicts to (C1).
\end{itemize}
Hence, there is  no non-trivial 
$G$-equivariant homomorphism
$\mathfrak{sl}_2(\F_p) \longrightarrow E[p]$, 
and the assertion  
for the non-CM case is finished.

\smallskip\noindent
\textbf{(The case:~CM)}
Suppose that $E$ has complex multiplication. 
By the assumption (C3), the ring $\End(E)$ is the maximal order $\oo$ of 
some imaginary quadratic field $L := \Q(\sqrt{-d})$.
As we shall see below, in this case, we can take $N:=1$. 
Take any $m \in \Z_{\ge 1}$,  
and 
put $G:=\Gal(K^E_{\infty}/K_m)$. 
Let $H'_m$ be the subgroup of 
$(\oo \otimes_{\Z} \Z_p)^\times$ 
corresponding to 
$\Gal(K^E_{\infty}/L_m)$
by 
$$
\rho_{\oo}^E \colon \Gal(K^E_\infty/L)
\longrightarrow \Aut_{\oo\otimes_{\mathbb{Z}}\mathbb{Z}_p}
(T_p(E))=(\oo \otimes_{\mathbb{Z}}\mathbb{Z}_p)^\times, 
$$
where $L_m = K_mL$ 
(cf.\ \eqref{eq:rhoEo}). 
Recall that $L = \Q(\sqrt{-d})$ and $K_\infty$ 
are linearly disjoint over $\Q$ (\autoref{clm:5.2}) 
and $LK_{\infty}^E = K_{\infty}^E$ by \autoref{clm:5.2.2} in the proof of \autoref{lemvanishGalVp}. 
There exists a lift 
$\widetilde{c}_m \in G = \Gal(K^E_{\infty}/K_m)$
of the generator $c \in \Gal(L/\Q)$. 
Note that $G$ is generated by 
$\widetilde{c}_m$ and $H'_m$, and $H'_m$ is a normal 
subgroup of 
$G$ of index two.

\setcounter{claim}{0}
\begin{oneclaim}
There exists a 
non-trivial 
element of $H_m'$ whose order is prime to $p$. 	
\end{oneclaim}
\begin{proof}
Suppose that $H_m'$ has no non-trivial element 
whose order is prime to $p$ for the contradiction.
Then $H'_m$ becomes a pro-$p$ group, and hence 
there exists a non-zero element $a \in E[p]$ fixed by $H'_m$ 
(cf.\ \cite[{{\sc Chapitre}~IX, \S 1, {\sc Lemme}~2}]{Se1}).
\begin{itemize}
\item 
If $a$ is an eigenvector of $\widetilde{c}_m$, then $a$ spans 
a proper $G$-stable $\F_p$-subspace of $E[p]$. 
\item 
Let us suppose that $a$ is  
not an eigenvector of $\widetilde{c}_m$. 
Note that $H'_m$ acts trivially on 
both $a$ and 
$\widetilde{c}_m(a)$, 
for $H'_m$ is a normal subgroup of $G$. 
Since $E[p]$ is spanned by $\set{a , \widetilde{c}_m(a) }$ over $\F_p$, 
the action of $H'_m$ on $E[p]$ is trivial.
The action of $G$ on $E[p]$ 
factors through the cyclic group 
$G/H'_m$ of order two, especially prime to $p$, 
generated by the image of $\widetilde{c}_m$.
\end{itemize} 
In any cases, it contradicts to (C1).
As a result, there exists a 
non-trivial 
element $H'_m$ whose order is prime to $p$. 
\end{proof}

Take any 
non-trivial 
element $x \in H'_m$ whose order is prime to $p$.
Since  $[K_m:K_1]$ and 
$[K^E_{m}:K^E_1]$ are powers of $p$, 
we may regard $x$ as an element of $\Gal(K^E_\infty/K_m)$.  
Since the order of $x \in (\oo \otimes_{\Z} \Z_p)^\times$ is prime to $p$, 
we also note that there is no non-trivial element of $E[p]$  fixed by $x$.
However,  
the element $x$ acts trivially on $\Gal(K^E_{m+1}/K^E_m)$ 
because $\Gal(K^E_{m+1}/K^E_m)$ is a subquotient of 
the abelian group $H'_m$ which contains $x$.
This implies that  there is no non-trivial 
$G$-equivariant homomorphism
$\Gal(K^E_{m+1}/K^E_m) \longrightarrow E[p]$.
This completes the proof of  \autoref{claimHomvanish}
\end{proof}

\subsection{The kernel and the cokernel of the restriction maps}

The goal of this subsection is the following proposition  
which is a key of the proof of \autoref{thmmain}.

\begin{prop}\label{proprescyctoKn}
Let $p$ be a prime number 
at which the elliptic curve $E$ has good reduction. 
Suppose that $E$ satisfies the conditions {\rm (C1)}, {\rm (C2)} and {\rm (C3)}. 
Let 
\[
\mathrm{res}_n:=\mathrm{res}_n^{\mathrm{Sel}}
\colon \mathrm{Sel}_p(K_n,E[p^n])
\longrightarrow H^0(K_n, \mathrm{Sel}_p(K^E_n,E[p^n])).
\]
be the restriction map.
Then, the following hold.
\begin{enumerate}
\item There exists a non-negative integer 
$\nu_{\mathrm{res}}^{\Ker}$ such that 
\[
\# \Ker (\mathrm{res}_n) \le p^{\nu_{\mathrm{res}}^{\Ker}}
\]
for any  $n \in \Z_{\ge 0}$. 
\item There exists a non-negative integer 
$\nu_{\mathrm{res}}^{\Coker}$ such that 
\[
\# \Coker (\mathrm{res}_n) \le p^{\nu_{\mathrm{res}}^{\Coker}}
\]
for any  $n \in \Z_{\ge 0}$. 
\end{enumerate}
\end{prop}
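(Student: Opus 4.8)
The plan is to derive both statements from the inflation--restriction sequence for $K^E_n/K_n$, taking \autoref{propkerresbound} as the main global input and supplementing it with a \emph{local} analogue at the places above $p$ and with the condition (C2) at the remaining bad places. Part (1) is almost immediate: if $s\in\mathrm{Sel}_p(K_n,E[p^n])$ lies in $\Ker(\mathrm{res}_n)$, then $s$ is killed by the restriction $H^1(K_n,E[p^n])\to H^1(K^E_n,E[p^n])$, whose kernel is $H^1(K^E_n/K_n,E[p^n])$ by inflation--restriction; hence $\Ker(\mathrm{res}_n)\subseteq H^1(K^E_n/K_n,E[p^n])$, and the latter has order bounded independently of $n$ by \autoref{propkerresbound} for $i=1$.

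For part (2), fix the finite set $\Sigma$ of places of $\Q$ above $pD_E$ and $\infty$ as in \autoref{remSigma}, so that the fine Selmer group of $K_n$ (resp.\ of $K^E_n$) is the kernel of the localization map from $H^1(K_{n,\Sigma}/K_n,E[p^n])$ (resp.\ $H^1(K^E_{n,\Sigma}/K^E_n,E[p^n])$) to $\bigoplus_{v\in\Sigma}\mathcal{H}_v$, where $\mathcal{H}_v=H^1(K_{n,v},E[p^n])$ if $v\mid p$ (the fine condition there being that the class vanishes) and $\mathcal{H}_v=H^1(K_{n,v},E[p^n])/H^1_f(K_{n,v},E[p^n])$ if $v\nmid p$. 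Restriction gives a morphism between these two four-term exact sequences; applying the snake lemma after replacing the localization targets by the images of the middle terms, and then taking $\Gal(K^E_n/K_n)$-invariants, one finds that $\Coker(\mathrm{res}_n)$ has order bounded in terms of: (a) the cokernel of $H^1(K_{n,\Sigma}/K_n,E[p^n])\to H^1(K^E_{n,\Sigma}/K^E_n,E[p^n])^{\Gal(K^E_n/K_n)}$, which injects into $H^2(K^E_n/K_n,E[p^n])$ by the five-term sequence and is bounded by \autoref{propkerresbound} for $i=2$; (b) the kernel of the same map, namely $H^1(K^E_n/K_n,E[p^n])$, bounded by \autoref{propkerresbound} for $i=1$; and (c) the local kernels $\Ker\bigl(\mathcal{H}_v\to\bigoplus_{w\mid v}\mathcal{H}_w\bigr)$ over the finitely many $v\in\Sigma$.

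It thus remains to bound (c) uniformly in $n$. At $v\mid p$: since $E[p^n]\subseteq E(K^E_n)$, the Galois module $E[p^n]$ is trivial over $K^E_{n,w}$, so inflation--restriction identifies $\Ker\bigl(H^1(K_{n,v},E[p^n])\to H^1(K^E_{n,w},E[p^n])\bigr)$ with $H^1(K^E_{n,w}/K_{n,v},E[p^n])$, which we bound by repeating the argument of \autoref{propkerresbound} locally: the decomposition subgroup of $\Gal(K^E_\infty/K_\infty)$ at $w$ acts on $V_p(E)$ through a $p$-adic Lie algebra whose invariants and first cohomology with values in $V_p(E)$ both vanish --- this is verified case by case, the local representation over $K_{\infty,v}$ being irreducible when $E$ is supersingular at $p$ and, when $E$ is ordinary, a (possibly split) extension of two unramified characters of infinite order --- so that $H^j(K^E_{\infty,w}/K_{\infty,v},V_p(E))=0$ for $j=0,1$; one then passes to $E[p^n]$ via $0\to T_p(E)\to V_p(E)\to E[p^\infty]\to 0$ together with \cite[(2.1) Proposition]{Ta}, and descends along the procyclic extension $K_{\infty,v}/K_{n,v}$. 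At $v\nmid p$ where $E$ has potentially multiplicative reduction below $v$: (C2), in the form \autoref{lemC2}~(d), gives $H^0(K_{n,v},E[p^n])=0$, hence by local duality and the Weil self-duality $E[p^n]\simeq\Hom(E[p^n],\mu_{p^n})$ also $H^2(K_{n,v},E[p^n])=0$, and the local Euler characteristic formula (trivial for $v\nmid p$) then forces $H^1(K_{n,v},E[p^n])=0$, so $\mathcal{H}_v=0$. At the remaining $v\nmid p$, $E$ has potentially good reduction, $E(K_{n,v})[p^\infty]$ is finite of order bounded in $n$, and $\mathcal{H}_v$ is bounded by local duality.

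The main obstacle is the local analysis at the primes above $p$: proving that $V_p(E)$ has no invariants and no first cohomology for the local decomposition Lie algebra in each reduction type, and then carrying out the descent from $K_\infty$ to $K_n$ with bounds independent of $n$; keeping the diagram chase in part (2) honest (in particular, the localization maps need not be surjective, so one must work with images and be careful when passing to $\Gal(K^E_n/K_n)$-invariants) is a secondary technical point. The contributions of the bad places away from $p$ are comparatively routine once (C2) is invoked, although the potentially-good-reduction case still relies on the uniform finiteness of $E(K_{n,v})[p^\infty]$.
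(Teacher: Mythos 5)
Part (1) and the global skeleton of your part (2) are exactly the paper's argument: inflation--restriction plus the snake lemma reduce everything to $H^1(K^E_n/K_n,E[p^n])$ and $H^2(K^E_n/K_n,E[p^n])$ (both bounded by \autoref{propkerresbound}) and to the local kernels, so the only real question is your treatment of the local terms. Away from $p$ your route is correct and in fact simpler than the paper's. At a place $v\nmid p$ over a potentially multiplicative prime, \autoref{lemC2} (d) does follow from (C2) (the paper proves (a)$\Rightarrow$(b), (b)$\Leftrightarrow$(c) and (b)\&(c)$\Rightarrow$(d)), so $H^0(K_{n,v},E[p^n])=0$; with the Weil self-duality, local Tate duality kills $H^2$, and the prime-to-$p$ local Euler characteristic then forces $H^1(K_{n,v},E[p^n])=0$, so the whole local term vanishes --- the paper instead carries out the Tate-curve analysis of $Y_n, Z_n$ in \autoref{claimlocalres}. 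At the remaining places $v\nmid p$ the reduction stays additive potentially good for all $n$ (the cyclotomic tower is unramified at $\ell\ne p$), which is exactly the hypothesis of \autoref{thmboundpotgoodred}; the same duality/Euler-characteristic count bounds $\# H^1(K_{n,v},E[p^n])$ by $(\# E(K_{n,v})[p^\infty])^2$, hence your bound is correct, replacing the paper's analysis via the maximal unramified subextension. Since the number of places of $K_n$ above each bad $\ell$ stabilizes, summing these local bounds is harmless.

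The gap is at $p$, which is also the technical core of the paper's proof. You assert that $H^0$ and $H^1$ of $\Gal(K^E_{\infty,w}/K_{\infty,v})$ with coefficients in $V_p(E)$ vanish, ``verified case by case'', but this is precisely what needs proof and it does not follow from what you write: irreducibility (supersingular case) only gives $H^0=0$, and naming the two unramified characters (ordinary case) gives nothing until you know they have infinite order on the decomposition group, i.e.\ that the unit root $\alpha$ (and $p/\alpha$) is not a root of unity --- exactly the Weil-bound/Imai input that the paper channels through \autoref{thmboundpotgoodred}. To get $H^1=0$ you must actually run the Lazard/Lie-algebra computation for each possible local image (Borel with or without unipotent part in the ordinary case, nonsplit Cartan or its normalizer in the supersingular case), and the descent from $K_\infty$ to $K_n$ additionally requires finiteness of $H^q(K^E_{\infty,w}/K_{\infty,v},E[p^\infty])$ for $q\le 1$ (cofinite generation from topological finite generation plus Tate's no-divisible-submodule result), exactly as in the proof of \autoref{propkerresbound}; note that only $H^1$ is needed at finite level, so no local analogue of \autoref{claimHomvanish} is required. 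This program is viable, but as written it is an outline, whereas the paper proves the local bound at $p$ directly at finite level: in the ordinary case via the filtration $\Fil E[p^n]$ and the matrix computation showing $a^k\ne 1$ (again via Imai), and in the supersingular case by showing $H^1(K^E_{n,w}/K_{n,v},E[p])=0$ using the level-two fundamental characters. So: same skeleton, a genuine and correct simplification away from $p$, but the decisive local estimate at $p$ is asserted rather than proved.
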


%
%

In order to prove \autoref{proprescyctoKn}, 
we need the following theorem: 

\begin{thm}\label{thmboundpotgoodred}
Let $\ell$ be a prime number, and 
$F/\mathbb{Q}_\ell$ a finite extension. 
Fix an embedding 
$\overline{\Q} \hookrightarrow \overline{F}$, and 
regard $\mu_{p^\infty}$ as a subset of $\overline{F}$. 
If $\ell$ is distinct from $p$, 
suppose that $E_{F(\mu_{p^n})}$ has 
additive reduction for any $n\ge 1$. 
Then, the sequence 
\[
\set{ 
\# E(F(\mu_{p^n}))[p^\infty]  
}_{n \ge 0}
\]
is bounded.
\end{thm}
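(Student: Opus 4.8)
The plan is to split according to whether $\ell=p$ or $\ell\neq p$, the former being the substantive case. Suppose first $\ell=p$. Since $E$ has good reduction at $p$ by our standing hypothesis, its base change $E_F$ to the finite extension $F/\Qp$ still has good reduction, and hence by Imai's theorem \cite{Im} the group $E(F(\mu_{p^\infty}))[p^\infty]$ is finite. As $F(\mu_{p^\infty})=\bigcup_n F(\mu_{p^n})$, every $p$-power torsion point of $E$ over $F(\mu_{p^\infty})$ is already defined over some finite layer, so $E(F(\mu_{p^\infty}))[p^\infty]=\bigcup_n E(F(\mu_{p^n}))[p^\infty]$ is an increasing union; since this union is finite, the orders $\#E(F(\mu_{p^n}))[p^\infty]$ stabilize, hence are bounded.

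Now suppose $\ell\neq p$, and write $F_n:=F(\mu_{p^n})$, with residue field $k_n$ of characteristic $\ell$. Fix $n\ge 1$; by hypothesis $E_{F_n}$ has additive reduction. I would use the filtration $E_1(F_n)\subseteq E_0(F_n)\subseteq E(F_n)$ attached to the N\'eron model of $E$ over the valuation ring of $F_n$ (\cite[Chapter~VII]{Si1}): the kernel of reduction $E_1(F_n)\simeq\widehat{E}(\mathfrak{m}_{F_n})$ is a pro-$\ell$ group, hence has no nonzero $p$-power torsion, while additivity of the reduction gives $E_0(F_n)/E_1(F_n)\simeq\mathbb{G}_a(k_n)$, a finite $\ell$-group. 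Thus $E_0(F_n)$ is pro-$\ell$, so $E_0(F_n)[p^\infty]=0$, and therefore $E(F_n)[p^\infty]$ embeds into the component group $E(F_n)/E_0(F_n)$. By the Kodaira classification of additive reduction types the component group has order at most $4$ (\cite[Chapter~IV]{Si2}); since $p$ is odd this yields $\#E(F_n)[p^\infty]\le 4$ for all $n\ge 1$ (in fact it is trivial unless $p=3$). Combined with the finiteness of $E(F)[p^\infty]$, this shows $\{\#E(F(\mu_{p^n}))[p^\infty]\}_{n\ge 0}$ is bounded.

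I expect the case $\ell=p$ to be the only real obstacle: there $F(\mu_{p^\infty})/F$ is infinitely ramified, and the assertion genuinely fails for elliptic curves with multiplicative reduction --- the Tate parametrization already places $\mu_{p^\infty}$ inside the $p$-power torsion over $F(\mu_{p^\infty})$ --- so no elementary argument is available and one must invoke the arithmetic input that good reduction over a $p$-adic field forces finiteness of the $p$-power torsion over the cyclotomic tower, namely Imai's theorem. The case $\ell\neq p$ is by contrast purely formal, resting only on the boundedness of the component groups of additive fibers.
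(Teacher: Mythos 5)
Your proposal is correct and follows essentially the same route as the paper: the case $\ell=p$ is dispatched by Imai's theorem using the standing good-reduction hypothesis at $p$, and the case $\ell\neq p$ uses the filtration $E_1(F_n)\subseteq E_0(F_n)\subseteq E(F_n)$, the absence of $p$-power torsion in the formal group and in $\widetilde{E}_{ns}(\kappa_n)\simeq\mathbb{G}_a(\kappa_n)$ for additive reduction, and the bound $\#\bigl(E(F_n)/E_0(F_n)\bigr)\le 4$. Your packaging of the middle step as ``$E_0(F_n)$ is pro-$\ell$'' is just a mild rephrasing of the paper's exact-sequence argument with $E_1(F_n)[p^m]=E_1(F_n)/p^mE_1(F_n)=0$.
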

\begin{proof}
For the case $\ell = p$, this follows from Imai's result \cite{Im}. 
Consider the case $\ell \neq p$. 
For every $n\ge 1$, put $F_n := F(\mu_{p^n})$ and we denote by $\kappa_n$ the residue field of $F_n$. 
Following the notation in \cite[Chapter VII, Section 2]{Si1}, 
we denote by $\pi\colon E_{F_n}(F_n)\longrightarrow \widetilde{E}_{F_n}(\kappa_n)$ 
the reduction map. 
We define 
$\widetilde{E}_{F_n,\mathrm{ns}}$ to be the set of non-singular points in the reduction $\widetilde{E}_{F_n}$ 
and put 
$E_{F_n,0}(F_n) := \pi^{-1}(\widetilde{E}_{F_n,\mathrm{ns}}(\kappa_n))$ the group of rational points whose reduction is non-singular. 
The reduction map $\pi$ induces a short exact sequence 
\begin{equation}
\label{seq:EFn}    
0 \longrightarrow E_{F_n,1}(F_n) \longrightarrow E_{F_n,0}(F_n) \longrightarrow \widetilde{E}_{F_n,\mathrm{ns}}(\kappa_n)\longrightarrow 0, 
\end{equation}
where the left term $E_{F_n,1}(F_n)$ is defined by the exactness 
(\cite[Chapter~VII, Proposition~2.1]{Si1}). 
From the assumption that $E_{F_n}$ has additive reduction, 
the order of the quotient $E_{F_n}(F_n)/E_{F_n,0}(F_n)$ is at most $4$ (\cite[Chapter~VII, Theorem~6.1]{Si1}). 
Hence, it is enough to show that  $\set{\# E_{F_n,0}(F_n)[p^{\infty}]}_{n\ge 0}$ is bounded.
The above sequence \eqref{seq:EFn} induces 
\[
 0 \longrightarrow E_{F_n,1}(F_n)[p^m] \longrightarrow E_{F_n,0}[p^m] \longrightarrow \widetilde{E}_{F_n,\mathrm{ns}}(\kappa_n)[p^m] \longrightarrow E_{F_n,1}(F_n)/p^mE_{F_n,1}(F_n) 
\]
for any $m\ge 1$. 
Since $E_{F_n,1}(F_n)$ is written by the group associated to the formal group law and has no non-trivial points of order $p^m$ (\cite[Chapter~VII, Proposition~3.1]{Si1}), 
we obtain $E_{F_n,1}(F_n)[p^m] = E_{F_n,1}(F_n)/p^mE_{F_n,1}(F_n) = 0$.
From the assumption that $E_{F_n}$ has additive reduction again, 
it follows that  
$\widetilde{E}_{F_n,\mathrm{ns}}(\kappa_n)$ is isomorphic to the additive group $\kappa_n$ 
(\cite[Chapter~III, Exercise~3.5]{Si1}) 
so that $\widetilde{E}_{F_n,\mathrm{ns}}(\kappa_n)[p^m] = 0$. 
The assertion follows from this.
\end{proof}


\begin{lem}\label{lemboundmultred}
Suppose that $E$ has potentially multiplicative reduction at 
a prime number $\ell$ (distinct from $p$).
Then, there exists an integer $N_\ell$ such that 
for any $n \in \Z_{\ge N_\ell}$ and 
any place $w$ of $K_n^E$ above $\ell$, we have 
\(
p^n E(K^E_{n,w})[p^\infty]=0
\). 
\end{lem}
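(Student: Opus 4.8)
The plan is to reduce to the Tate parametrization of $E$ at $\ell$ and then carry out an explicit local computation. By \autoref{lem:K1E}, $E_{K_1^E}$ has split multiplicative reduction at every place of $K_1^E$ above $\ell$; as there are only finitely many such places, it suffices to attach to each place $w_1$ of $K_1^E$ above $\ell$ a threshold $N_{w_1}$ such that $p^n E(K^E_{n,w})[p^\infty]=0$ for all $n\ge N_{w_1}$ and all places $w$ of $K_n^E$ above $w_1$, and then to set $N_\ell:=\max_{w_1\mid\ell}N_{w_1}$. Fix $w_1$, put $F:=K^E_{1,w_1}$ (a finite extension of $\Q_\ell$, with $\ell\ne p$), and let $\G_m/q^{\Z}$ be the Tate parametrization of $E_F$, so $v_1:=\ord_{w_1}(q)\in\Z_{>0}$. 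Since $E[p^n]$ is generated over $F$ by a primitive $p^n$-th root of unity together with a fixed $p^n$-th root $q^{1/p^n}$ of $q$, the completion $K^E_{n,w}$ equals $F(\mu_{p^n},q^{1/p^n})$; in particular $E[p^n]\subseteq E(K^E_{n,w})$, so $\#E(K^E_{n,w})[p^\infty]\ge p^{2n}$, and it will be enough to prove $\#E(K^E_{n,w})[p^\infty]\le p^{2n}$ for $n\gg 0$, since this forces $E(K^E_{n,w})[p^\infty]=E[p^n]$.

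Write $K:=K^E_{n,w}$. Using $E(\overline K)=\overline K^\times/q^{\Z}$, one checks first that every $G_K$-fixed $p$-power torsion element of $\overline K^\times/q^{\Z}$ is represented by an element of $K^\times$: if $x^{p^m}=q^k$, then for $\sigma\in G_K$ the ratio $\sigma(x)/x$ is simultaneously a $p^m$-th root of unity and a power of $q$, hence equals $1$. Therefore $E(K)[p^\infty]=(K^\times/q^{\Z})[p^\infty]$. Combining the tautological exact sequence $0\to\mathcal O_K^\times\to K^\times/q^{\Z}\to\Z/\ord_w(q)\Z\to 0$ with $\mathcal O_K^\times[p^\infty]=\mu_{p^{c}}$, where $c:=\ord_p\bigl(\#(\mu_{p^\infty}\cap K)\bigr)$ (here we use $\ell\ne p$, so $1+\mathfrak m_K$ is uniquely $p$-divisible), one gets $\#E(K)[p^\infty]=p^{c}\cdot\#B$ with $B\subseteq(\Z/\ord_w(q)\Z)[p^\infty]=\Z/p^{\ord_p(\ord_w(q))}\Z$. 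So the bound $\#E(K)[p^\infty]\le p^{2n}$ reduces to showing $c=n$ and $\ord_p(\ord_w(q))=n$ for all large $n$.

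Both equalities rest on one observation: since $F(\mu_{p^n})/F$ is unramified ($\ell\ne p$), a fixed uniformizer $\pi$ of $F$ stays a uniformizer in every $F(\mu_{p^n})$. Write $q=\pi^{v_1}u$ with $u\in\mathcal O_F^\times$ fixed, and let $j_0$ be the $p$-adic valuation of the order of the $p$-primary part of the root-of-unity component of $u$ — a constant independent of $n$. Since $1+\mathfrak m_{F(\mu_{p^n})}$ is uniquely $p$-divisible and $\mu_{p^n}\subseteq F(\mu_{p^n})$, one has $q\in\bigl(F(\mu_{p^n})^\times\bigr)^{p^{\ord_p(v_1)}}$ once $n\ge j_0+\ord_p(v_1)$. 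Writing then $q=y^{p^{\ord_p(v_1)}}$ with $y$ of valuation prime to $p$, the extension $F(\mu_{p^n})(q^{1/p^n})=F(\mu_{p^n})\bigl(y^{1/p^{\,n-\ord_p(v_1)}}\bigr)$ is totally ramified over $F(\mu_{p^n})$ of degree $p^{\,n-\ord_p(v_1)}$. This immediately gives $\ord_p(\ord_w(q))=n$, and it shows that $K$ and $F(\mu_{p^n})$ have the same residue field, of degree $\mathrm{lcm}\bigl(d_1,\,e(n)\bigr)$ over $\F_\ell$, where $d_1$ is the residue degree of $F$ and $e(n)$ is the multiplicative order of $\ell$ in $(\Z/p^n\Z)^\times$. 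By the lifting-the-exponent lemma (here the oddness of $p$ is used), $\ord_p(e(n))=n-s$ for $n\ge s$, where $e_1$ is the order of $\ell$ mod $p$ and $s:=\ord_p(\ell^{e_1}-1)$; applying the lemma once more to the residue degree $D$ gives $\ord_p(\ell^{D}-1)=n$, i.e.\ $c=n$. Hence $\#E(K^E_{n,w})[p^\infty]=p^{c}\cdot\#B\le p^{n}\cdot p^{n}=p^{2n}$ for all large $n$, and the lemma follows.

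The main obstacle is the bookkeeping in the third paragraph: one must verify carefully that the contributions from the $p$-divisibility of the Tate parameter $q$ and from the $p$-power roots of unity generated inside $K^E_{n,w}$ are bounded independently of $n$ — so that they are absorbed once $n$ is large — and that the two lifting-the-exponent computations are carried out correctly.
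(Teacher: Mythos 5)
Your proof is correct. Both you and the paper reduce the statement, via \autoref{lem:K1E}, to the Tate parametrization over the completion of $K_1^E$ at a place above $\ell$, and both aim at the exact identity $E(K^E_{n,w})[p^\infty]=E[p^n]$ for all large $n$; the difference lies in how the cyclotomic part is pinned down. The paper introduces an auxiliary field $L=K^E_1(\mu_{p^N})$, with $N$ chosen so that every place of $L$ above $\ell$ is \emph{inert} in $L(\mu_{p^\infty})/L$; inertness forces the local cyclotomic extensions to grow at full speed, which gives $\mu_{p^\infty}$ of the local fields exactly, and the remaining verification (with threshold $N+\ord_u(q)$) is left essentially implicit. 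You instead argue purely locally over $F=K^E_{1,w_1}$: the exact sequence $0\to\mathcal O_K^\times\to K^\times/q^{\Z}\to\Z/\ord_w(q)\Z\to 0$ cleanly separates the torsion into a root-of-unity part and a $q$-part; Kummer theory (after extracting the maximal $p$-power $p^{\ord_p(v_1)}$ from $q$, which your condition $n\ge j_0+\ord_p(v_1)$ guarantees) shows $K/F(\mu_{p^n})$ is totally ramified, so the residue degree is $\mathrm{lcm}(d_1,e(n))$ and your two lifting-the-exponent computations give $\mu_{p^\infty}(K)=\mu_{p^n}$ once $n\ge s+\ord_p(d_1)$ (a threshold you leave inside ``for all large $n$'', which is fine since the lemma only asks for some $N_\ell$), while the valuation identity $\ord_w(q)=p^n v_1'$ with $p\nmid v_1'$ bounds the $q$-part by $p^n$. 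Your route is more elementary and self-contained --- no auxiliary global field and no inertness argument, at the price of the residue-field and Kummer bookkeeping --- whereas the paper's inertness device makes the cyclotomic control immediate but compresses the rest of the local computation into a single sentence.
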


\begin{proof}
Suppose that $E$ has potentially multiplicative reduction at a prime $\ell$.  

\begin{oneclaim}
There exists a finite Galois extension field 
$L$ of $\Q$ contained in $K^E_\infty$ 
satisfying the following conditions: 

\begin{enumerate}[label=$(\mathrm{\alph*})$]
\item The elliptic curve $E_L$ has split multiplicative reduction 
at every place of $L$ above $\ell$.  
\item Every place of $L$ above $\ell$ is 
inert in $L_{\infty} := L(\mu_{p^\infty})/L$. 
\item There exists $N \in \Z_{\ge 1}$ such that $K_N  \subseteq L \subseteq K^E_N$.
\end{enumerate}
\end{oneclaim}
\begin{proof}
By \autoref{lem:K1E}, 
the base change $E_{K_1^E}$ has split multiplicative reduction at every place of $K_1^E$ 
above $\ell$. 
Take any integer $N \in \mathbb{Z}_{>0}$ satisfying 
$\mu_{p^N} \not\subseteq \mathbb{Q}_\ell(E(\overline{\mathbb{Q}}_\ell)[p])$, and  
put 
$L:=K^E_1(\mu_{p^N})$.
As $\mu_{p^N} \subset K_N^E$, 
the conditions (a) and (c) are satisfied. 
Note that $L_\infty :=K^E_1(\mu_{p^\infty})/K^E_1$ is 
a (cyclotomic) $\Z_p$-extension, and 
our choice of $N$ implies that  
the group 
$\Gal(L_\infty /L)$ 
becomes a proper subgroup of the decomposition group 
of $\Gal(L_\infty /K^E_1)$ 
at any place $v$ of $K^E_1$ above $\ell$. 
The condition (b) is satisfied.
\end{proof}

In order to prove  \autoref{lemboundmultred},
it suffices to show that there exists an integer $N'\in \Z_{>0}$ 
such that for any $n \in \Z_{\ge N'}$ and 
any place $w$ of 
$K^E_n$ above $\ell$, it holds that 
\(
E(K^E_{n,w})[p^\infty]=E[p^n].
\)
For the field $L$ and $N\in \Z_{\ge 1}$ given in the above claim, 
take any $n \in \Z_{\ge N}$ and any place $w$ of 
$K^E_n$ above $\ell$.
Let $u$ be the place of $L$ below $w$.
Since $E_{L_u}$ has split multiplicative reduction, 
we have a $G_{L_u}$-invariant isomorphism 
\begin{equation}\label{eqTatemodel}
E(\overline{L}_u) \xrightarrow{\ \simeq\ } \overline{L}_u^\times/q^{\Z}
\end{equation}
for some $q \in {L}_u$ with $\ord_u(q)>0$.
Recall that every place of $L$ above $\ell$ is 
inert in $L_\infty/L$.
By the isomorphism  \eqref{eqTatemodel}, 
if $n \ge N_0:=N+ \ord_u(q)$, then we have 
$E(K^E_{n,w})[p^n] =E[p^n]$, and 
$E(K^{E, \rm ur}_{n,w})[p^\infty] 
\simeq \mu_{p^\infty} \times q^{p^{-n}\Z}/q^{\Z}$. 
\end{proof}


\begin{lem}[{\cite[Example 3.11]{BK}}]\label{lemloccond0}
For any prime number $\ell$ distinct from $p$ and 
any finite extension $F/\Q_\ell$, 
it holds that $H^1_f(F, E[p^\infty])=0$.
\end{lem}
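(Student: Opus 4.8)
The plan is to unwind the definition: $H^1_f(F,E[p^\infty])$ is by construction the image of $H^1_f(F,V_p(E))$ under the natural map $H^1(F,V_p(E))\to H^1(F,E[p^\infty])$, so it suffices to show $H^1_f(F,V_p(E))=0$. As $\ell\ne p$, one has $H^1_f(F,V_p(E))=H^1_{\ur}(F,V_p(E))$ by definition, and the inflation--restriction sequence identifies the latter with $H^1(F^{\ur}/F,V_p(E)^{I_F})$, where $I_F\subseteq G_F$ is the inertia group. Since $\Gal(F^{\ur}/F)\simeq\widehat{\Z}$ is topologically generated by a Frobenius element $\varphi$ and $V_p(E)^{I_F}$ is a finite-dimensional $\Q_p$-vector space, this group is isomorphic to the coinvariants $V_p(E)^{I_F}/(\varphi-1)V_p(E)^{I_F}$. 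Hence everything reduces to checking that $\varphi-1$ is invertible on $V_p(E)^{I_F}$, i.e.\ that $1$ is not an eigenvalue of $\varphi$ on $V_p(E)^{I_F}$.

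I would verify this by a short case analysis on the reduction type of $E_F$. If $E$ has good reduction, then $V_p(E)^{I_F}=V_p(E)$ and the eigenvalues of $\varphi$ are Weil numbers of absolute value $(\#\kappa_F)^{1/2}\ne1$ (Hasse). If $E$ has multiplicative reduction, then -- using that $\Q_\ell(\mu_{p^\infty})/\Q_\ell$ is unramified, so that $\Q_p(1)$ is an unramified $G_F$-module -- the Tate parametrization shows that $V_p(E)^{I_F}$ is one-dimensional, namely the unramified line inside $V_p(E)$ (isomorphic to $\Q_p(1)$ up to an unramified quadratic twist), on which $\varphi$ acts by a scalar of the form $\pm\#\kappa_F$ or its inverse, which is $\ne1$ because $\#\kappa_F>1$. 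If $E$ has additive reduction, pass to a finite extension $F'/F$ over which $E$ acquires good or multiplicative reduction (semistable reduction theorem); then $V_p(E)^{I_F}\subseteq V_p(E)^{I_{F'}}$ (possibly zero), and a suitable power of $\varphi$ acts on it with eigenvalues among those treated in the two previous cases, none of which is $1$, so $1$ is not an eigenvalue of $\varphi$ on $V_p(E)^{I_F}$ either. In every case $V_p(E)^{I_F}/(\varphi-1)V_p(E)^{I_F}=0$, so $H^1_f(F,V_p(E))=0$ and therefore $H^1_f(F,E[p^\infty])=0$.

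There is also a more hands-on alternative: $H^1_f(F,E[p^\infty])$ is the direct limit of the finite-level groups $H^1_f(F,E[p^n])$, which by \autoref{remSelcl} (extended verbatim to the local field $F$) coincide with $H^1_{\mathrm{cl}}(F,E[p^n])=\Im\bigl(E(F)/p^nE(F)\to H^1(F,E[p^n])\bigr)$; passing to the limit, $H^1_f(F,E[p^\infty])$ is the image of $E(F)\otimes_{\Z}\Q_p/\Z_p$ in $H^1(F,E[p^\infty])$. But $E(F)$ is a compact abelian group which is an extension of a finite group by the pro-$\ell$ group $\widehat{E}(\mathfrak{m}_F)\simeq(\mathfrak{m}_F,+)$; since $p\ne\ell$, tensoring with $\Q_p/\Z_p$ annihilates both constituents, so $E(F)\otimes_{\Z}\Q_p/\Z_p=0$ and the claim follows. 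The only points needing care in either approach are the minor bookkeeping that $H^1_f$ commutes with the relevant limit, and -- in the first approach -- the reduction of the additive case to the others; I do not expect either to be a real obstacle, the whole statement being \cite[Example~3.11]{BK}.
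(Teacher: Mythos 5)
Your proposal is correct. Note that the paper offers no internal argument for this lemma at all --- it is quoted verbatim from \cite[Example~3.11]{BK} --- so there is no in-paper proof to match; what you have done is supply one (in fact two). Your second argument is essentially the content of the cited example (and of \autoref{remSelcl}): identify $H^1_f(F,E[p^n])$ with the image of $E(F)/p^nE(F)$, pass to the limit, and observe that $E(F)\otimes_{\Z}\Q_p/\Z_p=0$ because $E(F)$ is an extension of a finite group by a pro-$\ell$ group and $p\ne\ell$; since the equality $H^1_{\mathrm{cl}}=H^1_f$ invoked there is itself what Bloch--Kato prove, this route is best viewed as a verification of the quoted fact rather than an independent proof. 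Your first argument, by contrast, is self-contained and is the one worth recording: by the paper's definition $H^1_f(F,E[p^\infty])$ is the image of $H^1_f(F,V_p(E))=H^1_{\ur}(F,V_p(E))\simeq V_p(E)^{I_F}/(\varphi-1)V_p(E)^{I_F}$, and the Frobenius eigenvalues on $V_p(E)^{I_F}$ are Weil numbers of complex absolute value $\ne 1$: absolute value $(\#\kappa_F)^{1/2}$ in the good case, and $\pm\#\kappa_F$ on the (exactly one-dimensional, since inertia acts nontrivially and unipotently) Tate line in the multiplicative case; in the additive case one passes to a semistable extension $F'/F$, notes that a suitable positive power of a Frobenius lift lies in $G_{F'}$ and acts on $V_p(E)^{I_{F'}}\supseteq V_p(E)^{I_F}$ through a positive power of $\varphi_{F'}$, whose eigenvalues are again of absolute value $\ne 1$, so no eigenvalue of $\varphi$ on $V_p(E)^{I_F}$ can equal $1$. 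The two points you flag --- that $H^1_f$ is compatible with the direct limit over $n$, and the bookkeeping identifying that power of $\varphi$ with a power of $\varphi_{F'}$ modulo $I_{F'}$ --- are indeed routine and conceal no gap.
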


For each $n \in \mathbb{Z}_{\ge 0}$, 
we denote by
$\Sigma_{n,p}$ the set of
all the finite places $v$ of $K_n$  
above $p$, and 
$\Sigma_{n,\mathrm{bad}}$ by the set 
of all the finite places $v$ of $K_n$  
where $E_{K_{n,v}}$ has bad reduction.
We put  
$\Sigma_n:=\Sigma_{n,p} \cup 
\Sigma_{n,\mathrm{bad}}$. 
and define 
$\Sigma_n^0$ to be 
the subset of $\Sigma_n$ consisting of 
all the places $v$ which lies below 
some $w \in \Sigma_m$ 
for every $m \in \mathbb{Z}_{\ge n}$.
Namely, we have 
\begin{equation}
	\label{def:Sigma_n^0}
	\Sigma_n^0 = \Set{v \in \Sigma_n\smallsetminus \Sigma_{n,p} | 
\begin{array}{l}
\text{for any $m\ge 0$, the elliptic curve} \\
\text{$E_{K_m}$ has bad reduction for some $w\mid v$}
\end{array}
}.
\end{equation}

\begin{proof}[\textbf{Proof of \autoref{proprescyctoKn}}]
In this proof, once we fix $n\in \Z_{\ge 0}$ 
and simplify the notation $H^i(F'/F,E[p^n]) = H^i(F'/F)$ for an extension $F'/F$. 
We denote by $K_{n,\Sigma_n}$ the maximal unramified extension of 
$K_n$ outside $\Sigma_n$.
As noted in \autoref{remSigma}, 
the fine Selmer group $\Sel_p(K_n,E[p^n])$ 
is a subgroup of $H^1(K_{n,\Sigma_n}/K_n)$. 
The Hochschild--Serre spectral sequence gives the following commutative diagram with exact rows: 
\begin{equation}
	\label{eq:inf-res}
	\vcenter{
\xymatrix@C=6mm{
0 \ar[r] &\Ker(\res_n^{\loc})\ar[r]    & \Coker(\iota_n) \ar[r]^{\res_n^{\loc}} & \Coker(\iota_n^E) \\
0\ar[r] & H^1(K_n^E/K_n) \ar[u]\ar[r]^{\mathrm{inf}_n} & H^1(K_{n, \Sigma_n}/K_n)\ar@{->>}[u] \ar[r]^-{\mathrm{res}_n} & H^1(K_{n,\Sigma_n}/K_n^E)^{G_{K_n}}\ar@{->>}[u] \ar[r] & H^2(K_n^E/K_n)\\
0\ar[r] & \Ker(\mathrm{res}_n^{\mathrm{Sel}})\ar[r]\ar@{^{(}->}[u] & \Sel_p(K_n,E[p^n])\ar@{^{(}->}[u]^{\iota_n} \ar[r]^-{\mathrm{res}_n^{\mathrm{Sel}}} & \Sel_p(K_n^E,E[p^n])^{G_{K_n}}\ar@{^{(}->}[u]^{\iota_n^E} \ar[r] & \Coker(\mathrm{res}_n^{\mathrm{Sel}}).\ar[u]
}}
\end{equation}
The snake lemma induces the exact sequence  
\begin{equation}
\label{eq:snake}
\begin{tikzcd}
  0\rar & \Ker(\res_n^{\Sel})  \rar & H^1(K_n^E/K_n)  \rar  
             \ar[draw=none]{d}[name=X, anchor=center]{}
    & \Ker(\res_n^{\loc}) \ar[rounded corners,
            to path={ -- ([xshift=2ex]\tikztostart.east)
                      |- (X.center) \tikztonodes
                      -| ([xshift=-2ex]\tikztotarget.west)
                      -- (\tikztotarget)}]{dll}[at end]{\delta} \\      
 &  \Coker(\res_n^{\Sel}) \rar & H^2(K_n^E/K_n).
\end{tikzcd}
\end{equation}
By \autoref{propkerresbound}, 
the order of $H^1(K_n^E/K_n) = H^1(K_n^E/K_n,E[p^n])$ is bounded independently of $n$, 
and so is the kernel $\Ker(\res_n^{\Sel})$.

Let us investigate the cokernel of $\mathrm{res}_n^{\Sel}$. 
For each finite place $v$ in $K_n$, 
we define  
restriction maps 
\begin{align*}
	&\res_{n,v}^{\loc}: H^1(K_{n,v},E[p^n])\longrightarrow H^0\left(K_n,\prod_{w\mid v}H^1(K_{n,w}^E,E[p^n])\right), \\
	&\res_{n,v}^f: H^1_f(K_{n,v},E[p^n])\longrightarrow H^0\left(K_n,\prod_{w\mid v}H^1_f(K_{n,w}^E,E[p^n])\right),\quad  \mbox{and}\\
	&\overline{\res}_{n,v}^{\loc}: 
	\frac{H^1(K_{n,v},E[p^n])}{H^1_f(K_{n,v},E[p^n])} \longrightarrow H^0\left(K_n,\prod_{w\mid v}\frac{H^1(K_{n,w}^E,E[p^n])}{H^1_f(K_{n,w}^E,E[p^n])}\right).
\end{align*}
These maps induce the following commutative diagram with exact rows:  
\begin{equation}
\label{eq:H/H_f}
\vcenter{
\xymatrix@C=4mm{
0\ar[r] & H^1_f(K_{n,v}) \ar[d]^-{\res_{n,v}^f} \ar[r] & H^1(K_{n,v}) \ar[r]\ar[d]^-{\res_{n,v}^{\loc}} & \dfrac{H^1(K_{n,v})}{H^1_f(K_{n,v}) }\ar[r]\ar[d]^-{\overline{\res}_{n,v}^{\loc}}  & 0\\
0\ar[r] & \left(\underset{w\mid v}{\prod} H^1_f(K_{n,w}^E))\right)^{G_{K_n}} \ar[r] & \left(\underset{w\mid v}{\prod} H^1(K_{n,w}^E)\right)^{G_{K_n}} \ar[r] & \left(\underset{w\mid v}{\prod}\dfrac{H^1(K_{n,w}^E)}{H^1_f(K_{n,w}^E)}\right)^{G_{K_n}}
}}
\end{equation}
By \autoref{propkerresbound}, 
the group $H^2(K_n^E/K_n) = H^2(K_n^E/K_n,E[p^n])$ is finite and its order is bounded 
independently of $n$. 
From the exact sequence \eqref{eq:snake}, 
it is enough to give a bound for $\set{\#\Ker(\res_n^{\loc})}_{n\ge 0}$. 
By diagram chaise and the definition of the fine Selmer groups 
(\autoref{defnSel}), 
we have an injective homomorphism 
\begin{equation}
	\label{eq:Kers}
	\xymatrix{
\Ker(\res_n^{\loc})\, \ar@{^{(}->}[r] & \displaystyle \prod_{v\mid p}\Ker(\res_{n,v}^{\loc})\times \prod_{v\nmid p} \Ker(\overline{\res}_{n,v}^{\loc}).
}
\end{equation}
When $E_{K_n}$ has good reduction at 
a finite place $v \nmid p$  of $K_n$, then 
the Tate module $T_\ell(E)$ for the prime number $\ell$ with $v \mid \ell$ 
is unramified (\cite[Chapter VII, Theorem 7.1]{Si1}). 
We have $H^1_f(K_{n,v}, E[p^n]) = H^1_{\ur}(K_{n,v},E[p^n])$ 
and $H^1_f(K_{n,w}^E, E[p^n]) = H^1_{\ur}(K_{n,w}^E,E[p^n])$ (\cite[Lemma~1.3.8 (ii)]{Ru}). 
Moreover, the extension $K_{n,w}^E/K_{n,v}$ is unramified for any $w\mid v$ 
as $E[p^n]$ is unramified. 
From the definition of the unramified cohomology (cf.~Notation), we have a commutative diagram
\[
\xymatrix@C=15mm@R=5mm{
\dfrac{H^1(K_{n,v},E[p^n])}{H^1_{\ur}(K_{n,v},E[p^n])} \ar@{^{(}->}[d] \ar[r]^-{\overline{\res}_{n,v}^{\loc}} & 
\ds \prod_{w\mid v} \left(\frac{H^1(K_{n,w}^E,E[p^n])}{H^1_{\ur}(K_{n,w}^E,E[p^n])}\right)^{G_{K_n}}\ar@{^{(}->}[d] \\
H^1(K_{n,v}^{\ur},E[p^n]) \ar[r]^-{\res_{n,v}^{\ur}} & \ds \prod_{w\mid v} H^1(K_{n,w}^{E, \ur},E[p^n])^{G_{K_n}}. 
}
\]
From the inflation-restriction sequence (\cite[Proposition B.2.5 (i)]{Ru}), the kernel of the bottom horizontal map ${\res_{n,v}^{\ur}}$ is 
$\bigcap_{w\mid v}H^1(K_{n,w}^{E,\ur}/K_{n,v}^{\ur},E[p^n]^{G_{K_{n,v}^{\ur}}}) = 0$ 
and the map $\res_{n,v}^{\ur}$ is injective.  
In particular, we have $\Ker(\overline{\res}_{n,v}^{\loc}) = 0$ for any finite place 
$v\notin \Sigma_n$.
This implies that the order of $\Ker(\overline{\res}_{n,v}^{\loc})$ is bounded independently of $n$ 
for the case $v\nmid p$ and $v \not\in \Sigma_n^0$.
From \eqref{eq:Kers}, 
in order to prove 
\autoref{proprescyctoKn} (2), 
it is left to show the following assertions.
\begin{itemize}
	\item For $v\mid p$, the sequence $\set{\#\Ker(\res_{n,v}^{\loc})}_{n\ge 0}$ is  bounded. 
	\item For $v\nmid p$, and $v\in \Sigma^0_n$, the sequence  $\set{\#\Ker(\overline{\res}_{n,v}^{\loc})}_{n\ge 0}$ is bounded.
\end{itemize}
By applying the snake lemma to the diagram \eqref{eq:H/H_f}, 
there is an exact sequence 
\[
0 \longrightarrow \Ker(\res_{n,v}^f) \longrightarrow \Ker(\res_{n,v}^{\loc})\longrightarrow \Ker(\overline{\res}_{n,v}^{\loc}) \longrightarrow \Coker(\res_{n,v}^f).
\]
The assertion (2) in \autoref{proprescyctoKn} 
follows from the lemma below (\autoref{claimlocalres}).
\end{proof}

By definition (cf.~\eqref{def:Sigma_n^0}), 
we have 
\[
\Sigma_0^0 = \Set{\ell\colon \mbox{prime number} 
|
\begin{array}{l}
\text{for any $m\ge 0$, the elliptic curve} \\
\text{$E_{K_m}$ has bad reduction at a place above $\ell$}
\end{array}
}.
\]
We are assuming $E$ has good reduction at $p$, 
so that $p\not\in\Sigma_0^0$. 

\begin{lem}\label{claimlocalres}
\begin{enumerate}
\item 
For any prime number $\ell \in \Sigma^0_0$ (distinct from $p$),  
the set
\[
\set{ \# \Ker (\mathrm{res}_{n,v}^{\mathrm{loc}}) \mathrel{\vert}
n \ge 0,v \mid \ell }
\]
is bounded.

\item 
For the fixed prime $p$, the set 
\[
\set{ \# \Ker (\mathrm{res}_{n,v}^{\mathrm{loc}}) \mathrel{\vert}
n \ge 0,v \mid p }
\]
is bounded.

\item
For any prime number $\ell \in \Sigma^0_0$ 
(distinct from $p$), 
the set
\[
\set{ \# \Coker(\mathrm{res}_{n,v}^{f}) 
\mathrel{\vert}
n \ge 0,v \mid \ell
}
\]
is bounded. 
\end{enumerate}
\end{lem}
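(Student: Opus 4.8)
Throughout fix $\ell$, a place $v\mid\ell$ of $K_n$, and a place $w\mid v$ of $K^E_n$, and set $D_w:=\Gal(K^E_{n,w}/K_{n,v})$. Since $K^E_n=\Q(E[p^n])$, the group $G_{K^E_{n,w}}$ acts trivially on $E[p^n]$, so the inflation--restriction sequence identifies $\Ker(\res^{\loc}_{n,v})$ with $H^1(D_w,E[p^n])$, and in particular embeds it into $H^1(K_{n,v},E[p^n])$. The case $n=0$ being trivial, assume $n\ge 1$. I would also split every $\ell\in\Sigma^0_0$ into two cases: (i) $E$ has potentially multiplicative reduction at $\ell$, or (ii) $E$ has potentially good but not good reduction at $\ell$; in case (ii), since $K_{m,v}=\Q_\ell(\mu_{p^m})$ is unramified over $\Q_\ell$, the curve $E_{K_{m,v}}$ has additive reduction for every $m$, so \autoref{thmboundpotgoodred} applies with $F=\Q_\ell$ and bounds $\#E(K_{n,v})[p^\infty]$ independently of $n$.

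For part (1) I would use the local Euler characteristic. Local Tate duality gives $H^2(K_{n,v},E[p^n])\cong H^0(K_{n,v},\Hom_{\Zp}(E[p^n],\mu_{p^n}))^\vee$, and the Weil pairing furnishes a $G_{K_{n,v}}$-equivariant isomorphism $E[p^n]\cong\Hom_{\Zp}(E[p^n],\mu_{p^n})$, so $\#H^2(K_{n,v},E[p^n])=\#E(K_{n,v})[p^n]$. As $\ell\ne p$, the Euler characteristic of the $p$-group $E[p^n]$ over $K_{n,v}$ equals $1$, whence $\#H^1(K_{n,v},E[p^n])=(\#E(K_{n,v})[p^n])^2$ and $\#\Ker(\res^{\loc}_{n,v})\le(\#E(K_{n,v})[p^n])^2$. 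In case (i), condition (C2) — equivalently assertion (a) of \autoref{lemC2} — gives $E(K_{n,v})[p]=0$, hence $E(K_{n,v})[p^n]=0$ and $\Ker(\res^{\loc}_{n,v})=0$; in case (ii), $\#E(K_{n,v})[p^n]$ is bounded by \autoref{thmboundpotgoodred}. This proves (1).

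For part (2) (now $v\mid p$) the Euler characteristic is far too large, so instead I would inflate all the way up: from $\Ker(\res^{\loc}_{n,v})=H^1(D_w,E[p^n])$ and the triviality of the $G_{\Q_p(E[p^\infty])}$-action on $E[p^n]$, inflation embeds $\Ker(\res^{\loc}_{n,v})$ into $H^1\bigl(\Gal(\Q_p(E[p^\infty])/\Q_p(\mu_{p^n})),E[p^n]\bigr)$. Write $\mathcal{U}:=\Gal(\Q_p(E[p^\infty])/\Q_p(\mu_{p^\infty}))$, a normal subgroup with quotient $\Gal(\Q_p(\mu_{p^\infty})/\Q_p(\mu_{p^n}))\cong\Zp$ of cohomological dimension $1$; Hochschild--Serre then yields a short exact sequence
\[
0\longrightarrow H^1(\Zp,E[p^n]^{\mathcal{U}})\longrightarrow H^1\bigl(\Gal(\Q_p(E[p^\infty])/\Q_p(\mu_{p^n})),E[p^n]\bigr)\longrightarrow H^1(\mathcal{U},E[p^n])^{\Zp}\longrightarrow 0.
\]
The left term has order $\#H^0(K_{n,v},E[p^n])=\#E(\Q_p(\mu_{p^n}))[p^n]$, bounded by \autoref{thmboundpotgoodred} (the case $\ell=p$, built on Imai's theorem \cite{Im}). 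For the right term, the long exact sequence of $0\to E[p^n]\to E[p^\infty]\xrightarrow{p^n}E[p^\infty]\to 0$ gives $\#H^1(\mathcal{U},E[p^n])\le\#\bigl(H^0(\mathcal{U},E[p^\infty])/p^n\bigr)\cdot\#H^1(\mathcal{U},E[p^\infty])[p^n]$; the first factor is bounded by the order of the finite part of the cofinitely generated $\Zp$-module $H^0(\mathcal{U},E[p^\infty])$, and the second by $\#H^1(\mathcal{U},E[p^\infty])$. That last group is finite: as $\mathcal{U}$ is a compact $p$-adic Lie group, Lazard's theorem reduces $H^1(\mathcal{U},V_p(E))$ to the Lie algebra cohomology $H^1(\mathfrak{g},V_p(E))$ with $\mathfrak{g}=\Lie(\mathcal{U})\subseteq\mathfrak{sl}_2(\Q_p)$, and one checks this vanishes for every $\mathfrak{g}$ occurring when $E$ has good reduction at $p$ (a Cartan subalgebra, a Borel subalgebra, $\mathfrak{sl}_2$, or $0$ — good reduction excludes the purely unipotent line, for which $H^1$ would be nonzero, since the Frobenius eigenvalue of the unramified part is not a root of unity), so $H^1(\mathcal{U},E[p^\infty])\cong H^2(\mathcal{U},T_p(E))_{\mathrm{tors}}$ is finite. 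Assembling these bounds proves (2); this is the local analogue of \autoref{lemvanishGalVp}, and it is the crux of the argument, because the decomposition group $\mathcal{U}$ at $p$ need not be open in $SL_2(\Zp)$ (for good ordinary reduction it lies in a Borel), which is exactly why the case analysis of Lie algebras, and the exclusion of the unipotent case, are needed.

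For part (3) I would first rewrite the finite local conditions. By \autoref{lemloccond0}, $H^1_f(K_{n,v},E[p^\infty])=H^1_f(K^E_{n,w},E[p^\infty])=0$, so $H^1_f(K_{n,v},E[p^n])=\Ker\iota_{n,v}$, and the connecting map of $0\to E[p^n]\to E[p^\infty]\xrightarrow{p^n}E[p^\infty]\to 0$ identifies it, $D_w$-equivariantly, with $E(K_{n,v})[p^\infty]/p^n$; likewise $H^1_f(K^E_{n,w},E[p^n])\cong A_n/p^n$ with $A_n:=E(K^E_{n,w})[p^\infty]$. Thus $\res^f_{n,v}$ becomes the natural map $E(K_{n,v})[p^\infty]/p^n\to(A_n/p^n)^{D_w}$, so $\#\Coker(\res^f_{n,v})\le\#(A_n/p^n)^{D_w}$. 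Since $E[p^n]\subseteq A_n\subseteq E[p^\infty]$ we have $A_n[p^n]=E[p^n]$, and $0\to p^nA_n\to A_n\to A_n/p^n\to 0$ gives $\#(A_n/p^n)^{D_w}\le\#A_n^{D_w}\cdot\#H^1(D_w,p^nA_n)$. Here $A_n^{D_w}=E(K_{n,v})[p^\infty]$ is $0$ in case (i) and bounded in case (ii) by \autoref{thmboundpotgoodred}; and $p^nA_n\cong A_n/E[p^n]$ is a finite group of bounded order — in case (i) because $A_n=E[p^n]$ for $n$ large by \autoref{lemboundmultred}, so $p^nA_n=0$, and in case (ii) because $E$ acquires good reduction over $K^E_{n,w}$ for $n\ge 1$, and a short computation with the Frobenius eigenvalues on the reduced Tate module (which are $p$-adic units $\ne 1$) shows $\#A_n/p^{2n}$ is independent of $n$. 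Finally $\#H^1(D_w,p^nA_n)$ is bounded: $D_w$ is a closed subgroup of $GL_2(\Z/p^n)$, hence generated by a bounded number of elements, and acts on the finite group $p^nA_n$ of bounded order through a finite automorphism group, so inflation--restriction bounds this cohomology. This proves (3); here the only delicate estimate is the $n$-independence of $\#A_n/p^{2n}$ in the potentially good case.
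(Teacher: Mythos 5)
Your plan is correct, and in each of the three parts it takes a genuinely different route from the paper's proof. For (1) you bound the whole group $H^1(K_{n,v},E[p^n])$ at once: local Tate duality together with the Weil self-duality $E[p^n]\simeq\Hom(E[p^n],\mu_{p^n})$ gives $\#H^2=\#H^0=\#E(K_{n,v})[p^n]$, and the local Euler--Poincar\'e formula (trivial since $\ell\ne p$) gives $\#H^1=(\#E(K_{n,v})[p^n])^2$, which vanishes in the potentially multiplicative case by (C2) and is bounded in the potentially good case by \autoref{thmboundpotgoodred}; the paper instead bounds only the subgroup $H^1(K^E_{n,w_n}/K_{n,v},E[p^n])$, using the auxiliary field $L$, Tate cohomology of the cyclic layers and the Tate-curve analysis, so your argument is shorter and dispenses with $L$ here. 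For (2) you prove a local analogue of \autoref{lemvanishGalVp} (inflation to $\Gal(\Q_p(E[p^\infty])/\Q_p(\mu_{p^n}))$, Hochschild--Serre over $\Z_p$, Imai's theorem for the two $H^0$-type terms, and finiteness of $H^1(\mathcal{U},E[p^\infty])$ via Lazard and a case analysis of $\Lie(\mathcal{U})\subseteq\mathfrak{sl}_2(\Q_p)$), replacing the paper's explicit ordinary/supersingular dichotomy (the filtration and the unit root $a\ne 1$, resp.\ the fundamental characters of level $2$); the one point to tighten is the exclusion of the nilpotent line, since your stated reason (the unramified Frobenius eigenvalue is not a root of unity) only makes sense in the ordinary case --- the uniform argument is that a nilpotent (or zero) Lie algebra would give $V_p(E)$ a nonzero invariant subspace under an open subgroup of $\mathcal{U}$, hence infinitely many $p$-power torsion points over some $F(\mu_{p^\infty})$ with $F/\Q_p$ finite, contradicting Imai; with that fixed, the vanishing of $H^1(\mathfrak{g},V_p(E))$ for Cartans, Borels and $\mathfrak{sl}_2$ does yield $H^1(\mathcal{U},V_p(E))=0$ and hence the required finiteness. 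For (3) you bound the full target $(A_n/p^nA_n)^{D_w}$ of $\mathrm{res}^f_{n,v}$ rather than just its cokernel, using $A_n^{D_w}=E(K_{n,v})[p^\infty]$ (killed by (C2), or bounded by \autoref{thmboundpotgoodred}), the boundedness of $p^nA_n$ (by \autoref{lemboundmultred} in the multiplicative case, and by the Frobenius-eigenvalue computation in the good case, where ``independent of $n$'' should really be ``bounded, eventually constant''), and uniformly bounded generation of $D_w$; the paper reaches the same identification of $H^1_f$ with $E(\cdot)[p^\infty]\otimes\Z/p^n\Z$ via \autoref{lemloccond0}, but then bounds $\Coker(a_n)$ through the groups $\Xi_n$ and again the field $L$. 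Overall your route buys brevity and uniformity (no auxiliary field $L$, no supersingular character computation), at the cost of invoking duality/Euler-characteristic and Lazard-type machinery where the paper's argument is more hands-on; both establish the same bounds.
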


\begin{proof}
First, we prove the following claim.

\setcounter{claim}{0}
\begin{claim}
\label{clm:L}
	There exists a finite Galois extension field $L$ of $\Q$ 
	contained in $K^E_{\infty} = \Q(E[p^\infty])$ 
	satisfying the following conditions.
	\begin{enumerate}[label=$(\mathrm{\alph*})$]
\item The elliptic curve $E_L$ has semistable reduction everywhere.
\item The elliptic curve $E_{L}$ has split multiplicative reduction 
at every place $u$ of $L$ above a prime number $q$ 
where $E$ has potentially multiplicative reduction.
\item Every place of $L$ above every $\ell \in \Sigma^0_0$  is inert in $L_{\infty}:= L(\mu_{p^{\infty}})/L$. 
\item There exists an integer $N\in \Z_{\ge 0}$ such that $K_N\subseteq L \subseteq K_N^E$.
\end{enumerate}
\end{claim}
\begin{proof}
Let $q_0$ be a prime number 
where $E$ has potentially good additive reduction. 
Since $q_0$ is distinct from $p$, 
the order of the image of $G_{\mathbb{Q}_{q_0}^{\mathrm{ur}}}$ in 
$\Aut_{\mathbb{Z}_p}(T_p(E))$ is finite (\cite[Chapter VII, Theorem 7.1]{Si1}).
This implies that there exists an intermediate field 
$F^{(q_0)}_{\mathrm{pg}}$ of 
$K^E_\infty/\mathbb{Q}$ such that 
$F^{(q_0)}_{\mathrm{pg}}/\mathbb{Q}$ is a finite Galois extension, and 
$E_{F^{(q_0)}_{\mathrm{pg}}}$ has good reduction 
at every place above $q_0$.
Let $F_{\mathrm{pg}}$ be the composite of the fields 
$F^{(q)}_{\mathrm{pg}}$ where 
$q$ runs all the prime numbers 
where $E$ has potentially good additive reduction. 
By \autoref{lem:K1E}, 
the composite field 
$L:=F_{\mathrm{pg}}K^E_1$ satisfies the conditions 
(a) and (b). 
Moreover, take a sufficiently large $N \in \mathbb{Z}_{>0}$, 
and replace $L$ with $L(\mu_{p^N})$, 
the additional conditions (c) and (d) follow from 
the similar arguments in 
the proof of \autoref{lemboundmultred}.
Note that 
$L=F_{\mathrm{pg}}K^E_{1}(\mu_{p^N})$ is 
a finite Galois extension field of $\Q$ 
contained in $K^E_\infty$.
\end{proof}

Put $L_n := L(\mu_{p^n})$ for each $n\ge 1$. 
Take any prime number $\ell \in \Sigma^0_0$. 
Fix a place 
$w_\infty$ of $K^E_\infty$ above $\ell$.
For any $m \in \Z_{\ge N}$, 
denote by $w_m$ the place of $K^E_m$ below $w_\infty$ and 
by $u_m$  the place of $L_m$ below $w_\infty$ 
respectively.

Let us prove the assertion (1). 
Take any $n \in \Z_{\ge N}$, and let $v=v_n$ be the place 
of $K_n$ below $w_\infty$.
For the fixed place $w_n$, 
by identifying $G_{K_{n,v}}$ with the decomposition subgroup of $G_{K_n}$ at $v$, 
we consider $H^1(K^E_{n,w_n}, E[p^n])$ as an $G_{K_{n,v}}$-module 
and 
$\prod_{w \mid v}H^1(K^E_{n,w}, E[p^n])$ 
is isomorphic to the induced module $\Ind_{G_{K_n}}^{G_{K_{n,v}}}(H^1(K^E_{n,w_n}, E[p^n]))$.  
Shapiro's lemma gives an isomorphism 
\begin{equation}
\label{eq:H0vw}	
H^0\left(
K_n, 
\prod_{w \mid v}H^1(K^E_{n,w}, E[p^n])
\right)
\simeq H^0\left(
K_{n,v}, 
H^1(K^E_{n,w_n}, E[p^n])
\right)
\end{equation}
(cf.\ \cite[(1.6.4) Proposition]{NSW}). 
By the Hochschild-Serre exact sequence (\cite[Proposition B.2.5 (ii)]{Ru}), 
we obtain the following commutative diagram 
whose rows are exact: 
\[
\xymatrix{
0 \ar[r] & \Ker(\res_{n,v}^{\mathrm{loc}}) \ar[d]\ar[r] & H^1(K_{n,v},E[p^n]) \ar[r]^-{\res_{n,v}^{\mathrm{loc}}}\ar@{=}[d] &\displaystyle{ \prod_{w\mid v} H^1(K_{n,w}^E,E[p^n])^{G_{K_n}}}\ar[d]_{\eqref{eq:H0vw}}^{\simeq}\\
0 \ar[r] & H^1(K_{n,w_n},E[p^n]) \ar[r] & H^1(K_{n,v},E[p^n]) \ar[r] & 
H^1(K_{n,w_n}^E,E[p^n])^{G_{K_n}}.
}
\]
It holds that 
\begin{equation}
\label{eq:Kerloc}	
\Ker (\mathrm{res}_{n,v}^{\mathrm{loc}}) \xrightarrow{\ \simeq\ }
H^1(K^E_{n,w_n}/K_{n,v}, E[p^n]). 
\end{equation}
The order of $\Ker( \mathrm{res}_{n,v}^{\mathrm{loc}})$ 
depends only on the prime number $\ell$ and the positive integer $n$
(in particular, it is independent of the choice of 
the place $w_\infty$ of $K^E_\infty$ 
above the fixed prime number $\ell$).
For any intermediate field $M$ 
of $K^E_{n,w_n}/K_{n,v}$ which is 
Galois over $K_{n,v}$, 
we have an exact sequence
\begin{equation}
    \label{seq:YKZ}
0  \longrightarrow Y_{n}(M) \longrightarrow \Ker(\mathrm{res}_{n,v}^{\mathrm{loc}})
\longrightarrow Z_{n}(M),
\end{equation}
where we put 
\begin{align*}
Y_{n}(M):=& 
H^1(M/K_{n,v}, E(M)[p^n]),\ \mbox{and}\\
Z_{n}(M):=& 
H^0(K_{n,v}, 
H^1(K^E_{n,w_n}/M, E[p^n])).
\end{align*}

First, let us study the cases when $\ell \ne p$. 
Recall that 
$E_{L_{n,u_n}}$ has good or split multiplicative reduction 
and $E_{K_{n,v}}$ has additive reduction from the very definition of $\Sigma_0^0$.

\smallskip 
\noindent {\bf (The case: Potentially good reduction with $\ell \ne p$).} 
Suppose that $\ell \ne p$, and 
$E$ has potentially good reduction at $\ell$. 
Let $M_n$ be the maximal subfield of 
$K^E_{n,w_n}$ which is unramified above $K_{n,v}$. 
As the extension $M_n/K_{n,v}$ is cyclic, 
we have 
\begin{align*}
	H^1(M_n/K_{n,v}, E(M_{n})[p^n])&\simeq \widehat{H}^{-1}(M_n/K_{n,v},E(M_{n})[p^n]) \\
&= \frac{\Ker\big(N_{M_n/K_{n,v}} \colon E(M_n)[p^n] \longrightarrow E(K_{n,v})[p^n]\big)}{\braket{\Frob_v-1}},
\end{align*}
where $\widehat{H}^{\ast}$ stands for the Tate cohomology group, 
$N_{M_n/K_{n,v}}$ is the norm map and $\Frob_v$ is the Frobenius automorphism 
at $v$ which is a generator of the cyclic group $\Gal(M_n/K_{n,v})$ 
(cf.~\cite[{{\sc Chapitre}~VIII, \S 4}]{Se1}).
There are (in)equalities below:
\begin{align*}
\#Y_n(M_n)
&=\# H^1(M_n/K_{n,v}, E(M_{n})[p^n]) \\
&=\# \widehat{H}^{-1}(M_n/K_{n,v}, E(M_{n})[p^n]) \\
& \le \# \left(
\frac{E(M_{n})[p^n]}{\braket{\Frob_{v}-1}}
\right) \\
& =\# (E(M_n)[p^n])[\Frob_{v}-1] \\
& =\# E(K_{n,v})[p^n] \\
& \le \# E(K_{n,v})[p^{\infty}].
\end{align*}
From \autoref{thmboundpotgoodred}, 
the sequence $\set{\# Y_n(M_n) }_{n \ge 0}$ 
is bounded. 
Let us study $Z_n(M_n)$. 
Note that $K^E_{n,w_n}/ L_{n,u_n}$ 
is unramified because $E_{L_{n},u_n}$ has good reduction. 
Since $K^E_{n,w_n}/M_n$ is totally ramified, 
we have 
\[
[K_{n,w_n}^E:M_n] = [L_{n,u_n}:L_{n,u_n}\cap M_n] \le [L_{n,u_n}:K_{n,v}] \le [L:K_N].
\]
This implies that 
\[
\sup_{n \ge N} \# Z_{n}(M_n)
\le \sup_{n \ge N} 
\# H^1(K^E_{n,w_n}/M_n, E[p^n])
<  \infty.
\] 
Consequently, 
the set $\set{ \#\Ker(\mathrm{res}_{n,v}^{\mathrm{loc}})|n \ge 0, v \mid \ell}$
is bounded.

\smallskip 
\noindent \textbf{(The case: Potentially multiplicative reduction with $\ell \ne p$).}
Suppose that $\ell \ne p$, and 
$E$ has potentially multiplicative reduction at $\ell$.
Put $Y_n:=Y_n(L_{n,u_n})$ and 
$Z_n:=Z_n(L_{n,u_n})$.
Let $u:=u_N$ be the place of $L$ below $u_n$. 
The elliptic curve $E_{L_u}$ is isomorphic to a Tate curve 
$\mathbb{G}_m/q_u^{\Z}$, and in particular, we have 
a $G_{L_u}$-equivariant isomorphism 
\begin{equation}\label{eqEtorTate}
E[p^\infty] \simeq  \mu_{p^\infty} \times q_u^{\Z[p^{-1}]}/q_u^{\Z}.
\end{equation}
This implies that $K^E_{\infty,w_\infty}/L_{\infty, u_\infty}$ is 
a totally ramified cyclic extension, where 
$u_{\infty}$ is the place of $L_{\infty}$ below $w_{\infty}$. 
Fix a  topological generator
$\tau \in \Gal(K^E_{\infty,w_\infty}/L_{\infty, u_\infty})$. 
Since $L_{\infty, u_{\infty}}/L_u$ is unramified, 
the homomorphism 
\[
\Gal(K^E_{\infty,w_\infty}/L_{\infty,u_\infty}) 
\longrightarrow \Gal(K^E_{n,w_n}/L_{n,u_n});\ 
\sigma \longmapsto \sigma\vert_{K^E_{n,w_n}} 
\]
is surjective. 
Firstly, we show that $\set{ \# Y_{n} }_{n \ge 0}$ is bounded.
We define 
\[
E'_n:= \left(
E(K^{E, {\rm ur}}_{n,w_n})[p^\infty]_{\rm div}
\right)[p^n].
\]
The isomorphism \eqref{eqEtorTate} implies that 
$E'_n$ is isomorphic to $\mu_{p^n}$ 
as a $\Z_p[G_{L_{u}}]$-module.  
Note that $E'_n$ is $G_{K_{n,v}}$-stable 
as $K^{E, {\rm ur}}_{n,w_n}/K_{n,v}$ is a Galois extension.
We obtain an exact sequence 
\begin{equation}
    \label{seq:Y}
Y'_{n} \longrightarrow 
Y_{n} 
\longrightarrow Y''_{n},
\end{equation}
where we put 
\begin{align*}
Y'_{n}:=& H^1(L_{n,u_n}/K_{n,v},E'_n),\quad \mbox{and}\\ 
Y''_{n}:= &H^1(L_{n,u_n}/K_{n,v}, 
E(L_{n,u_n})[p^n]/E'_n).
\end{align*}
Letu us we study $Y''_{n}$.
Note that $\tau$ acts on 
$T_p(E)$ non-trivially and  unipotently. 
Putting 
$\nu_\tau:= \ord_p(\# (T_p(E)/\braket{\tau-1})_{\rm tor})$, 
we have 
\[
\# (E(L_{n,u_n})[p^n]/E'_n) \le
\# (E(L_{\infty,u_\infty})[p^\infty]/E'_\infty)
=p^{\nu_\tau}.
\]
It follows that the sequence 
\(
\set{ \# Y''_{n} }_{n \ge 0}
\)
is bounded, 
because of the inequaility 
$[L_{n,u_n}:K_{n,v}]\le [L:\Q]$.
Let us consider  $Y'_{n}$.
We define $H_n$ to be the maximal subgroup 
of $\Gal(L_{n,u_n}/K_{n,v})$ 
acting trivially on $E'_n$, and 
$L'_n$ the maximal subfield 
of $L_{n,u_n}$ fixed by $H_n$.
Now, we consider an exact sequence
\[
0 \longrightarrow H^1 (L'_n/K_{n,v}, E'_n)
\longrightarrow Y'_{n}
\longrightarrow H^1 (L_{n,u_n}/L'_n, E'_n).
\]
By (C2) for $E$, we know  
$H^0(K_{n,v}, E'_n)=0$ (\autoref{lemC2}). 
Since $L'_n/K_{n,v}$ is cyclic, we have 
$H^1(L'_n/K_{n,v},E'_n) \simeq \widehat{H}^{-1}(L'_n/K_{n,v},E'_n)$ 
(cf.~\cite[{{\sc Chapitre}~VIII, \S 4}]{Se1}). 
For $E'_n$ is finite, its Herbrand quotient is trivial 
so that $\# \widehat{H}^{-1}(L'_n/K_{n,v},E'_n)  = \#\widehat{H}^0(L'_n/K_{n,v},E'_n)$ (\cite[{{\sc Chapitre}~VIII, \S 4, {\sc Proposition}~8}]{Se1}).
Therefore,  
we have 
\[
\# H^1 (L'_n/K_{n,v}, E'_n)
= \# \widehat{H}^0 (L'_n/K_{n,v}, E'_n) \le \# H^0(K_{n,v},E_n') =1.
\]
Since 
$L_{n,u_n}/L'_n$ is a cyclic extension 
whose order is at most $[L:\Q]$, we have
\[
\# H^1 (L_{n,u_n}/L'_n, E'_n)
= 
\# \Hom_{\Z_p}(\Gal(L_{n,u_n}/L'_n), \Z/p^n\Z)
\le [L:\Q].
\]
The sequence $\set{ \# Y'_{n} }_{n \ge 0}$ is bounded.
This implies that $\set{ \# Y_{n} }_{n \ge 0}$ is bounded by \eqref{seq:Y}.
Secondly, let us show that $\set{ \# Z_{n} }_{n \ge 0}$ is bounded.
We have an exact sequence 
\begin{equation}
\begin{tikzcd}
 H^0(L_{n,u_n}, E[p^n]/E'_n)\rar{\delta_n} 
             \ar[draw=none]{d}[name=X, anchor=center]{}
    &  H^1(K^E_{n,w_n}/L_{n,u_n}, E'_n) 
    \ar[rounded corners,
            to path={ -- ([xshift=2ex]\tikztostart.east)
                      |- (X.center) \tikztonodes
                      -| ([xshift=-2ex]\tikztotarget.west)
                      -- (\tikztotarget)}]{dl} \\      
  H^1(K^E_{n,w_n}/L_{n,u_n}, E[p^n]) \rar & H^1(K^E_{n,w_n}/L_{n,u_n}, E[p^n]/E'_n).
\end{tikzcd}
\end{equation}
We put 
\begin{align*}
Z'_n:=& H^0(K_{n,v}, \Coker (\delta_n) ), \ \mbox{and}\\
Z''_n:=& H^0(K_{n,v}, H^1(K^E_{n,w_n}/L_{n,u_n}, E[p^n]/E'_n)).
\end{align*}
In order to prove that 
$\set{ \# Z_{n} }_{n \ge 0}$ is bounded, 
it suffices to show that 
$\set{ \# Z'_n }_{n \ge 0}$ and 
$\set{ \# Z''_n }_{n \ge 0}$ are bounded. 
Let us show that  $\set{ \# Z'_n }$ is bounded.
The generator $\tau_n:=\tau\vert_{K^E_{n,w_n}}$ of 
the Galois group 
$\Gal(K^E_{n,w_n}/L_{n,u_n})$ acts trivially on 
$E[p^n]/E'_n$. It holds that 
\[
H^0(L_{n,u_n}, E[p^n]/E'_n)=
E[p^n]/E'_n \simeq \Z/p^n.
\]
We also  have an isomorphism 
\begin{align*}
H^1(K^E_{n,w_n}/L_{n,u_n}, E'_n) =
\Hom (\Gal(K^E_{n,w_n}/L_{n,u_n}),E'_n) 
\xrightarrow{\ \simeq \ } 
E'_n[p^{M_n}]
\end{align*}
given by the evaluation at $\tau_n$, 
where $M_n:=\ord_p([K^E_{n,w_n}:L_{n,u_n}])$.
We denote by $\overline{E}'_n$ the image of $E'_n$ in 
$E[p^n]/\braket{\tau-1}$. 
Its order is bounded as 
\(
\# \overline{E}'_n \le p^{\nu_\tau}
\).
By definition, the coboundary map  $\delta_n$ is given by
\[
\delta_n \colon E[p^n]/E'_n \longrightarrow E'_n[p^{M_n}];\ 
(P \ \mathrm{mod}\, E'_n) \longmapsto 
(\tau -1) P.
\]
We obtain 
\[
\# Z'_n \le
\# \Coker (\delta_n) \le 
\# \overline{E}'_n \le p^{\nu_\tau}.
\]
Finally, let us show that $\set{ \# Z''_n}_{n \ge 0}$
is bounded.
Note that we have an injective homomorphism
\[
\xymatrix{
H^1(K^E_{n,w_n}/L_{n,u_n}, E[p^n]/E'_n)
\ar@{^{(}->}[r] & 
\dfrac{E[p^n]/E'_n}{\braket{\tau-1}}
=E[p^n]/E'_n.
}
\]
By (C2) for $E$ and \autoref{lemC2}, 
it holds that 
$Z''_n=H^0(K_{n,v}, E[p^n]/E'_n)=0$.
This implies that $\set{ \# Z_{n} }_{n \ge 0}$ is bounded.
Hence, we deduce that 
$\set{\#\Ker( \mathrm{res}_{n,v}^{\mathrm{loc}}) | n \ge 0, v \mid \ell}$
is bounded. 
%

\smallskip 
Now, suppose that $\ell \ne p$, and 
let us show the assertion (3) of 
\autoref{claimlocalres}. 
Again, take any $n \in \Z_{\ge N}$, and let $v=v_n$ be the place 
of $K_n$ below $w_\infty$. 
The order of $\Coker(\mathrm{res}_{n,v}^{f})$ 
depends only on $\ell$ and $n$. 
By the short exact sequence
$0 \longrightarrow  
E[p^n]
\longrightarrow{}
E[p^\infty]
\xrightarrow{\times p^n}
E[p^\infty]
\longrightarrow 0$,
there is a short exact sequence 
\[
0 \longrightarrow A_n^0  
\xrightarrow{\ \delta\ } H^1(K_{n,v}, E[p^n]) \xrightarrow{\iota_{n,v}} H^1(K_{n,v},E[p^{\infty}])[p^n], 
\]
where 
$A_n^0 := E(K_{n,v})[p^\infty]\otimes_{\Z_p}(\Z/p^n\Z)$. 
Recall that 
$H^1_f(K_{n,v},E[p^n])$ is defined to be the inverse image 
of $H^1_f(K_{n,v},E[p^{\infty}])$ by $\iota_{n,v}$ (cf.\ \eqref{def:iota}). 
Thus, 
the map $\iota_{n,v}$ induces the short exact sequence 
$0 \longrightarrow A_n^0 \longrightarrow B_n^0 \longrightarrow C_n^0$, 
where 
\[
B^0_n:= H^1_f(K_{n,v}, E[p^n]), \ \mbox{and}\ 
C^0_n:= H^1_f(K_{n,v}, E[p^\infty])[p^n].
\]
%
Furthermore, we obtain a commutative diagram
\[
\xymatrix{
0 \ar[r] & 
A^0_n \ar[r] \ar[d]^{a_n} & 
B^0_n \ar[r] \ar[d]^{b_n} & 
C^0_n  \ar[d]^{c_n}  \\
0 \ar[r] & A^1_n \ar[r] & B^1_n 
\ar[r] & C^1_n   
} 
\]
whose rows are exact, where 
\begin{align*}
A^1_n:=& H^0\left(K_n,
\prod_{w \mid v}E(K^E_{n,w})[p^\infty]\otimes_{\Z_p}(\Z/p^n\Z) \right), \\
B^1_n:=& H^0\left(K_n,
\prod_{w \mid v}H^1_f(K^E_{n,w}, E[p^n]) \right), \\
C^1_n:=& H^0\left(K_n,
\prod_{w \mid v}H^1_f(K^E_{n,w}, E[p^\infty])[p^n] \right), 
\end{align*}
and the arrows $a_n$ and 
$c_n$ are restriction maps, and 
$b_n = \res_{n,v}^{f}$.
By \autoref{lemloccond0}, 
we have $C^0_n=C^1_n=0$.
In order to prove \autoref{claimlocalres} (3), 
it suffices to show that 
the sequence $\set{ \# \Coker(a_n)}_{n \ge N}$ is bounded.
By the exact sequence 
\[
0 \longrightarrow 
p^nE(K^E_{n,w})[p^\infty]
\longrightarrow
E(K^E_{n,w})[p^\infty]
\longrightarrow
E(K^E_{n,w})[p^\infty]\otimes_{\Z_p}(\Z/p^n\Z)
\longrightarrow 0,
\] 
using Shapiro's lemma as in \eqref{eq:H0vw}, 
we obtain an exact sequence 
\begin{align*}
E(K_{n,v})[p^\infty]
 \xrightarrow{\ a_n\ } A^1_n
 \longrightarrow 
H^1\left(K^E_n/K_n, 
\prod_{w \mid v}p^nE(K^E_{n,w})[p^\infty]\right)=:\Xi_n.
\end{align*}
In order to prove that 
$\set{ \# \Coker (a_n) \mathrel{\vert} n \in \Z_{\ge N} }$ is bounded, 
it suffices to show that $\set{\# \Xi_n }_n$ is bounded.
Fix a place $w_n$ of $K^E_n$. 
We have 
\[
\Xi_n=H^1(K^E_{n,w_n}/K_{n,v}, p^nE(K^E_{n,w_n})[p^\infty]).
\]
For any intermediate field $M$ of 
$K^E_{n,w_n}/K_{n,v}$ which is Galois over $K_{n,v}$, 
we have the inflation-restriction exact sequence
\[
0 \longrightarrow 
\Xi'_n(M) \longrightarrow 
\Xi_n
\longrightarrow 
\Xi''_n(M),
\]
where we put 
\begin{align*}
\Xi'_n(M) := &
H^1\left(
M/K_{n,v}, 
H^0(
M, 
p^nE(K^E_{n,w_n})
[p^\infty]
)
\right),\ \mbox{and} \\
\Xi''_n(M)
:=& H^0\left(
K_{n,v}, H^1\left(K^E_{n,w_n}/M, p^nE(K^E_{n,w_n})[p^\infty]
\right)\right).
\end{align*}
Recall that $K^E_n$ contains $L$, the elliptic curve 
$E_{K^E_n}$ has semistable reduction everywhere. 
Let $u_n$ be the place of 
$L_n:=L(\mu_{p^n})$ below $w_n$.

\smallskip
\noindent 
{\bf (The case:~Good reduction)} 
Suppose that $E_{L_{n,u_n}}$ 
has  good reduction. 
Let $M_n$ be the maximal subfield of $K^E_{n,w_n}$ which is 
unramified over $K_{n,v}$. 
By similar arguments of 
the boundedness of $\set{\# Y_n(M_n)}_{n \ge 0}$
for {\bf (The case:~Potentially good reduction with $\ell \ne p$)} in 
the proof of (1), 
we have 
\begin{align*}
	 \# \Xi'_n(M_n) &= \# H^1(M_n/K_{n,v}, p^nE(K_{n,w_n}^E)[p^{\infty}]) \\
	 &\le \# \left( \frac{p^nE(M_n)[p^{\infty}]}{\braket{\mathrm{Frob}_v-1}} \right)\\
	 &= \# p^nE(K_{n,v})[p^{\infty}]\\
	 &\le \# E(K_{n,v})[p^{\infty}].
\end{align*}
\autoref{thmboundpotgoodred} implies that  
the sequence $\set{ \# \Xi'_n(M_n) }_{n \ge 0}$ is bounded.
Moreover, as noted in the proof of 
the boundedness of $\set{\# Z_n(M_n)}_{n \ge 0}$ 
in {\bf (The case:~Potentially good reduction with 
$\ell \ne p$)}, 
the sequence 
$\set{[K^E_n:M_n]}_{n \ge N}$ is bounded, 
and hence $\set{\# \Xi''_n(M_n) }_{n \ge N}$ is bounded.

\smallskip
\noindent
{\bf (The case:~Multiplicative reduction)}
Suppose that 
$E_{L_{n,u_n}}$ has multiplicative reduction. 
Put $\Xi'_n:=\Xi'_n(L_{n,u_n})$ and 
$\Xi''_n:=\Xi''_n(L_{n,u_n})$.
In this case, \autoref{lemboundmultred} implies that  
$\Xi'_n=0$ and 
$\Xi''_n=0$ for sufficiently large $n$, 
and in particular, the sequences 
$\set{ \# \Xi''_n }_{n \ge N}$ and 
$\set{ \# \Xi''_n }_{n \ge N}$ is bounded. 

By the above arguments, we deduce that in any case,  the set
$\set{ \# \Xi''_n }_{n \ge N}$ is bounded 
and so is 
$\set{ \# \Xi_n }_{n \ge N}$.
Accordingly, the assertion \autoref{claimlocalres} (3) is proved. 

\smallskip
Let us show the assertion (2).
Here, we study the case when $\ell = p$. 
Recall that by our assumption, 
the elliptic curve $E$ has good reduction at $p$.

\smallskip\noindent 
{\bf (The case: Good ordinary reduction)}
Suppose that 
the elliptic curve $E$ has good ordinary reduction at  $p$. 
In this case, 
there exists a $G_{\Q_p}$-stable $\Z/p^n\Z$-submodule 
$\mathrm{Fil} E[p^n]$ of $E[p^n]$ of rank one 
such that the inertia group 
$I_{\Q_p}$ of  $G_{\Q_p}$  
acts via the cyclotomic character on $\mathrm{Fil} E[p^n]$, 
and  trivially on $E[p^n]/\mathrm{Fil} E[p^n]$.
Fix a generator $P_n$ of 
the cyclic $\mathbb{Z}_p$-module $\mathrm{Fil} E[p^n]$ and 
a lift $Q_n \in E[p^n]$ of a generator of 
the cyclic $\mathbb{Z}_p$-module 
$\overline{Q}_n \in E[p^n]/\mathrm{Fil} E[p^n]$.
The pair $(P_n,Q_n)$ becomes a basis of 
the free $\mathbb{Z}/p^n\mathbb{Z}$-module
of rank two.
Let $M_n$ be the maximal subfield of $K^E_{n,w_n}$ which is 
unramified over $K_{n,v}$, and 
put $I_n:=\Gal(K^E_{n,w_n}/M_n)$. 
Since $I_n$ acts trivially on 
$\mathrm{Fil} E[p^n]$ and 
$E[p^n]/\mathrm{Fil} E[p^n]$, 
the group $I_n$ is a cyclic group which is 
generated by an element acting on 
$E[p^n]$ via a unipotent matrix 
$$
U=\begin{pmatrix}
1 & x_n \\
0 & 1
\end{pmatrix} \in M_2(\mathbb{Z}/p^n\mathbb{Z})
$$ 
under the basis $(P_n,Q_n)$. 
Fix a lift $\tau \in \Gal(K^E_{n,w_n}/K_{n,v})$ 
of the Frobenius  $\Frob_v \in \Gal(M_n/K_{n,v})$.
The filtration 
$\mathrm{Fil} E[p^n]$ is stable under the action of 
$\Gal(K^E_{n,w_n}/K_{n,v})$, 
and the Weil pairing 
$e\colon E[p^n] \times E[p^n] \longrightarrow \mu_{p^n}$
is an alternative pairing preserving the action of 
$\Gal(K^E_{n,w_n}/K_{n,v})$ (\cite[Chapter III, Section 8]{Si1}).
Accordingly, 
the fixed lift $\tau$ acts on $E[p^n]$ by a matrix 
\begin{equation}\label{eq:matpresoftau}
A=\begin{pmatrix}
a_n & b_n \\
0 & a_n^{-1}
\end{pmatrix} \in M_2(\mathbb{Z}/p^n\mathbb{Z})
\end{equation}
 for some $a_n \in (\mathbb{Z}/p^n\mathbb{Z})^\times$
and $b_n \in \mathbb{Z}/p^n\mathbb{Z}$.
We can define $a:=(a_n)_n \in \varprojlim_n 
(\mathbb{Z}/p^n\mathbb{Z})^\times=\mathbb{Z}_p^\times$. 
Since $E$ has good reduction at $p$, 
\autoref{thmboundpotgoodred} 
implies that $a^k \ne 1$ 
for any $k \in \mathbb{Z}_{>0}$. 
In fact, if $a^k = 1$, then 
for any $m \in \mathbb{Z}_{\ge 0}$, 
the group $\mathrm{Fil} E[p^m]$ 
of order $p^m$ is contained in 
$E(\mathbb{Q}_{p^k}(\mu_{p^m}))$, 
and contradicts to \autoref{thmboundpotgoodred}.
Here, we denote by $\mathbb{Q}_{p^k}$ 
the unramified  
extension field of $\mathbb{Q}_p$ 
of degree $k$. 
It holds that 
\begin{equation}\label{eq:actionofAonU}
AUA^{-1}=\begin{pmatrix}
1 & a_n^2x_n \\
0 & 1
\end{pmatrix} 
=U^{a_n^2}.
\end{equation}
By the short exact sequence 
$0 \longrightarrow 
\mathrm{Fil} E[p^n] \longrightarrow 
E[p^n]\longrightarrow 
E[p^n]/\mathrm{Fil} E[p^n] 
\longrightarrow 0$ 
and \eqref{eq:Kerloc}, 
we obtain an exact sequence 
\begin{equation}
    \label{seq:YKZp}
Y_n \longrightarrow 
\Ker( \mathrm{res}_{n,v}^{\mathrm{loc}})
\longrightarrow 
Z_n, 
\end{equation}
where 
\begin{align*}
Y_n &:= H^1(K^E_{n, w_n}/K_{n,v_n}, \mathrm{Fil} E[p^n]), \ \mbox{and}\\
Z_n &:= H^1(K^E_{n, w_n}/K_{n,v_n}, 
E[p^n]/\mathrm{Fil} E[p^n] ).
\end{align*}
In order to show that 
$\set{ \# \Ker( \mathrm{res}_{n,v}^{\mathrm{loc}})}_{n \ge 0}$ is bounded, 
it is sufficient to prove that both
$\set{ \# Y_n }_{n \ge 0}$
and 
$\set{ \# Z_n }_{n \ge 0}$ 
are bounded. 

First, let us study the order of 
$Z_n$.
Since $I_n$ acts trivially on 
$E[p^n]/\mathrm{Fil} E[p^n]$, 
we have an exact sequence 
\begin{equation}
	\label{seq:ZZZ}
0 \longrightarrow 
Z_n'
\longrightarrow 
Z_n \longrightarrow 
Z_n'',
\end{equation}
where 
\begin{align*}
	Z_n' &:= H^1(M_n/K_{n,v_n}, E[p^n]/\mathrm{Fil} E[p^n]),\ \mbox{and}\\ 
	Z_n''&:= H^0(K_{n,v}, 
H^1(K^E_{n,w_n}/M_n, E[p^n]/\mathrm{Fil} E[p^n]) )\\
&=H^0(K_{n,v}, 
\Hom(I_n,E[p^n]/\mathrm{Fil} E[p^n])).
\end{align*}
Since $a \ne 1$,  we have 
$
\#(E[p^\infty]/\mathrm{Fil} E[p^\infty])[a^{-1}-1]<\infty
$,
 and 
\begin{align*}
\# Z_n' 
& \le \# \left(
\frac{E[p^n]/\mathrm{Fil} E[p^n]}{\braket{\tau -1}}
\right) \\
&= \# \left(
\frac{E[p^n]/ \mathrm{Fil} E[p^n])}{\braket{a^{-1}-1}}
\right) \\
&=\# (E[p^n]/ \mathrm{Fil} E[p^n])[a^{-1}-1] \\
&\le  
\# (E[p^\infty]/ \mathrm{Fil} E[p^\infty])[a^{-1}-1].
\end{align*}
The sequence 
$\set{ Z_n'}_{n\ge 0}$ is bounded. 
Let us consider the order of 
$Z_n''$.
The matrix presentation \eqref{eq:matpresoftau} 
implies that the Galois group 
$\Gal(M_n/K_{n,v})=\langle \Frob_v \rangle$ 
acts on 
$E[p^n]/\mathrm{Fil} E[p^n]$ via the character 
$\Frob_v \longmapsto a_n^{-1}$, and 
\eqref{eq:actionofAonU} implies that 
$\Frob_v \in \Gal(M_n/K_{n,v})$ acts on 
$I_n$ via the character 
$\Frob_v \longmapsto a_n$. 
Since $a^3 \ne 1$, namely 
$a^2 \ne a^{-1}$,  there exists an integer 
$m_0 \in \mathbb{Z}_{>0}$ such that 
$a^2_{m_0} \ne a^{-1}_{m_0}$.
We have 
\[
Z_n''\subseteq \Hom(I_n,E[p^{m_0-1}]/\mathrm{Fil} E[p^{m_0-1}]).
\]
Since $I_n$ is cyclic, the sequence 
$\set{ \# \Hom(I_n,E[p^{m_0-1}]/
\mathrm{Fil} E[p^{m_0-1}]) }_{n \ge 0}$
is bounded 
and so is $\set{ Z_n'' }_{n \ge 0}$. 
As a result, the sequence 
$\set{ \# Z_n }_{n\ge 0}$ is bounded 
from \eqref{seq:ZZZ}. 

The boundedness of 
$\set{ \# Y_n }$ 
follows from the arguments in the 
previous paragraph just by 
replacing $E[p^n]/\mathrm{Fil} E[p^n]$
with $\mathrm{Fil} E[p^n]$, where 
the Galois group 
$\Gal(M_n/K_{n,v})=\langle \Frob_v \rangle$ 
acts via the character 
$\Frob_v \longmapsto a_n$. 
By the short exact sequence \eqref{seq:YKZp}  
we deduce that 
$\set{ \# \Ker( \mathrm{res}_{n,v}^{\mathrm{loc}}) }_{n \ge 0}$ is bounded.

\smallskip
\noindent {\bf (The case:~Good supersingular reduction)}
Suppose that $E$ 
has good supersingular reduction at  $p$.
In order to prove that 
the sequence 
$\set{\# \Ker(\mathrm{res}_{n,v}^{\mathrm{loc}})| n \ge 0, v \mid p}$
is bounded, 
by  \eqref{eq:Kerloc}
it suffices to show that 
$H^1(K^E_{n,w_n}/K_{n,v}, E[p^n])=0$ for any $n\ge 0$. 
The short exact sequence $0\longrightarrow E[p] \longrightarrow E[p^{m+1}]\xrightarrow{\,\times p\,} E[p^m]\longrightarrow 0$ 
induces the exact sequence 
\[
H^1(K^E_{n,w_n}/K_{n,v}, E[p])\longrightarrow H^1(K^E_{n,w_n}/K_{n,v}, E[p^{m+1}])\longrightarrow H^1(K^E_{n,w_n}/K_{n,v}, E[p^m]).
\]
By induction on $m$, 
it is enough to show that 
$H^1(K^E_{n,w_n}/K_{n,v}, E[p])=0$.
We denote  the inertia subgroup of $G_{\Qp}$ by 
$I_{\Qp} := \Gal(\overline{\Qp}/\Qp^{\ur})$, and  
the wild inertia subgroup by 
$I_{\Qp}^w := \Gal(\overline{\Qp}/\Qp^{\mathrm{tame}}) \subseteq I_{\Qp}$, 
where $\Qp^{\mathrm{tame}}$ is the maximal tamely ramified extension of $\Qp$.
Let $I_{\Q_p}^t:= I_{\Qp}/I_{\Qp}^w \simeq \varprojlim_n \F_{p^n}^\times$ 
be the tame inertia group of $G_{\Q_p}$ (cf.\ \cite[1.3, Proposition 2]{Se3}), 
and $\psi \colon I_{\Q_p}^t  
\longrightarrow \F_{p^2}^\times$ 
the  character  
induced by 
the natural projection 
$\varprojlim_n \F_{p^n}^\times 
\longrightarrow \F_{p^2}^\times$. 
The characters 
$\psi$ and  $\psi^p$ 
form the fundamental characters of level 2 (cf.~\cite[1.7]{Se3}).
By \cite[1.11, Proposition~12]{Se3}, the following hold.
\begin{itemize}
	\item The action of the wild inertia  subgroup $I_{\Qp}^w$ on $E[p]$ is trivial, so that 
	the action of the inertia group $I_{\Q_p}$
of $G_{\Q_p}$ on $E[p]$ 
factors through $I_{\Q_p}^t$. 
	\item The group $E[p]$ has a structure of $\F_{p^2}$-vector space of dimension $1$. 
	\item The image of $I_{\Qp}$ in $\Aut(E[p])$ is a cyclic group of order $p^2-1$.
	\item The action of $I_{\Qp}^t$ on $E[p]$ is given by the fundamental character $\psi$ of level $2$. 
\end{itemize}
Let us regard $E[p]$ as an $\mathbb{F}_{p}$-vector space, 
and consider the 
$\mathbb{F}_{p^2}$-vector space $E[p]\otimes_{\Fp} \mathbb{F}_{p^2}$, 
which is the extension of scalar of $E[p]$.
By the properties of $E[p]$ noted above, 
the action of $I_{\Qp}^t$ on 
$E[p]\otimes_{\Fp} \mathbb{F}_{p^2}$ 
is given by the matrix
\begin{equation}
\label{eq:psi-psip}
\begin{pmatrix}
\psi & 0 \\
0 & \psi^p
\end{pmatrix}
\end{equation}
after taking a suitable $\F_{p^2}$-basis 
$E[p] \otimes_{\F_p} \F_{p^2}$ 
(cf.\ \cite[1.9, Corollaire~3]{Se3}, see also \cite[2.6 Theorem]{Ed} which is a result on 
modulo $p$ Galois representations attached to 
modular forms with coefficients in $\overline{\F}_p$). 
Let $F$ be the maximal unramified extension field of 
$\Q_p$ contained in $K^E_{1,w_1}$.  
Put $F_n := F(\mu_{p^n})$. 
We have the following inflation-restriction exact sequences:  
\begin{equation}
\label{seq:KnE/Kn}
\begin{tikzcd}
H^1(F_n/\Q_p(\mu_{p^n}), H^0(F_n, E[p])) \rar 
             \ar[draw=none]{d}[name=X, anchor=center]{}
    &  H^1(K^E_{n,w_n}/K_{n,v}, E[p])
    \ar[rounded corners,
            to path={ -- ([xshift=2ex]\tikztostart.east)
                      |- (X.center) \tikztonodes
                      -| ([xshift=-2ex]\tikztotarget.west)
                      -- (\tikztotarget)}]{dl} \\      
  H^1(K^E_{n,w_n}/F_n, E[p])
\end{tikzcd}
\end{equation}
and 
\begin{equation}
\label{seq:KnE/Fn}
\begin{tikzcd}[column sep=0.3cm]
H^1(K^E_{1,w_1}(\mu_{p^n})/F_n, E[p])
\rar 
    \ar[draw=none]{d}[name=X, anchor=center]{}
    &  H^1(K^E_{n,w_n}/F_n, E[p])
    \ar[rounded corners,
            to path={ -- ([xshift=2ex]\tikztostart.east)
                      |- (X.center) \tikztonodes
                      -| ([xshift=-2ex]\tikztotarget.west)
                      -- (\tikztotarget)}]{dl} \\      
   H^1(K^E_{n,w_n}/K^E_{1,w_1}(\mu_{p^n}), E[p])^{G_{F_n}}\arrow[r,equal] 
 & \Hom_{\Z[G_{F_n}]}(\Gal(F'_n/K^E_{1,w_1}(\mu_{p^n})), E[p]),
\end{tikzcd}
\end{equation}
where $F'_n$ is the maximal abelian extension field of $K^E_{1,w_1}(\mu_{p^n})$
contained in $K^E_{n,w_n}$. 

\begin{claim}
\label{clm:H0(Fn,Ep)}
We have 
$H^0(F_n, E[p]) = 0$ 
and 
$H^1(K^E_{1,w_1}(\mu_{p^n})/F_n, E[p]) = 0$. 
\end{claim}
\begin{proof}
We may assume $n \ge 1$.
Since $F/\Q_p$ is unramified, 
the ramification index of $F_n/\Q_p$ is $(p-1)p^{n-1}$, 
which is not divisible by 
$[K^E_{1,w_1}:F]=p^2-1$. 
This implies that 
the restrictions of $\psi$ and $\psi^p$ on $I_{\Q_p} \cap G_{F_n}$ 
are non-trivial, and 
by \eqref{eq:psi-psip}, we have 
\[
H^0(F_n, E[p]) \subseteq 
H^0(F_n, E[p]\otimes_{\Fp} \mathbb{F}_{p^2})=0.
\]

Furthermore, the extension  
$K^E_{1,w_1}(\mu_{p^n})/F_n$ is finite cyclic. 
By using the Herbrand quotient of 
the Tate cohomology groups (\cite[{{\sc Chapitre}~VIII, \S 4, {\sc Proposition}~8}]{Se1}), 
we have 
\begin{align*}
    \# H^1(K^E_{1,w_1}(\mu_{p^n})/F_n, E[p]) &= \# \widehat{H}^1(K^E_{1,w_1}(\mu_{p^n})/F_n, E[p]) \\
    &= \#\widehat{H}^0(K^E_{1,w_1}(\mu_{p^n})/F_n, E[p]) \\
    &\le \# H^0(F_n,E[p]) = 1.
\end{align*}
Because of this, we obtain the claim. 
\end{proof}
Applying  \autoref{clm:H0(Fn,Ep)}, 
the exact sequences \eqref{seq:KnE/Kn} and \eqref{seq:KnE/Fn} 
give 
\begin{align*}
    \# H^1(K^E_{n,w_n}/K_{n,v}, E[p]) &\le 
\# H^1(K_{n,w_n}^E/F_n,E[p]) \\
&\le \#\Hom_{\Z[G_{F_n}]}(\Gal(F'_n/K^E_{1,w_1}(\mu_{p^n})), E[p]).
\end{align*}
Now, we shall show that 
\begin{equation}\label{eq:HomF'nvanishing}
    \Hom_{\Z[G_{F_n}]}(\Gal(F'_n/K^E_{1,w_1}(\mu_{p^n})), E[p])=0.
\end{equation}
For each $m \in \Z$ with $1 \le m \le n$, we define 
the subgroup $\mathrm{Fil}^m$ of $\Gal(F'_n/K^E_{1,w_1}(\mu_{p^n}))$ 
to be the image of $\Gal(K^E_{n,w_n}/K^E_{m,w_m}(\mu_{p^n}))$ 
by the natural map 
\[
\Gal(K_{n,w_n}^E/K_{1,w_1}^E(\mu_{p^n})) 
\longrightarrow \Gal(F'_n/K_{1,w_1}^E(\mu_{p^n})). 
\]
The family $\set{ \mathrm{Fil}^m}_m$ becomes 
a  $G_{F_n}$-stable 
descending filtration  of 
$\Gal(F'_n/K^E_{1,w_1}(\mu_{p^n}))$. 
In order to show \eqref{eq:HomF'nvanishing}, 
it suffices to show that 
\[
\Hom_{\Z[G_{F_n}]}(\mathrm{Fil}^m/\mathrm{Fil}^{m+1}, E[p] 
\otimes_{\Zp} \Z_{p^2})=0.
\]
Take an $\mathbb{F}_{p^2}$-basis $B_1$ of 
$E[p]\otimes_{\Fp}\mathbb{F}_{p^2}$ which gives 
the presentation of 
the action of $I_{\Qp}^t$ by 
the matrix \eqref{eq:psi-psip}, and 
for each $m \in \Z$ with $2 \le m \le n$, 
fix a basis $B_m$ of 
$E[p^m]\otimes_{\Zp}\mathbb{Z}_{p^2}$
which is a lift of $B_1$. 
Since $\Gal(K_{m+1,w_{m+1}}^E/K_{m,w_{m}}^E)$ is 
a normal subgroup of 
$\Gal(K_{m+1,w_{m+1}}^E/\Qp)$, 
it is stable under the conjugate action of $G_{F_n}$.
Recall that we have a $G_{F_n}$-stable injection 
\[
\xymatrix@R=3mm{
\Gal(K_{m+1,w_{m+1}}^E/K_{m,w_{m}}^E) \ar@{^{(}->}[r] & \Ker\bigg(\Aut(E[p^{m+1}]\otimes_{\Zp}\Z_{p^2}) \twoheadrightarrow \Aut(E[p^{m}]\otimes_{\Zp}\Z_{p^2})\bigg)\ar@{=}[d]\\
&  1 + p^{m}M_2(\Z_{p^2}/p^{m+1}\Z_{p^2}) 
\simeq M_2(\mathbb{F}_{p^2}),
}
\]
where the action of $\sigma \in G_{F_n}$ 
on $M_2(\F_{p^2})$ is defined by the conjugate action of the matrix 
\[
\begin{pmatrix}
\psi(\sigma) & 0 \\
0 & \psi^p(\sigma)
\end{pmatrix}.
\]
Since 
$\mathrm{Fil}^m/\mathrm{Fil}^{m+1}$ is 
a quotient of $\Gal(K_{m+1,w_{m+1}}^E(\mu_{p^n})/
K_{m,w_{m}}^E(\mu_{p^n}))$
by definition, and the restriction 
\[\Gal(K_{m+1,w_{m+1}}^E(\mu_{p^n})/
K_{m,w_{m}}^E(\mu_{p^n})) 
\longrightarrow 
\Gal(K_{m+1,w_{m+1}}^E/
K_{m,w_{m}}^E)
\]
is an injective homomorphism, 
we can regard 
$\mathrm{Fil}^m/\mathrm{Fil}^{m+1}$ as 
a $G_{F_n}$-stable subquotient of $M_{2}(\mathbb{F}_{p^2})$. 
Let us study the $\mathbb{F}_{p^2}[G_F]$-module structure of 
$M_{2}(\mathbb{F}_{p^2})$. 
Take any $\sigma \in G_{F_n}$. 
It holds that 
\begin{align*}
\begin{pmatrix}
\psi(\sigma) & 0 \\
0 & \psi^p(\sigma)
\end{pmatrix}
\begin{pmatrix}
a & 0 \\
0 & b
\end{pmatrix}
\begin{pmatrix}
\psi(\sigma) & 0 \\
0 & \psi^p(\sigma)
\end{pmatrix}^{-1}
&=\begin{pmatrix}
a & 0 \\
0 & b
\end{pmatrix} \ 
\text{for any $a,b \in \F_{p^2}$},  \\
\begin{pmatrix}
\psi(\sigma) & 0 \\
0 & \psi^p(\sigma)
\end{pmatrix}
\begin{pmatrix}
0 & 1 \\
0 & 0
\end{pmatrix}
\begin{pmatrix}
\psi(\sigma) & 0 \\
0 & \psi^p(\sigma)
\end{pmatrix}^{-1}
&=\psi^{1-p}(\sigma)\begin{pmatrix}
0 & 1 \\
0 & 0
\end{pmatrix},  \\ \intertext{and}
\begin{pmatrix}
\psi(\sigma) & 0 \\
0 & \psi^p(\sigma)
\end{pmatrix}
\begin{pmatrix}
0 & 0 \\
1 & 0
\end{pmatrix}
\begin{pmatrix}
\psi(\sigma) & 0 \\
0 & \psi^p(\sigma)
\end{pmatrix}^{-1}
&={\psi}^{p-1}(\sigma)\begin{pmatrix}
0 & 0 \\
1 & 0
\end{pmatrix}.
\end{align*}
Note that $\psi \ne \psi^{p-1}$, and $\psi \ne \psi^{1-p}$. 
It holds that $M_2(\mathbb{F}_{p^2})$ is 
a semisimple $\mathbb{F}_{p^2}[G_{F_n}]$-module, and 
there is no simple $\mathbb{F}_{p^2}[G_{F_n}]$-submodule of $M_2(\mathbb{F}_{p^2})$ 
which is isomorphic to an $\mathbb{F}_{p^2}[G_{F_n}]$-submodule of 
$E[p]\otimes_{\Zp} \Z_{p^2}$.
This implies 
\[
\Hom_{\Z[G_{F_n}]}(\mathrm{Fil}^m/\mathrm{Fil}^{m+1}, E[p] 
\otimes_{\Zp} \Z_{p^2})=0, 
\]
and 
we obtain \eqref{eq:HomF'nvanishing}. 
Consequently, we have 
\[
\Ker(\mathrm{res}_{n,v}^{\mathrm{loc}})=
H^1(K^E_{n,w_n}/K_{n,v}(\mu_{p^n}), E[p^n])=0.
\]
By the above arguments, 
we deduce that 
$\set{\# \Ker (\mathrm{res}_{n,v}^{\mathrm{loc}}) }_{n \ge 0, v \mid p}$
is bounded. 
This completes the proof of 
\autoref{claimlocalres} (2).
\end{proof}

\subsection{Proof of \autoref{thmmain}}\label{sspfmain}

%
%
%
%
%
%

Recall that $\Sigma_{0,\mathrm{bad}}$ denotes the set of 
prime numbers where $E$ has bad reduction. 
As $E$ has good reduction at $p$ 
the prime $p$ does not belong to $\Sigma_{0,\mathrm{bad}}$.

\begin{lem}\label{lemfur}
Suppose that $E$ satisfies {\rm (C2)}.
Let $\ell \in \Sigma_{0,\mathrm{bad}}$. 
For each $n \in \Z_{\ge 0}$ 
and $i \in \set{ 0,1,2 }$, 
we put 
\begin{align*}
\cH^i_f(\ell, n):=&
H^i\left(
K_n, 
\prod_{w \mid \ell}
\frac{H^1_{ f}(K^E_{n,w},E[p^n])}{
H^1_{\rm ur}(K^E_{n,w},E[p^n]) \cap H^1_{f}(K^E_{n,w},E[p^n])}
\right),\ \mbox{and} \\
\cH^i_{\rm ur}(\ell, n):=&
H^i\left(
K_n, 
\prod_{w \mid \ell}
\frac{H^1_{\rm ur}(K^E_{n,w},E[p^n])}{
H^1_{\rm ur}(K^E_{n,w},E[p^n]) \cap H^1_{f}(K^E_{n,w},E[p^n])}
\right).
\end{align*}
Then, there exists an integer $N'_\ell \in \Z_{\ge 1}$ 
such that for any $n \in \Z_{\ge N'_\ell}$ 
and $i \in \set{ 0,1,2 }$, it holds that  
$\cH^i_f(\ell, n)=0$ and 
$\cH^i_{\rm ur}(\ell, n)=0$.
\end{lem}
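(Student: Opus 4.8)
The plan is to reduce the statement, via Shapiro's lemma, to the vanishing of certain \emph{local} Galois cohomology groups at the places above $\ell$, and then to recognise the relevant local modules as subquotients of $E[p^\infty]$, so that (C2)---in the guise of \autoref{lemC2}~(d)---together with local Tate duality and the local Euler characteristic formula finishes the job.

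First I would fix a place $v\mid\ell$ of $K_n$ and a place $w_n\mid v$ of $K^E_n$; its decomposition group in $G_{K_n}$ is $G_{K_{n,v}}$, and, $K^E_n/K_n$ being Galois, all three groups $H^1(K^E_{n,w_n},E[p^n])$, $H^1_f(K^E_{n,w_n},E[p^n])$, $H^1_{\ur}(K^E_{n,w_n},E[p^n])$ are $G_{K_{n,v}}$-modules on which $G_{K^E_{n,w_n}}$ acts trivially. Writing $M^f_{n,v}$ and $M^{\ur}_{n,v}$ for the two local quotients occurring in the lemma, the induced-module description $\prod_{w\mid v}(\,\cdot\,)\simeq\Ind^{G_{K_n}}_{G_{K_{n,v}}}(\,\cdot\,)$ and Shapiro's lemma give $\cH^i_f(\ell,n)\simeq\bigoplus_{v\mid\ell}H^i(K_{n,v},M^f_{n,v})$ and similarly for $\cH^i_{\ur}$. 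As there are only finitely many $v\mid\ell$, it is enough to show $H^i(K_{n,v},M^f_{n,v})=H^i(K_{n,v},M^{\ur}_{n,v})=0$ for $i\in\{0,1,2\}$.

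Since $E$ has good reduction at $p$ and $\ell\in\Sigma_{0,\mathrm{bad}}$ we have $\ell\neq p$, and $E_{\Q_\ell}$ has either potentially good or potentially multiplicative reduction. If $E$ has potentially good reduction at $\ell$, then $E_{K^E_{1,w_1}}$ has good reduction (N\'eron--Ogg--Shafarevich, using $p\geq 3$ and $\ell\neq p$), hence so does $E_{K^E_{n,w_n}}$ for every $n\geq 1$; by \autoref{remH^1_f} this gives $H^1_f=H^1_{\ur}$ over $K^E_{n,w_n}$, so $M^f_{n,v}=M^{\ur}_{n,v}=0$. If $E$ has potentially multiplicative reduction at $\ell$, then by \autoref{lem:K1E} the curve $E_{K^E_{n,w_n}}$ is a split-multiplicative Tate curve $\G_m/q_w^{\Z}$ for $n\geq 1$; since $E[p^n]\subseteq E(K^E_{n,w_n})$ by the definition of $K^E_n$, one has $\mu_{p^n}\subseteq K^E_{n,w_n}$ and $q_w\in\big((K^E_{n,w_n})^{\times}\big)^{p^n}$. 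Let $C_n\subseteq E[p^n]$ be the image of $\mu_{p^n}$ under the Tate parametrisation; over $K_{n,v}$ one has $C_n\simeq\mu_{p^n}\otimes\psi$ and $E[p^n]/C_n\simeq(\Z/p^n\Z)\otimes\psi$, where $\psi$ is the (at most quadratic) character whose fixed field is where $E$ acquires multiplicative reduction. Using that $q_w$ is a $p^n$-th power, Kummer theory for the Tate curve shows that the finite local condition is the image of the injection $H^1(K^E_{n,w_n},C_n)\hookrightarrow H^1(K^E_{n,w_n},E[p^n])$, and, combined with the restriction maps to $K^{E,\ur}_{n,w_n}$, one obtains $G_{K_{n,v}}$-equivariant isomorphisms $H^1_f(K^E_{n,w_n},E[p^n])\simeq H^1(K^E_{n,w_n},C_n)$, $H^1_{\ur}(K^E_{n,w_n},E[p^n])\simeq E[p^n]$, and $H^1_{\ur}\cap H^1_f\simeq H^1_{\ur}(K^E_{n,w_n},C_n)$; consequently $M^f_{n,v}\simeq M^{\ur}_{n,v}\simeq E[p^n]/C_n$, a subquotient $\Zp[G_{K_{n,v}}]$-module of $E[p^\infty]$. (If one wishes, \autoref{lemboundmultred} gives $E(K^E_{n,w_n})[p^\infty]=E[p^n]$ for $n$ large, which streamlines the computation of $H^1_f$.)

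Finally, for $M=E[p^n]/C_n$: the vanishing $H^0(K_{n,v},M)=0$ is exactly \autoref{lemC2}~(d) (as $\ell$ is a prime of potentially multiplicative reduction and $M$ is a subquotient of $E[p^\infty]$); local Tate duality gives $H^2(K_{n,v},M)\simeq H^0(K_{n,v},M^{\vee}(1))^{\vee}$, and the Weil pairing identifies $M^{\vee}(1)\simeq C_n$, again a subquotient of $E[p^\infty]$, so $H^2(K_{n,v},M)=0$ by \autoref{lemC2}~(d) as well; and $H^1(K_{n,v},M)=0$ follows from the local Euler characteristic formula, since $\#M=p^n$ is prime to the residue characteristic $\ell$ of $K_{n,v}$, whence $\#H^1=\#H^0\cdot\#H^2=1$. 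Thus $N'_\ell=1$ works in all cases. The step I expect to be delicate is the third paragraph: carefully tracking the $G_{K_{n,v}}$-action---above all the quadratic twist $\psi$ coming from the potentially multiplicative reduction---and verifying that the Kummer-theoretic identifications of $M^f_{n,v}$ and $M^{\ur}_{n,v}$ with $E[p^n]/C_n$ are genuinely $G_{K_{n,v}}$-equivariant.
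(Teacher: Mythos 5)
Your proposal is correct, and its skeleton coincides with the paper's: reduce via Shapiro's lemma to local cohomology over $K_{n,v}$, dispose of the potentially good case by $H^1_f=H^1_{\ur}$, and in the potentially multiplicative case kill $H^0$ using (C2) through \autoref{lemC2}, $H^2$ by local Tate duality, and $H^1$ by the local Euler--Poincar\'e characteristic formula (as $\ell\neq p$). Where you genuinely diverge is in how the two local quotient modules are identified. The paper works with Rubin's structural lemmas: for the $f$-quotient it uses the surjection from $H^1_f(K^E_{n,w},T_p(E))/H^1_{\ur}(K^E_{n,w},T_p(E))\simeq\bigl(E(K^{E,\ur}_{n,w})[p^\infty]/E(K^{E,\ur}_{n,w})[p^\infty]_{\div}\bigr)^{\Frob_w=1}$ together with a Jordan--H\"older constituent argument, and for the $\ur$-quotient it computes the image under $\iota_{n,w}$, obtaining $\bigl(E(K^{E,\ur}_{n,w})[p^\infty]/\braket{\Frob_w-1}\bigr)[p^n]$. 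You instead exploit that over $K^E_{n,w}$ the curve is a split Tate curve with $E[p^n]$ rational, so the morphism of Kummer sequences for $\overline{F}^\times\rightarrow E(\overline{F})$ identifies $H^1_f$ with the (injective) image of $H^1(K^E_{n,w},C_n)$ and $H^1_{\ur}\cap H^1_f$ with the image of $H^1_{\ur}(K^E_{n,w},C_n)$, whence both quotients are isomorphic to $E[p^n]/C_n\simeq(\Z/p^n\Z)(\psi)$ as $G_{K_{n,v}}$-modules; this is a cleaner reduction and gives $N'_\ell=1$. The two delicate points you flag do check out: injectivity of $H^1(F',C_n)\rightarrow H^1(F',E[p^n])$ for both $F'=K^E_{n,w}$ and $F'=K^{E,\ur}_{n,w}$ holds because $E[p^n]\subseteq E(K^E_{n,w})$ forces the relevant connecting maps from $H^0(F',E[p^n]/C_n)$ to vanish, which yields both identifications $G_{K_{n,v}}$-equivariantly; and the equality of the image of $H^1(K^E_{n,w},C_n)$ with $H^1_f=\Ker(\iota_{n,w})$ (using \autoref{lemloccond0}) follows from compatibility of the two connecting maps together with $E(K^E_{n,w})\otimes_{\Z}\Z/p^n\Z\simeq E(K^E_{n,w})[p^\infty]\otimes_{\Z_p}\Z/p^n\Z$, the prime-to-$p$ part of $E(K^E_{n,w})$ being $p^n$-divisible. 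Finally, your handling of the potentially good case (good reduction already over $K^E_1$, since the inertia image is finite and lands in the torsion-free group $1+pM_2(\Z_p)$) is fine and slightly sharper than the paper, which instead invokes \cite[Chapter IV, Proposition 10.3]{Si2} to obtain some level $n_0$.
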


\begin{proof}
\noindent {\bf (The case:~Potentially good reduction at $\ell$)}
First, suppose that $E$ has potentially good reduction at $\ell$.
There exists an integer $n_0\in\Z_{\ge 1}$ such that 
$E_{K^E_{n_0}}$ has good reduction at every place above $\ell$ 
(\cite[Chapter IV, Proposition 10.3]{Si2}). 
For any $n \in \Z_{\ge n_0}$ and any place $w$ of $K^E_{n}$, 
we have $H^1_f(K^E_{n,w},E[p^n])=H^1_{\rm ur}(K^E_{n,w},E[p^n])$
(cf.\ \autoref{remH^1_f}).
We obtain $\cH^i_f(\ell, n)=0$ and 
$\cH^i_{\rm ur}(\ell, n)=0$
for any $n \in \Z_{\ge n_0}$ and $i \in \set{ 0,1,2 }$.

\noindent 
{\bf (The case:~Potentially multiplicative reduction at $\ell$)}
Next, suppose that 
$E$ has potentially multiplicative reduction at $\ell$.
Let $N_\ell\in \Z_{\ge 1}$ be as in  \autoref{lemboundmultred}.
By \autoref{lem:K1E}, the base change $E_{K^E_{N_\ell}}$ has  
split multiplicative reduction at every $w \mid \ell$.
Take any $n \in \Z_{\ge N_{\ell}}$, and 
let $v$ be any place of $K_n$ above $\ell$.
For any place $w$ of $K^E_n$ above $v$, 
$E_{K^E_{n,w}}$ is isomorphic to 
a Tate curve $\mathbb{G}_m/q_w^{\Z}$.
By Shapiro's lemma as in \eqref{eq:H0vw}, 
we have 
\[
\cH^i_{\mathcal{F}}(\ell, n)\simeq 
H^i\left(
K_{n,v}, 
\frac{H^1_{\mathcal{F}}(K^E_{n,w},E[p^n])}{
H^1_{\rm ur}(K^E_{n,w},E[p^n]) \cap H^1_{f}(K^E_{n,w},E[p^n])}
\right)
\]
for each $\mathcal{F} \in \set{ f, \textrm{ur} }$ and 
$i \in \set{ 0,1,2 }$.

Let us show that 
$\cH^i_{f}(\ell, n)=0$ for each $i$.
The natural surjective homomorphism 
$T_p(E) \longrightarrow T_p(E)/p^nT_p(E) 
\simeq E[p^n]$ induces 
a map 
\[
\pi_{n,w}\colon 
H^1(K^E_{n,w},T_p(E))
\longrightarrow 
H^1(K^E_{n,w},E[p^n]).
\]
We note that $H^1_{\rm ur}(K^E_{n,w},T_p(E))$ is 
contained in the inverse image 
of $H^1_{\rm ur}(K^E_{n,w},E[p^n])$ by 
 the map $\pi_{n,w}$.
By \cite[Lemma~1.3.8]{Ru}, the image of $H^1_f(K^E_{n,w},T_p(E))$
by $\pi_{n,w}$ coincides with $H^1_f(K^E_{n,w},E[p^n])$. 
Here, we have $H^1_{\ur}(K^E_{n,w},T_p(E)) \subseteq H^1_{f}(K^E_{n,w},T_p(E))$ 
with finite index
(\cite[Lemma~1.3.5 (ii)]{Ru}). 
The map $\pi_{n,w}$ induces a surjection
\[
\pi_{n,w}^f \colon
\frac{H^1_f(K^E_{n,w},T_p(E))}{H^1_{\rm ur}(K^E_{n,w},T_p(E))}
\longrightarrow 
\frac{H^1_f(K^E_{n,w},E[p^n])}{H^1_{\ur}(K^E_{n,w},E[p^n]) \cap 
H^1_{f}(K^E_{n,w},E[p^n])}.
\]
By \cite[Lemma~1.3.5 (iii)]{Ru}, we have 
\[
\frac{
H^1_f(K^E_{n,w},T_p(E))}{
H^1_{\rm ur}(K^E_{n,w},T_p(E))}
=\left( \frac{E(K_{n,w}^{E,\ur})[p^\infty]}{
E(K_{n,w}^{E,\ur})[p^\infty]_{\div}}
\right)^{\Frob_w =1},
\]
where $\Frob_w \in \Gal(K_{n,w}^{E,\ur}/K_{n,w}^E)$ 
is the Frobenius automorphism.
Since $\pi_{n,w}^f$ is surjective, 
all 
the Jordan--H\"older constituents $J_i/J_{i-1}$ 
of the composition series 
\[
0 = J_0 \subseteq J_1\subseteq \cdots \subseteq J_t := \frac{H^1_{ f}(K^E_{n,w},E[p^n])}{
H^1_{\rm ur}(K^E_{n,w},E[p^n]) \cap H^1_{f}(K^E_{n,w},E[p^n])}
\]
as $\mathbb{Z}_p[G_{K_{n,v}}]$-modules 
are isomorphic to 
\[
\frac{E(K_{n,w}^{E,\ur})[p^\infty]}{E(K_{n,w}^{E,\ur})[p^\infty]_{\div}}
[p]
=
\frac{E[p^\infty]}{E(K_{n,w}^{E,\ur})[p^\infty]_{\div}}
[p]=(\mu_p \times q_w^{p^{-1}\Z})/(\mu_p \times q_w^{\Z}).
\] 
It follows from 
the condition (C2) for $E$ and \autoref{lemC2} that 
\[
H^0\left(K_{n,v},
\frac{E(K_{n,w}^{E,\ur})[p^\infty]}{E(K_{n,w}^{E,\ur})[p^\infty]_{\div}}
[p]
\right)
\subseteq  H^0\left(K_{n,v},
\frac{E(K_{n,w}^{E,{\rm ur}})[p^\infty]}{
E(K_{n,w}^{E,{\rm ur}})[p^\infty]_{\rm div}}
\right)= 0.
\]
By induction on $i$, we have 
$H^0(K_{n,v}, J_i) = 0$. 
In particular, we obtain 
\[
H^0(K_{n,v}, J_t) = \cH^0_f(\ell, n)=0.
\]
The condition (C2) for $E$ and \autoref{lemC2}  also imply the equality 
\[
H^0\left(K_{n,v},
\Hom_{\mathbb{Z}_p} \left(\frac{H^1_{ f}(K^E_{n,w},E[p^n])}{
H^1_{\rm ur}(K^E_{n,w},E[p^n]) 
\cap H^1_{f}(K^E_{n,w},E[p^n])},
\mu_{p^n}
\right)
\right)= 0.
\]
By the local duality of the Galois cohomology (\cite[(7.2.6) Theorem]{NSW}), 
we also have 
\(
\cH^2_f(\ell, n)
= 0
\). 
Moreover, as we have $\ell \ne p$, 
the local Euler--Poincar\'e characteristic 
\[
\frac{\# \cH^0_f(\ell, n)\#\cH^2_f(\ell, n)}{\# 
\cH^1_f(\ell, n)}
\] 
is equal to $1$ 
(\cite[(7.3.1) Theorem]{NSW}).
We obtain \(
\cH^1_f(\ell, n)
= 0
\).

Next, let us show that 
$\cH^i_{\rm ur}(\ell, n)=0$ for each $i$.
The inclusion $E[p^n] \subseteq E[p^\infty]$ 
induces a homomorphism 
\[
\iota_{n,w}:H^1(K^E_{n,w},E[p^n])\rightarrow H^1(K^E_{n,w},E[p^\infty]).
\] 
Recall that 
$H^1_f(K^E_{n,w},E[p^n])$
is defined to be the inverse image of $H^1_f(K^E_{n,w},E[p^{\infty}])$ 
by the natural map $\iota_{n,w}$
(cf.~\cite[Remark 1.3.9]{Ru}). 
From \autoref{lemloccond0}, 
we have 
\begin{equation}
\label{eq:Ker_iota}
H^1_f(K^E_{n,w},E[p^n])=\Ker( \iota_{n,w}).
\end{equation}
By \cite[Lemma 1.3.2 (i)]{Ru}, 
we have 
\[
 H^1_{\ur}(K^E_{n,w},E[p^n])
 \simeq \frac{E(K_{n,w}^{E,{\rm ur}})[p^n]}{\braket{\Frob_w -1}}.
\]
The latter group is isomorphic to 
$E[p^n] = E(K_{n,w}^E)[p^n]$ because of $K_n^E = \Q(E[p^n])$.
The image of 
$H^1_{\ur}(K^E_{n,w},E[p^n])$ 
by $\iota_{n,w}$ 
is contained in 
\[
H^1_{\rm ur}(K^E_{n,w},E[p^\infty])
\simeq 
\frac{E(K_{n,w}^{E,{\rm ur}})[p^\infty]}{\braket{\Frob_w -1}},
\] 
and  we have 
\[
\iota_{n,w}(
H^1_{\rm ur}(K^E_{n,w},E[p^n])
) 
= 
\left(
\frac{E(K_{n,w}^{E,{\rm ur}})[p^\infty]}{\braket{\Frob_w -1}}
\right)[p^n].
\]
By \eqref{eq:Ker_iota}, the map $\iota_{n,w}$ induces 
\[
\frac{H^1_{\rm ur}(K^E_{n,w},E[p^n])}{
H^1_{\rm ur}(K^E_{n,w},E[p^n]) \cap H^1_{f}(K^E_{n,w},E[p^n])}
\xrightarrow{\ \simeq\ }
\left(
\frac{E(K_{n,w}^{E,{\rm ur}})[p^\infty]}{\braket{\Frob_w -1}}
\right)[p^n].
\]
Therefore, by (C2) and \autoref{lemC2}, 
we have $\cH^0_{\rm ur}(\ell, n)=0$.
Moreover, similar to the proof of $\cH^i_f(\ell, n) = 0$, 
by using the local duality theorem and the local Euler--Poincar\'e characteristic formula, 
we deduce that $\cH^1_{\rm ur}(\ell, n)=0$ and 
$\cH^2_{\rm ur}(\ell, n)=0$.
This completes the proof of \autoref{lemfur}.
\end{proof}

\begin{cor}\label{corfur}
Suppose that $E$ satisfies {\rm (C2)}.
Let $\ell$ be a prime number (distinct from $p$)
at which $E$ has bad reduction. 
Then, 
there exists an integer $N_{\ell}' \in \Z_{\ge 1}$ 
such that for any $n \in \mathbb{Z}_{\ge N'_\ell}$  
and any $\cF \in \set{f, \mathrm{ur}}$, 
the natural map
\[
\left(  
\prod_{w \mid \ell}  
\frac{H^1(K_{n,w}^E,E[p^n])}{H^1_{f}(K_{n,w}^E,E[p^n]) 
\cap H^1_{\rm ur}(K_{n,w}^E,E[p^n])}
\right)^{G_{K_n}} \longrightarrow 
\left(
\prod_{w \mid \ell}  
\frac{H^1(K_{n,w}^E,E[p^n])}{H^1_{\mathcal{F}}(K_{n,w}^E,E[p^n]) }
\right)^{G_{K_n}}
\]
is an isomorphism.
\end{cor}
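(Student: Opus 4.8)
The plan is to deduce \autoref{corfur} formally from \autoref{lemfur} by chasing the long exact sequences of Galois cohomology attached to two tautological short exact sequences of local cohomology modules. Fix $\ell \in \Sigma_{0,\mathrm{bad}}$ and let $N'_\ell$ be the integer produced by \autoref{lemfur}. For $n \ge N'_\ell$ and a place $w \mid \ell$ of $K^E_n$, I would abbreviate $V_w := H^1(K^E_{n,w},E[p^n])$, $F_w := H^1_f(K^E_{n,w},E[p^n])$, $U_w := H^1_{\mathrm{ur}}(K^E_{n,w},E[p^n])$ and $I_w := F_w \cap U_w$, so that $I_w \subseteq F_w \subseteq V_w$ and $I_w \subseteq U_w \subseteq V_w$. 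Each product $\prod_{w\mid\ell}(-)$ of these groups is a finite discrete $\Z_p[G_{K_n}]$-module, the action of $G_{K_n}$ factoring through $\Gal(K^E_n/K_n)$ and permuting the places above $\ell$; in particular $M^{G_{K_n}} = H^0(K_n,M)$ for such an $M$.

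First I would record the two short exact sequences of $\Z_p[G_{K_n}]$-modules obtained by applying the third isomorphism theorem termwise,
\[
0 \to \prod_{w\mid\ell} F_w/I_w \to \prod_{w\mid\ell} V_w/I_w \to \prod_{w\mid\ell} V_w/F_w \to 0,
\qquad
0 \to \prod_{w\mid\ell} U_w/I_w \to \prod_{w\mid\ell} V_w/I_w \to \prod_{w\mid\ell} V_w/U_w \to 0,
\]
and take the associated long exact sequences in $H^\bullet(K_n,-)$. In the notation of \autoref{lemfur} we have $H^i(K_n,\prod_{w\mid\ell}F_w/I_w) = \cH^i_f(\ell,n)$ and $H^i(K_n,\prod_{w\mid\ell}U_w/I_w) = \cH^i_{\mathrm{ur}}(\ell,n)$, so the two long exact sequences begin
\[
0 \to \cH^0_f(\ell,n) \to \Big(\prod_{w\mid\ell}V_w/I_w\Big)^{G_{K_n}} \to \Big(\prod_{w\mid\ell}V_w/F_w\Big)^{G_{K_n}} \to \cH^1_f(\ell,n)
\]
and likewise with $f$ replaced by $\mathrm{ur}$ and $F_w$ by $U_w$. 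The natural map appearing in \autoref{corfur} for $\cF = f$ (resp.\ $\cF = \mathrm{ur}$) is precisely the middle arrow of the first (resp.\ second) sequence.

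To finish, I would invoke \autoref{lemfur}: for $n \ge N'_\ell$ we have $\cH^0_f(\ell,n) = \cH^1_f(\ell,n) = 0$ and $\cH^0_{\mathrm{ur}}(\ell,n) = \cH^1_{\mathrm{ur}}(\ell,n) = 0$; substituting these vanishings into the two displayed exact sequences shows at once that both middle arrows are isomorphisms, which is the assertion. There is no substantial obstacle here beyond bookkeeping---the one subtle point is that one genuinely needs the vanishing of both the $H^0$ and the $H^1$ of the kernel modules $\prod_{w\mid\ell} F_w/I_w$ and $\prod_{w\mid\ell} U_w/I_w$, so the corollary rests on the full strength of \autoref{lemfur}, whose proof (via the local duality theorem and the local Euler--Poincar\'e characteristic formula, using (C2)) is where the real work lies.
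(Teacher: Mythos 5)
Your proposal is correct and follows essentially the same route as the paper: the paper forms, for each $\cF\in\set{f,\mathrm{ur}}$, the termwise short exact sequence with kernel $\prod_{w\mid\ell}H^1_{\cF}/(H^1_f\cap H^1_{\ur})$ and applies the vanishing of $\cH^0_{\cF}(\ell,n)$ and $\cH^1_{\cF}(\ell,n)$ from \autoref{lemfur} to the resulting long exact sequence in $H^\bullet(K_n,-)$, exactly as you do. Your observation that only the $i=0,1$ vanishings are needed (the $i=2$ case being internal to the proof of \autoref{lemfur}) is also accurate.
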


\begin{proof}
Take $N_{\ell}'\in \Z_{\ge 1}$ as in \autoref{lemfur}. 
For $n\ge N_{\ell}'$, 
to simplify the notation, we put $H^i_{\cF'}(K_{n,w}^E):=H^i_{\cF'}(K_{n,w}^E,E[p^n])$ 
 $(\cF' \in \set{\emptyset, \ur, f})$. 
The short exact sequences 
\[
0 \longrightarrow 
\frac{H^1_{\cF}(K_{n,w}^E)}{H^1_{f}(K_{n,w}^E) \cap H^1_{\rm ur}(K_{n,w}^E)} 
\longrightarrow  
\displaystyle  \frac{H^1(K_{n,w}^E)}{H^1_{f}(K_{n,w}^E) \cap H^1_{\rm ur}(K_{n,w}^E)}  \longrightarrow
\frac{H^1(K_{n,w}^E) }{H^1_{\cF}(K_{n,w}^E)} \longrightarrow 0
\]
for all place $w$ above $\ell$ 
%
induce the cohomological long exact sequence
\[
\xymatrix@C=5mm{
\cH^0_{\mathcal{F}}(\ell, n) \ar[r] 
& \displaystyle \Big(  
\prod_{w \mid \ell}  \frac{H^1(K_{n,w}^E)}{H^1_{f}(K_{n,w}^E) \cap H^1_{\rm ur}(K_{n,w}^E)}\Big)^{G_{K_n}} 
 \ar[r]^-{h} 
 & \displaystyle  \Big(
\prod_{w \mid \ell}  
\frac{H^1(K_{n,w}^E)}{H^1_{\mathcal{F}}(K_{n,w}^E) }
\Big)^{G_{K_n}}
\ar[r] & \cH^1_{\mathcal{F}}(\ell, n).
}
\]
\autoref{lemfur} implies that the map $h$ is an isomorphism. 
\end{proof}

Recalling from \autoref{secintro}, 
let 
\begin{align*}
	\rho_n^E &\colon \Gal(K_n^E/\Q) \longrightarrow \Aut_{\Zp}(E[p^n])\simeq GL_2(\Z/p^n),\ \mbox{and}\\ 
	(\rho_n^E)^{\vee}&\colon \Gal(K_n^E/\Q)^{\mathrm{op}} \longrightarrow \Aut_{\Zp}(E[p^n]^{\vee}) = GL_2(\Z/p^n)
\end{align*}
be the Galois representations arising from the action on $E[p^n]$ 
and the right action of the Pontrjagin dual $E[p^n]^{\vee} = \Hom_{\Zp}(E[p^n],\Z/p^n)$
of $E[p^n]$.
For each $n\in \Z_{\ge 0} \cup \set{\infty}$, we define 
$R_n = \Z/p^n \Z[\Gal(K_n/\Q)]$-modules 
\begin{align*}
S_{n} &:=
\Hom_{\Zp [\Gal(K^E_n/K_n)]}
(\Cl(\cO_{K^E_n}[1/p]) \otimes_{\Z} {\Z_p}, E[p^n]  ) \\ \intertext{and}
A_n^E &:= (M_2(\Z/p^n\Z), (\rho_n^E)^\vee)
\otimes_{\Z[\Gal(K^E_n/K_n)]} 
\Cl(\cO_{K^E_n}[1/p]).
\end{align*}

\begin{lem}
\label{lemAS}
	For each $n\in \Z_{\ge 1}$, there exists a $\Gal(K_n/\Q)$-equivariant isomorphism
\[
(A_n^E)^{\vee} \stackrel{\simeq}{\longrightarrow} S_n^{\oplus 2}.
\] 
\end{lem}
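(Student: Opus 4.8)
The plan is to derive the isomorphism from the tensor--Hom adjunction together with an explicit identification of the Pontryagin dual of the twisted matrix module. Write $\mathcal{G}:=\Gal(K_n^E/\Q)$ and $G:=\Gal(K_n^E/K_n)$, and recall that $G$ acts trivially both on $A_n^E$ and on $S_n$, so that the $\Gal(K_n/\Q)$-actions on them are the ones induced by choosing lifts to $\mathcal{G}$; it therefore suffices to produce an isomorphism $(A_n^E)^\vee\xrightarrow{\ \simeq\ }S_n^{\oplus 2}$ that is equivariant for $\mathcal{G}$ (acting on $M_2(\Z/p^n\Z)$ through $(\rho_n^E)^\vee$, on $\Cl(\cO_{K_n^E}[1/p])$ through the Galois action, and on $E[p^n]$ through $\rho_n^E$), since such an isomorphism automatically descends.

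First I would note that, as $M_2(\Z/p^n\Z)$ is annihilated by $p^n$, one may rewrite
\[
A_n^E=\bigl(M_2(\Z/p^n\Z),(\rho_n^E)^\vee\bigr)\otimes_{\Zp[G]}\bigl(\Cl(\cO_{K_n^E}[1/p])\otimes_\Z\Zp\bigr),
\]
and then apply the tensor--Hom adjunction for Pontryagin duals over $\Zp$, namely $\Hom_{\Zp}\bigl((-)\otimes_{\Zp[G]}(-),\Qp/\Zp\bigr)\cong\Hom_{\Zp[G]}\bigl(-,\Hom_{\Zp}(-,\Qp/\Zp)\bigr)$, to obtain a $\mathcal{G}$-equivariant isomorphism
\[
(A_n^E)^\vee\ \cong\ \Hom_{\Zp[G]}\!\Bigl(\Cl(\cO_{K_n^E}[1/p])\otimes_\Z\Zp,\ \bigl(M_2(\Z/p^n\Z),(\rho_n^E)^\vee\bigr)^\vee\Bigr).
\]
It remains to identify the $\Hom$-target. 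Decomposing a matrix into the pair of its rows, and viewing each row as an element of $E[p^n]^\vee=\Hom_{\Z/p^n\Z}(E[p^n],\Z/p^n\Z)$, the right-multiplication twist defining $\bigl(M_2(\Z/p^n\Z),(\rho_n^E)^\vee\bigr)$ becomes the componentwise action on $(E[p^n]^\vee)^{\oplus 2}$; hence there is a $\mathcal{G}$-equivariant isomorphism of right $\Zp[\mathcal{G}]$-modules $\bigl(M_2(\Z/p^n\Z),(\rho_n^E)^\vee\bigr)\cong(E[p^n]^\vee)^{\oplus 2}$. Passing to Pontryagin duals and using the canonical evaluation isomorphism $(E[p^n]^\vee)^\vee\cong E[p^n]$ of left $\Zp[\mathcal{G}]$-modules, we get $\bigl(M_2(\Z/p^n\Z),(\rho_n^E)^\vee\bigr)^\vee\cong E[p^n]^{\oplus 2}$. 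Substituting and commuting $\Hom$ past the finite direct sum gives $(A_n^E)^\vee\cong\Hom_{\Zp[G]}\bigl(\Cl(\cO_{K_n^E}[1/p])\otimes_\Z\Zp,E[p^n]\bigr)^{\oplus 2}=S_n^{\oplus 2}$, and the $\Gal(K_n/\Q)$-equivariance follows because every step was $\mathcal{G}$-equivariant.

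The main point to be careful about is the bookkeeping of the ``$\mathrm{op}$'' and Pontryagin-dual conventions, i.e.\ checking that the row-decomposition really intertwines the right-multiplication twist with the componentwise right $\Zp[\mathcal{G}]$-action, and that after dualizing one lands on $E[p^n]$ with its honest Galois action rather than a twist of it. If one prefers to avoid the dual appearing at the intermediate stage, one can instead invoke the Weil pairing together with $\mu_{p^n}\subseteq K_n$ to get $E[p^n]^\vee\cong E[p^n]$ already over $\Zp[G]$; but as the row-decomposition argument furnishes the needed identification over all of $\mathcal{G}$, this is not strictly necessary. Once these module identifications are in hand, the statement is a formal consequence of adjunction.
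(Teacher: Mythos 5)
Your proof is correct and follows essentially the same route as the paper: the paper's (very terse) argument is exactly the tensor--Hom adjunction applied to the definition of $A_n^E$, combined with the identification of the (dual of the) twisted matrix module $\bigl(M_2(\Z/p^n\Z),(\rho_n^E)^\vee\bigr)$ with two copies of $E[p^n]$, yielding $(A_n^E)^\vee\simeq\Hom_{\Z_p[\Gal(K_n^E/K_n)]}\bigl(\Cl(\cO_{K_n^E}[1/p])\otimes_{\Z}\Z_p,\,E[p^n]\bigr)^{\oplus 2}=S_n^{\oplus 2}$. Your row-decomposition plus double-dual bookkeeping just makes explicit the step the paper states as the single isomorphism $(M_2(\Z/p^n\Z),\rho_n^E)\simeq E[p^n]^{\oplus 2}$, so no genuinely different idea is involved.
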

\begin{proof}
	There is a $\Gal(K^E_n/\mathbb{Q})$-equivariant 
isomorphism 
\[
(M_2(\Z/p^n\Z), \rho_n^E)
\simeq E[p^n]^{\oplus 2}
\]
so that we have $\Gal(K_n/\mathbb{Q})$-equivariant  isomorphisms 
\begin{align*}
\Hom_{\Z_p}(A_{n}^E, \Z/p^n\Z) 
& \xrightarrow{\ \simeq \ } \Hom_{\Z_p[\Gal(K^E_n/K_n)]}
\bigg( \Cl(\cO_{K^E_n}[1/p]) \otimes_{\Z} \Z_p, 
(M_2(\Z/p^n\Z), \rho_n^E) \bigg) \\
& \xrightarrow{\ \simeq \ } 
S_{n}^{\oplus 2}.
\end{align*}
This shows the assertion. 
\end{proof}

\begin{lem}
\label{lemSnSel}
There exists an integer $N\in\Z_{\ge 1}$ 
such that, for any $n \in \Z_{\ge N}$,  we have an isomorphism 
\[
S_n\simeq H^0(K_n,\Sel_p(K^E_n, E[p^n])).
\]
\end{lem}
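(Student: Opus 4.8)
The plan is to compare both sides with an auxiliary everywhere-unramified variant of the fine Selmer group and to use \autoref{lemfur} to absorb the discrepancy at the primes of bad reduction. For $\star\in\{f,\mathrm{ur},f\cap\mathrm{ur}\}$ I would let $\Sel_p^{\star}(K^E_n,E[p^n])\subseteq H^1(K^E_n,E[p^n])$ be the subgroup cut out by the same local conditions as $\Sel_p(K^E_n,E[p^n])$ (trivial localization at every place above $p$, and the unramified condition at every place away from $p$ where $E_{K^E_n}$ has good reduction), except that at a place $w$ of $K^E_n$ above a prime $\ell\in\Sigma_{0,\mathrm{bad}}$ one requires the localization to lie in $H^1_{\star}(K^E_{n,w},E[p^n])$, where $H^1_{f\cap\mathrm{ur}}:=H^1_f\cap H^1_{\mathrm{ur}}$. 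By \autoref{defnSel} and \autoref{remH^1_f}, $\Sel_p^{f}(K^E_n,E[p^n])=\Sel_p(K^E_n,E[p^n])$, and one has inclusions $\Sel_p^{f\cap\mathrm{ur}}\subseteq\Sel_p^{f}$ and $\Sel_p^{f\cap\mathrm{ur}}\subseteq\Sel_p^{\mathrm{ur}}$.

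\emph{Step 1.} The first step is to identify $\Sel_p^{\mathrm{ur}}(K^E_n,E[p^n])$ with the $\Hom$-group by class field theory. Since $K^E_n=\Q(E[p^n])$, the group $G_{K^E_n}$ acts trivially on $E[p^n]$, so $H^1(K^E_n,E[p^n])=\Hom_{\mathrm{cont}}(G_{K^E_n},E[p^n])$, and (as $p$ is odd) the archimedean places impose no condition. A homomorphism $\varphi$ lies in $\Sel_p^{\mathrm{ur}}(K^E_n,E[p^n])$ exactly when it is unramified at every finite place and kills the decomposition group of every place above $p$; by global class field theory such $\varphi$ factor through the Galois group of the maximal everywhere-unramified abelian extension of $K^E_n$ in which every place above $p$ splits completely, which, compatibly with the action of $\Gal(K^E_n/\Q)$, is isomorphic via the Artin map to $\Cl(\cO_{K^E_n})/\langle[\mathfrak{p}]:\mathfrak{p}\mid p\rangle=\Cl(\cO_{K^E_n}[1/p])$. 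As $E[p^n]$ is killed by $p^n$, this gives a $\Gal(K^E_n/K_n)$-equivariant isomorphism
\[
\Sel_p^{\mathrm{ur}}(K^E_n,E[p^n])\ \xrightarrow{\ \simeq\ }\ \Hom_{\Z_p}\!\bigl(\Cl(\cO_{K^E_n}[1/p])\otimes_{\Z}\Z_p,\,E[p^n]\bigr),
\]
and hence, taking $\Gal(K^E_n/K_n)$-invariants, $H^0(K_n,\Sel_p^{\mathrm{ur}}(K^E_n,E[p^n]))\simeq S_n$ for every $n\ge 1$.

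\emph{Step 2.} The second step is to show that $H^0(K_n,\Sel_p^{\mathrm{ur}})$, $H^0(K_n,\Sel_p^{f\cap\mathrm{ur}})$ and $H^0(K_n,\Sel_p^{f})$ all coincide once $n$ is large. For $\cF\in\{f,\mathrm{ur}\}$ the local conditions give an exact sequence
\[
0\longrightarrow\Sel_p^{f\cap\mathrm{ur}}(K^E_n,E[p^n])\longrightarrow\Sel_p^{\cF}(K^E_n,E[p^n])\longrightarrow\prod_{\ell\in\Sigma_{0,\mathrm{bad}}}\,\prod_{w\mid\ell}\frac{H^1_{\cF}(K^E_{n,w},E[p^n])}{H^1_f(K^E_{n,w},E[p^n])\cap H^1_{\mathrm{ur}}(K^E_{n,w},E[p^n])},
\]
whose first map identifies $\Sel_p^{f\cap\mathrm{ur}}$ with the kernel of the second. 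Applying the left-exact functor $H^0(K_n,-)$, the rightmost term becomes $\bigoplus_{\ell\in\Sigma_{0,\mathrm{bad}}}\cH^0_{\cF}(\ell,n)$ in the notation of \autoref{lemfur}. Put $N:=\max_{\ell\in\Sigma_{0,\mathrm{bad}}}N'_\ell$ with $N'_\ell$ as in \autoref{lemfur}, which is finite since $\Sigma_{0,\mathrm{bad}}$ is finite. For $n\ge N$, \autoref{lemfur} gives $\cH^0_{\cF}(\ell,n)=0$ for every $\ell\in\Sigma_{0,\mathrm{bad}}$, so $H^0(K_n,\Sel_p^{f\cap\mathrm{ur}})=H^0(K_n,\Sel_p^{\cF})$ for both $\cF=f$ and $\cF=\mathrm{ur}$. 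Combining this with $\Sel_p^{f}=\Sel_p$ and Step 1, for $n\ge N$ we get
\[
H^0(K_n,\Sel_p(K^E_n,E[p^n]))=H^0(K_n,\Sel_p^{f\cap\mathrm{ur}})=H^0(K_n,\Sel_p^{\mathrm{ur}})\simeq S_n,
\]
and the whole chain is $\Gal(K_n/\Q)$-equivariant, so the isomorphism is $R_n$-linear.

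The routine part is Step 1: once one notes that the Galois action on $E[p^n]$ is trivial over $K^E_n$ and that the fine local condition at $p$ amounts to complete splitting, the description of $\Sel_p^{\mathrm{ur}}$ in terms of $\Cl(\cO_{K^E_n}[1/p])$ is a direct application of class field theory. The main obstacle is Step 2: the fine Selmer group uses $H^1_f$ at the primes of bad reduction, whereas only $H^1_{\mathrm{ur}}$ is seen by the class group, and the two conditions genuinely differ at primes of potentially multiplicative reduction. The key point is that \autoref{lemfur} makes the relevant local quotients vanish \emph{after} passing to $\Gal(K^E_n/K_n)$-invariants and for $n$ sufficiently large, so the comparison must be set up at the level of invariant submodules rather than of the Selmer groups themselves.
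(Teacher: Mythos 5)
Your proposal is correct and takes essentially the same route as the paper: your Step 1 is the paper's class-field-theoretic identification of $S_n$ with the $G_{K_n}$-invariants of the Selmer group cut out by unramified conditions away from $p$, and your Step 2 uses \autoref{lemfur} to reconcile the $f$- and $\mathrm{ur}$-conditions at places above $\Sigma_{0,\mathrm{bad}}$ for $n$ large, exactly as the paper does via \autoref{corfur}. The only (cosmetic) difference is that you compare the three Selmer subgroups through the common refinement $\Sel_p^{f\cap\mathrm{ur}}$ and need only the vanishing of $\cH^0_{\cF}(\ell,n)$, whereas the paper compares the targets of the localization maps and invokes the full isomorphism of \autoref{corfur}, which also uses the vanishing of $\cH^1_{\cF}(\ell,n)$.
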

\begin{proof}
Let $H_n^E$ be the maximal subextension of the $p$-Hilbert class field 
of $K_{n}^E$ which is completely split at primes above $p$. 
From the global class field theory, 
the ideal class group $\Cl(\cO_{K^E_n}[1/p]) \otimes_{\Z}\Zp$ 
is isomorphic to the Galois group 
$\Gal(H_n^E/K_n^E)$.
We have 
\begin{align*}
	&\Hom (\Cl(\cO_{K^E_n}[1/p]) \otimes_{\Z}\Zp,E[p^n])\\
	&\simeq \Hom (\Gal(H_n^E/K_n^E),E[p^n])\\
	&= \Ker\big(\Hom(G_{K_n^E},E[p^n]) \longrightarrow
	\prod_{w \mid p} \Hom(G_{K_{n,w}^E},E[p^n]) \times 
	\prod_{w \nmid p} \Hom(G_{K_{n,w}^{E,{\rm ur}}},E[p^n]) \Big)\\
	&\simeq \Ker\Big(H^1(G_{K_n^E},E[p^n]) \longrightarrow
	\prod_{w \mid p} H^1(G_{K_{n,w}^E},E[p^n]) \times 
	\prod_{w \nmid p} H^1(G_{K_{n,w}^{E,{\rm ur}}},E[p^n]) \Big)
\end{align*}
Therefore, we obtain
\begin{align*}
S_n &\simeq  \Hom_{\Zp[\Gal(K_{n}^E/K_n)]}(\Gal(H_n^E/K_n^E),E[p^n]) \\
 &\simeq  \Hom(\Gal(H_n^E/K_n^E),E[p^n])^{G_{K_n}} \\
&= \Ker\Big(H^1(K^E_n,E[p^n])^{G_{K_n}} \longrightarrow 
	\big(\prod_{w \mid p} H^1(K_{n,w}^E,E[p^n]) \times 
	\prod_{w \nmid p} H^1(K_{n,w}^{E,{\rm ur}},E[p^n]) \big)^{G_{K_n}}\Big).
\end{align*}
By the very definition of $H^1_{\ur}$, there exists an injective homomorphism
\begin{equation}
	\label{eq:H1/H1ur}
	H^1(K_{n,w}^E, E[p^n])/H^1_{\ur}(K_{n,w}^E,E[p^n]) \hookrightarrow H^1(K_{n,w}^{E,\ur},E[p^n])
\end{equation}
and hence $S_n$ is isomorphic to the kernel of 
\begin{equation}
	\label{eq:SnKer}
	H^1(K^E_n,E[p^n])^{G_{K_n}} \longrightarrow 
	\Big(\prod_{w \mid p} H^1(K_{n,w}^E,E[p^n]) \times 
	\prod_{w \nmid p} \frac{H^1(K_{n,w}^E,E[p^n])}{
	H^1_{\ur}(K_{n,w}^E,E[p^n])}\Big)^{G_{K_n}}.
\end{equation}
It follows from \autoref{corfur} 
that, for each prime $\ell \in \Sigma_{0,\mathrm{bad}}$, 
there exists 
an integer $N'_{\ell}\in\Z_{\ge 1}$ such that 
\begin{equation}
\label{eq:isom}
\left(\prod_{w \mid \ell}  
\frac{H^1(K_{n,w}^E,E[p^n])}{H^1_{f}(K_{n,w}^E,E[p^n]) }
\right)^{G_{K_n}}\simeq 
\left(\prod_{w \mid \ell}  
\frac{H^1(K_{n,w}^E,E[p^n])}{H^1_{\ur}(K_{n,w}^E,E[p^n]) }
\right)^{G_{K_n}}
\end{equation}
for any $n\ge N_{\ell}'$.
%
Now, we put $N := \max\set{ N'_{\ell} | \ell \in \Sigma_{0,\mathrm{bad}}}$. 
For any 
$n \ge N$, we have 
\begin{align*}
&H^0(K_n,\Sel_p(K^E_n, E[p^n])) \\ 
&=\Ker\Big(H^1(K^E_n,E[p^n])^{G_{K_n}} \longrightarrow 
\Big( \prod_{w \mid p} H^1(K_{n,w}^E,E[p^n]) 
\times  \prod_{w \nmid p} 
\frac{H^1(K_{n,w}^E,E[p^n])}{H^1_{f}(K_{n,w}^E,E[p^n])}\Big)^{G_{K_n}}\Big)\\
&\overset{(\diamondsuit)}{=} \Ker\Big(H^1(K^E_n,E[p^n])^{G_{K_n}} \longrightarrow 
	\Big(\prod_{w \mid p} H^1(K_{n,w}^E,E[p^n]) \times 
	\prod_{w \nmid p} \frac{H^1(K_{n,w}^E,E[p^n])}{
	H^1_{\ur}(K_{n,w}^E,E[p^n])}\Big)^{G_{K_n}}\Big)\\
&\overset{\eqref{eq:SnKer}}{\simeq}S_n.
\end{align*}
Here, the second equality ($\diamondsuit$) follows 
from \eqref{eq:isom} for a bad prime $\ell \neq p$ and 
\autoref{remH^1_f} for a good prime $\ell \neq p$. 
\end{proof}

\begin{thm}
\label{thmmainbody}
Suppose that $E$ satisfies the conditions 
{\rm (C1)}, {\rm (C2)} and {\rm (C3)}. 
Then, there exists a family of $R_n$-homomorphisms 
\[
r_n\colon \Sel(K_n,E[p^n])^{\oplus 2}\longrightarrow (A_n^E)^{\vee}
\]
such that 
the kernel $\Ker(r_n)$ and the cokernel $\Coker(r_n)$ are finite 
with order bounded independently of $n$. 
\end{thm}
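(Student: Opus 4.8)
The plan is to realize $r_n$ as a composite of three maps, following the strategy indicated in \autoref{remAS}. First, by \autoref{lemAS} there is, for each $n\in\Z_{\ge 1}$, a $\Gal(K_n/\Q)$-equivariant isomorphism $(A_n^E)^\vee \xrightarrow{\ \simeq\ } S_n^{\oplus 2}$; since it is assembled from $\Gal(K_n/\Q)$-equivariant maps, it is an isomorphism of $R_n$-modules. Second, by \autoref{lemSnSel} there is an integer $N\in\Z_{\ge 1}$ such that for all $n\ge N$ one has an $R_n$-isomorphism $S_n \simeq H^0(K_n, \Sel_p(K^E_n, E[p^n]))$. Third, \autoref{proprescyctoKn} supplies non-negative integers $\nu_{\mathrm{res}}^{\Ker}$ and $\nu_{\mathrm{res}}^{\Coker}$ such that the restriction map
\[
\res_n\colon \Sel_p(K_n, E[p^n]) \longrightarrow H^0(K_n, \Sel_p(K^E_n, E[p^n]))
\]
satisfies $\#\Ker(\res_n)\le p^{\nu_{\mathrm{res}}^{\Ker}}$ and $\#\Coker(\res_n)\le p^{\nu_{\mathrm{res}}^{\Coker}}$ for every $n\ge 0$; note that $\res_n$ is $\Gal(K_n/\Q)$-equivariant, hence $R_n$-linear.

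For $n\ge N$, I would define $r_n$ as the composite
\[
\Sel_p(K_n, E[p^n])^{\oplus 2} \xrightarrow{\ \res_n^{\oplus 2}\ } H^0(K_n, \Sel_p(K^E_n, E[p^n]))^{\oplus 2} \xrightarrow{\ \simeq\ } S_n^{\oplus 2} \xrightarrow{\ \simeq\ } (A_n^E)^\vee,
\]
where the last two arrows come from \autoref{lemSnSel} (applied in each coordinate) and \autoref{lemAS}. Then $r_n$ is an $R_n$-homomorphism with $\#\Ker(r_n)\le p^{2\nu_{\mathrm{res}}^{\Ker}}$ and $\#\Coker(r_n)\le p^{2\nu_{\mathrm{res}}^{\Coker}}$, bounds independent of $n$. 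For the finitely many $n$ with $0\le n<N$, both $\Sel_p(K_n, E[p^n])^{\oplus 2}$ and $(A_n^E)^\vee$ are finite $R_n$-modules (for $n=0$ both are trivial since $E[1]=0$), so one may take $r_n$ to be, say, the zero map; its kernel and cokernel are finite. Enlarging $\nu_{\mathrm{res}}^{\Ker}$ and $\nu_{\mathrm{res}}^{\Coker}$ so as to dominate also $\ord_p\#\Sel_p(K_n, E[p^n])^{\oplus 2}$ and $\ord_p\#(A_n^E)^\vee$ over the finite range $0\le n<N$ gives a bound valid for the whole family $\set{r_n}_{n\ge 0}$.

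Since all the substantive input has already been proved (\autoref{propkerresbound}, \autoref{proprescyctoKn}, and the structural \autoref{lemAS} and \autoref{lemSnSel}), the only issues needing attention in this final assembly are formal: verifying that the isomorphisms of \autoref{lemAS} and \autoref{lemSnSel} are compatible with the $R_n$-module structures (not merely with the underlying $\Zp$-module structures), so that $r_n$ is genuinely $R_n$-linear, and handling the small values $n<N$ so that the bounds on $\#\Ker(r_n)$ and $\#\Coker(r_n)$ are uniform in $n$. Neither of these is a real obstacle; the genuine difficulty of \autoref{thmmainbody} lies entirely in \autoref{proprescyctoKn}, which has already been established.
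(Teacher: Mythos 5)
Your proposal is correct and is essentially the paper's own proof: the paper likewise defines $r_n$ (for $n$ large) as the composite of $(\mathrm{res}_n^{\mathrm{Sel}})^{\oplus 2}$ with the isomorphisms of \autoref{lemSnSel} and \autoref{lemAS}, invokes the uniform bounds of \autoref{proprescyctoKn}, and disposes of the finitely many small $n$ by finiteness of both sides. No substantive difference.
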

\begin{proof}
By \autoref{proprescyctoKn} and \autoref{lemSnSel}, there exists 
$N \in \Z_{\ge 1}$, 
the order of the kernel and that of the cokernel of the map 
\[
\Sel_p(K_n, E[p^n])^{\oplus 2}
\xrightarrow{\left( \mathrm{res}_n^{\mathrm{Sel}} \right)^{\oplus 2} } H^0(K_n,\Sel_p(K_n^E,E[p^n]))^{\oplus 2} \simeq S_n^{\oplus 2}
\]
are at most $p^{2 \nu_{\mathrm{res}}^{\Ker}}$ and $p^{2 \nu_{\mathrm{res}}^{\Coker}}$ 
respectively for all $n\ge N$. 
By \autoref{lemAS}, there is an isomorphism $S_n^{\oplus 2} \simeq (A_n^E)^{\vee}$.  
Since $\Sel_p(K_n,E[p^n])^{\oplus 2}$ and $(A^E_n)^\vee$ 
are finite for any $n<N$, 
this completes the proof of \autoref{thmmainbody}. 
\end{proof}

%

\providecommand{\bysame}{\leavevmode\hbox to3em{\hrulefill}\thinspace}


\begin{thebibliography}{DDMS}

\bibitem[BK]{BK}
Bloch, S. and Kato, K., 
\textit{L-functions and Tamagawa numbers of motives}, 
The Grothendieck Festschrift, Vol.\ I, 333--400, Progr. Math. 
\textbf{86}, Birkh\"user Boston, Boston, MA, 1990.


\bibitem[DDMS]{DDMS}
Dixon, J.\ D., Du Sautoy, M.\ P., Mann, A., and Segal, D.,
{\it Analytic pro-$p$ groups}, Cambridge Studies 
in Advanced Mathematics, vol.\ {\bf 61},
Cambridge University Press, 
(2003).


\bibitem[Ed]{Ed}
Edixhoven, B., 
{\it The weight in Serre's conjectures on modular forms}, 
Invent.\ Math.\ {\bf 109},
(1992), 563--594.


\bibitem[Ei]{Ei}
Eisenbud, D., {\it Commutative Algebra: 
with a View Toward Algebraic Theory}, 
Grad.\ Texts in Math., vol.\ {\bf 150}, 
Springer-Verlag (1995).


\bibitem[Ga]{Ga}
Garnek, J., 
{\it On class numbers of division fields of abelian varieties},
J. Th\'{e}or. Nombres Bordeaux, {\bf 31} (2019), no.\ 1, 227--242.

\bibitem[Hi]{Hi}
Hiranouchi, T., {\it Local torsion primes and 
the class numbers associated to an elliptic curve over $\mathbb {Q}$}, 
Hiroshima Math. J. {\bf 49} (2019), no.\ 1, 117--128.


\bibitem[Im]{Im}
Imai, H., {\it 
A remark on the rational points of 
abelian varieties with values in cyclotomic
$\Z_p$-extensions}, Proc.\ Japan Acad.\ {\bf 51} (1975), 12--16.


\bibitem[Iw]{Iw}
Iwasawa, K., {\it
On $\mathbf{Z}_l$-extensions of algebraic number fields}, 
Ann.\ of Math.\ {\bf 98} (1973), 246--326.

\bibitem[Ka]{Ka}
Kato, K., \emph{{$p$}-adic {H}odge theory and values of zeta functions of
  modular forms}, no. 295, 2004, Cohomologies $p$-adiques et applications
  arithm\'{e}tiques. III, pp.~ix, 117--290.


\bibitem[La]{La}
Lazard, M., {\it 
Groupes analytiques {$p$}-adiques},
Inst.\ Hautes \'{E}tudes Sci.\ Publ.\ Math, 
{\bf 26} (1965), 389--603.


\bibitem[LMF]{LMFDB}
The LMFDB Collaboration, 
{\it The L-functions and modular forms database}, 
Elliptic curve with LMFDB label \texttt{5077.a1} 
(Cremona label \texttt{5077a1}),  
\url{https://www.lmfdb.org/EllipticCurve/Q/5077/a/1}, 
[Online; accessed 21 March 2022].

\bibitem[Ne]{Ne}
Neukirch, J., \emph{Algebraic number theory}, Grundlehren der Mathematischen
  Wissenschaften [Fundamental Principles of Mathematical Sciences], vol. {\bf 322},
  Springer-Verlag, Berlin, 1999, Translated from the 1992 German original and
  with a note by Norbert Schappacher, With a foreword by G. Harder.

\bibitem[NSW]{NSW}
Neukirch,~J., Schmidt,~A.\ and Wingberg, K., \emph{Cohomology of number fields},
  second ed., Grundlehren der Mathematischen Wissenschaften [Fundamental
  Principles of Mathematical Sciences], vol. {\bf 323}, Springer-Verlag, Berlin,
  2008.


\bibitem[Oh1]{Oh}
Ohshita, T., {\it 
Asymptotic lower bound of 
class numbers along a Galois representation},
J.\ Number Theory
{\bf 211} (2020), 95--112.

\bibitem[Oh2]{Oh2}
\bysame, {\it 
On higher Fitting ideals of certain Iwasawa modules associated with Galois representations and Euler systems}, 
Kyoto J.\ Math.\ {\bf 61} (2021), no.\ 1, 1--95.


\bibitem[PS]{PS}
Prasad, D.\ and Shekhar, S., 
{\it Relating the Tate--Shafarevich group of an elliptic curve with the class group}, 
Pacific Journal of Mathematics {\bf 312} (1) 
(2021), 203--218.


\bibitem[Ru]{Ru}
Rubin, K.,
{\it Euler systems},
{Hermann Weyl lectures, Ann.\ of Math.\ Studies, vol.\ {\bf 147}, 
Princeton Univ.\ Press} (2000).


\bibitem[SY1]{SY1}
Sairaiji, F.\ and Yamauchi, T., {\it 
On the class numbers of the fields of the $p^n$-torsion points 
of certain elliptic curves over $\Q$}, J.\ Number Theory {\bf 156} (2015), 
277--289. 

\bibitem[SY2]{SY2}
\bysame, \emph{On the class numbers of the fields of the
  {$p^n$}-torsion points of elliptic curves over {$\mathbb{Q}$}}, J. Th\'{e}or.
  Nombres Bordeaux \textbf{30} (2018), no.~3, 893--915.


\bibitem[Se1]{Se1}
Serre, J.-P., \emph{Corps locaux}, Hermann, Paris, 1968, Deuxi\`eme \'edition,
  Publications de l'Universit\'e de Nancago, No.~VIII.

\bibitem[Se2]{Se2}
\bysame, \emph{Sur les groupes de congruence 
des vari\'et\'es ab\'eliennes. II}, Izv.\ Akad.\ Nauk SSSR Ser. Mat., 1971,
Volume 35, Issue 4, 731--737.



\bibitem[Se3]{Se3}
\bysame, {\it Propri\'et\'es galoisiennes des points 
d'ordre fini des courbes elliptiques},
Invent.\ Math.\ {\bf 15} (4) (1972), 259--331.


%
\bibitem[Si1]{Si1}
Silverman, J.~H., \emph{The arithmetic of elliptic curves}, second ed., Graduate
  Texts in Mathematics, vol. 106, Springer, Dordrecht, 2009.
  
\bibitem[Si2]{Si2}
\bysame, \emph{Advanced topic in the arithmetic of elliptic curves}, Graduate
  Texts in Mathematics, vol. 151, Springer, Dordrecht, 2013.

\bibitem[SU]{SU}
Skinner, C.\ and Urban, E., 
\emph{The Iwasawa Main Conjectures for $\mathrm{GL}_2$}, 
Invent.\ Math.\ \textbf{195} (2014), 1--277.
  
\bibitem[Ta]{Ta}
Tate, J.,
{\it Relation between $K_2$ and Galois cohomology}, 
Invent.\ Math.\ \textbf{36} (1976), 257--274.


\bibitem[Wa]{Wa}
Washington, L., {\it Introduction to Cyclotomic Fields}, 
2nd edition, Grad.\ Texts in Math., vol.\ {\bf 83}, 
Springer-Verlag (1997).


\bibitem[Wu]{Wu} Wuthrich, C, 
{\it The fine Selmer group and height pairings}, 
Ph.D.\ thesis, University of Cambridge, UK, 2004.

\end{thebibliography}
\end{document}